\colorlet{shadecolor}{blue!15}
\newtheorem{theorem}{Theorem}[section]
\newtheorem{conj}{Conjecture}
\newtheorem{corollary}[theorem]{Corollary}
\newtheorem{lemma}[theorem]{Lemma}
\newtheorem{claim}[theorem]{Claim}
\newtheorem{proposition}[theorem]{Proposition}
\newtheorem{definition}[theorem]{Definition}
\newtheorem{remark}[theorem]{Remark}
\newtheorem{question}[conj]{Question}
\newcommand{\be}[1]{\begin{equation}\label{#1}}
\newcommand{\ee}{\end{equation}}
\numberwithin{equation}{section}
\newcommand{\ba}[1]{\begin{align}\label{#1}}
\newcommand{\ea}{\end{align}}
\numberwithin{equation}{section}
\newcommand{\ben}{\begin{equation*}}
\newcommand{\een}{\end{equation*}}
\numberwithin{equation}{section}
\newenvironment{proof}[1][\relax]
  {\paragraph{Proof\ifx#1\relax\else~of #1\fi}}%
  {~\hfill$\square$\par\bigskip}
\newcommand{\calA}{\mathcal{A}}
\newcommand{\calC}{\mathcal{C}}
\newcommand{\calD}{\mathcal{D}}
\newcommand{\calE}{\mathcal{E}}
\newcommand{\calF}{\mathcal{F}}
\newcommand{\calG}{\mathcal{G}}
\newcommand{\calH}{\mathcal{H}}
\newcommand{\calP}{\mathcal{P}}
\newcommand{\bbE}{\mathbb{E}}
\newcommand{\bbN}{\mathbb{N}}
\newcommand{\bbP}{\mathbb{P}}
\newcommand{\bbR}{\mathbb{R}}
\newcommand{\bbU}{\mathbb{U}}
\newcommand{\bbZ}{\mathbb{Z}}
\newcommand{\sfC}{{\sf C}}
\newcommand{\bfN}{\mathbf N}
\newcommand{\bfP}{\mathbf P}
\newcommand{\bfT}{\mathbf T}
\newcommand{\ep}{\varepsilon}
\newcommand{\eps}{\varepsilon}
\newcommand{\la}{\lambda}
\newcommand{\La}{\Lambda}
\newcommand{\Ann}{{\rm Ann}}
\newcommand{\ind}{\mathbbm 1}
\setlist[itemize]{itemsep=0pt, topsep=2pt}
\setlist[enumerate]{itemsep=1pt, topsep=4pt}
\newcommand{\rk}[1]{\bgroup\color{red}%
  \par\medskip\hrule\smallskip%
  \noindent\textbf{#1}%
  \par\smallskip\hrule\medskip\egroup}
\newcommand{\ghost}{\mathfrak g}
\newcommand{\lra}{\leftrightarrow}
\newcommand{\llra}{\longleftrightarrow}
\newcommand{\xlra}{\xleftrightarrow}
\newcommand\concel[2]{\ooalign{$\hfil#1\mkern0mu/\hfil$\crcr$#1#2$}}  
\newcommand\nxlra[1]{\mathrel{\mathpalette\concel{\xlra{#1}}}}
\renewcommand{\ep}{\varepsilon}
\newcommand{\Cov}{\mathrm{Cov}}
\def\les{\!{\begin{array}{c}<\\[-10pt]\scriptstyle{\frown}\end{array}}\!}
\def\ges{\!{\begin{array}{c}>\\[-10pt]\scriptstyle{\frown}\end{array}}\!}
\title{Planar random-cluster model: scaling relations}
\author{Hugo Duminil-Copin\thanks{Universit\'e de Gen\`eve}\ \thanks{Institut des Hautes \'Etudes Scientifiques} and Ioan Manolescu\addtocounter{footnote}{1}\thanks{University of Fribourg}}
\date{\today}
\begin{document}

\maketitle

\begin{abstract}
    This paper studies the critical and near-critical regimes of the planar random-cluster model on $\mathbb Z^2$ with cluster-weight $q\in[1,4]$ using novel coupling techniques. More precisely, we derive the {\em scaling relations} between the critical exponents $\beta$, $\gamma$, $\delta$, $\eta$, $\nu$, $\zeta$ as well as $\alpha$ (when $\alpha\ge0$). As a key input, we show the stability of crossing probabilities in the near-critical regime using new interpretations of the notion of influence of an edge in terms of the rate of mixing. As a byproduct, we derive a generalization of Kesten's classical scaling relation for Bernoulli percolation involving the ``mixing rate'' critical exponent $\iota$ replacing the four-arm event exponent $\xi_4$.
  \end{abstract}

\setcounter{tocdepth}{2}
\tableofcontents

\section{Introduction}
 
\subsection{Motivation}

Understanding the behaviour of physical systems undergoing a continuous phase transition at and near their critical point is one of the major challenges of modern statistical physics, both on the physics and the mathematical sides. In the first half of the twentieth century, the understanding relied essentially on exact computations, as exemplified by the analysis of mean-field systems and Onsager's revolutionary solution of the 2D Ising model~\cite{Ons44}. In the sixties, the arrival of the renormalization group (RG) formalism (see~\cite{Fis98} for a historical
exposition) led to a (non-rigorous) deep physical and geometrical understanding of continuous phase transitions. The RG formalism suggests that ``coarse-graining'' renormalization transformations correspond to appropriately changing the scale and the parameters of the model under study. The large scale limit of the critical regime then arises as the fixed
point of the renormalization transformations. 

A striking consequence of the RG formalism is that the critical fixed point being usually unique, the scaling limit at the critical point must satisfy translation, rotation, scale and even conformal invariance, see e.g.~\cite{BPZ84b,BPZ84a}.
In two dimensions, this prediction allowed for the computation of {\em critical exponents} ruling the behaviour of thermodynamical quantities and the classification of models into {\em universality classes}, meaning classes of models undergoing the same critical behaviour. 

Another observation related to the previous developments is that the critical exponents are related to each other: if the behaviours of the specific heat, the order parameter, the susceptibility, the source-field, the two-point function and the correlation length are governed by the exponents $\alpha$, $\beta$, $\gamma$, $\delta$, $\eta$ and $\nu$ respectively, then the following {\em scaling relations} should be satisfied (below,  the dimension $d$ of the lattice is assumed to be equal to 2, but we state the relations in this generality as they are predicted to extend to any dimension below the so-called upper critical dimension of the system):
\begin{align}
&\frac{2-\alpha}d=\nu=\frac{2\beta}{d-2+\eta},\label{eq:a1}\\
&2-\eta=d\frac{\delta-1}{\delta+1}=\frac\gamma\nu.\label{eq:a2}\end{align}
 A striking feature of these relations is that they hold {\em for different universality classes}, meaning that the critical exponents may be different for different models, yet they are always related via~\eqref{eq:a1} and~\eqref{eq:a2}.

The aim of this paper is to provide rigorous proofs of these scaling relations for a family {\em of planar percolation models}. Percolation models are models of random subgraphs of a given lattice. Bernoulli percolation is maybe the most studied such model, and breakthroughs in the understanding of its phase transition have often served as milestones in the exciting history of statistical physics.  The random-cluster model (also called Fortuin-Kasteleyn percolation) is another example of a percolation model. It was introduced by Fortuin and Kasteleyn around 1970~\cite{For70,ForKas72} as a generalization of Bernoulli percolation. 
It was found to be related to many other models of statistical mechanics, including the Ising and Potts models, and to exhibit a very rich critical behaviour. Of particular importance from the point of view of physics and for the relevance of our paper is the fact that the scaling limits of the random-cluster models at criticality are expected to fall into different universality classes and to be related to various 2D conformal field theories.

Let us conclude this section by reminding the reader that the theory of Bernoulli percolation is now well developed, with a decent understanding of the properties of the scaling limit~\cite{AizBur99}, of crossing probabilities~\cite{Rus78,SeyWel78}, universal critical exponents~\cite{KesSidZha98}, scaling relations~\cite{Kes87,Nol08}, noise sensitivity and near-critical window~\cite{GPS10b,GPS}, etc. For a variant of the model (site percolation on the triangular lattice), the existence of the scaling limit and its conformal invariance was proved~\cite{Smi01} and critical exponents have been computed~\cite{SW01}, see~\cite{BefDum13} and references therein for an overview of two-dimensional Bernoulli percolation. Deriving all these properties for Bernoulli percolation relies on specific features, such as independence of the states of different edges and geometric interpretations of differential formulae using so-called pivotal events. These features are not satisfied for more general random-cluster models.
Another more prosaic goal of this paper is therefore to develop robust tools enabling one to bypass these special characteristics of Bernoulli percolation to extend the results mentioned in the abstract to the whole regime of critical random-cluster models undergoing a continuous phase transition. As such, these tools may have a number of implications that are not mentioned in the present paper, in particular for the study of other planar dependent percolation models.

\subsection{Definition of the random-cluster model}\label{sec:1.2}

As mentioned in the previous section, the model of interest in this paper is the random-cluster model, which we now define. 
For background, we direct the reader to the monograph~\cite{Gri06} and to the lecture notes~\cite{Dum17a} for an exposition of the most recent results.

Consider the square lattice $(\mathbb Z^2,\mathbb E)$, that is the graph with vertex-set $\mathbb Z^2=\{(n,m):n,m\in\mathbb Z\}$ and edges between nearest neighbours. In a slight abuse of notation, we will write $\bbZ^2$ for the graph itself. 
Consider a finite subgraph $G = (V,E)$ of the square lattice ($V$ denotes the vertex-set and $E$ the edge-set) and let $\partial G$ be the set of vertices in $V$ incident to at most three edges in $E$.
Write $\La_n$ for the subgraph of $\bbZ^2$ spanned by the vertex-set $\{-n,\dots, n\}^2$. For  $1 \le r < R$, write $\Ann(r,R)$ for the annulus $\Lambda_R\setminus\Lambda_{r-1}$. 
We also write $\Lambda_n(x)$ and $\Lambda_n(e)$ for the boxes of size $n$ recentred around $x$ and the bottom left endpoint of the edge $e$, respectively.

In order to define the model, consider first a finite graph $G$. A percolation configuration $\omega$ on $G$ is an element of $\{0,1\}^{E}$. An edge $e$ is said to be {\em open} (in $\omega$) if $\omega_e=1$, otherwise it is {\em closed}. A configuration  $\omega$ can be seen as a subgraph of $G$ with vertex-set $V$ and edge-set $\{e\in E:\omega_e=1\}$. When speaking of connections in $\omega$, we view $\omega$ as a graph. For sets of vertices $A$ and $B$, we say that $A$ is connected to $B$ if there exists a path of edges of $\omega$ with endpoints that connect a vertex of $A$ to a vertex of $B$. This event is denoted by $A{\longleftrightarrow}B$. We also speak of connections in a set of vertices~$C$ if the endpoints of the edges of the path are all in~$C$.

A {\em cluster} is a connected component of $\omega$. The {\em boundary conditions} $\xi$ on $G$ are given by a partition of $\partial G$. We say that two vertices of $G$ are {\em wired together} if they belong to the same element of the partition $\xi$.
\begin{definition} The random-cluster measure on $G$ with edge-weight $p$, cluster-weight $q>0$ and boundary conditions $\xi$  is given by
\begin{equation}\label{eq:RCM_def1}
	\phi_{G,p,q}^\xi[\omega]=\frac{1}{Z^\xi(G,p,q)} (\tfrac{p}{1-p})^{|\omega|}q^{k(\omega^\xi)},
\end{equation}
where $|\omega|=\sum_{e\in E}\omega_e$, $k(\omega)$ is the number of connected components of the graph, $\omega^\xi$ is the graph obtained from $\omega$ by identifying wired vertices together, and  $Z^\xi(G,p,q)$ is a normalising constant called the {\em partition function} chosen in such a way that $\phi_{G,p,q}^\xi$ is a probability measure. 
\end{definition}

Two specific families of boundary conditions will be of special interest to us. On the one hand, the {\em free} boundary conditions, denoted 0, correspond to no wirings between boundary vertices. On the other hand, the {\em wired} boundary conditions, denoted 1, correspond to all boundary vertices being wired together. 

The random-cluster model may be modified to accommodate an external magnetic field as follows. 
Add to the lattice $\bbZ^2$ a vertex $\ghost$ called the {\em ghost vertex} and connect it to each vertex $v$ of $\bbZ^2$ by an edge $v\ghost$. The random-cluster measure $\phi_{G,p,q,h}^i$ (for $i=0$ or $1$ and $h \geq 0$) is defined exactly as the random-cluster model on $G$, except that the boundary is now $\partial G\cup\{\mathfrak g\}$, and the edge-weight is $p$ for the edges of $G$ and $1-e^{-h}$ for edges having $\ghost$ as an endpoint, i.e.~that
\begin{equation}\label{eq:RCM_def1}
	\phi_{G,p,q,h}^i[\omega]=\frac{1}{Z^\xi(G,p,q,h)} (\tfrac p{1-p})^{|\omega|}(e^h-1)^{\Delta(\omega)}q^{k(\omega^i)},
\end{equation}
where $\Delta(\omega):=\sum_{v\in V}\omega_{v{\mathfrak g}}$. The probability that $\ghost$ is in the cluster of 0 has an interpretation in terms of spin models with a magnetic field: for $q=2$, this probability is equal to the {\em spontaneous magnetization} with an external field $h$ for the Ising model on the square lattice. 
A similar interpretation holds for the 3-state and 4-state Potts models. For more details on this topic, see~\cite{BiskBorChaKot00}. 

For $q\ge1$ and $i=0,1$, the family of measures $\phi_{G,p,q,h}^i$ converges weakly as $G$ tends to the whole square lattice. The limiting measures on $\{0,1\}^{\bbE}$ are denoted by $\phi_{p,q,h}^i$ and are called {\em infinite-volume} random-cluster measures with free and wired boundary conditions. They are invariant under translations and ergodic. When $h=0$, we simply drop it from the notation.

The random-cluster model undergoes a phase transition at $h=0$ and a critical parameter $p_c=p_c(q)$  in the following sense: if $p>p_c(q)$, the probability 
$$\theta(p):=\phi_{p,q}^1[0\text{ is in an infinite cluster}]$$
is strictly positive, while for $p<p_c(q)$, it is equal to 0.
In the past ten years, considerable progress has been made in the understanding of this phase transition: the critical point was proved in~\cite{BefDum12} (see also~\cite{DumMan14,DCRT16}) to be equal to 
$$p_c(q)=\frac{\sqrt q}{1+\sqrt q}.$$
It was also shown in these papers that the {\em correlation length} \begin{align}\label{eq:correlation}
\xi(p)&:=\lim_{n\rightarrow\infty}-n/\log [\pi_1(p,n)-\theta(p)]\in[0,\infty]
\end{align}
is finite as soon as $p\ne p_c$, where
\begin{equation}
\pi_1(p,n):=\phi_{p,q}^1[0\longleftrightarrow\partial\Lambda_n]
\end{equation}
(when $p=p_c$ we drop $p_c$ from this notation).

For $q\ge1$, it was  proved in~\cite{DumSidTas16, DGHMT16} (see also~\cite{RaySpi19} when $q>4$) that the correlation length at $p_c$ is infinite if and only if $q\le 4$. As the divergence of the correlation length is one of the characterizations of a continuous phase transition, and as we are interested in this type of phase transition only,  in the whole paper we will restrict our attention to the range $q\in[1,4]$. Also, since the $q=1$ case was already treated by Kesten in~\cite{Kes87}, and later solved in numerous other places (see references below), we will often assume that $q>1$.

\paragraph{Two notational conventions} Since $q\in[1,4]$ will always be fixed, we drop it from notation. For $q\in[1,4]$, there is a unique infinite-volume random-cluster measure, so we omit the superscript corresponding to the boundary condition and denote it simply  by $\phi_{p}$.

For two families  $(f_i)_{i\in I}$ and $(g_i)_{i\in I}$, introduce $ f\asymp g$ (resp.~$ f\les g$ and $f\ges g$) to refer to the existence of constants $c,C\in(0,\infty)$ such that for every $i\in I$, $cg_i\le f_i\le Cg_i$ (resp.~$f_i\le Cg_i$ and $f_i\ge cg_i$). In most cases, the family $I$ will be obvious from context and omitted. In the special case where $I$ contains (implicitly or explicitly) the edge-parameter $p\in(0,1)$, we in fact further require that $p$ is not close to 0 or 1 (which is justified for every application that we have in mind since we are interested in properties for $p$ close to $p_c$).

\subsection{Stability below the characteristic length}\label{sec:stab}\label{sec:1.5}

When studying a non-critical system, a natural length-scale is provided by the {\em characteristic length}, which appeared in a simplified context of Bernoulli percolation in the work of Kesten~\cite{Kes87} (see also~\cite{BCKS99})  and was explicited for the random-cluster model in~\cite{DumGarPet14}. In order to define the characteristic length, we first introduce the notion of crossing probability.

A {\em quad} $(\calD; a,b,c,d)$ is a finite subgraph of $\bbZ^2$ whose boundary $\partial\calD$ is a simple path of edges of $\mathbb Z^2$,  along with four points $a,b,c,d$ found on $\partial \calD$ in counterclockwise order. These points define four arcs $(ab)$, $(bc)$, $(cd)$, and $(da)$ corresponding to the parts of the boundary between them. We also see the quad as a domain of $\mathbb R^2$ with marked points on its boundary by taking the union of the faces enclosed by $\partial\calD$. The typical example is the case of rectangles $[0,n]\times[0,m]$ or $\La_n$ with $a,b,c,d$ being the corners of the rectangle, oriented in counterclockwise order, starting from the bottom-right one. In this case, we omit $a,b,c,d$ from the notation.
We say that the quad $(\calD,a,b,c,d)$ is {\em crossed} if $(ab)$ is connected to $(cd)$ in $\calD$. The event is denoted by $\calC(\calD)$. 

We say that a quad $(\calD; a,b,c,d)$ is {\em $\eta$-regular at scale $R$} for some $\eta > 0$ if $\calD$
is contained in  $\Lambda_R$, 
is the union of a finite number of translates of $\Lambda_{\eta R}$ by points of $\eta R\mathbb Z^2$, 
and $a,b,c,d\in\eta R\bbZ^2$.

Now, consider $\delta>0$ small enough. How small $\delta$ should be is dictated by the proof of Theorem~\ref{thm:RSWnear} and we simply wish to mention here that $\delta$ can be taken independent of $q\in[1,4]$, and can easily be estimated (even though the value is irrelevant for our study).

\begin{definition}[Characteristic length] For each $q\ge1$ and $p\in(0,1)$, let
\begin{align}\label{eq:L_def}
	L(p) =L(p,q):= 	\inf\{ R \geq 1 : \phi_{p} [\calC(\Lambda_R)] \notin[\delta, 1-\delta]\}\in[1,\infty].
\end{align}
\end{definition}

Note that $L(p)<+\infty$ for every $p\ne p_c$ by~\cite{BefDum12}; 
by duality,  $L(p_c)=+\infty$ as long as $\delta < 1/2$, which we will always assume. The interest of the characteristic length lies in its connection with the {\em scaling window}, i.e.~the regime of parameters $(R,p)$ for which one expects typical properties of the random-cluster model in $\Lambda_R$ with parameters $p$ to be similar to the critical ones. 
 In physics, the statement that the system looks critical is usually related to another length-scale, namely the correlation length $\xi(p)$ defined in \eqref{eq:correlation}.
The correlation length encodes the rate of exponential decay of the probability of being connected to distance $n$ but not to infinity as $n$ tends to infinity; it is not a priori directly related to $L(p)$. Nevertheless, the following result reunite the two notions of correlation and characteristic lengths, thus affirming that the  characteristic length is simply the correlation length in disguise. 

\begin{theorem}[Equivalence  correlation/characteristic lengths]\label{thm:L_equiv_xi}
    Fix $1\le q\le 4$, we have that for $p\in(0,1)$,   
    \begin{equation}
    	L(p)\asymp \xi(p).
    \end{equation}
\end{theorem}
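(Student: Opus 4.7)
I prove the two bounds $L(p) \lesssim \xi(p)$ and $\xi(p) \lesssim L(p)$ separately. By the planar duality $p \mapsto p^* = q(1-p)/(q(1-p)+p)$ of the random-cluster model at $q \ge 1$, the supercritical case $p > p_c$ reduces to a subcritical dual model with matching length scales (using $L(p) \asymp L(p^*)$ and $\xi(p) \asymp \xi(p^*)$), so we may focus on $p \le p_c$, where $\theta(p) = 0$.

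\textbf{Bound $L \lesssim \xi$.} The definition of $L(p)$ gives $\phi_p[\calC(\Lambda_{L(p)-1})] \ge \delta$. On the other hand, the definition of $\xi(p)$ combined with a subadditivity/monotone-coupling argument (available for the random-cluster model with $q \ge 1$ via FKG and the spatial Markov property) promotes the asymptotic rate into a pointwise bound $\pi_1(p,n) \le C \exp(-n/\xi(p))$, and more directly an exponential upper bound on the crossing probability $\phi_p[\calC(\Lambda_n)] \le C' \exp(-cn/\xi(p))$ (obtained by combining the union bound over pairs of boundary points with a decorrelation bound that absorbs the polynomial prefactor into the exponential rate at scales $n \gtrsim \xi(p)$). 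Evaluating at $n = L(p) - 1$ and using $\phi_p[\calC(\Lambda_n)] \ge \delta$ forces
\[
\delta \le C' \exp\bigl(-c(L(p)-1)/\xi(p)\bigr),
\]
which rearranges to $L(p) \le C'' \xi(p) \log(1/\delta) \lesssim \xi(p)$.

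\textbf{Bound $\xi \lesssim L$.} We must show exponential decay $\pi_1(p,n) \le \exp(-c n/L(p))$. Starting from $\phi_p[\calC(\Lambda_{2L(p)})] \le \delta$ (from the definition of $L(p)$, with $\delta$ sufficiently small as required in the statement of Theorem~\ref{thm:RSWnear}), a renormalization argument -- conditioning iteratively on configurations in middle strips of long rectangles, and using FKG, the spatial Markov property, and the stochastic domination of the conditional measure by the wired-boundary measure (which is available for $q \ge 1$) -- shows that the crossing probability of a $k L(p) \times L(p)$ rectangle in the long direction decays like $\alpha^k$ for some $\alpha = \alpha(\delta) < 1$. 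A standard reduction (based on covering $\partial \Lambda_n$ by translates of such rectangles and applying FKG) then transfers this to $\pi_1(p,n) \le C \alpha^{n/L(p)}$, yielding $\xi(p) \le L(p)/\log(1/\alpha) \lesssim L(p)$.

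\textbf{Main obstacle.} The renormalization step in the second bound is the crux of the argument. For Bernoulli percolation one would invoke the BK inequality to obtain clean multiplicativity of crossing probabilities across nested regions; this is unavailable for $q > 1$. Instead, one must extract decay from FKG together with the spatial Markov property, which introduces a constant multiplicative loss at each conditioning step when replacing the actual boundary conditions by the worst-case wired ones. The smallness of $\delta$ built into the definition of $L(p)$ -- precisely the smallness required by Theorem~\ref{thm:RSWnear} -- is what ensures this multiplicative loss is absorbed and that the geometric ratio $\alpha$ remains strictly less than $1$, producing genuine exponential rather than merely polynomial decay.
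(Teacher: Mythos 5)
Your second bound ($\xi \lesssim L$) is essentially the paper's argument: the paper proves, via exactly the coarse-graining you describe (chains of box-to-annulus crossings, a union bound over $8^N$ skeletons, decoupling by \eqref{eq:SMP} and \eqref{eq:CBC} at the price of wired boundary conditions, and the smallness of $\delta$ combined with the RSW theorem of~\cite{DumTas18} and the mixing property to make the single-box probability beat the entropy), that $\phi_p[\Lambda_{L}\leftrightarrow\Lambda_{L}(x)]\lesssim \exp[-c|x|/L(p)]$, which gives $\xi(p)\lesssim L(p)$. That part of your plan is sound.

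The genuine gap is in your first bound. The chain ``$\delta \le \phi_p[\calC(\Lambda_{L-1})] \le C'\exp(-c(L-1)/\xi)$'' requires an upper bound on a crossing probability in terms of $\xi$, and the only route you offer is a union bound over boundary points of the pointwise bound $\pi_1(p,n)\le C\exp(-n/\xi)$ coming from subadditivity. That union bound produces a polynomial prefactor: $\phi_p[\calC(\Lambda_n)]\le Cn\exp(-2n/\xi)$, and optimizing $\delta \le Cn\exp(-2n/\xi)$ at $n=L-1$ only yields $L\lesssim \xi\log\xi$, not $L\lesssim\xi$. No ``decorrelation bound'' absorbs this prefactor, because the prefactor and the exponential live at the same scale $n\asymp\xi$; this is precisely the logarithmic loss that the paper points out afflicts the earlier result of~\cite{DumGarPet14} for $q=2$, and removing it is the stated novelty of Theorem~\ref{thm:L_equiv_xi}. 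The paper's fix goes in the opposite direction: it proves the pointwise \emph{lower} bound $\phi_p[\Lambda_{L}\leftrightarrow\Lambda_{L}(x)]\gtrsim\exp[-C|x|/L(p)]$ by chaining, via \eqref{eq:FKG}, the circuit events $A_L(y_j)$ of Theorem~\ref{thm:RSWnear} along a path of $O(|x|/L)$ boxes of scale $L$. Since each circuit costs only a constant, this shows directly that the exponential decay rate of $\pi_1(p,n)$ is at most $C/L(p)$, hence $\xi(p)\ge L(p)/C$ with no logarithmic correction. You need this (or an equivalent constant-per-scale lower bound) to close the argument. Separately, your reduction of the supercritical case to ``$\xi(p)\asymp\xi(p^*)$'' is itself most of the work there: for $p>p_c$ the quantity $\pi_1(p,n)-\theta(p)$ must be compared to dual connection probabilities by sandwiching it between a sum over dual circuits and a product of a polynomially decaying finite-volume one-arm term with $\phi_p[\Lambda_{2n}\not\leftrightarrow\infty]$, as the paper does; it is not a formal consequence of duality.
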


The proof is based on a coarse-grained procedure.  
We wish to highlight that the result is new for every $1<q\le 4$, even for $q=2$ for which
\cite[Theorem~1.2]{DumGarPet14} proves almost the same statement, 
but with a logarithmic control over the ratio of $L(p)/\xi(p)$ rather than a constant one.

One of the main results of~\cite{Kes87} is that the scaling window is simply the set of $(p,R)$ such that $R=O(L(p))$. 
This result is sometimes referred to as {\em stability below the characteristic length}; it is the subject of the following theorem in the context of the random-cluster model.
Together with Theorem~\ref{thm:L_equiv_xi} the stability result legitimates the two physical interpretations of the correlation length: in terms of rate of decay and in terms of scaling window. We state the result for $q\ne1$ as the case $q=1$ was already treated in~\cite{Kes87}.

\begin{theorem}[Stability below the characteristic length]\label{thm:stability}
    Fix $1< q \le 4$. \medbreak\noindent
  {\em (Stability of crossing probabilities)} There exists $\eps > 0$ such that, for every  $\eta$-regular discrete quad $(\calD,a,b,c,d)$ at scale $R\ge1$ and every $p \in (0,1)$ (the constant in $\les$ depends on $\eta$ but $\eps$ does not), 
    \begin{align}\label{eq:stability crossings}
		|\phi_{p} [\calC(\calD)] - \phi_{p_c} [\calC(\calD)]|
		\les(\tfrac R{L(p)})^\eps.
    \end{align}
 \medbreak\noindent
  {\em (Stability of the one-arm event)}  For every $p \in (0,1)$ and every $R\le L(p)$, 
	\begin{align}\label{eq:stability arm exponent}
    	\pi_1(R)\asymp \pi_1(p,R).
	\end{align}
\end{theorem}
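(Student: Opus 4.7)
The plan is to derive both stability statements from a differential-inequality scheme in $p$, controlling the $p$-derivative of the relevant observable by a suitable sum of edge influences, and then invoking a matching lower bound which is precisely what characterizes $L(p)$. As a preliminary step, I would rely on the near-critical RSW estimate (Theorem~\ref{thm:RSWnear}), which guarantees that for every $\eta$-regular quad at scale $R\le L(p)$ the crossing probability $\phi_p[\calC(\calD)]$ lies in a fixed compact subset of $(0,1)$. Combined with FKG and the standard gluing construction, this yields uniform quasi-multiplicativity for arm events in the regime $R\le L(p)$, and in particular bounds $\pi_1(p,R)$ from above and below by $\pi_1(p,L(p))$ times a power of $R/L(p)$.

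For the stability of crossings, I would write
\begin{equation*}
\phi_p[\calC(\calD)]-\phi_{p_c}[\calC(\calD)]=\int_{p_c}^{p}\tfrac{d}{ds}\phi_s[\calC(\calD)]\,ds,
\end{equation*}
and use the Russo-type identity for the random-cluster model, which represents $\tfrac{d}{ds}\phi_s[A]$ as a sum over edges $e$ of a covariance $\Cov_s(\omega_e,\ind_A)$ (up to bounded prefactors involving the cluster-weight). Each such covariance is then interpreted, via the coupling techniques advertised in the abstract, as an \emph{influence} measuring the extent to which conditioning on $\omega_e$ biases the occurrence of $A$. The crucial claim is that, for $e\in\calD$ and $R\le L(p)$, this influence is comparable to its value at $p_c$; summing over $e$ and integrating from $p_c$ to $p$ then produces a bound of the form $|p-p_c|\cdot R^2\cdot I_{p_c}(R)$, where $I_{p_c}(R)$ is the typical critical influence. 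The polynomial rate $(R/L(p))^\eps$ is finally obtained by combining this estimate with the identification $|p-p_c|\cdot L(p)^2\cdot I_{p_c}(L(p))\asymp 1$, which is what \emph{defines} $L(p)$ from the influence viewpoint, together with the quasi-multiplicativity of the critical influence to redistribute factors of $R/L(p)$.

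The one-arm stability \eqref{eq:stability arm exponent} would follow from the same differential scheme applied now to the monotone quantity $\pi_1(p,R)$: one bounds its logarithmic derivative in $p$ by an analogous sum of edge influences along the half-infinite path from $0$ to $\partial\Lambda_R$, and integrates from $p_c$ to $p$. Because $R\le L(p)$, the integrated quantity is $O(1)$, so $\pi_1(p,R)/\pi_1(R)$ stays within a constant factor of $1$. Alternatively, one can deduce \eqref{eq:stability arm exponent} more directly from the first part via a standard construction: iterate RSW crossings in annular scales $2^k\le L(p)$ to transfer the control on quads into control on arm events, using quasi-multiplicativity to glue the scales.

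The main obstacle is the very first link in this chain: for $q>1$ the measure $\phi_p$ lacks independence, so the classical Russo formula with its pivotal interpretation is not directly available, and even when one writes the derivative as a sum of covariances, bounding these covariances by geometric ``arm-type'' quantities is non-trivial. This is where the paper's main novelty enters: the reinterpretation of influence in terms of a \emph{mixing rate} between the edge $e$ and distant events, which can be estimated by coupling two random-cluster configurations that differ only in a local region. Establishing the correct quasi-multiplicativity for this mixing-based influence, and proving that it is stable under small changes of $p$ throughout $R\le L(p)$, is the technical heart on which the whole differential-inequality strategy rests.
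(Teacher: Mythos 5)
Your high-level architecture matches the paper's: write the difference as an integral of the Russo-type derivative \eqref{eq:Russ}, interpret each covariance as a mixing-rate influence via couplings, and rest everything on the quasi-multiplicativity and near-critical stability of that influence. However, there is a genuine gap in the quantitative step. You claim that summing the influences over $e\in\calD$ gives a derivative of order $R^2 I_{p_c}(R)$ and hence a bound $|p-p_c|\,R^2 I_{p_c}(R)$ after integration. For $q>1$ this is exactly the statement the paper warns is \emph{false}: edges far from $\calD$, up to distance $L(p)$, contribute a term $\sum_{\ell=R}^{L(p)}\ell\,\Delta_p(\ell)\Delta_p(R,\ell)$ to the derivative (Corollary~\ref{cor:sta} / Proposition~\ref{prop:deriv_cross}), and by quasi-multiplicativity this equals $R^2\Delta_p(R)$ times roughly $\sum_{\ell}\ell\Delta_p(R,\ell)^2/R^2$, which diverges relative to the local term whenever $\iota<1$. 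Any proof that only accounts for edges inside $\calD$ bounds the wrong quantity. The paper handles this by comparing the \emph{full} derivative at scale $R$ with the full derivative of $\phi_u[\calC(\La_{L(p)})]$, extracting the factor $(R/L(p))^\delta$ from Proposition~\ref{prop:lower bound Delta} and quasi-multiplicativity, and then integrating the latter derivative, which is trivially bounded by $1$.

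A second, related problem is your closing step. The relation $|p-p_c|\,L(p)^2\,I_{p_c}(L(p))\asymp 1$ is not the definition of $L(p)$ in this paper (which is defined via crossing probabilities exiting $[\delta,1-\delta]$); it is the scaling relation \eqref{eq:L_Delta}, whose proof in Section~\ref{sec:13} itself invokes Theorem~\ref{thm:stability} and the stability of $\Delta_p$. Using it as an input therefore risks circularity unless you first establish it independently, which requires essentially the same derivative comparison the paper uses. The same comment applies to your one-arm argument: asserting that the integrated logarithmic derivative is $O(1)$ ``because $R\le L(p)$'' is precisely what needs proof, and the RSW-gluing alternative you mention only yields comparisons up to polynomial factors in $R/L(p)$, not the up-to-constants statement $\pi_1(R)\asymp\pi_1(p,R)$.
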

The stability of arm event probabilities~\eqref{eq:stability arm exponent} extends to more general arm events. Moreover, an improved version may be formulated; see Remark~\ref{rem:improved_stability} for details.

The strategy for proving Theorem~\ref{thm:stability} is related to Kesten's original one, in that it uses Theorem~\ref{thm:delta} to study the behaviour of derivatives of crossing events. 
Nevertheless, several additional difficulties occur, mainly due to the replacement of pivotality by influence in the differential formulas for probabilities of events: recall~\cite[Thm.~(2.46)]{Gri06} the general formula, valid for every $q>0$,
\begin{equation}\label{eq:Russ}
	\tfrac{{\rm d}}{{\rm d}p}\phi_{p}[\calC(\calD)]
	=\tfrac1{p(1-p)} \sum_{e \in \bbE} \mathrm{Cov}_p(\omega_e, \calC(\calD)), 
\end{equation}
where $\Cov_p$ denotes the covariance under $\phi_p$. 
For $q=1$, the sum of covariances gets nicely rewritten in terms of {\em pivotal edges}, i.e.~edges which, when switched from close to open, change the occurrence of the event. In particular, it is possible to prove that for crossing events of a rectangle of size $R$ and edges that are far from the boundary of the rectangle, the probability of being pivotal is of the order of the probability $\pi_4(p,R)$ that the two extremities of a given edge $e$ belong to different clusters of radius at least $R$ (when $p=p_c$, we simply write $\pi_4(R)$).
This property was used crucially in~\cite{Kes87} and ultimately leads to Kesten's scaling relation $L(p)^2 \pi_4(L(p)) \asymp (p-p_c)^{-1}$.
The description in terms of pivotal edges is wrong for random-cluster models with $q>1$ as the covariance between an edge and crossing events at scale $R$ is no longer of the order of $\pi_4(p,R)$. 

Driven by this different phenomenology, in this paper we introduce a {\em new interpretation} of the covariance valid for every $q>1$ encoding how much an edge is influenced by boundary conditions at a distance $R$, or equivalently, how fast the model mixes.
\begin{definition}[Mixing rate]
    For $1< q\le4$, $1 \le r < R$, $p\in(0,1)$ and $e$ an edge incident to the origin, write 
    \begin{align}
    	\Delta_p(R) &:= \phi_{\La_R,p}^1[\omega_e] -  \phi_{\La_R,p}^0[\omega_e],\\
    	\Delta_p(r,R) &:= \phi_{\La_R,p}^1[\calC(\La_r)] -  \phi_{\La_R,p}^0[\calC(\La_r)].
    \end{align}   
\end{definition}

The quantity $\Delta_p(R)$, to which we now refer as the {\em mixing rate}, will be crucial in our study, as it will replace the amplitude $\pi_4(p,R)$ of standard pivotal events in the study of Bernoulli percolation. As such, it is very important to derive some of its properties. 

\begin{theorem}[Properties of the mixing rate]\label{thm:delta}
    Fix $1<q\le 4$. 
    \medbreak\noindent
 	{\em (i)  (Mixing rate/covariance connection)} 
	For every $\eta>0$, every $p\in(0,1)$, every $\eta$-regular quad $(\calD,a,b,c,d)$ at scale $R\le L(p)$ containing $\Lambda_{\eta R}(e)$ for some edge $e$ (below the constants in $\asymp$ depend on $\eta$), 
    \begin{equation}\label{eq:delta_thm1}
    \Cov_{p}[\omega_e;\calC(\calD)]\asymp\Delta_p(R).
    \end{equation}
    \medbreak\noindent
 {\em (ii)   (Quasi-multiplicativity)} For every $p\in(0,1)$ and $1\le r\le R\le L(p)$,
    \begin{equation}\label{eq:delta_thm3}
    	 \Delta_p(r)\Delta_p(r,R)\asymp\Delta_p(R).
    \end{equation}
      \medbreak\noindent
{\em (iii)  (Stability below the characteristic length)} For every $p\in(0,1)$ and $1\le R\le L(p)$,
    \begin{equation}\label{eq:delta_thm5}
    	\Delta_p(R)\asymp\Delta_{p_c}(R).
    \end{equation}
 \medbreak\noindent
{\em (iv)   (Comparison to pivotality)} There exists $\ep>0$ such that for every $1\le R\le L(p)$,
    \begin{equation}\label{eq:delta_pi4_improvement}
    	  \Delta_{p}(r,R) \ges (R/r)^\ep\pi_4(R)/\pi_4(r).
    \end{equation}
 \medbreak\noindent
{\em (v)   (Mixing interpretation)} For every $1\le 2r\le R\le L(p)$,
    \begin{equation}\label{eq:delta_mixing_interpretation}
    	   \Delta_p(r,R)\asymp\max\Big\{\Big|\frac{\phi_p[A\cap B]}{\phi_p[A]\phi_p[B]}-1\Big|\ :\ A\in\mathcal F(\Lambda_r)\text{ and }B\in\mathcal F(\mathbb Z^2\setminus\Lambda_R)\Big\},
	 \end{equation}
	where $\mathcal F(S)$ is the $\sigma$-algebra generated by the edges with both endpoints in $S$.
\end{theorem}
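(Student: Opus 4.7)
The plan is to prove the five statements in the stated order, with (i) and (ii) carrying the bulk of the work while the remaining three follow by softer arguments. The fundamental tools are the spatial Markov property of $\phi_p$, FKG monotonicity in boundary conditions, and RSW-type estimates valid below the characteristic length, guaranteeing that crossing probabilities of $\eta$-regular quads at scales $R\le L(p)$ are bounded away from $0$ and $1$ uniformly in $q\in[1,4]$. For part (i), I would begin from the elementary identity $\Cov_p[\omega_e,\calC(\calD)] = p(1-p)\bigl(\phi_p[\calC(\calD)\mid\omega_e=1]-\phi_p[\calC(\calD)\mid\omega_e=0]\bigr)$ and note that by the Markov property the two conditional measures are the random-cluster measures obtained by forcing $e$ open or closed. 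The upper bound $\Cov_p[\omega_e,\calC(\calD)]\les\Delta_p(R)$ follows by exploring the cluster of $e$ outward to scale $R$: the effect of modifying $\omega_e$ on $\calC(\calD)$ is controlled by the maximal gap between wired and free induced measures on a box of size $\asymp R$ around $e$, namely $\Delta_p(R)$. The matching lower bound uses RSW to produce with uniformly positive conditional probability a nested collection of primal and dual circuits inside $\calD$ that turns $\omega_e$ into a pivotal-like edge for $\calC(\calD)$; a further Markov decomposition links the probability of this geometric configuration back to $\Delta_p(R)$.

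For part (ii), I condition on the configuration outside $\Lambda_r$: the Markov property gives $\phi_{\Lambda_R,p}^i[\omega_e]=\mathbb{E}_{\Lambda_R,p}^i[\phi_{\Lambda_r,p}^{\xi}[\omega_e]]$ with $\xi$ the induced boundary condition on $\partial\Lambda_r$, and monotonicity sandwiches the inner integrand between $\phi_{\Lambda_r,p}^0[\omega_e]$ and $\phi_{\Lambda_r,p}^1[\omega_e]$. The variation with $\xi$ then factors as $\Delta_p(r)$ times an indicator of a ``wired-like versus free-like'' boundary event at $\partial\Lambda_r$; RSW identifies this event (up to constants) with a crossing event of the annulus $\Lambda_R\setminus\Lambda_r$ under the original boundary conditions on $\Lambda_R$, so part (i) applied at the annular scale produces the second factor $\Delta_p(r,R)$.

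Part (iii) is a Gronwall-type argument: applying the Russo formula \eqref{eq:Russ} separately to $\phi^0_{\Lambda_R,p}[\omega_e]$ and $\phi^1_{\Lambda_R,p}[\omega_e]$ rewrites $\tfrac{d}{dp}\Delta_p(R)$ as a sum over edges $e'\in\Lambda_R$ of covariance differences; part (i) bounds each contribution by $\Delta_p$ at the scale $|e-e'|$, and combined with the quasi-multiplicativity from part (ii) and with RSW this yields $|\tfrac{d}{dp}\log\Delta_p(R)|\les|\tfrac{d}{dp}\log\phi_p[\calC(\Lambda_R)]|$, which integrates to a bounded ratio whenever $R\le L(p)$. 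For part (iv), I would construct a four-arm configuration from $e$ to $\partial\Lambda_r$ and extend it across $\Lambda_R\setminus\Lambda_r$ by an additional arm-separation / colour-switching structure that yields the polynomial gain $(R/r)^\eps$; the lower bound on $\Delta_p(r,R)$ then follows from part (i) applied at scale $r$, using that for $q>1$ the mixing rate genuinely exceeds $\pi_4$. For part (v), the upper bound uses $\phi_p[A\mid B]=\mathbb{E}^B[\phi^\xi_{\Lambda_r,p}[A]]$ together with monotonicity to give $|\phi_p[A\mid B]-\phi_p[A]|\les\Delta_p(r,R)\cdot\phi_p[A]$; the matching lower bound specializes $A=\calC(\Lambda_r)$ and $B$ to a crossing/circuit event sufficiently far outside $\Lambda_R$ which biases the induced $\xi$ on $\partial\Lambda_R$ towards wired.

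The main obstacle is the lower bound in part (i), which is the geometric heart of the theorem: realizing the abstract quantity $\Delta_p(R)$ as an actual influence of $\omega_e$ on $\calC(\calD)$ requires a careful multi-scale construction via RSW below $L(p)$, while crucially avoiding any logical dependence on the stability theorem (Theorem~\ref{thm:stability}), which itself will be proved later using part (iii) as input. A secondary delicate point is the Gronwall step in part (iii), in which the sum over edges must be decomposed by scale and matched to the derivative of a single crossing probability without incurring logarithmic losses.
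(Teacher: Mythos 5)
Your proposal captures the correct high-level shape (spatial Markov property, comparison of boundary conditions, RSW below $L(p)$, reduction of the covariance to a difference of conditional measures), but at each of the decisive steps it asserts exactly the statement that constitutes the paper's main innovation, without supplying the mechanism. Concretely: in (ii) and (v) you claim that the variation of $\phi_{\La_r,p}^{\xi}[\cdot]$ over the induced boundary conditions $\xi$ ``factors as $\Delta_p(r)$ times a boundary event'' and that $|\phi_p[A\mid B]-\phi_p[A]|\les\Delta_p(r,R)\,\phi_p[A]$ for general (non-monotone) $A$. Neither follows from FKG or the Markov property: the space of induced boundary conditions is enormous, and the whole of Sections~\ref{sec:9}--\ref{sec:10} is devoted to proving precisely this uniformity. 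The paper's route is an increasing coupling built scale by scale via decision trees (Theorem~\ref{thm:coupling}), which shows that whenever the two configurations induce different boundary conditions on $\partial\La_r$, then with comparable probability they induce a \emph{boosting pair} on a well-separated flower domain; Theorem~\ref{thm:boosting_pair} (whose engine is the algebraic identity of Lemma~\ref{lem:boost}, the only place where $q>1$ enters quantitatively) then converts any such boosting pair into a uniform gain on crossing probabilities, and Theorem~\ref{thm:Delta_coupling} identifies the probability of survival of the discrepancy with $\Delta_p(r,R)$. Without this machinery your ``further Markov decomposition'' in (i) and your factorization in (ii) are unsupported assertions, not proofs.

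Two of your steps would moreover fail as described. For the lower bound in (i) you propose to make $e$ ``pivotal-like'' by surrounding it with nested primal and dual circuits; any such construction prices the configuration at the four-arm probability, and by part (iv) itself $\pi_4$ is polynomially \emph{smaller} than $\Delta_p(R)$ for $q>1$, so this strategy cannot match the upper bound --- the paper instead propagates the discrepancy created by conditioning on $\omega_e$ outward through the coupling, and only uses an RSW ``pivotality'' event $H$ at the final scale. For (iii), differentiating $\Delta_p(R)=\phi^1_{\La_R,p}[\omega_e]-\phi^0_{\La_R,p}[\omega_e]$ directly and bounding each covariance difference by $\Delta_p(\ell)^2$ gives $\sum_{\ell\le R}\ell\Delta_p(\ell)^2$, which (using $\Delta_p(\ell,R)\ges(\ell/R)^{2-\delta}$) can exceed $\Delta_p(R)\cdot\frac{d}{dp}\phi_p[\calC(\La_R)]$ by a polynomial factor, so the Gronwall step does not close. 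The paper circumvents this by proving $\Cov_p(\omega_e,\omega_f)\asymp\Delta_p(R)^2$ and bounding the logarithmic derivative of this \emph{covariance}, where a second-order cancellation (the product of a total-variation factor $\Delta_p(R)\Delta_p(\ell,R)$ with a factor $\Delta_p(\ell)^2$) produces the needed extra smallness; Remark~\ref{rem:improved_stability} explicitly notes that the derivative of $\Delta_p$ itself is not directly controllable. Finally, your sketch of (iv) omits the actual mechanism (Lemma~\ref{lem:tildeAA} and the induction of Corollary~\ref{cor:induction}: at every scale the coupled four-arm event in $(\omega,\omega')$ beats the four-arm event in $\omega$ by a fixed factor $1+\delta$, which multiplies across $\log(R/r)$ scales).
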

 
The proof of this theorem is the main innovation of the paper. It is based on new increasing couplings between random-cluster models.
While coupling Bernoulli percolation at different parameters is fairly straightforward, coupling different random-cluster models can be quite elaborate. In this paper, we develop several increasing couplings between random-cluster models (typically one at $p_c$ and one at $p$, or one at $p_c$ and another one at $p_c$, but conditioned on an event) that satisfy various properties.

In the previous theorem, Properties (i)--(v) have crucial interpretations. 
Property (i) will have the following important application. It states that the covariance between an edge which is deep inside an $\eta$-regular quad and the crossing event of said quad is of the order of $\Delta_p(R)$.  
Property (ii) is an analog of the quasi-multiplicativity of probabilities of arm events and will be popping up everywhere in the applications of $\Delta_p(R)$, in particular when trying to estimate the covariance between a crossing event and an edge close to the boundary of the quad. Property (iii) states the stability below the characteristic length for the mixing rate, analogously to that proved by Kesten for the four-arm event probability. Property (iv) shows that replacing $\pi_4(p,R)$ by $\Delta_p(R)$ is really necessary, as the trivial bound stating that the covariance is larger than or equal to pivotality is polynomially far from being sharp for any $q>1$. Finally, (v) justifies the reference to mixing in the name of $\Delta_p$, as it links this quantity to the error term in the ratio-weak mixing.

We finish comments on this theorem by a crucial observation. When trying to compute asymptotics for the covariance between an edge and a crossing  event, (i) and (ii) imply that it suffices to understand for every $\ep>0$ the limit of $\Delta_{p_c}(\ep R,R)$ as $R$ tends to infinity. Indeed, these limits allow to estimate the covariance up to arbitrarily small polynomial terms and therefore to estimate the critical exponent. 
This is very useful as the covariance itself is not easily expressed in terms of properties of large interfaces of the critical system, while $\Delta_{p_c}(\ep R,R)$ (which is equal by duality to $1-2\phi_{\Lambda_R,p_c}^0[\calC(\Lambda_{\ep R})]$) is a quantity that can be derived from the scaling limit of the critical model, for instance using the conjectural convergence to the Conformal Loop Ensemble.

It is tempting to deduce from the previous theorem that when $q>1$ the derivative of crossing probabilities of $\eta$-regular quads at scale $R\le L(p)$ is of order $R^2\Delta_p(R)$ (exactly like it is of order $R^2\pi_4(R)$ for Bernoulli percolation). {\em This statement is actually wrong and illustrates the subtle but deep difference with Bernoulli percolation}. Indeed, there is a competition on the right-hand side of~\eqref{eq:Russ} between two possible scenarios:
\begin{itemize}
\item The collective contribution of edges in $\calD$ is the main part of the right-hand side. In such case, we expect the derivative at $p_c$ to exist and to be equal to $R^2\Delta_p(R)$. Moreover, it may be proved in this case that the derivative is stable within the critical window.
\item The collective contribution of edges far from $\calD$ is the main part of the right-hand side. In such case, the derivative at $p_c$ is infinite. For $p\ne p_c$, the contribution comes mostly from edges at distance $L(p)$, and the derivative is of order $L(p)^2\Delta_p(L(p))^2/\Delta_p(R)$.
\end{itemize}
An accurate estimate of the derivative, valid in all scenarios, is therefore given by the following statement.

\begin{corollary}\label{cor:sta}
	Fix $1<q\le 4$ and $\eta>0$.
	Every $p\in(0,1)$ and every $\eta$-regular quad $(\calD,a,b,c,d)$ at scale $R\le L(p)$,
	\begin{equation}\label{eq:delta_cor:sta}
    	\tfrac{{\rm d}}{{\rm d}p}\phi_{p}[\calC(\calD)]\asymp 
		 R^2\Delta_p(R)+\sum_{\ell=R}^{L(p)}\ell \Delta_p(\ell)\Delta_p(R,\ell),
    \end{equation}
    where the constants in $\asymp$ depend on $\eta$.
\end{corollary}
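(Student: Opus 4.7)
The starting point is the Russo-type identity~\eqref{eq:Russ}, which represents the derivative as $(p(1-p))^{-1}\sum_{e\in\bbE}\Cov_p(\omega_e,\calC(\calD))$; by FKG every summand is nonnegative. The plan is to decompose this sum according to the position of $e$ relative to $\calD$ and estimate each region using Theorem~\ref{thm:delta}.

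For edges $e$ deep inside $\calD$, meaning $\Lambda_{\eta R}(e)\subset\calD$, Theorem~\ref{thm:delta}(i) applies directly and gives $\Cov_p(\omega_e,\calC(\calD))\asymp\Delta_p(R)$. Since $\calD$ is $\eta$-regular, there are $\asymp R^2$ such edges, producing the first term $R^2\Delta_p(R)$; the matching lower bound for the whole derivative follows immediately from nonnegativity of the remaining covariances. Edges inside the boundary layer of $\calD$ of width $\eta R$ are $\les R^2$ in number and each contributes at most $O(\Delta_p(R))$ (by~(v) combined with the stability~(iii)), so their collective contribution is absorbed in the same term.

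For edges $e$ at dyadic distance $\ell\in[R, L(p)]$ from $\calD$, I would proceed in two steps. Viewing $e$ as lying deep inside a box of scale $\ell$, Theorem~\ref{thm:delta}(i) first gives that its covariance with the crossing of $\Lambda_\ell$ is of order $\Delta_p(\ell)$; then the mixing interpretation~(v), applied with $A=\calC(\calD)\in\calF(\Lambda_R)$ and a suitable $B$ at scale $\ell$, transfers this to a covariance estimate with the smaller event $\calC(\calD)$, introducing the extra factor $\Delta_p(R,\ell)$. Aggregating over the $\asymp\ell^2$ edges in the annulus at scale $\ell$ around $\calD$ and invoking the quasi-multiplicativity~(ii), $\Delta_p(\ell)\asymp\Delta_p(R)\Delta_p(R,\ell)$, produces the per-scale contribution $\ell\Delta_p(\ell)\Delta_p(R,\ell)$. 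For edges at distance $\ell>L(p)$, the exponential decay of $\Delta_p(\ell)$ past $L(p)$ (which follows from Theorem~\ref{thm:L_equiv_xi} together with the classical exponential decay of one-arm probabilities past the correlation length) and the stabilisation of $\Delta_p(R,\ell)$ yield a geometric tail dominated by the $\ell=L(p)$ term, which is already present in the sum.

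The main obstacle is the middle region. The naive per-edge upper bound provided by~(v) alone is $\Delta_p(R,\ell)$, which overestimates the true covariance by a polynomial factor; only after aggregating carefully over the annulus and combining~(i), (ii) and~(v) of Theorem~\ref{thm:delta} does the sharp per-scale estimate $\ell\Delta_p(\ell)\Delta_p(R,\ell)$ emerge. This subtlety is exactly the one highlighted in the paragraphs preceding the corollary: for $q>1$, the derivative can pick up contributions of genuinely different orders from distinct regions, which must be resolved by a sum over intermediate scales $\ell\in[R, L(p)]$ rather than absorbed into a single term.
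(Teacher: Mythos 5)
Your skeleton (Russo's formula, decomposition of the sum of covariances by the distance from $e$ to $\calD$, three regimes) is the same as the paper's, but two steps are genuinely unjustified, and the first one hides the deepest input of the whole statement. You claim that each edge in the boundary layer of width $\eta R$ contributes at most $O(\Delta_p(R))$, ``by (v) combined with (iii)''. Neither property gives this, and the claim is false as a per-edge bound: for an edge $e$ at distance $n\ll R$ from $\partial\calD$, the covariance can exceed $\Delta_p(R)$ by a polynomial factor (the paper's Lemma~\ref{lem:Cov_Delta} only bounds it by $\sum_{r\ge n/2}\tfrac1r\Delta_p(r)\phi_p[{\rm Piv}_{r,e}(\calD)]$, whose leading term is of order $\Delta_p(n)\gg\Delta_p(R)$ unless the pivotality probabilities are small). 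The paper controls this region not edge by edge but by summing the pivotality probabilities along each boundary arc (the events $E_i$ in the proof of Proposition~\ref{prop:deriv_cross}), which yields the bound $\sum_{\ell\le R}\ell\Delta_p(\ell)$ for the whole boundary layer. Converting this into $R^2\Delta_p(R)$ is \emph{not} free: it requires $\Delta_p(\ell)\les(\ell/R)^{-2+\delta}\Delta_p(R)$, i.e.\ Proposition~\ref{prop:lower bound Delta} combined with quasi-multiplicativity. That proposition is proved via the parafermionic observable (with a separate argument for $q=4$) and is precisely why the corollary is stated after it; your argument never invokes it, so the upper bound in your proof does not close.

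The second gap is the per-edge estimate $\Cov_p(\omega_e,\calC(\calD))\asymp\Delta_p(\ell)\Delta_p(R,\ell)$ for $e$ at distance $\ell\in[2R,L(p)]$. You correctly observe that (v) alone only gives the weaker bound $\Delta_p(R,\ell)$, but the fix you propose --- ``aggregating carefully over the annulus'' together with (i) and (ii) --- is not the mechanism, and no aggregation over edges will produce the missing factor $\Delta_p(\ell)$ in a per-edge inequality. The paper obtains \eqref{eq:Cov_Delta2} by an explicit three-stage increasing coupling of $\phi_p[\cdot\,|\,\omega_e=0]$ and $\phi_p[\cdot\,|\,\omega_e=1]$: the discrepancy created at $e$ survives to scale $\ell/4$ around $e$ with probability $\asymp\Delta_p(\ell)$ (Theorem~\ref{thm:coupling}(ii) and Theorem~\ref{thm:Delta_coupling}), is transported to an inner flower domain on $\La_{\ell/2}$ (Lemma~\ref{lem:boosting_out_to_boosting_in}), and only then boosts the crossing of $\calD$ by $\asymp\Delta_p(R,\ell)$. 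Multiplying a ``covariance at scale $\ell$'' from (i) by a ``mixing rate'' from (v) is not a valid deduction without this conditional-independence structure. Your treatment of the bulk edges and of the exponential tail beyond $L(p)$ is essentially correct, and the lower bound does follow from (i) plus positivity of the remaining covariances together with the per-edge lower bound for distant edges; but as written the upper bound has the two holes above, the first of which cannot be patched without Proposition~\ref{prop:lower bound Delta}.
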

Looking at this sum formula for the derivative, one sees that whether the derivative is governed by the collective contribution of edges in or close to $\calD$ or by that of edges far from $\calD$ is related to whether $\ell\Delta_p(\ell)$ decays or not as $\ell$ tends to infinity.
 This can also be related to whether the specific heat blows up or not at $p_c$, as will be seen in the next section. Note that this up-to-constant formula unraveled a third possible scenario where each scale contributes the same amount. This scenario happens for the random-cluster model with $q=2$, in which the derivative blows up logarithmically in $L(p)$.

In order to derive this corollary, one will need an important result which is reminiscent of the classical claim that the four-arm exponent is strictly smaller than 2 for Bernoulli percolation.  
\begin{proposition}[Lower bound on $\Delta_{p}(r,R)$] \label{prop:lower bound Delta}
	There exists $\delta>0$ such that for every $1<q\le 4$ and $1\le r\le R\le L(p)$,
    \begin{equation}\label{eq:delta_thm4}
    	\Delta_{p}(r,R)\ge \delta(r/R)^{2-\delta}.
    \end{equation}
\end{proposition}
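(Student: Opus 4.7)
The strategy is a random-cluster adaptation of Kesten's original argument~\cite{Kes87} that the four-arm exponent for Bernoulli percolation is strictly smaller than~$2$, with the mixing rate $\Delta_p$ playing the role of the four-arm probability. By the stability statement (iii) of Theorem~\ref{thm:delta} we may reduce to $p=p_c$, and by quasi-multiplicativity (ii) the target inequality reduces to producing matched estimates on $\Delta_{p_c}(R)$ from below and $\Delta_{p_c}(r)$ from above whose ratio is $\gtrsim (r/R)^{2-\delta}$.

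For each $R$ I would select $p^{*}>p_c$ with $L(p^{*})=R$. Sharpness of the phase transition combined with Theorem~\ref{thm:L_equiv_xi} gives $p^{*}-p_c\le C/R$. Apply Russo's formula~\eqref{eq:Russ} to $\calC(\Lambda_R)$ and estimate the dominant contribution to $\sum_e \Cov_{p'}(\omega_e,\calC(\Lambda_R))$ via Theorem~\ref{thm:delta}(i): each interior edge contributes $\asymp \Delta_{p'}(R)\asymp\Delta_{p_c}(R)$ thanks to stability (iii). Because $\phi_{p_c}[\calC(\Lambda_R)]=\tfrac12$ by self-duality and $\phi_{p^{*}}[\calC(\Lambda_R)]\ge 1-\delta$ by definition of $p^{*}$, the crossing probability jump is of order one, and integrating yields $\Delta_{p_c}(R)\gtrsim 1/((p^{*}-p_c)R^{2})\gtrsim 1/R$. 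Running the same Russo argument on $\calC(\Lambda_r)$---where the crossing jump is controlled only by the trivial bound $1$ since $r\le R=L(p^{*})$ keeps the system in the scaling window at scale~$r$---gives the companion estimate $\Delta_{p_c}(r)\lesssim R/r^{2}$. Dividing and applying quasi-multiplicativity produces $\Delta_{p_c}(r,R)\gtrsim (r/R)^{2}$.

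This already delivers the correct polynomial form but with the boundary exponent~$2$. To extract strict $\delta>0$, I would invoke the polynomial improvement of Theorem~\ref{thm:delta}(iv), namely $\Delta_p(r,R)\gtrsim (R/r)^{\varepsilon}\pi_4(R)/\pi_4(r)$. A separate lower bound $\pi_4(r,R)\gtrsim (r/R)^{2}$ (obtainable via iterated RSW for the four-arm event in annuli combined with comparisons to products of quadrant one-arm probabilities) converts (iv) into $\Delta_p(r,R)\gtrsim (r/R)^{2-\varepsilon}$, establishing the proposition with $\delta=\varepsilon$. The chief obstacle is precisely this last step: the bare Russo-plus-sharpness estimate saturates at the exponent~$2$, and passing strictly below it requires the polynomial gain of Theorem~\ref{thm:delta}(iv), itself a non-trivial consequence of the new increasing couplings developed earlier in the paper.
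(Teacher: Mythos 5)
Your proposal has a genuine gap: it treats as routine precisely the input that the paper identifies as the hard, model-specific core of this proposition. The paper stresses that \eqref{eq:delta_thm4} is the one place where a generic RSW/coupling argument does not suffice and the parafermionic observable must be harvested: for $1\le q<4$ the proof imports $\pi_4(p,r,R)\ges (r/R)^{2-\delta}$ from the observable-based analysis of~\cite{DumManTas20} and then simply uses $\Delta_p(r,R)\ge\pi_4(p,r,R)$ (no need for the polynomial improvement (iv)); for $q=4$ that $\pi_4$ bound is expected to \emph{fail} (one expects $\pi_4(r,R)\asymp(r/R)^2$ exactly), so the paper argues directly on $\Delta$ via the observable identity \eqref{eq:fundamental_holomorphicity}, proving $\Delta_{p_c}(r,R)\ges\pi_2(r,R)(r/R)$ together with a bespoke stability statement. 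Your closing step — Theorem~\ref{thm:delta}(iv) combined with ``$\pi_4(r,R)\ges(r/R)^2$, obtainable via iterated RSW'' — is therefore the crux, not a lemma: iterated RSW only yields $\pi_4(r,R)\ges(r/R)^C$ for some large $C$, and no elementary argument in the paper (or known for $q>1$) produces the exponent $2$; for $q=4$ even the exponent-$2$ bound on $\pi_4$ is only reached through the observable (Remark~\ref{rmk:ahah}).

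The preparatory Kesten-style reduction is also flawed on two counts. First, it is circular: Theorem~\ref{thm:delta}(iii) and the asymptotics $\tfrac{d}{dp}\phi_p[\calC(\La_R)]\asymp R^2\Delta_p(R)$ are established in Corollary~\ref{cor:deriv} and Section~\ref{sec:7} \emph{using} \eqref{eq:delta_thm4}, and the paper explicitly warns that the derivative is \emph{not} of order $R^2\Delta_p(R)$ in general because of the far-field term $\sum_{\ell\ge R}\ell\Delta_p(\ell)\Delta_p(R,\ell)$ in \eqref{eq:delta_cor:sta}. Second, the two halves of your computation are mutually incompatible: the claim $p^*-p_c\le C/R$ when $L(p^*)=R$ amounts to $\nu\le1$ (conjecturally $q\ge2$), while neglecting the far-field contribution in Russo's formula is legitimate only when $\ell\Delta_p(\ell)$ decays, i.e.\ $\iota>1$ (conjecturally $q<2$); and indeed the conclusion $\Delta_{p_c}(R)\ges1/R$ contradicts the predicted $\iota=3\kappa/8-1>1$ for $1<q<2$. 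So the intermediate exponent-$2$ bound does not come out of this route, and the strict improvement to $2-\delta$ rests on an unproved $\pi_4$ estimate.
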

While the rest of the paper relies on fairly generic assumptions of the percolation model at hand, the previous proposition harvests a much more specific property of the random-cluster model on $\mathbb Z^2$, namely the parafermionic observable. For $1\le q<4$, the result will follow from  crossing estimates that were 
 recently obtained in~\cite{DumManTas20} using this observable. These crossing estimates are uniform in boundary conditions and in (possibly fractal) domains. The byproduct of the analysis in~\cite{DumManTas20} is that $\pi_4(R)/\pi_4(r)$ is bounded from below by $\delta(r/R)^{2-\delta}$, and therefore by (iv) so is $\Delta_p(r,R)$. For $q=4$, the crossing estimates are not uniform in boundary conditions and a more specific analysis, also based on the parafermionic observable, must be performed. It is the subject of Section~\ref{sec:q=4} in this paper.

\subsection{Scaling relations}\label{sec:intro_scaling_relations}

In continuous phase transitions,  natural observables of the model decay algebraically. 
The behaviour at and near criticality is thus expected to be encoded by various {\em critical exponents} $\alpha$, $\beta$, $\gamma$, $\delta$, $\eta$, $\nu$, $\zeta$, $\iota$, $\xi_1$ and $\xi_4$  defined as follows (below $o(1)$ denotes a quantity tending to 0):
\begin{align*}
f''(p)&=|p-p_c|^{-\alpha+o(1)}&\text{ as $p\rightarrow p_c$},\\
\theta(p)&=(p-p_c)^{\beta+o(1)}&\text{as $p\searrow  p_c$}, \\
\chi(p)&=|p-p_c|^{-\gamma+o(1)}&\text{as $p\rightarrow  p_c$,}\\
\phi^1_{p_c,h}[0\longleftrightarrow\ghost]&=h^{1/\delta+o(1)}&\text{as $h\rightarrow 0$,}\\
\phi_{p_c}[0\longleftrightarrow x]&=|x|^{-\eta+o(1)}&\text{as $|x|\rightarrow\infty$},\\
\pi_1(R)&=R^{-\xi_1+o(1)} &\text{ as $R\rightarrow\infty$},\\
\xi(p)&=|p-p_c|^{-\nu+o(1)}&\text{as $p\rightarrow  p_c$},\\
\phi_{p_c}[|\sfC|\ge n]&=n^{-\zeta+o(1)} &\text{as $n\rightarrow \infty$},\\
\Delta_{p_c}(R)&=R^{-\iota+o(1)} &\text{as } R\rightarrow\infty,\\
\pi_4(R)&=R^{-\xi_4+o(1)} &\text{ as $R\rightarrow\infty$},
\end{align*}
 where all the quantities above were already defined in previous sections, except that $0\longleftrightarrow  \mathfrak g$ is the event that 0 is connected to the ghost by an open path, $|\sfC|$ is the number of vertices in the cluster $\sfC$ of the origin, and $f(p)$ and $\chi(p)$ are the thermodynamical quantities respectively called the {\em free-energy}  and the {\em susceptibility} defined\footnote{The definition of $f(p)$  for $q=1$ is slightly different and is given by $f(p):=\phi_{p}[1/|\mathsf C|]$.}
  by
 \begin{align*}
 f(p)&:=\lim_{n\rightarrow\infty}-\tfrac1{|\Lambda_n|}\log Z^0(\Lambda_n,p),\\
\chi(p)&:=\phi_{p}[|\sfC| \ind_{|\sfC| < \infty}].
 \end{align*}
Let us mention that the first equation only applies when $f''(p)$ diverges as $p$ approaches $p_c$, which is to say that the phase transition is of second order. 
  
These exponents are quantities of central interest in physics and have been the object of many studies. A beautiful prediction is that these exponents should depend on each other via {\em scaling relations}:
\begin{align}
\eta&=2\xi_1,\tag{{\bf R1}}\label{eq:etaxi}\\
\label{eq:zetaxi}\zeta&=\xi_1/(2 - \xi_1)\tag{{\bf R2}},\\
  \delta &= (2 - \xi_1)/\xi_1,\label{eq:deltaxi}\tag{{\bf R3}}\\
 \beta&=\nu\xi_1  \tag{{\bf R4}},\label{R4}\\
   \gamma&=(2-2\xi_1)\nu, \tag{{\bf R5}}\label{R5}\\
   \alpha&=2-2\nu\tag{{\bf R6}}\label{eq:alphanu}.
\end{align}
An important feature of the relations above is that they are independent of $q$: the exponents vary from model to model, but not the formulae. Relations  {\bf R1--6} were proved for Bernoulli percolation  (i.e.~cluster-weight $q=1$) in a milestone paper by Harry Kesten~\cite{Kes87} without any of them being computed, or indeed even be shown to exist (see also~\cite{Nol08,GPS,DumManTas20b}). For the random-cluster with $q=2$, critical exponents were calculated independently~\cite{McCWu83,DumGarPet14,Dum13} and were observed to satisfy {\bf R1--6}. Let us mention that similar relations should hold in all dimensions that are below the so-called {\em upper-critical dimension} (with certain values of $2$ replaced by the dimension $d$). We refer to a paper by Borgs, Chayes, Kesten and Spencer~\cite{BCKS99} (see also~\cite{BCKS}) for a discussion of this phenomenon for Bernoulli percolation.

Kesten's analysis in the case of Bernoulli percolation was relying on another scaling relation, sometimes referred to as {\em Kesten's scaling relation}, stating that 
$\nu(2-\xi_4)=1$ for $q=1$.
 It was observed in~\cite{DumGarPet14} that this equality fails for $q=2$ (one can also check this using the table gathering the predicted exponents below). We will show that it should be replaced by the following {\em generalized Kesten's scaling relation}, 
\begin{equation}\label{eq:nu_iota}
\nu(2-\iota)=1\tag{{\bf R7}}.
\end{equation}
Note that the second property of Theorem~\ref{thm:delta} shows that $\xi_4>\iota$ so that $\nu(2-\xi_4)=1$ fails not only at $q=2$ but {\em for every} $q>1$.

Before discussing the main results, let us mention the predicted values for the different exponents. 
The first three  scaling relations enable us to express $\delta$, $\eta$ and $\zeta$ in terms of $\xi_1$ only. This is particularly interesting since $\xi_1$ is measurable in terms of the scaling limit of interfaces at criticality. 
The relations {\bf R6} and {\bf R7} link $\alpha$, $\nu$ and $\iota$. This is again very useful since it was noted in the previous section  how $\iota$ can be obtained from the understanding of the scaling limit of interfaces. An alternative approach to computing these three exponents would be to first obtain $\alpha$, which may perhaps be derived using exact integrability of the random-cluster model, see~\cite[Sec.~12.8]{Bax89} and Section~\ref{sec:1.5} for more details. 
Finally, {\bf R4} and {\bf R5} express $\beta$ and $\gamma$ in terms of $\xi_1$ and $\nu$, so that one can obtain all the exponents from $\xi_1$ and $\iota$ (or $\alpha$). 

Conformal invariance enables to predict that the scaling limit of the random-cluster model with cluster-weight $q\in[0,4]$ is related to  CLE$(\kappa)$ (see~\cite{SchSheWil09} and the discussion in~\cite{GarWu18}), from which $\xi_1$ and $\iota$ can be deduced. This leads to the following table, where all the exponents are expressed in terms of $\kappa$.

\begin{center}
\begin{tabular}{|c|c|c|c|c|c|c|}
\hline
{\bf Exponent} & {\bf Definition} &{\bf $q\in[0,4]$} & $q=1$ & $q=2$ & $q=3$ & $q=4$\\  \hline
$\kappa$ & $\kappa(q):=4\pi/\arccos(-\tfrac{\sqrt q}2)$ & $\kappa$
 & $6$ & $\tfrac{16}3$ & $\tfrac{24}5$ & $4$\\ \hline
 $\alpha$ & $f''(p)=|p-p_c|^{-\alpha+o(1)}$  & $\tfrac23\tfrac{16-3\kappa}{8-\kappa}$
 &$-\tfrac23$ & 0 & $\tfrac13$ & $\tfrac23$\\ \hline
 $\beta$ & $\theta(p)=(p-p_c)^{\beta+o(1)}$ & $\tfrac{3\kappa-8}{12}$
 & $\tfrac5{36}$ & $\tfrac18$ & $\tfrac19$& $\tfrac1{12}$\\ \hline
 $\gamma$ & $\chi(p)=|p_c-p|^{-\gamma+o(1)}$ & $\tfrac8\kappa-\tfrac23+\tfrac23\tfrac{\kappa}{8-\kappa}$
 & $\tfrac{43}{18}$ & $\tfrac74$ & $\tfrac{13}9$& $\tfrac76$\\ \hline
 $\delta$ & $\theta(p_c,h)=h^{1/\delta+o(1)}$ & $\tfrac{(8+\kappa)(8+3\kappa)}{(8-\kappa)(3\kappa-8)}$
 & $\tfrac{91}5$ &$15$ &$14$ & $15$\\  \hline
 $\eta$ & $\phi^0_{p_c}[0\longleftrightarrow x]=|x|^{-\eta+o(1)}$ & $\tfrac{(8-\kappa)(3\kappa-8)}{16\kappa}$
 & $\tfrac5{24}$ & $\tfrac14$ & $\tfrac4{15}$ & $\tfrac14$\\  \hline
 $\nu$ & $\xi(p)=|p_c-p|^{-\nu+o(1)}$ & $\tfrac8{3(8-\kappa)}$
 & $\tfrac43$ & 1  & $\tfrac56$ & $\tfrac23$\\ \hline
  $\zeta$ & $\phi^0_{p_c}[|\sfC|\ge n]=n^{\zeta+o(1)}$ & $\tfrac{(8-\kappa)(3\kappa-8)}{(8+\kappa)(3\kappa+8)}$
  & $\frac5{91}$ & $\frac1{15}$  & $\frac1{14}$ & $\frac1{15}$\\ \hline
 $\xi_1$ & $\pi_1(R)=R^{-\xi_1+o(1)}$& $\tfrac{(8-\kappa)(3\kappa-8)}{32\kappa}$
 & $\tfrac5{48}$ &$\tfrac18$ &  $\tfrac2{15}$  & $\tfrac18$\\ \hline
 $\xi_4$ &$ \pi_4(R)=R^{-\xi_4+o(1)}$& $-\tfrac{\kappa}{8}+4+\tfrac{6}{\kappa}$
 & $\tfrac54$& $\tfrac{39}{24}$& $\tfrac{33}{20}$& $2$ \\ \hline
$\iota$ & $ \Delta(R)=R^{-\iota+o(1)}$&  $\tfrac{3\kappa}{8}-1$
& $\ $& $1$& $\tfrac{4}{5}$& $\tfrac12$ \\ \hline
\end{tabular}
\end{center}

In this paper, we prove {\bf R1--7} for the random-cluster model with general cluster-weights~$q\in[1,4]$, except for~${\bf R6}$ when~$\alpha$ is negative. We insist on the fact that the random-cluster models belong to different universality classes when~$q$ varies from~$q=1$ to~$q=4$, so that this paper provides the first generic derivation of these relations for different universality classes. As in~\cite{Kes87}, we do not claim to show that any of these exponents exist, nor do we compute their values; the actual statements of the scaling relations with no reference to the exponents are given in the three theorems below. Note nonetheless that if one makes the assumption of algebraic decay with the proper exponent, the statements below imply the scaling relations mentioned above.

Below, we assume that~$1<q\le 4$ as the case~$q=1$ is already known. We start by the two simplest scaling relations~\eqref{eq:etaxi} and~\eqref{eq:zetaxi} involving only quantities at~$p=p_c$ and~$h=0$. The theorem is an easy consequence of uniform crossing estimates obtained for the random-cluster at criticality, see e.g.~\cite{DumSidTas16}. While the result is not especially complicated, we chose to include it here for completeness. Introduce the following quantity for every~$n>0$,
\begin{align}\label{eq:phi}
	\varphi(n):=\min\{r \in \bbN :\, r^2\pi_1(r)\ge n\}.
\end{align}

\begin{theorem}[Scaling relations at criticality]\label{thm:scaling_rel_crit}
	Fix~$1 < q\le 4$. For~$x\in\bbZ^2$ and~$n\ge1$,
	\begin{align}\label{eq:relation2}
		\phi_{p_c}[0\longleftrightarrow x]&\asymp \pi_1(|x|)^2,\\
		\label{eq:relation3}
		\phi_{p_c}[|\sfC|\ge n]&\asymp \pi_1(\varphi(n)).
	\end{align}
\end{theorem}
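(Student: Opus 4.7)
The plan combines the uniform RSW-type crossing estimates at criticality (from~\cite{DumSidTas16}), quasi-multiplicativity of arm events, FKG, and the ratio-weak mixing property from Theorem~\ref{thm:delta}(v), using $L(p_c)=\infty$ throughout.

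\emph{First relation.} Set $r:=\lfloor|x|/4\rfloor$ and write $A_0:=\{0\leftrightarrow\partial\Lambda_r(0)\}$ and $A_x:=\{x\leftrightarrow\partial\Lambda_r(x)\}$; by quasi-multiplicativity of the one-arm, $\phi_{p_c}[A_0]\asymp\phi_{p_c}[A_x]\asymp\pi_1(|x|)$. Since $\{0\leftrightarrow x\}\subseteq A_0\cap A_x$, and $A_0,A_x$ are measurable with respect to edges in two disjoint boxes separated by distance $\geq 2r$, the mixing estimate of Theorem~\ref{thm:delta}(v) applied at $p=p_c$ yields
$$\phi_{p_c}[A_0\cap A_x]\lesssim\phi_{p_c}[A_0]\phi_{p_c}[A_x]\asymp\pi_1(|x|)^2,$$
which gives the upper bound. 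For the matching lower bound, I combine via FKG three increasing events: arm events from $0$ and $x$ to designated sides of $\Lambda_r(0)$ and $\Lambda_r(x)$ (each of probability $\gtrsim\pi_1(|x|)$ by RSW-symmetry), plus $O(1)$ RSW crossings of thin rectangles laid along the segment $[0,x]$ (each of probability $\geq c>0$) that glue the two arms together. FKG then yields $\phi_{p_c}[0\leftrightarrow x]\gtrsim\pi_1(|x|)^2$.

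\emph{Second relation.} Set $R:=\varphi(n)$, so $R^2\pi_1(R)\asymp n$. Combining the first relation with the polynomial lower bound $\pi_1(r)\gtrsim r^{-1+\delta}$ for some $\delta>0$ (a consequence of the uniform RSW bounds), the sum $\sum_{y\in\Lambda_R}\pi_1(|y|)^2$ is dominated by the scale $R$, giving
$$\mathbb E_{p_c}[|\sfC\cap\Lambda_R|]=\sum_{y\in\Lambda_R}\phi_{p_c}[0\leftrightarrow y]\asymp R^2\pi_1(R)^2\asymp n\pi_1(R).$$
The upper bound follows from $\phi_{p_c}[|\sfC|\geq n]\leq\phi_{p_c}[0\leftrightarrow\partial\Lambda_R]+\phi_{p_c}[|\sfC\cap\Lambda_R|\geq n]$: the first term equals $\pi_1(R)$, and Markov's inequality bounds the second by $\mathbb E_{p_c}[|\sfC\cap\Lambda_R|]/n\asymp\pi_1(R)$. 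For the lower bound, set $X:=|\sfC\cap\Lambda_R|$ and condition on $E:=\{0\leftrightarrow\partial\Lambda_R\}$. Using quasi-multiplicativity together with the first relation, one shows $\phi_{p_c}[0\leftrightarrow y,\, 0\leftrightarrow\partial\Lambda_R]\asymp\pi_1(|y|)\pi_1(R)$ for $y\in\Lambda_R$, hence $\mathbb E[X\mid E]\asymp n$. The conditional Paley--Zygmund inequality, combined with the second-moment bound $\mathbb E[X^2\mid E]\lesssim n^2$, then yields $\phi_{p_c}[X\geq cn]\gtrsim\pi_1(R)$; a cosmetic rescaling from $\varphi(cn)$ to $\varphi(n)$ closes the argument.

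\emph{Main obstacle.} The conditional second-moment bound, equivalent to
$$\sum_{y,z\in\Lambda_R}\phi_{p_c}[0\leftrightarrow y,\ 0\leftrightarrow z,\ 0\leftrightarrow\partial\Lambda_R]\lesssim n^2\pi_1(R),$$
is the most delicate step. The Bernoulli proof via the BK inequality is unavailable for $q>1$, so I instead decompose over the last branching vertex $w$ of the cluster of $0$ before it splits toward $y$ and $z$, and estimate each three-arm configuration around $w$ using quasi-multiplicativity, arm separation, and the ratio-weak mixing of Theorem~\ref{thm:delta}(v) as a substitute for BK-style near-independence at large scales.
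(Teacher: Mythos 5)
Your overall architecture coincides with the paper's: the first relation is proved exactly as in Lemma~\ref{lem:0} (upper bound by restricting to the two one-arm events in disjoint boxes and decoupling them by mixing; lower bound by an FKG gluing — the paper uses the circuit event $A_R$ from \eqref{eq:RSWA} where you use thin rectangles along $[0,x]$, an immaterial difference), and the second relation follows the same first-moment/second-moment/Paley--Zygmund scheme as Lemma~\ref{prop:moments}, with Markov's inequality for the upper bound. Two minor remarks: invoking Theorem~\ref{thm:delta}(v) for the decoupling is overkill — the elementary polynomial mixing of Proposition~\ref{prop:mixing} (\eqref{eq:mix}), proved in Section~\ref{sec:coupling exploration}, is all that is used in the paper and keeps this theorem independent of the heavy coupling machinery; and the domination of $\sum_{k\le R}k\pi_1(k)^2$ by its top scale should be justified via $\pi_1(k)/\pi_1(R)\les (R/k)^{1/2-c}$ from \eqref{eq:LOWER_BOUND_ONE_ARM} and \eqref{eq:MULTIPLICATIVITY}, not merely via a pointwise lower bound on $\pi_1$.

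The one genuine gap is precisely the step you flag: the bound $\sum_{y,z\in\Lambda_R}\phi_{p_c}[0\lra y,\,0\lra z,\,0\lra\partial\Lambda_R]\les R^4\pi_1(R)^3$. Your proposed route — decompose over the last branching vertex $w$ and treat the three disjoint arms emanating from $w$ — is exactly the tree-graph argument whose engine is the BK inequality: the three arms from $w$ live in overlapping neighbourhoods of $w$ at every small scale, so mixing (which decouples events supported on well-separated regions) cannot by itself convert disjoint occurrence into a product of probabilities, and summing over $w$ without such a factorisation is not controlled. The paper avoids the branching vertex altogether. Ordering $|y|\le|z|\le|y-z|$ and setting $\ell=|y|$, $k=|z|$, the event $\{0\lra y,\,0\lra z,\,0\lra\partial\Lambda_R\}$ forces a chain of one-arm events in pairwise well-separated boxes and annuli: $0\lra\partial\Lambda_{\ell/4}$ and $y\lra\partial\Lambda_{\ell/4}(y)$ (each of probability $\asymp\pi_1(\ell)$), $z\lra\partial\Lambda_{k/4}(z)$ ($\asymp\pi_1(k)$), $\partial\Lambda_{\min\{2\ell,k/4\}}\lra\partial\Lambda_k$ ($\asymp\pi_1(k)/\pi_1(\ell)$ by \eqref{eq:MULTIPLICATIVITY}), and $\partial\Lambda_{\min\{2k,R\}}\lra\partial\Lambda_R$ ($\asymp\pi_1(R)/\pi_1(k)$). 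Iterating \eqref{eq:mix} across these disjoint supports yields $\phi_{p_c}[0\lra y,\,0\lra z,\,0\lra\partial\Lambda_R]\les\pi_1(\ell)\pi_1(k)\pi_1(R)$, and summing $\sum_{\ell\le k\le 4R}\ell\pi_1(\ell)\,k\pi_1(k)\,\pi_1(R)\les R^4\pi_1(R)^3$ closes the argument. You should replace your branching-vertex sketch by this direct multi-scale decomposition; as written, the key estimate of your proof is not established.
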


We now turn to the scaling relation~\eqref{eq:deltaxi} involving the magnetic field. For~$q=2$,~\eqref{eq:deltaxi} was proved in~\cite{CGN14} using the GKS inequality but this inequality is not available for general random-cluster models. A fact which came as a surprise to us is that~\eqref{eq:deltaxi}  can be derived for every~$1\le q\le 4$ without referring to any other result of the paper (see Section~\ref{sec:14}). 

\begin{theorem}[Scaling relation with magnetic field]\label{thm:scaling_rel_h}
	Fix~$1 < q\le 4$. For~$h>0$,	
	\begin{align}
		\phi_{p_c,h}[0\longleftrightarrow \mathfrak g]&\asymp \pi_1(\varphi(\tfrac1h)).
	\end{align}
\end{theorem}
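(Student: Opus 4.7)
Set $R := \varphi(1/h)$, so that $R^2\pi_1(R) \asymp 1/h$. My plan has three steps.

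\emph{Step 1 (integrating out the ghost).} Let $\sfC$ denote the cluster of $0$ in $\omega|_{\bbZ^2}$. Conditioning on $\omega|_{\bbZ^2}$ and summing the Boltzmann weight over the $\mathfrak g$-edges incident to $\sfC$ (the only $\mathfrak g$-edges that can realise $\sfC\longleftrightarrow\mathfrak g$) yields the exact identity
\[
\phi_{p_c,h}[0\longleftrightarrow\mathfrak g \mid \omega|_{\bbZ^2}] \;=\; \frac{e^{h|\sfC|}-1}{q-1+e^{h|\sfC|}} \;\asymp\; 1\wedge (h|\sfC|),
\]
with constants depending only on $q\in[1,4]$. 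Integrating this identity gives $\phi_{p_c,h}[0\longleftrightarrow\mathfrak g]\asymp\phi_{p_c,h}[1\wedge(h|\sfC|)]$.

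\emph{Step 2 (evaluation under $\phi_{p_c}$).} By Abel summation and Theorem~\ref{thm:scaling_rel_crit},
\[
\phi_{p_c}[1\wedge(h|\sfC|)] \;\asymp\; h\sum_{n=1}^{\lfloor 1/h\rfloor}\phi_{p_c}[|\sfC|\ge n] \;\asymp\; h\sum_{n=1}^{\lfloor 1/h\rfloor}\pi_1(\varphi(n)).
\]
A standard RSW-based lower bound gives $\pi_1(r)\ges r^{-1+c}$ for some $c>0$, so $\pi_1(\varphi(n))$ decays in $n$ strictly slower than $1/n$; the sum is therefore dominated by its upper endpoint, giving $\phi_{p_c}[1\wedge(h|\sfC|)]\asymp\pi_1(R)$.

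\emph{Step 3 (comparison, the main obstacle).} The lower bound is immediate: $1\wedge h|\sfC|$ is increasing in $\omega$ and $\phi_{p_c,h}\ge_{\mathrm{st}}\phi_{p_c}$ by FKG and monotonicity in $h$, so $\phi_{p_c,h}[1\wedge(h|\sfC|)]\ge\phi_{p_c}[1\wedge(h|\sfC|)]\asymp\pi_1(R)$. For the matching upper bound I would decompose
\[
1\wedge(h|\sfC|) \;\le\; \ind_{\{0\stackrel{\bbZ^2}{\longleftrightarrow}\partial\Lambda_R\}} + h\,|\sfC\cap\Lambda_R|,
\]
take $\phi_{p_c,h}$-expectation, and aim to bound each summand by $\pi_1(R)$. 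Were one allowed to replace $\phi_{p_c,h}$ by $\phi_{p_c}$ up to a multiplicative constant, Theorem~\ref{thm:scaling_rel_crit} and the polynomial regularity of $\pi_1$ would finish the job: the first term becomes $\phi_{p_c}[0\longleftrightarrow\partial\Lambda_R]=\pi_1(R)$, while the second becomes $h\sum_{v\in\Lambda_R}\pi_1(|v|\vee 1)^2\asymp hR^2\pi_1(R)^2\asymp\pi_1(R)$ by the definition of $R$. Justifying this substitution is the crux of the argument: my plan is to work in finite volume and write $\phi_{p_c,h}$ as $\phi_{p_c}$ reweighted by $W=\prod_C\frac{q-1+e^{h|C|}}{q}$ (product over clusters of $\omega|_{\bbZ^2}$), and to show that on the relevant events the local contribution of $W$ (from clusters intersecting $\Lambda_R$) is bounded, since such clusters have typical size controlled by the two-point function at scale $R$, while the far-away contribution factorises with the partition function by a decoupling argument at criticality. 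I expect this decoupling, relying only on the uniform crossing estimates of~\cite{DumSidTas16} and not on any of Theorems~\ref{thm:L_equiv_xi}, \ref{thm:stability}, or~\ref{thm:delta}, to be the hardest part of the proof.
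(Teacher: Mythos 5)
Your Steps 1--2 and the lower bound are correct, and the reduction is essentially the paper's: integrating out the ghost to get the conditional probability $\frac{e^{h|\sfC|}-1}{q-1+e^{h|\sfC|}}\asymp 1\wedge(h|\sfC|)$ is a clean (and exact) version of the finite-energy argument used there, and your decomposition $1\wedge(h|\sfC|)\le \ind_{\{0\leftrightarrow\partial\La_R\}}+h|\sfC\cap\La_R|$ is precisely the paper's decomposition for the upper bound. The gap is exactly where you locate it, but it is more serious than your sketch suggests. What is needed is that $\phi_{p_c,h}[0\leftrightarrow\partial\La_R]\les\pi_1(R)$ and $\phi_{p_c,h}[0\leftrightarrow y]\les\pi_1(|y|)^2$ for $h\les 1/(R^2\pi_1(R))$, i.e.\ stability of one-arm and two-point estimates under the field up to the conjugate scale. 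The paper proves this as Lemma~\ref{lem:ab}: it differentiates $\tilde\pi_1(h,r)$ in $h$, shows via a conditioning/stochastic-domination argument that vertices $y\notin\sfC$ contribute negatively to the covariance with $\omega_{y\ghost}$, and is left with $\sum_y\phi_h[0\leftrightarrow y,0\leftrightarrow\partial\La_r]\les r^2\pi_1(r)^2$, whence $\tfrac{d}{dh}\log\tilde\pi_1(h,r)\les r^2\pi_1(r)$; integrating up to $h\asymp 1/(R^2\pi_1(R))$ changes the logarithm by $O(1)$. A parallel differential inequality for the dual-circuit probability $\beta(h,r)$ is needed to keep the RSW machinery (and hence the two-point bound) available for the $h$-deformed measure; the two estimates are bootstrapped through the definition of $h_c(R)$.

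Your proposed substitute --- controlling the Radon--Nikodym derivative $W=\prod_C\frac{q-1+e^{h|C|}}{q}$ by splitting into a bounded local factor and a far factor that ``factorises with the partition function by decoupling'' --- is unlikely to close as stated. The far factor is the exponential of an extensive quantity with large fluctuations, and the mixing estimate \eqref{eq:mix} available at criticality is a polynomial bound for \emph{events}, not for such unbounded functionals; the correlation between $W_{\mathrm{far}}$ and the local arm event is exactly the quantity one must bound, and it is not negligible (the field genuinely shifts local connection probabilities --- that is why the answer involves $\varphi(1/h)$ at all). Even the local factor requires an exponential moment bound $\phi_{p_c}[e^{h|\sfC|}\mid 0\leftrightarrow\partial\La_R]\les1$ at $h\asymp1/(R^2\pi_1(R))$, i.e.\ control of the full upper tail of $|\sfC|$ conditionally on the arm event, which is not established in the paper (only first and second moments are, in Lemma~\ref{prop:moments}). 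So the differential-inequality route of Lemma~\ref{lem:ab}, or an equivalent, is genuinely missing from your argument rather than being a routine verification.
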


The scaling relations~\eqref{R4}--\eqref{eq:nu_iota} are the most difficult ones as they involve
 the random-cluster model at~$p$ near~$p_c$ and rely heavily on the stability in the near-critical regime. 
 
 \begin{theorem}[Scaling relations near-critical regime]\label{thm:scaling_rel_nc}
    Fix~$1< q \le 4$. For~$p>p_c$ (and~$p\ne p_c$ for the second),
    \begin{align}
	    \theta(p)&\asymp\pi_1(\xi(p)), \label{eq:main_theta}\\
        \chi(p)&\asymp\xi(p)^2\pi_1(\xi(p))^2,\label{eq:main_chi}\\
		\Delta_{p_c}(\xi(p))&\asymp \xi(p)^{-2}|p-p_c|^{-1},\label{eq:L_Delta} \\
		f''(p)&\asymp\sum_{\ell\le \xi(p)}\ell\Delta_{p_c}(\ell)^2\label{eq:lp2}.
	\end{align}
\end{theorem}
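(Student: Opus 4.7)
The plan is to exploit Theorem~\ref{thm:L_equiv_xi} to replace $\xi(p)$ by $L(p)$ throughout, and then use the toolbox of Theorem~\ref{thm:delta}, Corollary~\ref{cor:sta} and Proposition~\ref{prop:lower bound Delta}. For~\eqref{eq:main_theta}, the upper bound $\theta(p)\le\pi_1(p,L(p))\asymp\pi_1(L(p))$ is immediate from monotonicity and~\eqref{eq:stability arm exponent}; for the matching lower bound, I condition on $\{0\leftrightarrow\partial\La_{L(p)}\}$ and chain crossings of the dyadic annuli $\Ann(2^kL(p),2^{k+1}L(p))$ for $k\ge 0$, which by~\eqref{eq:stability crossings} combined with RSW at $p_c$ and standard gluing succeed with probabilities tending to $1$ fast enough to produce an infinite cluster with positive conditional probability. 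For~\eqref{eq:main_chi}, I split $\chi(p)=\sum_x\phi_p[0\leftrightarrow x,\,|\sfC|<\infty]$ at $|x|=L(p)$: below scale $L(p)$, stability combined with~\eqref{eq:relation2} from Theorem~\ref{thm:scaling_rel_crit} gives $\phi_p[0\leftrightarrow x]\asymp\pi_1(|x|)^2$, whose summation contributes $\asymp L(p)^2\pi_1(L(p))^2$; above scale $L(p)$, exponential decay of the connection beyond the correlation length makes the tail negligible.

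For the generalised Kesten relation~\eqref{eq:L_Delta}, I apply Corollary~\ref{cor:sta} to $\calD=\La_{L(p)}$ and integrate in the parameter $p'$ from $p_c$ to $p$. By the definition of $L(p)$ together with RSW at criticality, $|\phi_p[\calC(\La_{L(p)})]-\phi_{p_c}[\calC(\La_{L(p)})]|\asymp 1$. At scale $R=L(p)\le L(p')$, the first term of~\eqref{eq:delta_cor:sta} equals $L(p)^2\Delta_{p'}(L(p))\asymp L(p)^2\Delta_{p_c}(L(p))$ by~\eqref{eq:delta_thm5} and is independent of $p'$; the residual sum $\sum_{\ell=L(p)}^{L(p')}\ell\,\Delta_{p'}(\ell)\Delta_{p'}(L(p),\ell)$ is controlled via~\eqref{eq:delta_thm3} and Proposition~\ref{prop:lower bound Delta} so that, when integrated against $dp'$ over $[p_c,p]$, it remains of the same order as the first term's integral. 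Combining yields $L(p)^2\Delta_{p_c}(L(p))\,(p-p_c)\asymp 1$.

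For~\eqref{eq:lp2}, I start from $f''(p)\asymp\sum_{e'\in\bbE}\Cov_p(\omega_{e_0},\omega_{e'})$, obtained by two applications of~\eqref{eq:Russ} combined with translation invariance. Decomposing by the $L^\infty$-distance $\ell$ between $e'$ and $e_0$, the target is $\Cov_p(\omega_{e_0},\omega_{e'})\asymp\Delta_{p_c}(\ell)^2$ for $\ell\le L(p)$ (the tail $\ell>L(p)$ being exponentially small), and summing over the $\asymp\ell$ edges at distance exactly $\ell$ gives $\sum_{\ell\le L(p)}\ell\,\Delta_{p_c}(\ell)^2$. The upper bound $\Cov_p\les\Delta_{p_c}(\ell)^2$ is routine: setting $Y_0=\phi_p[\omega_{e_0}\mid\calF_{\bbZ^2\setminus\La_{\ell/3}(e_0)}]$ and analogously $Y_{e'}$, each has oscillation at most $\Delta_p(\ell/3)\asymp\Delta_{p_c}(\ell)$, so Cauchy-Schwarz combined with the conditional-covariance decomposition closes the estimate. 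The matching lower bound $\Cov_p\ges\Delta_{p_c}(\ell)^2$ is the main obstacle and requires realising the extremal boundary fluctuations around $e_0$ and $e'$ simultaneously; for this I appeal to the mixing interpretation~\eqref{eq:delta_mixing_interpretation} together with the increasing couplings developed in the paper. A secondary delicate point occurs in~\eqref{eq:L_Delta}, where uniform-in-$p'$ control of the derivative over $[p_c,p]$ must ensure that the intermediate-scale contributions do not dominate the leading term, through a self-consistency argument that feeds the target relation back into the integral.
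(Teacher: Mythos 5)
Your overall architecture matches the paper's (replace $\xi(p)$ by $L(p)$ via Theorem~\ref{thm:L_equiv_xi}, then use stability, Corollary~\ref{cor:sta} and the covariance estimates), but three steps do not go through as written. First, for the lower bound in~\eqref{eq:main_theta} you invoke~\eqref{eq:stability crossings} at the dyadic scales $2^kL(p)$, $k\ge 0$; stability only controls scales $R\le L(p)$ (for $R>L(p)$ the error term $(R/L(p))^\eps$ is useless), so it cannot give crossing probabilities tending to $1$ above the characteristic length. What you need there is the supercritical coarse-graining estimate, i.e.\ Corollary~\ref{rmk:1}, which is exactly how the paper closes this bound ($\theta(p)\ge\phi_p[0\lra\partial\La_{2L(p)},\,A_{L(p)},\,\La_{L(p)}\lra\infty]$ via FKG). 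Relatedly, in~\eqref{eq:main_chi} for $p>p_c$ the two-point function $\phi_p[0\lra x]$ does \emph{not} decay beyond $L(p)$ (it tends to $\theta(p)^2$); the decay comes entirely from the constraint $|\sfC|<\infty$, which forces a dual circuit around $0$ and $x$, and this must be handled explicitly (spatial Markov property plus Corollary~\ref{rmk:1} for the dual), as the paper does.

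The most substantive gap is in~\eqref{eq:L_Delta}. Quasi-multiplicativity and Proposition~\ref{prop:lower bound Delta} give a \emph{lower} bound on $\Delta_{p'}(L(p),\ell)$, so they cannot bound the residual sum $\sum_{\ell=L(p)}^{L(p')}\ell\Delta_{p'}(\ell)\Delta_{p'}(L(p),\ell)$ from above; at fixed $p'$ close to $p_c$ this sum genuinely dominates $L(p)^2\Delta_{p'}(L(p))$ whenever $\iota<1$, and showing that its integral over $[p_c,p]$ is still $O(1)$ requires precisely the relation you are trying to prove — your ``self-consistency'' remark is circular as stated. The paper's resolution is to pick $p_0$ with $L(p_0)=RL(p)$ for a fixed large constant $R$, so that for $u\in[p_0,p]$ the residual sum runs over boundedly many scales and is $\les L(p)^2\Delta_u(L(p))$ trivially, while $g(p)-g(p_0)\ges 1$ still holds by stability; integrating only over $[p_0,p]$ then closes the lower bound without any bootstrap. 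Finally, for~\eqref{eq:lp2} the two-sided estimate $\Cov_p(\omega_{e_0},\omega_{e'})\asymp\Delta_p(\ell\wedge L(p))^2e^{-c\ell/L(p)}$ is exactly~\eqref{eq:Cov_Delta2}--\eqref{eq:Cov_Delta3} of Lemma~\ref{lem:Cov_Delta} (applied with $\calD$ a single edge), so the lower bound you flag as ``the main obstacle'' is already available; note also that your ``routine'' upper bound via conditioning on the exterior of two disjoint boxes is not quite routine, since the conditional law on the union of the boxes is not a product measure (boundary wirings couple them), so the conditional covariance term does not vanish and needs the coupling machinery anyway.
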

 
Note that assuming that~$\iota$ exists, we get a very different behaviour depending on whether it is smaller or larger than~$1$, or correspondingly whether~$\alpha$ is positive or negative, i.e.~whether the random-cluster model undergoes a second-order or higher-order phase transition. The former occurs when~$\nu<1$, i.e.~conjecturally when~$q\in(2,4]$. When~$\nu=1$, which is conjectured to correspond only to~$q=2$, all the exponents are known and~$f''(p)$ blows up logarithmically (in particular it satisfies the scaling relation as well). 
When~$\nu>1$,~$f''(p)$ remains bounded in the vicinity of~$p_c$, and the phase transition becomes of third-order (or higher). 
The exponent~$\alpha$ may still be defined using the third derivative of~$f$, which is supposed to diverge at~$p_c$. 
We are currently only able to derive an upper bound on~$f'''$; the lower bound is unavailable even for Bernoulli percolation. 
We refer to Remark~\ref{rem:alpha_negative} for details.

\subsection{Two complementary results on the order parameter of Potts models}

This section gathers two satellite results that are of interest on their own and that do not necessarily fit in the storyline of the previous sections.  For~$q=2$, it is already known that  the value of~$\xi_1$ is equal to~$1/8$, so that our paper provides a new proof of the following immediate corollary using the Edwards-Sokal coupling~\cite[Section~1.4]{Gri06}. 
We include it since the Ising model with magnetic field, contrary to the case~$h= 0$, is not integrable and hence notoriously difficult to study. As mentioned previously, it was obtained in \cite{CGN14} using alternative arguments.
\begin{theorem}\label{thm:h}
Let ~$m(\beta,h)$ be the spontaneous magnetization of the Ising model on~$\bbZ^2$ at inverse-temperature~$\beta$ and magnetic field~$h$. For every~$h\in(0,1)$,
$$m(\beta_c,h)\asymp h^{1/15}.$$
\end{theorem}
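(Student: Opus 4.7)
}

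The plan is to reduce the statement to the random-cluster estimate in Theorem~\ref{thm:scaling_rel_h} with $q=2$ and then evaluate the right-hand side using the classical Ising two-point function. First, the Edwards--Sokal coupling identifies the spontaneous magnetization of the Ising model at inverse temperature $\beta$ with external field $h$ and the probability that $0$ is connected to the ghost vertex in the $q=2$ random-cluster model at the corresponding edge parameter. With the standard identification $p_c(\beta)=1-e^{-2\beta}$ (so that $\beta_c$ corresponds to $p_c=p_c(2)$), this gives
\begin{equation*}
m(\beta_c,h) = \phi_{p_c,2,h}^0[0\longleftrightarrow \mathfrak g].
\end{equation*}
Applying Theorem~\ref{thm:scaling_rel_h} at $q=2$ then yields $m(\beta_c,h)\asymp \pi_1(\varphi(1/h))$, so everything reduces to controlling $\pi_1$ and $\varphi$ at $q=2$.

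Next I would extract the sharp one-arm asymptotics $\pi_1(R)\asymp R^{-1/8}$ from the scaling relation at criticality \eqref{eq:relation2}. Indeed, the classical exact computations for the critical planar Ising model (Wu's formula) give the two-point function asymptotic $\langle\sigma_0\sigma_x\rangle_{\beta_c} \asymp |x|^{-1/4}$. Under Edwards--Sokal this equals $\phi_{p_c,2}[0\longleftrightarrow x]$, so Theorem~\ref{thm:scaling_rel_crit} produces
\begin{equation*}
\pi_1(|x|)^2 \asymp \phi_{p_c,2}[0\longleftrightarrow x] \asymp |x|^{-1/4},
\end{equation*}
hence $\pi_1(R)\asymp R^{-1/8}$ with honest constants (not merely in the $o(1)$-exponent sense). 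This is the crucial input that upgrades the known value $\xi_1=1/8$ to a sharp two-sided bound.

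Finally, I would insert this into the definition \eqref{eq:phi} of $\varphi$. Since $r^2\pi_1(r)\asymp r^{15/8}$ is monotone up to constants, the condition $r^2\pi_1(r)\ge n$ is first met at $\varphi(n)\asymp n^{8/15}$. Therefore
\begin{equation*}
\pi_1(\varphi(n)) \asymp (n^{8/15})^{-1/8} = n^{-1/15},
\end{equation*}
and applying this with $n=1/h$ concludes $m(\beta_c,h)\asymp h^{1/15}$.

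The substantive content all lives in the two invoked theorems; the only real obstacle here is confirming that $\pi_1(R)\asymp R^{-1/8}$ holds with true constants rather than just up to subpolynomial corrections. That is precisely what the combination of Theorem~\ref{thm:scaling_rel_crit} with Wu's sharp asymptotics for $\langle\sigma_0\sigma_x\rangle_{\beta_c}$ delivers, so once that input is in place the rest is arithmetic.
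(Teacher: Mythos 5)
Your proposal is correct and follows essentially the same route as the paper, which presents Theorem~\ref{thm:h} as an immediate corollary of Theorem~\ref{thm:scaling_rel_h} via the Edwards--Sokal coupling together with the known value $\xi_1=1/8$ for $q=2$. Your extra care in upgrading $\xi_1=1/8$ to a genuine two-sided bound $\pi_1(R)\asymp R^{-1/8}$ by combining Wu's sharp two-point asymptotics with \eqref{eq:relation2} is exactly the right way to make the paper's brief remark precise.
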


When~$1\le q\le 3$, it was proved in~\cite{DumManTas20} that~$\pi_1(R)\pi_4(R)\ge cR^{c-2}$ for every~$R\ge1$ and some constant~$c > 0$. 
In Remark~\ref{rmk:lower bound Delta q=4}, we show that~$\pi_1(R)\Delta(R)\ge cR^{c-2}$  also for~$q=4$. 
From these inequalities, using~\eqref{eq:main_theta},~\eqref{eq:L_Delta} and~\eqref{eq:delta_thm4}, one may deduce the result below, which should be understood as~$\beta < 1$ for~$1\le q\le 3$ and~$q=4$. The result for~$q = 1$ (that is for Bernoulli percolation) was already obtained by Kesten and Zhang~\cite{KesZha}; we expect~$\beta < 1$ to be valid for all~$1 \le q \le 4$.

\begin{theorem}[non-differentiability of the order parameter]\label{prop:beta}
For every~$1\le q\le 3$ or~$q=4$,  there exists~$c>0$ such that for every~$p\ge p_c$,
    \begin{align}
\theta(p)&\ge c(p-p_c)^{1-c}.\label{eq:bbbc}\end{align}
In particular, the spontaneous magnetization~$m(\beta)$ of the 2, 3 and 4-state Potts model satisfies ~$m(\beta)\ge c(\beta-\beta_c)^{1-c}$ for~$\beta\ge \beta_c$.
\end{theorem}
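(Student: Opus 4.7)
The plan is to combine three ingredients: (a) the near-critical equivalence $\theta(p) \asymp \pi_1(\xi(p))$ from~\eqref{eq:main_theta}; (b) the identity $\Delta_{p_c}(\xi(p)) \asymp \xi(p)^{-2}(p-p_c)^{-1}$ from~\eqref{eq:L_Delta}; and (c) the polynomial improvement $\pi_1(R)\Delta_{p_c}(R) \ges R^{c_1-2}$ for some $c_1 > 0$ and all $R\ge 1$. For $q\in[1,3]$, (c) follows by combining the inequality $\pi_1(R)\pi_4(R)\ges R^{c-2}$ from~\cite{DumManTas20} with $\Delta_{p_c}(R)\ges\pi_4(R)$ provided by Theorem~\ref{thm:delta}(iv); for $q=4$, it is the content of Remark~\ref{rmk:lower bound Delta q=4}.

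Set $R:=\xi(p)$. Then (a) and (b) read $\theta(p)\asymp\pi_1(R)$ and $p-p_c\asymp 1/(R^2\Delta_{p_c}(R))$, and inserting $\Delta_{p_c}(R)\ges R^{c_1-2}/\pi_1(R)$ from (c) produces
\begin{equation*}
	p-p_c \les \pi_1(R)\, R^{-c_1}.
\end{equation*}
A universal polynomial lower bound $\pi_1(R)\ges R^{-c_0}$ for some $c_0>0$ is available from the RSW theory of random-cluster models with $q\in[1,4]$ (see e.g.~\cite{DumSidTas16}); equivalently $R\ges\pi_1(R)^{-1/c_0}$, so that $R^{-c_1}\les\pi_1(R)^{c_1/c_0}$. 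Substituting yields
\begin{equation*}
	p-p_c \les \pi_1(R)^{\,1+c_1/c_0}.
\end{equation*}
Raising to the power $\tfrac{c_0}{c_0+c_1}\in(0,1)$ and invoking $\theta(p)\asymp\pi_1(R)$ gives
\begin{equation*}
	\theta(p) \ges (p-p_c)^{1-c}, \qquad c := \tfrac{c_1}{c_0+c_1} > 0,
\end{equation*}
which is~\eqref{eq:bbbc}.

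The Potts-model corollary then follows from the Edwards--Sokal coupling: the spontaneous magnetization $m(\beta)$ of the $q$-state Potts model at inverse temperature $\beta$ is a constant multiple of $\theta(p)$ for the random-cluster model at $p=1-e^{-\beta}$ with cluster-weight $q$, and $\beta-\beta_c\asymp p-p_c$ near criticality since $\beta\mapsto p$ is smooth with non-vanishing derivative there.

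The main obstacle is securing ingredient (c) when $q=4$: the uniform crossing estimates in fractal domains that drove the $1\le q<4$ argument of~\cite{DumManTas20} are unavailable, so a dedicated analysis based on the parafermionic observable must be performed in Section~\ref{sec:q=4}. Once (c) is in hand, the deduction above is an elementary exercise trading the scaling relations of Theorem~\ref{thm:scaling_rel_nc} against a quantitative polynomial RSW bound on $\pi_1$.
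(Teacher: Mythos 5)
Your argument is correct and is essentially the paper's own route: the paper derives \eqref{eq:bbbc} exactly by combining $\pi_1(R)\Delta_{p_c}(R)\ges R^{c_1-2}$ (from~\cite{DumManTas20} together with $\Delta_{p_c}(R)\ges\pi_4(R)$ when $1\le q\le 3$, and from Remark~\ref{rmk:lower bound Delta q=4} when $q=4$) with~\eqref{eq:main_theta} and~\eqref{eq:L_Delta}, and then trading the leftover power of $R=\xi(p)$ for a power of $p-p_c$ exactly as you do (your detour through $\pi_1(R)\ges R^{-c_0}$ is one valid way to close this step; one could equally use $\Delta_{p_c}(R)\le 1$ in~\eqref{eq:L_Delta} to get $R^2\ges(p-p_c)^{-1}$ directly). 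The only caveat is that Theorem~\ref{thm:scaling_rel_nc} is stated for $q>1$, so the case $q=1$ should be referred to Kesten--Zhang~\cite{KesZha}, as the paper itself does.
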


\subsection{Open questions}\label{sec:1.5}

The present paper opens many doors in the study of the critical regime of random-cluster models (and more generally planar dependent percolation models). We now mention a few open questions that in our opinion deserve attention. We refrain ourselves from asking the obvious question of proving conformal invariance of the model, and focus on questions that are directly related to the current work.

 Let us start by a question concerning scaling relations, namely whether one can prove~\eqref{eq:alphanu} when~$q\le 2$. As mentioned above, in this case~$f''(p)$ is expected to remain bounded when~$p$ tends to~$p_c$, but one may consider the behaviour of~$f'''(p)$ to make sense of~$\alpha$. Remark~\ref{rem:alpha_negative} of the present paper shows that the critical exponent~$\alpha$ defined like this satisfies~$2\nu\le 2-\alpha$, leaving the following question open (note that this is also open for~$q=1$).
\begin{question}
	Prove that for every~$1\le q<2$, one has~$2\nu\ge 2-\alpha$.
\end{question}

Another natural question is to derive critical exponents for random-cluster models. The scaling relations enable one to deduce certain exponents from others, and we may therefore choose which exponents to try to derive. From this point of view, the exponent~$\alpha$ is particularly tempting since it implies directly~$\nu$, and also since exact integrability often provides physicists and mathematicians with closed formulae that may lead to~$\alpha$. We refer to~\cite{Bax89} for more details on this and summarize the discussion in the following question.
\begin{question}
Obtain~$\alpha$ using exact integrability to understand the near-critical behaviour of the free energy.
\end{question}
Another approach consists in deriving the exponents having as a basis the assumption of conformal invariance. In this case, we know that the scaling limit of the family of boundaries of clusters should be a
Conformal Loop Ensemble as mentioned in the introduction. As a consequence,~$\xi_1$ seems very easy to deduce from conformal invariance. Note that~$\xi_1$ and~$\iota$ are sufficient to derive the other exponents, and that~$\iota$ has the advantage of being a quantity which is computable using the scaling limit at criticality (the exponents~$\beta$,~$\gamma$,~$\delta$, and~$\nu$ involve values of~$p\ne p_c$ and should therefore be difficult to compute directly using only conformal invariance).  At the light of the quasi-multiplicativity property of~$\Delta_{p_c}(r,R)$, the following question seems  tractable.

\begin{question}
	Compute~$\iota$ assuming conformal invariance of interfaces at criticality.
\end{question}

 Let us finish this section by mentioning that~\cite{DumGarPet14} emphasizes a self-organized mechanism in the way new edges occur as~$p$ increases in Grimmett's monotone coupling (see~\cite{Gri06} for details). The authors argued that edges appear in clouds  and that the understanding of these clouds would be crucial towards the construction of the near-critical scaling limit, would anybody manage to construct the conformally invariant scaling limit at~$p_c$. The current work answers a number of questions and conjectures asked in this paper (including Conjecture 4.1 and 4.2 since~$\xi(p)$ is explicitly known, see e.g.~\cite{BefDum12b} and references therein), but does not provide direct insight on the structure of these clouds. We therefore conclude with the following question.
\begin{question}
What does the present work tell us about clouds (in the sense of~\cite{DumGarPet14}) in Grimmett's monotone coupling?
\end{question}

Almost everything is known about the random-cluster model with~$q=2$ on the square lattice, since the conformal invariance of the model and its interfaces was proved~\cite{Smi10,CheDumHon12a}. It is therefore only natural to discuss the question of the construction of the near-critical scaling limit in this context, especially since one expects subtle differences with the corresponding result for Bernoulli percolation (see~\cite{GPS}).

\begin{question}
	Construct the near-critical scaling limit of the model,
	i.e.~the limits of random-cluster models on~$\tfrac1R\mathbb Z^2$ at~$p$ such that~$R$ is of order of $\lambda L(p)$, 
	where~$\lambda$ is a fixed strictly positive parameter. One may start by studying the case of~$q=2$.
\end{question}

Recently, rotational invariance of the critical random-cluster model was obtained in~\cite{DKMO20}. 
This rotational invariance is expected to carry over to the near-critical regime. 
The arguments developed here, combined with those of~\cite{DumGarPet14}, should be relevant for the next question.

\begin{question}
Prove that the near-critical scaling limit of the model is invariant under rotations.
\end{question}

\subsection*{Organisation of the paper}

Section~\ref{sec:2} provides the necessary background to our paper. 
Section~\ref{sec:9} studies the dependency of crossing probabilities on boundary conditions (see Theorem~\ref{thm:boosting_pair}) 
and introduces the notion of boosting pair of boundary conditions. 
Section~\ref{sec:10} contains the proof of points~(ii),~(iv) and~(v) of Theorem~\ref{thm:delta}. This is the core of our paper, and indeed its biggest innovation. 
Section~\ref{sec:a} initiates the connection between the quantity~$\Delta_p$ and covariances, in particular proving Theorem~\ref{thm:delta}(i). 
Section~\ref{sec:b} provides the lower bound on~$\Delta_p$ given by Proposition~\ref{prop:lower bound Delta}. 
Section~\ref{sec:7} contains the proof of the stability below the correlation length: Theorem~\ref{thm:stability} and Theorem~\ref{thm:delta}(iii). 
Finally, Section~\ref{sec:all_scaling_relations} contains the derivation of the scaling relations.

\subsection*{A word about constants}
We will often work with~$p\in(0,1)$ and a spatial scale~$R \leq L(p)$. 
Unless stated otherwise, constants~$c, (c_i)_{i\ge 0}, C$ and~$(C_i)_{i\ge 0}$ are assumed uniform in~$(p,R)$ as above, with the assumption that~$p$ is not close to~$0$ or~$1$. They are, however, allowed to depend on the threshold~$\delta$ used in the definition of~$L(p)$; recall that this threshold is assumed small, but fixed. 
We do not discuss the dependence in~$q$ of constants, but the careful reader will notice that they may be rendered uniform in~$q$, potentially outside of the vicinity of~$1$ and~$4$. 

We reiterate that the constants in the notation~$\asymp$,~$\les$ and~$\ges$ also follow the same principle.

\section{Preliminaries}\label{sec:background}\label{sec:2}

This section briefly recalls some tools for the study of the planar random-cluster model.
Some sections are new, for instance Section~\ref{sec:11}. We recommend that the readers quickly browse through this section, even if they are already comfortable with the basics of the random-cluster model.

\subsection{Elementary properties of the random-cluster model}\label{sec:1.7}

We will use standard properties of the random-cluster model. They can be found in~\cite{Gri06}, and we only recall them briefly below. 
Fix a subgraph~$G=(V,E)$ of~$\bbZ^2$. 

\bigbreak\noindent
{\em Monotonic properties.} 
An event~$A$ is called {\em increasing} if for any~$\omega\le\omega'$ (for the partial ordering on~$\{0,1\}^E$),~$\omega\in A$  implies that~$\omega'\in A$.
Fix~$q\ge1$,~$1\ge p'\ge p\ge0$,~$h'\ge h\ge0$, and some boundary conditions~$\xi'\ge\xi$, where~$\xi'\ge\xi$ means that any wired vertices in~$\xi$ are also wired in~$\xi'$. Then, for every  increasing events~$A$ and~$B$,
\begin{align*}\tag{FKG}\label{eq:FKG} 
	\phi_{G,p,h}^\xi[A\cap B]&\ge \phi_{G,p,h}^\xi[A]\phi_{G,p,h}^{\xi}[B],\\
	\tag{$h$-MON}\label{eq:hmon} 
	\phi_{G,p,h'}^\xi[A]&\ge \phi_{G,p,h}^\xi[A],\\
	\tag{$p$-MON}\label{eq:pmon} 
	\phi_{G,p'}^\xi[A]&\ge \phi_{G,p}^\xi[A],\\
	\tag{CBC}\label{eq:CBC} 
	\phi_{G,p,h}^{\xi'}[A]&\ge \phi_{G,p,h}^\xi[A].
\end{align*}

The inequalities above will respectively be referred to as the {\em FKG inequality}, the {\em monotonicity in}~$h$ and~$p$, and the {\em comparison between boundary conditions}. 

\bigbreak\noindent{\em Spatial Markov property.} For any configuration~$\omega'\in\{0,1\}^E$ and any~$F\subset E$,
\begin{equation}\tag{SMP}\label{eq:SMP} 
	\phi_{G,p,h}^\xi[\cdot_{|F}\,|\,\omega_e=\omega'_e,\forall e\notin F]\ge \phi_{H,p,h}^{\xi'}[\cdot],
\end{equation}
where~$H$ denotes the graph induced by the edge-set~$F$, and~$\xi'$ the boundary conditions on~$H$ defined as follows: 
$x$ and~$y$ on~$\partial H$ are wired if they are connected in~$(\omega')_{|E\setminus F}^\xi$. 

\bigbreak\noindent
{\em Dual model.} Define the
dual graph~$G^*=(V^*,E^*)$ of~$G$ in the usual way: 
place dual sites at the centers of the faces of~$G$ (when considering a graph on the plane, 
the external face must be counted as a face of the graph), and for every bond~$e\in E$, place a dual bond
between the two dual sites corresponding to faces bordering~$e$. Given a
subgraph configuration~$\omega$, construct a configuration~$\omega^*$ on~$G^*$ by
declaring any bond of the dual graph to be open (resp.\ closed) if the
corresponding bond of the primal lattice is closed (resp.\ open) for the
initial configuration. The new configuration is called the \emph{dual
  configuration} of~$\omega$.
The dual model on the dual graph given by the dual configurations then
corresponds to a random-cluster measure 
with the same parameter~$q$, a dual parameter~$p^*$ satisfying~$$\frac{p^*p}{(1-p^*)(1-p)}=q,$$ 
and dual boundary conditions. We do not want to discuss too much the details of how dual boundary conditions are defined (we refer to~\cite{Gri06}) and simply mention that the dual of free boundary conditions are the wired ones, and vice versa. Note that the critical point is self-dual in the sense that~$p_c^*=p_c$.

\bigbreak
\noindent
{\em Loop model.} The loop representation of a configuration on~$G$ is supported on the {\em medial graph} of~$G$ defined as follows. 
Let~$(\bbZ^2)^\diamond$ be the {\em medial} lattice, with vertex-set given by the midpoints of edges of~$\bbZ^2$ and edges between pairs of nearest vertices (i.e.~vertices at a distance~$\sqrt 2/2$ of each other). It is a rotated and rescaled version of~$\bbZ^2$.  
For future reference, note that the faces of ~$(\bbZ^2)^\diamond$ contain either a vertex of~$\bbZ^2$ or one of~$(\bbZ^2)^*$.
The edges of the medial lattice~$(\bbZ^2)^\diamond$ are considered oriented in counterclockwise direction around each face containing a vertex of~$\bbZ^2$. 
Let~$G^\diamond$ be the subgraph of~$(\bbZ^2)^\diamond$ spanned by the edges of~$(\bbZ^2)^\diamond$ adjacent to a face corresponding to a vertex of~$G$. 

Let~$\omega$ be a configuration on~$G$. 
Draw self-avoiding paths on~$G^\diamond$ as follows: a path arriving at a vertex of the medial lattice 
always takes a~$\pm \pi/2$ turn at vertices so as not to cross the edges of~$\omega$ or~$\omega^*$ (see Figure~\ref{fig:domain}). 
The loop configuration thus defined is formed of possibly several paths going from boundary to boundary, as well as disjoint loops; 
together these form a partition of the edges of~$G^\diamond$. 

\subsection{Crossing and arm events probabilities below the characteristic length}\label{sec:crossing estimates}

As it is often the case when investigating the critical behaviour of lattice models, we will need to use crossing estimates in rectangles and more generally in quads, as well as estimates on certain universal and non-universal critical exponents. Such crossing estimates initially emerged in the study of Bernoulli percolation in the late seventies under the coined name of Russo-Seymour-Welsh theory~\cite{Rus78,SeyWel78}. 

The main technical tool that we will use is the following result on crossing estimates and arm events probabilities.

\begin{theorem}[Crossing estimates below the characteristic length]\label{thm:RSWnear}
	For~$\rho,\ep>0$, there exist~$c'>0$ and~$c=c(\rho,\ep)>0$ 
	such that for every~$p$,  every~$1 \le n\le L(p)$, every graph~$G$ containing~$[-\ep n,(\rho+\ep)n]\times[-\ep n,(1+\ep)n]$ and every boundary conditions~$\xi$,
    \begin{equation}\label{eq:RSWnear}\tag{RSW}	
    	c\le \phi_{G,p}^\xi[\calC([0,\rho n]\times[0,n])]\le 1-c.
    \end{equation}
    Moreover, if~$A_n$ denotes the event that there exists an open circuit surrounding 
   $\La_n$ in ${\rm Ann}(n,2n)$, 
    \begin{equation}\label{eq:RSWA}\tag{RSW'}
		\phi_{{\rm Ann}(2n,n),p}^0[A_n]\ge c' > 0.
	\end{equation}
\end{theorem}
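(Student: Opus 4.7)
The plan is to reduce the near-critical RSW statement to the critical RSW theorem (established for $q\in[1,4]$, uniformly in boundary conditions, in \cite{DumSidTas16, DumManTas20}) by exploiting the defining property of the characteristic length $L(p)$. The definition of $L(p)$ directly yields the infinite-volume square crossing estimate for every $p$ and every $n\le L(p)$:
\begin{equation*}
	\phi_p[\mathcal{C}(\Lambda_n)]\in[\delta,1-\delta].
\end{equation*}
This replaces self-duality (which provides the analogous input at criticality) as the initial estimate feeding the RSW machinery.

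The first task is to upgrade this to an estimate that is uniform in boundary conditions on every graph $G$ containing the $\epsilon n$-enlarged rectangle. I would adapt the standard boundary-condition-removal strategy: using FKG together with the square crossing estimate at scale $\asymp \epsilon n$, one constructs, with uniformly positive probability, either a primal circuit in the buffer (to absorb free boundary conditions when a lower bound on the crossing is sought) or a dual circuit (to absorb wired boundary conditions when an upper bound is sought). Conditioning on such a circuit effectively disconnects the crossing region from $\partial G$, reducing the computation to a comparison with the infinite-volume measure. Every input used in this construction (FKG, CBC, $p$-MON, and the square estimate for $n \le L(p)$) remains available throughout the near-critical regime.

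Once the square crossing estimate is established uniformly in boundary conditions, the extension to rectangles of arbitrary aspect ratio $\rho$ follows from Tassion's RSW argument applied essentially verbatim, since that argument relies only on square crossings uniform in BC together with FKG and monotonicity. The circuit estimate~\eqref{eq:RSWA} then follows from~\eqref{eq:RSWnear} by gluing four overlapping rectangle crossings of bounded aspect ratio in the annulus via FKG.

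The main obstacle is the uniformity in boundary conditions, since self-duality---central to the corresponding argument at $p = p_c$---is not available for general $p$. The resolution is to combine $p$-monotonicity and planar duality (using that $L(p) = L(p^*)$) to reduce each of the four scenarios (free or wired BC, combined with $p$ above or below $p_c$) to a case in which the relevant direction of the bound can be propagated either from the critical uniform RSW (via $p$-MON from $p_c$) or directly from the definition of $L(p)$, with the $\epsilon n$ buffer absorbing the discrepancy between the infinite-volume and finite-volume measures.
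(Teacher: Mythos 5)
Your overall strategy matches the paper's: reduce by duality and monotonicity to the case $p<p_c$ with free boundary conditions, feed the square-crossing estimate coming from the definition of $L(p)$ into an RSW input valid away from criticality, absorb the boundary conditions by constructing a (dual) circuit in the $\eps n$-buffer at a cost controlled by the critical uniform RSW of~\cite{DumSidTas16}, and obtain \eqref{eq:RSWA} by gluing four rectangles with FKG. The paper's proof is exactly this, in a specific order: it first passes from squares to $\rho$-rectangles \emph{in infinite volume}, via the quantitative renormalization inequality of~\cite{DumTas18}, namely $\phi_{p}[\calC(R)]\ge \tfrac1C\,\phi_{p}[\calC(\Lambda_n)]^C$ valid for every $p$ and every scale, and only then conditions on a dual circuit in $\overline R\setminus R$ to pass to $\phi^0_{\overline R,p}$ and hence to arbitrary $(G,\xi)$ by \eqref{eq:CBC} and \eqref{eq:SMP}.

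The one step of your plan that would not go through as written is the aspect-ratio extension. You propose to first make the square estimate uniform in boundary conditions on finite domains and then run ``Tassion's RSW argument essentially verbatim.'' That argument is not a black box taking only FKG and a square-crossing bound: it uses the translation and reflection symmetries of the underlying measure to compare crossings to shifted sub-segments, and these symmetries are destroyed in a finite graph $G$ with arbitrary $\xi$; moreover, in its basic form it only produces rectangle crossings along an unbounded sequence of scales, whereas here one needs every $n\le L(p)$. The correct fix is to do the aspect-ratio step first, for the infinite-volume, translation-invariant measure $\phi_p$, using the all-scales polynomial relation of~\cite{DumTas18} (which is precisely the strengthening of Tassion's argument designed for this purpose), and to treat boundary conditions only afterwards via the buffer circuit. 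With that reordering and that citation, your proof coincides with the paper's.
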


Since the result is not formally proved anywhere, we include it here. It basically consists in gathering different known results.
\begin{proof}
We start with~\eqref{eq:RSWnear}. By duality and comparison between boundary conditions \eqref{eq:CBC}, it suffices to show that for~$p<p_c$ and~$\xi=0$, we have that 
\[
\phi_{\overline R,p}^0[\calC(R)]\ge c,
\]
where~$R:=[0,\rho n]\times[0,n]$ and~$\overline R=[-\ep n,(\rho+\ep)n]\times[-\ep n,(1+\ep)n]$. 

The RSW theorem extracted from~\cite{DumTas18} gives the existence of~$C=C(\rho)>0$ such that for every~$n$ and~$p$,
\begin{equation}
	\phi_{p}[\calC(R)]\ge \tfrac1C  \phi_{p} [ \calC(\Lambda_n)]^C\ge \tfrac1C\delta^C ,
\end{equation} 
where in the second inequality we used the definition of~$L(p)$ and the fact that~$n\le L(p)$.

Consider the event~$\calE^*$ that there exists a dual-open circuit in the annulus~$\overline R\setminus R$ surrounding~$R$.
Then~\eqref{eq:CBC},~\eqref{eq:pmon} and the fact that~$\calE^*$ is decreasing imply that ~$\phi_p[\calE^*|\calC(R)] \geq \phi_{\overline R\setminus R, p_c}^1[\calE^*]$.
The result of~\cite{DumSidTas16} states that the latter probability is bounded from below by~$c_0=c_0(\rho,\eps)>0$ independently of~$n$.
The spatial Markov property and the comparison between boundary conditions allow us to conclude that 
\begin{equation}
	\phi_{\overline R,p}^0[\calC(R)]\ge\phi_{p}[\calC(R)|\calE^*]\ge \phi_{p}[\calE^*|\calC(R)]\phi_{p}[\calC(R)]\ge \frac{c_0}C  \delta^C.
	\end{equation} 
	This concludes the proof of~\eqref{eq:RSWnear}. For~\eqref{eq:RSWA}, use the FKG inequality and the fact that there exists a circuit in~${\rm Ann}(n,2n)$ surrounding the origin if the rectangle~$[-\tfrac53n,\tfrac53n]\times[\tfrac43n,\tfrac53n]$ as well as its rotations by angles~$\tfrac\pi2$,~$\pi$ and~$\tfrac{3\pi}2$ are all crossed in the long direction.
\end{proof}

The previous theorem has classical applications for the probability of so-called arm events. 
A self-avoiding path of type~$0$ or~$1$ connecting the inner to the outer boundary of an annulus is called an {\em arm}. We say that an arm is {\em of type~$1$} if it is composed of primal edges that are all open, and {\em of type~$0$} if it is composed of dual edges that are all dual-open.
For~$k\ge1$ and~$\sigma\in\{0,1\}^k$\,, define~$A_\sigma(r,R)$ to be the event that there exist~$k$ \emph{disjoint} arms from the inner to the outer boundary of~$\Ann(r,R)$ which are of type~$\sigma_1,\dots, \sigma_k$, when indexed in counterclockwise order. 

To simplify the notation, we introduce~$\pi_\sigma(p,r,R)$ for the~$\phi_{p}$-probability of~$A_\sigma(r,R)$. We drop~$p$ or~$r$ from the notation when~$p=p_c$ or~$r$ is the smallest integer such that~$\pi_\sigma(p,r,R)>0$ for all~$R>r$. Finally, when~$\sigma=1010\dots$ has length~$k$, we write the subscript~$k$ instead of~$\sigma$. 
For every~$\sigma$,~$ \pi_\sigma(R)$ decays algebraically with~$R$~\cite{DumSidTas16} and the scale invariance prediction suggests the existence of a critical exponent~$\xi_\sigma$ such that~$
\pi_\sigma(R)=R^{-\xi_\sigma+o(1)}$ as~$R$ tends to infinity. 

We also introduce~$A_{\sigma}^+(r,R)$ to be the same event as~$A_\sigma(r,R)$, except that the paths must lie in the upper half-plane~$\mathbb H:=\mathbb Z\times\mathbb Z_+$ and are indexed starting from the right-most. Introduce  its probability~$\pi_\sigma^+(p,r,R)$ and the associated exponent~$\xi_\sigma^+$.

We will need the following near-critical estimates on certain arm event probabilities.

\begin{proposition}[estimates on certain arm events]
	Fix~$1\le q\le 4$. There exists~$c > 0$ such that, for~$p\in(0,1)$ and~$1\le r\le R\le L(p)$,
    \begin{align}
        \pi_2(p,r,R)&\ges (r/R)^{1-c},\label{eq:TWO ARM}\\
        (r/R)^{1/2-c}\les \pi_1(p,r,R)&\les (r/R)^c.\label{eq:LOWER_BOUND_ONE_ARM}
    \end{align} 
\end{proposition}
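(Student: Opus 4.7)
All three bounds follow from RSW-based arguments (Theorem~\ref{thm:RSWnear}) combined with the monotonic properties recalled in Section~\ref{sec:1.7}. I would treat each bound in turn.

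For the upper bound on~$\pi_1$, I would split~$[r,R]$ into~$N=\lfloor\log_2(R/r)\rfloor$ dyadic annuli~$\mathrm{Ann}(r_k,r_{k+1})$ with~$r_k=2^k r$, and apply~\eqref{eq:RSWA} in the dual model in each annulus (the dual version follows from duality together with~\eqref{eq:CBC}, since the dual of free is wired and the circuit event is decreasing in~$\omega$). Each such annulus contains a dual-open circuit surrounding~$\Lambda_{r_k}$ with~$\phi_p$-probability at least~$c''>0$, uniformly over~$p$ and the boundary conditions; a dual circuit in any annulus blocks the one-arm. Since the complementary (no-circuit) events are all increasing, FKG yields
\[
\pi_1(p,r,R)\le(1-c'')^N\les(r/R)^c,
\]
with~$c=\log_2(1/(1-c''))$.

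For the lower bound on~$\pi_2$, I would combine two disjoint arms built in the upper and lower halves of~$\mathrm{Ann}(r,R)$. By FKG and reflection symmetry,
\[
\pi_2(p,r,R)\ges \pi_1^+(p,r,R)^2,
\]
and a lower bound on~$\pi_1^+$ is obtained by gluing a dyadic chain of~\eqref{eq:RSWnear} crossings of half-rectangles at each scale. A naive chain already gives~$\pi_1^+(p,r,R)\ges (r/R)^{\alpha}$ for some~$\alpha>0$. To get the stated exponent~$1-c$ with~$c>0$, one needs~$2\alpha<1$, which amounts to a uniform bound~$\xi_1^+<1/2$. I would obtain this from a refined RSW argument in the spirit of~\cite{DumSidTas16,DumManTas20}, combined with the uniform crossing estimates~\eqref{eq:RSWnear}.

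For the lower bound on~$\pi_1$, the idea is to turn the~$\pi_2$ lower bound into a~$\pi_1$ lower bound via a (possibly approximate) BK-type inequality
\[
\pi_2(p,r,R)\les \pi_1(p,r,R)^2\,(R/r)^{\eps}
\]
for any fixed~$\eps>0$. For~$q=1$ this is the classical van den Berg--Kesten inequality (with~$\eps=0$). For~$q>1$, BK itself fails, but the above approximate form can be obtained using~\eqref{eq:SMP}: one conditions on the innermost realization~$\gamma$ of one of the two arms, and bounds the conditional probability of a second arm in~$\mathrm{Ann}(r,R)\setminus\gamma$ by~$C\pi_1(p,r,R)(R/r)^\eps$, the polynomial loss coming from controlling the induced boundary conditions along~$\gamma$ via~\eqref{eq:CBC}. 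Combining with the previous step,
\[
\pi_1(p,r,R)^2\ges\pi_2(p,r,R)\,(R/r)^{-\eps}\ges(r/R)^{1-c+\eps},
\]
yielding~$\pi_1(p,r,R)\ges(r/R)^{1/2-(c-\eps)/2}$ once~$\eps$ is chosen smaller than~$c$.

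\textbf{Main obstacle.} The main technical difficulty is the BK-type inequality of the third step for~$q>1$: the van den Berg--Kesten inequality is genuinely unavailable, and the spatial Markov argument requires care to ensure that the boundary conditions induced by conditioning on an arm only cost a sub-polynomial factor. The strict bound~$\xi_1^+<1/2$ needed in the second step, although folklore, also requires a careful RSW argument uniform in~$q\in[1,4]$.
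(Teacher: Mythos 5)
The central problem is in your lower bound for~$\pi_2$. First, by the paper's convention ($\sigma=1010\dots$ of length~$k$ is abbreviated to~$k$), $\pi_2=\pi_{10}$ is the \emph{alternating} two-arm probability (one primal open arm and one dual open arm), not the probability of two primal arms; your gluing of two half-plane primal arms bounds a different event. Second, and more seriously, even for the event you construct, the route~$\pi_2\ges (\pi_1^+)^2$ can only give~$(r/R)^{1-c}$ with~$c>0$ if~$\xi_1^+<1/2$, and this is \emph{false} for~$q\ge 2$: the boundary one-arm exponent equals~$1/2$ already for~$q=2$ and is (conjecturally) equal to~$1$ at~$q=4$ (see Remark~\ref{rmk:ahah}, where~$R^2\pi_4(R)\asymp R\,\pi_1^+(R)$ with~$\pi_1^+(R)\asymp R^{-1}$ expected). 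So no "refined RSW argument" can rescue this step; the half-plane gluing is simply too lossy. The argument the paper has in mind is entirely different: by~\cite{AizBur99} (whose hypotheses are supplied by Theorem~\ref{thm:RSWnear}), macroscopic interfaces have dimension at least~$1+c$, i.e.\ a crossing interface of~$\La_R$ visits~$\ges (R/r)^{1+c}$ boxes of size~$r$; every such box carries the alternating two-arm event to scale~$R$ (an interface has primal open edges on one side and dual open edges on the other), and comparing with the~$(R/r)^2$ available boxes yields~$\pi_2(p,r,R)\ges (r/R)^{1-c}$ directly. This is the missing idea, and it is also why the alternating event is the right one to consider.

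Your third step inherits this confusion and adds an unsubstantiated claim. For the alternating event no BK-type inequality is needed:~$A_{10}\subseteq A_1\cap A_0$ with~$A_1$ increasing and~$A_0$ decreasing, so plain FKG gives~$\pi_2\le\pi_1\cdot\pi_1^0$ with \emph{no} loss, and self-duality then yields the~$(r/R)^{1/2-c}$ bound (this is exactly what the paper means by "follows from~\eqref{eq:TWO ARM} and the FKG inequality"). By contrast, the approximate BK inequality~$\pi_{11}\les\pi_1^2(R/r)^{\eps}$ for arbitrary~$\eps>0$ that you propose for~$q>1$ is not something you can wave through: conditioning on one arm induces wired boundary conditions along it, and the resulting boost to connection probabilities is precisely the quantity~$\Delta_p$ that this paper shows to be \emph{polynomially} large (Theorem~\ref{thm:delta}(iv)); a sub-polynomial loss cannot be assumed. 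Finally, a local but real error in your first step: for increasing events~$B_k^c$, FKG gives~$\phi[\bigcap_k B_k^c]\ge\prod_k\phi[B_k^c]$, which is the wrong direction for the upper bound you want. The correct (standard) tool is successive conditioning via~\eqref{eq:SMP} and~\eqref{eq:CBC}: the conditional probability of a blocking dual circuit in each fresh annulus, given everything outside it, is at least~$c''$ by~\eqref{eq:RSWA} applied to the dual model, whence~$\pi_1(p,r,R)\le(1-c'')^N$.
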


\begin{proof}
The bound \eqref{eq:TWO ARM} follows from the fractal structure of interfaces, which in turn follows from Theorem~\ref{thm:RSWnear} and~\cite[Theorem~\ref{thm:RSWnear}]{AizBur99}. 
The argument is classical and we omit it.

The inequality on the right of \eqref{eq:LOWER_BOUND_ONE_ARM} is standard. 
The one on the left follows readily from \eqref{eq:TWO ARM} and the FKG inequality. 
\end{proof}

\begin{proposition}[quasi-multiplicativity of the one-arm]
  Fix ~$1\le q\le 4$. For every~$1\le r\le \rho\le R\le L(p)$,
  \begin{equation}\label{eq:MULTIPLICATIVITY}
\pi_1(p,r,\rho)\pi_1(p,\rho, R)\asymp \pi_1(p,r,R).
  \end{equation}
\end{proposition}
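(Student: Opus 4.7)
My plan is to prove the two halves of the $\asymp$ separately: the lower bound by an FKG+RSW-based gluing argument, and the upper bound by exploiting the spatial Markov property together with comparison between boundary conditions.

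For the lower bound $\pi_1(p,r,\rho)\pi_1(p,\rho,R)\les\pi_1(p,r,R)$, I would realize the connection from $\partial\Lambda_r$ to $\partial\Lambda_R$ as the concatenation of an inner arm $A_1(r,\rho)$, an outer arm $A_1(2\rho,R)$, and a ``gluing'' event $\calO$ at scale $\rho$. A suitable $\calO$ is the existence of an open circuit in $\Ann(\rho,2\rho)$ surrounding $\Lambda_\rho$, reinforced with radial crossings of sub-annuli of $\Ann(\rho,2\rho)$, chosen so that on $\calO$ every vertex of $\partial\Lambda_\rho$ lies in the same open cluster as every vertex of $\partial\Lambda_{2\rho}$. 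By~\eqref{eq:RSWnear}, \eqref{eq:RSWA} and~\eqref{eq:FKG}, $\phi_p[\calO]\ges 1$ uniformly in $p,R$ with $2\rho\le L(p)$. Since $\calO$ is increasing and $A_1(r,\rho)\cap\calO\cap A_1(2\rho,R)\subseteq A_1(r,R)$, FKG gives $\pi_1(p,r,R)\ges\pi_1(p,r,\rho)\cdot\pi_1(p,2\rho,R)$, and an analogous RSW-gluing shows $\pi_1(p,2\rho,R)\asymp\pi_1(p,\rho,R)$.

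For the upper bound $\pi_1(p,r,R)\les\pi_1(p,r,\rho)\pi_1(p,\rho,R)$, the difficulty is the absence of a BK-type inequality for the random-cluster model with $q>1$. The idea is to exploit the disjoint edge supports in the inclusion $A_1(r,R)\subseteq A_1(r,\rho)\cap A_1(2\rho,R)$: the first event depends only on edges in $\Lambda_\rho$, while the second depends on edges in $\Ann(2\rho,R)\subset\bbZ^2\setminus\Lambda_\rho$. Conditioning on $\calF_\mathrm{ext}:=\sigma(\omega_e:e\not\subset\Lambda_\rho)$, the event $A_1(2\rho,R)$ is $\calF_\mathrm{ext}$-measurable, while~\eqref{eq:SMP} and~\eqref{eq:CBC} yield $\phi_p[A_1(r,\rho)\mid\calF_\mathrm{ext}]\le\phi^1_{\Lambda_\rho,p}[A_1(r,\rho)]$. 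Therefore
\begin{equation*}
\pi_1(p,r,R)\le\phi^1_{\Lambda_\rho,p}[A_1(r,\rho)]\cdot\pi_1(p,2\rho,R)\les\phi^1_{\Lambda_\rho,p}[A_1(r,\rho)]\cdot\pi_1(p,\rho,R),
\end{equation*}
using the same RSW-gluing as above for $\pi_1(p,2\rho,R)\les\pi_1(p,\rho,R)$.

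The hard part will be the auxiliary estimate $\phi^1_{\Lambda_\rho,p}[A_1(r,\rho)]\les\pi_1(p,r,\rho)$, which expresses uniformity of the one-arm probability in the boundary conditions at the outer scale. I expect it to follow from Theorem~\ref{thm:RSWnear}: the crossing estimates, being uniform in boundary conditions, give by dyadic FKG-gluing the comparison $\phi^0_{\Lambda_\rho,p}[A_1(r,\rho)]\asymp\phi^1_{\Lambda_\rho,p}[A_1(r,\rho)]$; and $\pi_1(p,r,\rho)=\phi_p[A_1(r,\rho)]$ is sandwiched between these two by realising $\phi_p$ as the $R'\to\infty$ limit of $\phi^0_{\Lambda_{R'},p}$ and of $\phi^1_{\Lambda_{R'},p}$, together with~\eqref{eq:CBC} applied to the nested domains $\Lambda_\rho\subset\Lambda_{R'}$.
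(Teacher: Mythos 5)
Your overall architecture --- an FKG/RSW gluing for the lower bound, and the spatial Markov property together with comparison between boundary conditions and a uniformity-in-boundary-conditions estimate for the upper bound --- is exactly the ``standard consequence of Theorem~\ref{thm:RSWnear}'' that the paper's one-line proof alludes to. The upper-bound half is correct: the inclusion $A_1(r,R)\subseteq A_1(r,\rho)\cap A_1(2\rho,R)$, the conditioning on $\calF_{\rm ext}$, and the auxiliary estimate $\phi^1_{\Lambda_\rho,p}[A_1(r,\rho)]\les\pi_1(p,r,\rho)$ (which follows from Proposition~\ref{prop:mixing} applied to $A_1(r,\rho/K)$ for a large constant $K$, followed by a one-scale extension of the arm under free boundary conditions) all go through, modulo the trivial boundary case $R<2\rho$.

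The lower bound, however, contains a step that fails as written. The gluing event $\calO$ you define --- every vertex of $\partial\Lambda_\rho$ lies in the same open cluster as every vertex of $\partial\Lambda_{2\rho}$ --- forces all of $\partial\Lambda_\rho$ into a single open cluster, and in particular forces every vertex of $\partial\Lambda_\rho$ to have an incident open edge; by finite energy this has probability decaying exponentially in $\rho$, so $\phi_p[\calO]\ges 1$ is false, and no finite collection of circuits and radial crossings can produce such an event with uniformly positive probability. The underlying obstruction is that your inner arm $A_1(r,\rho)$ and outer arm $A_1(2\rho,R)$ both terminate \emph{on the boundary} of the gluing annulus $\Ann(\rho,2\rho)$ and never enter its interior, so no increasing event supported in that annulus is guaranteed to meet them. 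The standard repair is to make the arms overlap the gluing annulus: first prove the one-scale extension $\pi_1(p,r,2\rho)\ges\pi_1(p,r,\rho)$ by intersecting $A_1(r,\rho)$ with an open circuit in $\Ann(\rho/2,\rho)$ (which the arm necessarily crosses, since it runs from $\Lambda_{\rho/2}$ to $\partial\Lambda_\rho$ in the nontrivial case $r\le\rho/2$) and with an open radial crossing of $\Ann(\rho/2,2\rho)$ (which also meets that circuit); then glue $A_1(r,2\rho)$ and $A_1(\rho,R)$ via a single open circuit in $\Ann(\rho,2\rho)$, which both arms traverse and therefore intersect. With $\calO$ replaced by this construction, FKG, \eqref{eq:RSWnear} and \eqref{eq:RSWA} yield the lower bound and the rest of your argument stands.
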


\begin{proof}
This is a standard consequence of Theorem~\ref{thm:RSWnear}.\end{proof}

\subsection{Couplings via exploration}\label{sec:coupling exploration}\label{sec:7.1}
 
In this section we present a technique for coupling different random-cluster measures in an increasing fashion by exploring the graph edge by edge, which we formalise using decision trees as follows. 
 Consider a graph~$G=(V,E)$ with~$n$ edges and~$U=(U_{e})_{e \in E}$ a family of independent uniform random variables in~$[0,1]$.
For a~$n$-tuple~$e=(e_1,\dots,e_n)$ of edges and for~$t\le n$, 
write~$e_{[t]}=(e_1,\dots,e_t)$ (with the convention~$e_{[0]}=\emptyset$) and~$U_{{[t]}}=(U_{e_1},\dots,U_{e_t})$. 

\begin{definition}[decision tree, stopping time]~\\
	A {\em decision tree} is a pair~${\bf T}=(e_1, (\psi_t)_{2 \le  t \le n})$, where~$e_1 \in E$, 
	and for each~$2 \le t\le n$ the function~$\psi_t$ takes a pair~$(e_{[t-1]}, U_{{[t-1]}})$ as an input 
	and returns an element~$e_t\in E\setminus\{e_1,\dots,e_{t-1}\}$. 
A {\em stopping time} for ~${\bf T}$ is a random variable~$\tau$ taking values in~$\{1,\dots,n, \infty\}$ 
	which is such that~$\{\tau \le t\}$ is measurable in terms of~$(e_{[t]},U_{{[t]}})$.
\end{definition}
We will say that the decision tree {\em reveals} one by one the edges of~$E$; 
the edges~$e_{[t]}$ are the edges {\em explored} at time~$t$. 
Less formally, a decision tree takes~$U~$ as an input and reveals edges one  after the other.  
It always starts from the same fixed~$e_1\in E$ (which corresponds to the root of the decision tree), 
then queries the value of~$U_{e_1}$. 
After that, it continues inductively as follows: 
at step~$t>1$,
the function~$\psi_t$, which should be interpreted as the decision rule at time~$t$, takes the locations and the values of the explored edges at time~$t-1$, and decides of the next edge to reveal. 

\begin{remark}
The theory of (random) decision trees played a key role in computer science  (we refer the reader to the survey~\cite{buhrman2002complexity}), but also found many applications in other fields of mathematics. In particular, random decision trees (sometimes called randomized algorithms) were used in~\cite{schrammsteif} to study the noise sensitivity of Boolean functions, for instance in the context of percolation theory. It was also used in~\cite{DumRaoTas17} in combination with the OSSS inequality (which was originally introduced in~\cite{OSSS}) to prove sharpness of random-cluster models.
\end{remark}

Decision trees may be used to construct random-cluster measures in a step-by-step fashion. This technique is generic and may be applied to so-called monotonic measures (see e.g.~\cite{Gri06}).
A key feature of this construction is that it enables one to do it with two (or more) random-cluster measures simultaneously. In this case, the decision tree produces couplings of these measures. Since we are mostly interested in couplings, we directly explain the construction for a pair of configurations. For~$1\le t\le n$, we extend the notation~$e_{[t]}$ and~$U_{{[t]}}$ with the notation~$\omega_{{[t]}}=(\omega_{e_1},\dots,\omega_{e_t})$. Below, we use the notation~$G_t$ for the graph~$G$ minus the edges~$e_1,\dots,e_t$.

\begin{proposition}\label{prop:coupling}
	Fix a finite subgraph~$G = (V,E)$ of~$\bbZ^2$. Consider~$0 \le p\le p'\le 1$,~$q\geq1$ 
	and ~$\xi\le\xi'$ boundary conditions. 
	Let~$\mathbf T=(e_1, (\psi_t)_{2 \le  t \le n})$ be a decision tree 
	and~$(U_e)_{e \in E}$ be a set of i.i.d.~uniform random variables under some measure~$\bbP_{\bf T}$.
	Define~$\omega,\omega' \in \{0,1\}^E$ by the following inductive procedure: for every~$0\le t<n$,
\begin{align*}
    \omega_{e_{t+1}} 
    &:= \ind(U_{e_t}\ge \phi_{G_t,p}^{\xi_t}[e_t \text{ closed}]);\\
   \omega'_{e_{t+1}} &:= \ind(U_{e_t}\ge \phi_{G_t,p}^{\xi_t'}[e_t \text{ closed}]),
   \end{align*}
	where~$\xi_t$ and~$\xi'_t$ are the boundary conditions induced by~$\omega_{{[t]}}^\xi$ and~$(\omega'_{{[t]}})^{\xi'}$, respectively (when~$t=0$, these are~$\xi$ and~$\xi'$). Then,~$\mathbb P_{\bf T}$-almost surely, for every stopping time~$\tau$ for~$\mathbf T$, we have that
	\begin{itemize}[noitemsep]
	\item~$\omega_{{[\tau]}}\le \omega'_{{[\tau]}}$,
	\item 	conditionally on~$(\tau,\omega_{{[\tau]}},\omega'_{{[\tau]}})$, 
	$\omega$ and~$\omega'$ on~$G_\tau$ have law~$\phi_{G_\tau, p}^{\xi_{\tau}}$ 
	and~$\phi_{G_\tau, p'}^{\xi'_{\tau}}$. 
	\end{itemize}
\end{proposition}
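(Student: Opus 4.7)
The plan is to prove both assertions by induction on $t \in \{0, 1, \ldots, n\}$ for deterministic $t$, and then extend to the stopping time $\tau$ by partitioning on $\{\tau = t\}$. The two main ingredients are the monotonicity properties \eqref{eq:CBC} and \eqref{eq:pmon} (for the pointwise domination) together with the spatial Markov property \eqref{eq:SMP} (for the conditional law). Both are classical for random-cluster measures and are listed in the preliminary section.

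For the pointwise domination, I would prove by induction that $\omega_{[t]} \le \omega'_{[t]}$ for all $t \le n$. The base case $t = 0$ is vacuous. Assuming the claim at step $t$, any two boundary vertices of $G_t$ wired by the partition $\xi_t$ are a fortiori wired by $\xi'_t$, so $\xi_t \le \xi'_t$. The edge $e_{t+1}$ revealed at the next step is determined by $(e_{[t]}, U_{[t]})$ and is therefore the same for both procedures. Since $\{e_{t+1} \text{ closed}\}$ is a decreasing event, \eqref{eq:CBC} and \eqref{eq:pmon} together yield
\[
\phi_{G_t, p}^{\xi_t}[e_{t+1} \text{ closed}] \;\ge\; \phi_{G_t, p'}^{\xi'_t}[e_{t+1} \text{ closed}].
\]
Consequently, whenever the fresh uniform exceeds the larger threshold and forces $\omega_{e_{t+1}} = 1$, it a fortiori exceeds the smaller one, so $\omega'_{e_{t+1}} = 1$ as well, completing the induction.

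For the conditional-law statement, I would again induct on deterministic $t$, proving that conditionally on $(e_{[t]}, \omega_{[t]})$ the restriction of $\omega$ to $G_t$ has law $\phi_{G_t, p}^{\xi_t}$. The base case is the definition of $\phi_{G, p}^\xi$. In the induction step, one reveals $e_{t+1}$ and sets $\omega_{e_{t+1}}$ by comparing the fresh uniform (independent of everything previously consulted) to $\phi_{G_t, p}^{\xi_t}[e_{t+1} \text{ closed}]$, which reproduces the correct marginal; then \eqref{eq:SMP} identifies the law of $\omega$ on $G_{t+1}$ conditionally on $(e_{[t+1]}, \omega_{[t+1]})$ as $\phi_{G_{t+1}, p}^{\xi_{t+1}}$. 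The same argument applies verbatim to $\omega'$ at parameters $(p', \xi')$. To upgrade to a general stopping time $\tau$, one decomposes on $\{\tau = t\}$, which by definition is measurable with respect to $(e_{[t]}, U_{[t]})$, and invokes the deterministic-time statement on each such event. The only obstacle is the bookkeeping check that the uniforms $U_e$ attached to still-unexplored edges remain i.i.d.\ uniform and independent of the revealed history; this is automatic because each $U_e$ is consulted precisely at the step when its edge is revealed. Beyond this, the proof is a direct combination of the standard edge-by-edge construction of random-cluster measures with monotonicity and the SMP, and no feature specific to the planar or critical setting enters.
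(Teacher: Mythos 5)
Your proposal is correct and follows the same route as the paper, which proves the domination by induction using the monotonicity properties \eqref{eq:CBC} and \eqref{eq:pmon} and identifies the conditional laws via the spatial Markov property \eqref{eq:SMP}; your write-up merely spells out the induction and the decomposition over $\{\tau=t\}$ that the paper leaves implicit. No gaps.
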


Note that for~$\tau=0$, we obtain that~$\omega$ and~$\omega'$ have laws~$\phi_{G,p}^{\xi}$ and~$\phi_{G,p'}^{\xi'}$, respectively, and that~$\omega \le \omega'$ a.s.. The procedure above may be applied to infinite-volume measures as long as~$\bf T$ is such that a.s. all edges are eventually queried.

\begin{proof}
	That~$\omega_{{[\tau]}}\le\omega'_{{[\tau]}}$ is proved by induction and uses the monotonic property of random-cluster measures  mentioned in Section~\ref{sec:1.7}. 
	That~$\omega_{{[\tau]}}$ and~$\omega'_{{[\tau]}}$ have the right laws 
	follows immediately from the spatial Markov property \eqref{eq:SMP}.
\end{proof}

\begin{remark}While the definition indicates that~$\mathbf T$ looks at~$U_{{[t]}}$ in order to decide the next queried edge~$e_{t+1}$ (and hence~$U_{e_{t+1}}$), we will often describe~$\mathbf T$ as choosing~$e_{t+1}$ in function of~$\omega_{{[t]}}$ and~$\omega'_{{[t]}}$, which are in turn functions of~$( e_{[t]}, U_{{[t]}})$. 
\end{remark}

\begin{remark}\label{rem:T_stopping_time}
Due to Proposition~\ref{prop:coupling}, we may construct an increasing coupling between ~$\phi_{G, p}^{\xi}$ and~$\phi_{G, p'}^{\xi'}$ by switching between decision trees at stopping times. Indeed, if we start the coupling by following a decision tree~$\mathbf T$, but stop the procedure at some stopping time~$\tau$, 
then we may complete it with any increasing coupling of~$\phi_{G_\tau, p}^{\xi_\tau}$ and~$\phi_{G_\tau, p'}^{\xi'_\tau}$.
We will often use this property, sometimes continuing with a specific coupling, other times with an arbitrary one.  
\end{remark}

We now discuss a few examples of decision trees and the couplings they produce.

\paragraph{Example 1} The deterministic decision tree~$\mathbf T$ for which the order~$e_1,\dots,e_n$ is fixed. 

\paragraph{Example 2} 
The decision tree~$\mathbf T$ that explores the clusters of~$\partial G$ in~$\omega'$. 
Formally, this decision tree is defined using a growing sequence~$\partial G=V_0\subset V_1\subset \cdots\subset V$ that represents the sets of vertices that the decision tree found to be connected to~$\partial G$ at time~$t$.

Fix an arbitrary ordering of the edges in~$E$ and set~$V_0=\partial G$. 
Now, for~$t \geq 0$, assume that~$e_{[t]}$ and~$V_t\subset V$ have been constructed and distinguish between two cases:
\begin{itemize}[noitemsep,nolistsep]
\item If there exists an unexplored edge connecting a vertex~$x \in V_t$ to a vertex~$y \notin V_t$, then 
reveal ~$e_{t+1}= xy$ (if several choices for~$xy$ exists, choose one according to some arbitrary order) and set
~$$V_{t+1}:=\begin{cases}V_t\cup\{y\}&\text{ if }\omega'_{e_{t+1}}=1\\ V_t&\text{ otherwise}.\end{cases}$$
\item If no edge as above exists, then set~$ e_{t+1}$ to be the smallest~$e\in E\setminus e_{[t]}$ for some arbitrary order and set~$V_{t+1}=V_t$.
\end{itemize}

The coupling~$\mathbb P_{\mathbf T}$ has the following useful property when~$p=p'$. 
If~$\tau$ denotes the first time that the decision tree finds no unexplored edge between~$V_t$ and~$V_t^c$ (note that~$\tau$ is a stopping time),
then all edges bounding the unexplored region~$E\setminus e_{[\tau]}$ are closed in~$\omega'_{[\tau]}$, hence also in~$\omega_{[\tau]}$. 
As a consequence, at every subsequent step in the coupling process, edges will be sampled with the same rule in the two configurations, hence the configurations will be equal on~$E\setminus e_{[\tau]}$. Equivalently, they will only (possibly) differ for edges that are connected to~$\partial G$ in~$\omega'$, thus leading to the following conclusion when combined with  Theorem~\ref{thm:RSWnear}.

\begin{proposition}[mixing]\label{prop:mixing}
    There exists~$c_{\rm mix}>0$ such that for every~$p\in(0,1)$ and every~$r\le R$ with~$R/r$ large enough, every~$G\supset\Lambda_{R}$ 
    and every event~$A$ depending on edges in~$\Lambda_r$, we have that for every two boundary conditions~$\xi$ and~$\psi$,
    \begin{align}\label{eq:mixing b.c.}
    	|\phi_{G,p}^\psi[A]-\phi_{G,p}^\xi [A]|
    	\le (r/R)^{c_{\rm mix}}\,\phi_{G,p}^\psi [A].
    \end{align}
    In particular, for any two events~$A$ and~$B$ depending on the edges inside~$\Lambda_r$ and outside~$\Lambda_R$, respectively,
    \begin{equation}\tag{Mix}\label{eq:mix}
    	| \phi_{G,p}^{\xi}[A\cap B]-\phi_{G,p}^{\xi}[A]\phi_{G,p}^{\xi}[B]|\le (r/R)^{c_{\rm mix}}\,  \phi_{G,p}^{\xi}[A]\phi_{G,p}^{\xi}[B].
    \end{equation}
\end{proposition}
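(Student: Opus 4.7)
The plan is to construct a simultaneous monotone coupling $\bbP$ of $\omega^\psi\sim\phi_{G,p}^\psi$, $\omega^\xi\sim\phi_{G,p}^\xi$, and a dominating $\omega^1\sim\phi_{G,p}^1$, and to produce with probability $1-(r/R)^{c_{\rm mix}}$ a closed primal ``barrier'' in $\Ann(r,R)$ preventing $\partial G$ from being connected to $\Lambda_r$ in $\omega^1$. On this event the three configurations will agree on $\Lambda_r$, so in particular $\omega^\psi\equiv\omega^\xi$ there.

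First, I would apply Proposition~\ref{prop:coupling} twice --- once to the pair $(\psi,1)$ and once to $(\xi,1)$, sharing the driving uniforms $(U_e)_{e\in E}$ --- where the decision tree in both cases is the one of Example~2 in Section~\ref{sec:coupling exploration}, exploring the clusters of $\partial G$ in the dominating configuration $\omega^1$. At the stopping time when this exploration terminates, every unrevealed edge is closed in $\omega^1$, hence in $\omega^\psi$ and $\omega^\xi$ by monotonicity, so the three configurations coincide on the unexplored region. Let $\calE$ be the event that the explored vertex set never reaches $\Lambda_r$; on $\calE$ we have $\omega^\psi\equiv\omega^\xi$ throughout $\Lambda_r$.

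Second, to estimate $\bbP[\calE^c]$, I would use a dyadic scale argument. Set $A_k:=\Ann(2^kr,2^{k+1}r)$ for $k=0,\dots,K-1$ with $K:=\lfloor\log_2(R/r)\rfloor$, and let $\calC_k$ denote the (decreasing) event that $A_k$ contains a dual-open circuit surrounding $\Lambda_{2^kr}$. The crossing estimate~\eqref{eq:RSWnear} combined with the FKG inequality --- gluing four rectangle crossings into a circuit --- yields
\[
	\phi_{G,p}^\eta[\calC_k]\ge c'
\]
uniformly in $\eta$ and in $k$ provided $2^{k+1}r\le L(p)$; the key point is that~\eqref{eq:RSWnear} holds for \emph{every} boundary condition. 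Revealing the annuli in decreasing order of $k$ and applying the spatial Markov property at each step gives
\[
	\bbP\Bigl[\bigcap_{k=0}^{K-1}\calC_k^c\Bigr]\le(1-c')^K\le(r/R)^{c_{\rm mix}},
\]
with $c_{\rm mix}:=-\log_2(1-c')>0$. A circuit in any $A_k$ is made of primal-closed edges separating $\partial G$ from $\Lambda_r$, so $\calE^c\subset\bigcap_k\calC_k^c$. When $R>L(p)$, one truncates the dyadic argument at scale $L(p)$ and invokes the (stronger) exponential decay of connectivities beyond $L(p)$.

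Third, the coupling directly yields the additive estimate $|\phi^\psi[A]-\phi^\xi[A]|\le\bbP[\calE^c]\le(r/R)^{c_{\rm mix}}$ for any $A\in\calF(\Lambda_r)$. The multiplicative form~\eqref{eq:mixing b.c.} follows by a further spatial-Markov decomposition: on $\calE$ the conditional law of the configuration inside the revealed barrier is a random-cluster measure with free boundary conditions on the inside of the outermost circuit and is therefore \emph{independent} of $\psi$ and $\xi$; thus $\phi^\psi[A,\calE]$ and $\phi^\xi[A,\calE]$ differ only through the law of the barrier itself, while the residual $\phi^\eta[A,\calE^c]$ is bounded by $\bbP[\calE^c]$. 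The ``in particular'' covariance bound~\eqref{eq:mix} is then obtained by conditioning on the configuration outside $\Lambda_R$ --- which determines $B$ and induces random boundary conditions on $\partial\Lambda_R$ --- and applying the first part of the proposition to the resulting random-cluster measures on $\Lambda_R$. The main technical obstacle is the uniformity in boundary conditions when applying~\eqref{eq:RSWnear} within each $A_k$, which is precisely the content of Theorem~\ref{thm:RSWnear} and is what makes the dyadic iteration work.
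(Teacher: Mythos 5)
Your construction of the three-way coupling and the dyadic dual-circuit estimate is sound, and it delivers the \emph{additive} bound: on the event $\calE$ that the exploration of the clusters of $\partial G$ in $\omega^1$ terminates before reaching $\Lambda_r$, one indeed has $\omega^\psi\equiv\omega^\xi$ on $\Lambda_r$, whence
\begin{equation*}
|\phi_{G,p}^\psi[A]-\phi_{G,p}^\xi[A]|=\big|\bbP[\omega^\psi\in A,\calE^c]-\bbP[\omega^\xi\in A,\calE^c]\big|\le \bbP[\calE^c]\le (r/R)^{c}.
\end{equation*}
The genuine gap is the upgrade to the \emph{multiplicative} bound $\le (r/R)^{c}\phi^\psi_{G,p}[A]$, which is the entire point of the proposition (it is applied to rare events such as arm events, for which the additive bound is useless). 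Your step three does not achieve this: the observation that the conditional law inside the barrier is the same for $\psi$ and $\xi$ only re-proves $\bbP[\omega^\psi\in A,\calE]=\bbP[\omega^\xi\in A,\calE]$, and bounding "the residual by $\bbP[\calE^c]$" is again additive. What is needed is $\bbP[\omega^\eta\in A,\calE^c]\le (r/R)^c\,\phi^\eta_{G,p}[A]$, i.e.\ a decoupling of the barrier-failure event (which lives in $\Ann(r,R)$ and is determined by $\omega^1$) from $\{\omega^\eta\in A\}$ (which lives in $\Lambda_r$ and concerns the smaller configuration). Attempting this by first revealing everything outside an intermediate box and then bounding $\phi^\zeta_{\Lambda_\rho,p}[A]$ uniformly in $\zeta$ by a constant times $\phi^\eta_{G,p}[A]$ is circular — it is precisely the statement being proven. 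The paper resolves this with a two-step argument at the intermediate scale $\rho=\sqrt{rR}$: first a one-sided comparison $\phi^\psi_{\Omega,p}[A]\le \phi^0_{\Omega,p}[A]/(1-(r/R)^c)$ valid for \emph{all} domains $\Omega\supset\Lambda_\rho$ (here the key point is that the difference event forces the one-arm event $\{\partial\Lambda_r\lra\partial G\}$ to occur \emph{in the same configuration} $\omega'$ that realises $A$, so the spatial Markov property factorises the two); then the converse inequality by decomposing $\phi^\psi_{G,p}[A]$ over the realisations of the random domain $\Omega(\omega)$ of vertices not connected to $\partial\Lambda_R$, and applying the first bound to each such $\Omega$. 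Some argument of this nature (or an induction over scales) is unavoidable and is missing from your proposal.

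A second, more minor issue: your treatment of the case $R>L(p)$ by "exponential decay of connectivities" is only valid for $p\le p_c$. For $p>p_c$ primal connectivities do not decay, $\Lambda_r$ is typically connected to $\partial G$ in $\omega^1$, and no dual barrier exists beyond scale $L(p)$, so your event $\calE$ has small probability rather than large. The standard fix, used by the paper as its opening line, is to reduce to $p\le p_c$ by planar duality (an event in $\calF(\Lambda_r)$ for the primal model is an event in $\calF(\Lambda_{r+1})$ for the dual, and boundary conditions dualise); alternatively one would have to switch to the dual exploration of Example~3 for $p>p_c$. You should state this reduction explicitly.
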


One may be surprised at first sight not to see any reference to the characteristic length in this statement, yet one should remember that the rate of decay is in fact {\em faster} when~$p$ is away from~$p_c$. The previous proposition simply state a universal bound on the rate of mixing valid for every~$p\in(0,1)$.

Notice also that in \eqref{eq:mixing b.c.} the event $A$ is not assumed increasing nor is there any assumption of ordering between the boundary conditions~$\xi$ and~$\psi$. 
These assumptions would greatly simplify the proof. 

\begin{proof}
Notice that applying \eqref{eq:mixing b.c.} to~$\La_R$ and using \eqref{eq:SMP} implies directly~\eqref{eq:mix}. We therefore focus on proving \eqref{eq:mixing b.c.}.

By duality, it suffices to prove the statement for~$p\le p_c$. 
Set~$\rho = \sqrt{rR}$ and let~$\Omega$ be a subgraph of~$G$ containing~$\La_\rho$.
We first compare the probability of~$A$ under free boundary conditions to that under arbitrary boundary conditions~$\psi$. 
For reasons which will be apparent later, we do this on~$\Omega$. 

For~$\psi$ boundary conditions on~$\partial \Omega$, using the increasing coupling~$\mathbb P_{\bf T}$ between~$\phi_{\Omega,p}^0$ and~$\phi_{\Omega,p}^\psi$ described above, we find
	\begin{align*}
		\phi_{\Omega,p}^\psi [A] - \phi_{\Omega,p}^0 [A] 
		&= \bbP_{\bfT} [\omega \notin A \text{ but } \omega' \in A]\\
		&\le \phi_{\Omega,p}^\psi  [\omega' \in A \text{ and } \partial \La_r \xlra{\omega'} \partial G]\\
		&\le \phi_{\Omega\setminus\La_r,p_c}^1[\partial\Lambda_r\longleftrightarrow\partial G]\phi_{\Omega,p}^\psi [A] \\
		&\le (r/R)^c\phi_{\Omega,p}^\psi[A],
	\end{align*}
	for some constant~$c >0$. The first inequality is due to the property of the coupling, 
	the second to \eqref{eq:SMP}, \eqref{eq:CBC} and \eqref{eq:pmon} and the third to \eqref{eq:LOWER_BOUND_ONE_ARM}.
	In conclusion,
	\begin{align}\label{eq:oj}
		\phi_{\Omega,p}^\psi [A]\le \phi_{\Omega,p}^0[A]/(1-(r/R)^c).
	\end{align}	
	The above applies in particular to~$\Omega = G$; let us now obtain a converse bound in this case.
	 
	Start by observing that, for any fixed~$\Omega$ as above 
	\begin{align*}
		 \phi_{G,p}^0[A] 
		 = \sum_{\xi \text{ b.c. on }\partial \Omega} \phi_{\Omega,p}^\xi [A] \phi_{G,p}^0[\omega_{|G\setminus\Omega} \text{ induces } \xi \text{ on }\partial\Omega]
		 \le \phi_{\Omega,p}^0[A]/(1-(r/R)^c).
	\end{align*}
	Fix now some boundary conditions~$\psi$ on~$G$.
	For a configuration~$\omega$ on~$G$, let~$\Omega(\omega)$ be the set of vertices that are {\em not} connected to~$\bbZ^2\setminus\La_{R-1}$. 
	Then, for~$\psi$ a boundary condition on~$G$, 
	\begin{align*}
		\phi_{G,p}^\psi [A]&\ge \phi_{G,p}^\psi [A,\partial \La_\rho \not\longleftrightarrow \partial \La_R]\\
		&=\sum_{\Omega \supset \La_{\rho} }\phi_{\Omega,p}^0[A]\,\phi_{G,p}^\psi [\Omega(\omega)=\Omega]\\
		&\ge (1-(r/R)^c)\phi_{G,p}^0[A] \phi_{G,p}^\psi [\partial \La_\rho \not\longleftrightarrow \partial \La_R]\\
		&\ge (1-(r/R)^c)^2 \phi_{G,p}^0[A],
		\end{align*}
		where in the second inequality we used \eqref{eq:oj} and the fact that 
		\[
		\sum_{\Omega \supset \La_{\rho} }\phi_{G,p}^\psi [\Omega(\omega)=\Omega] = \phi_{G,p}^\psi [\partial \La_\rho \not\longleftrightarrow \partial \La_R].
		\] 
		In the last inequality we used that 
		\[
		\phi_{G,p}^\psi [\partial \La_\rho \not\longleftrightarrow \partial \La_R] \leq \phi_{G,p_c}^\psi [\partial \La_\rho \not\longleftrightarrow \partial \La_R]
		\leq (\rho/R)^{2c}
		\]
		 for some constant~$c > 0$, by \eqref{eq:LOWER_BOUND_ONE_ARM}.
		
	Using the inequality above and \eqref{eq:oj} applied to~$G$, we conclude that 
	\begin{align*}
		\big|\phi_{G,p}^\psi [A] - \phi_{G,p}^0 [A]\big|
		\leq 2(r/R)^{c} \phi_{G,p}^0[A].
	\end{align*}
	Applying the above to two arbitrary boundary conditions~$\psi$ and~$\xi$ on~$\partial G$ and using the triangular inequality, 
	we conclude that for all~$r/R$ large enough, 
	\begin{align*}
		\big|\phi_{G,p}^\psi [A] - \phi_{G,p}^\xi [A]\big|
		\leq 4(r/R)^{c}\phi_{G,p}^0[A]
		\leq 4(r/R)^{c}(1+2(r/R)^c)\phi_{G,p}^\psi[A].
	\end{align*}
	By assuming again that~$r/R$ is large enough and modifying the constant in the exponent, we may eliminate the prefactor~$4$, and obtain~\eqref{eq:mixing b.c.}.
\end{proof}

\paragraph{Example 3} Alternatively, one may consider the decision tree~$\mathbf T$ that explores the dual clusters of~$\partial G^*$ in~$\omega^*$. 
We do not define this decision tree formally as it is almost identical to that of the previous example. 
We simply mention that, when coupling two measures with~$p=p'$ using~$\mathbf T$, 
differences only occur for edges that are connected in~$\omega^*$ to~$\partial G$. 

\begin{remark}
In spite of the constructions above, 
we are unaware of the existence of a coupling of random-cluster models with boundary conditions~$\xi\le\xi'$ and same edge-parameter~$p=p'$  that combines the properties of  Examples 2 and 3. Namely a coupling for which only edges connected in both~$\omega'$ and~$\omega^*$ to~$\partial G$ may have different states in the two configurations. 
\end{remark}

\begin{remark}
Even though the uniform variables~$(U_e)_{e\in E}$ are attached to the edges, 
the order in which these are revealed by~$\mathbf T$ has an influence on the final couple of configurations~$(\omega, \omega')$. 
Indeed, consider~$G = \Lambda_R$, parameters~$p = p'$ and boundary conditions~$\xi = 0$ and~$\xi' = 1$; 
let~$e$ be one of the edges containing the origin.  
In the coupling produced with the decision tree of Example~$1$, 
$\omega_e$ may differ from~$\omega'_e$ when~$e$ is not connected to~$\partial G$ in~$\omega'$, while this is impossible with the one produced by Example 2.\end{remark}

\subsection{Equivalence~$L(p)-\xi(p)$: proof of Theorem~\ref{thm:L_equiv_xi}}\label{sec:11}

We will show the following (stronger) proposition.
\begin{proposition}\label{prop:estimate subcritical}
There exist~$c,C>0$ such that for every~$p\le p_c$ and~$x\in \bbZ^2$,
\begin{equation}\label{eq:fundamental}
	\exp[-C|x|/L(p)]\les \phi_{p}[\Lambda_{L(p)}\longleftrightarrow \Lambda_{L(p)}(x)]\les\exp[-c|x|/L(p)].
\end{equation}
\end{proposition}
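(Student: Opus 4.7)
Set $L := L(p)$; I may assume $|x| \ge 2L$ (otherwise the claim is immediate from Theorem~\ref{thm:RSWnear}). I would prove the two bounds separately. For the \emph{lower bound}, the plan is to build a chain of $L$-scale boxes linking $\Lambda_L$ to $\Lambda_L(x)$: put $y_0 = 0$ and choose $y_1,\dots,y_k$ approximately on the segment $[0,x]$ with $|y_{i+1}-y_i|_\infty \asymp L$, $y_k$ within $L$ of $x$, so that $k \asymp |x|/L$. For each~$i$, let $\calC_i$ be the increasing event that a rectangle of bounded aspect ratio containing $\Lambda_L(y_i)\cup \Lambda_L(y_{i+1})$ is crossed in its long direction; Theorem~\ref{thm:RSWnear} gives a uniform $c_0>0$ with $\phi_p[\calC_i] \ge c_0$. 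The \eqref{eq:FKG} inequality applied to the intersection yields
\begin{equation*}
\phi_p[\Lambda_L \longleftrightarrow \Lambda_L(x)] \;\ges\; \phi_p\Bigl[\bigcap_i \calC_i\Bigr] \;\ges\; c_0^k \;\ges\; \exp(-C|x|/L).
\end{equation*}

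For the \emph{upper bound} I would coarse-grain at scale $2L$: partition $\bbZ^2$ into cells $C_z := [2Lz_1,2L(z_1{+}1)] \times [2Lz_2,2L(z_2{+}1)]$ and declare $z$ \emph{good} when $C_z$ admits both a dual top-to-bottom and a dual left-to-right crossing. By the definition of $L(p)$ together with self-duality, each dual crossing has probability $\ge 1-\delta$ for $p \le p_c$, and \eqref{eq:FKG} for decreasing events gives $\phi_p[z\text{ good}] \ge (1-\delta)^2$. Planar duality forces the two dual crossings of a good cell to intersect and partition $C_z$ into four quadrants: any primal cluster meeting $C_z$ is confined to a single quadrant, hence can exit $C_z$ only through the two sides bordering that quadrant. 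Tracing the coarse trajectory of a primal path from $\Lambda_L$ to $\Lambda_L(x)$, one checks that any rightward transition through a good cell lands the cluster in a left quadrant of the next cell, from which further rightward progress is impossible while one remains in good cells; the same holds for the other three directions by symmetry. Consequently the $\asymp |x|/L$ net rightward transitions forcibly made by any such path require $\ges |x|/L$ \emph{bad} cells on the trajectory, and the visited coarse cells form a $\bbZ^2$-connected subset of the coarse lattice.

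The \emph{Peierls step} proceeds as follows. The number of $\bbZ^2$-connected subsets of cardinality $n$ anchored near the origin is at most $C^n$ for an absolute $C$; within each, I would extract a sub-collection with pairwise coarse distance $\ge 3$, so the corresponding cells have disjoint edge-sets, and iteratively apply \eqref{eq:SMP}, \eqref{eq:CBC} and Theorem~\ref{thm:RSWnear} to obtain the uniform conditional bound $\phi_p[z\text{ bad} \mid \calF_{\bbZ^2 \setminus C_z}] \le 1-c$. Multiplying the resulting $(1-c)^{\Theta(n)}$ with the $C^n$ combinatorial factor and summing over $n \ges |x|/L$ yields the target bound $\exp(-c|x|/L)$. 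The \emph{main obstacle} is that the RSW constant is only bounded away from $0$ and $1$, not close to $1$, under worst-case boundary conditions, so a naive Peierls estimate with a single pair of dual crossings need not decay. I would overcome this by strengthening the good event to require at least one dual circuit among several nested annuli inside $C_z$, all at scales $\le L$ so that Theorem~\ref{thm:RSWnear} applies; iterated conditioning via \eqref{eq:SMP} then boosts the conditional good probability arbitrarily close to $1$, closing the Peierls estimate with a rate uniform in $p$ and in $q \in [1,4]$.
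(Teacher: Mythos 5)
Your lower bound is essentially the paper's argument (the paper uses circuits in annuli $\Lambda_{2L}(y_j)\setminus\Lambda_{L}(y_j)$ rather than long crossings of rectangles; note that consecutive long crossings of overlapping rectangles need not intersect, so as written you should add circuits or transversal crossings to glue the chain — a minor fix). The upper bound, however, has a genuine gap precisely at the point you flag as the main obstacle. Your proposed repair — redefining ``good'' as the existence of a dual circuit in at least one of several nested annuli \emph{inside} $C_z$, all at scales $\le L$ — destroys the geometric content of the coarse-graining: a dual circuit contained in an annulus strictly inside the cell surrounds only a central sub-box and does not separate opposite sides of $C_z$, so a primal path can cross the cell while avoiding all the annuli. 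The deterministic claim that a crossing from $\Lambda_L$ to $\Lambda_L(x)$ forces $\ges |x|/L$ bad cells therefore fails for the strengthened definition. Nor can the boosting be salvaged in an obvious way: to block a crossing, the dual circuit must separate the two relevant sides, which confines it to a single annulus of bounded modulus around the box at scale $\asymp L$; splitting that annulus into $m$ disjoint thin annuli degrades the RSW constant with $m$, and going to scales above $L$ puts you outside the range of Theorem~\ref{thm:RSWnear}. So under worst-case boundary conditions you really only get one chance with a fixed constant $c$ bounded away from $1$, and the Peierls sum need not converge.

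The paper closes this gap differently: it exploits the freedom in the threshold $\delta$ defining $L(p)$. The renormalization inequality of~\cite{DumTas18} gives $\phi_p[\Lambda_L\leftrightarrow\partial\Lambda_{2L}]\le C\,\phi_p[\calC(\Lambda_L)]^{1/C}\le C\delta^{1/C}$, which is made smaller than $8^{-49}$ by taking $\delta$ small; this is transferred to wired boundary conditions on $\Lambda_{4L}$ via \eqref{eq:mix}. A connection from $\Lambda_L$ to $\Lambda_L(x)$ forces a chain of $N\ge|x|/L$ sites of $L\bbZ^2$ each satisfying the one-arm event to distance $2L$; extracting $N/49$ of them at mutual distance $\ge 8L$ and applying \eqref{eq:SMP} and \eqref{eq:CBC} iteratively yields the bound $\sum_{N\ge|x|/L}8^N(8^{-49})^{N/49}$, which converges geometrically. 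In other words, the entropy is beaten not by pushing a conditional blocking probability close to $1$ (which RSW alone cannot do), but by making the unconditional one-arm probability at scale $L$ super-exponentially small in the separation constant — the paper's subsequent remark that this is where the strongest condition on $\delta$ is imposed is exactly the point your argument is missing.
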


Before proving this proposition, we explain how it implies the theorem.
\begin{proof}[Theorem~\ref{thm:L_equiv_xi}] 
    For~$p < p_c$, the proof is immediate thanks to the definition of~$\xi(p)$ and the fact that 
    \begin{equation}\label{eq:fundweak}
    	p^{2|\Lambda_{L(p)}|}\phi_{p}[\Lambda_{L(p)}\longleftrightarrow \Lambda_{L(p)}(x)]
    	\le\phi_p[0\longleftrightarrow x]
    	\le \phi_{p}[\Lambda_{L(p)}\longleftrightarrow \Lambda_{L(p)}(x)].
    \end{equation}
    
    For~$p > p_c$ we proceed by duality. 
    Notice that 
    \begin{align*}
    \pi_1(p,n)-\theta(p) 
    &= \phi_{p}[0 \longleftrightarrow \partial \La_n \text{ but } 0 \not\longleftrightarrow \infty] \\
    &\le \sum_{k\geq 0} \phi_{p}[(k+\tfrac12,0) \xlra{\omega^*} \partial \La_{k \vee n}(k,0)]
	\les\exp[-c|x|/L(p^*)].
    \end{align*}
	Indeed, any configuration contributing to the second probability contains a dual circuit of length at least~$n$, surrounding~$0$ 
	and passing through some point~$(0,k)$ of the horizontal axis. The last inequality is due to the subcritical case already established. 
	
	Conversely, due to \eqref{eq:CBC},
	\begin{align*}
	 \phi_{p}[0 \longleftrightarrow \partial \La_n \text{ and } 0 \not\longleftrightarrow \infty] 
	 \geq \phi_{\La_{2n},p_c}^0[0 \longleftrightarrow \partial \La_n] \phi_{p}[\La_{2n}\not\longleftrightarrow \infty ].
	\end{align*}
	As~$n$ tends to infinity, the first term in the right-hand side above decays at most polynomially due to \eqref{eq:LOWER_BOUND_ONE_ARM} and \eqref{eq:mixing b.c.}, 
	while the second is lower bounded by~$\exp[-C|x|/L(p^*)]$ due to the subcritical case and the FKG property. 
	
	The two inequalities above show that~$\xi(p)\asymp L(p^*)$. That~$L(p^*)\asymp L(p)$ follows directly by duality from the definition of the characteristic length.
\end{proof}	
We now turn to the proof of Proposition~\ref{prop:estimate subcritical}.

\begin{proof}[Proposition~\ref{prop:estimate subcritical}]
Set~$L=L(p)$. We assume that~$x\in L\mathbb Z^2$; the general case can be solved similarly. 
We start with the lower bound. Consider the shortest family of vertices of~$y_i \in L\mathbb Z^2$ with~$0=y_0,\dots,y_k=x$. 
Let~$A_L(y)$ be the event that there exists a circuit in~$\Lambda_{2L}(y)$ surrounding~$\Lambda_L(y)$. If~$A_L(y_j)$ occurs for every~$0\le j\le k$, then~$\Lambda_L$ is connected to~$\Lambda_L(x)$. We deduce from
the FKG inequality and Theorem~\ref{thm:RSWnear} that
\begin{equation}
\phi_p[\Lambda_L\longleftrightarrow \Lambda_L(x)]\ge \phi_p[A_L]^{k+1}\ge \exp[-C|x|/L],
\end{equation}
where the last inequality follows from Theorem~\ref{thm:RSWnear} and~$C > 0$ is some universal constant.

For the upper bound, we start by observing that by~\eqref{eq:mix} and  the RSW theorem from~\cite{DumTas18}, we have that, for some constant~$C$, 
\begin{align}\label{eq:ihuu}
\phi^1_{\Lambda_{4L},p}[\Lambda_L\longleftrightarrow\partial\Lambda_{2L}]
&\les \phi_{p}[\Lambda_L\longleftrightarrow\partial\Lambda_{2L}]\le C\phi_{p}[\calC(\Lambda_L)]^{1/C}\le C\delta^{1/C}<8^{-49},
\end{align}
provided that~$\delta$ in the definition of~$L(p)$ is chosen sufficiently small. 
Now, if~$\Lambda_L$ and~$\Lambda_L(x)$ are connected, then there must exist a sequence of~$N \ge |x|/L$ distinct vertices~$0=y_1,\dots,y_N=x$ contained in~$L\bbZ^2$ 
such that 
\begin{itemize}[noitemsep,nolistsep]
\item~$\|y_i-y_{i+1}\|_\infty= L$ for every~$i$,
\item~$\Lambda_L(y_i)$ is connected to~$\partial\Lambda_{2L}(y_i)$ for every~$i$.
\end{itemize} 
Choose out of these the first subsequence of vertices~$(y_i)_{i \in I}$ for the lexicographical order which contains $k = N/49$ vertices which are all at distance at least~$8L$ of each other (the existence of such a subsequence is due to the pigeonhole principle). 
The union bound over the possible choices of~$y_1,\dots,y_N$ (of which there are at most~$8^N$), 
the spatial Markov property~\eqref{eq:SMP} and the comparison between boundary conditions~\eqref{eq:CBC} imply that
\begin{align*}
	\phi_p[\Lambda_L\longleftrightarrow \Lambda_L(x)]\le\sum_{N\ge |x|/L} 8^N\phi^1_{\Lambda_{4L},p}[\Lambda_L\longleftrightarrow\partial\Lambda_{2L}]^{N/49}.
\end{align*}
The desired upper bound follows from the above using \eqref{eq:ihuu}.
\end{proof}

\begin{remark}
The previous proof is probably the place where the strongest condition on $\delta$ is imposed (remember that we already fixed $\delta<1/2$ to guarantee infinite characteristic length at $p_c$). 
\end{remark}

The next corollary is a useful estimate that we will invoke later in the article.
\begin{corollary}\label{rmk:1}
	There exists~$c=c(\delta)>0$ such that for every~$p>p_c$ and~$k\ge1$,
	\begin{equation}
		\phi_{p}[\Lambda_{kL(p)}\leftrightarrow\infty]\ge 1-\exp[-ck].
	\end{equation}
\end{corollary}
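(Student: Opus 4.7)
We reduce to Proposition~\ref{prop:estimate subcritical} via planar duality. Fix $p > p_c$ and write $L = L(p)$. As noted in the proof of Theorem~\ref{thm:L_equiv_xi}, the dual parameter satisfies $p^* < p_c$ and $L(p^*) \asymp L$. The starting observation is that $\{\Lambda_{kL} \not\leftrightarrow \infty\}$ coincides with the existence of a circuit in the dual configuration $\omega^*$ surrounding $\Lambda_{kL}$. Such a circuit must cross the positive horizontal half-axis at some point at distance at least $kL$ from the origin, and the negative horizontal half-axis at another such point; each of the two halves of the circuit then provides a dual-open path between these two points.

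To avoid an unnecessary polynomial prefactor, we discretise at scale $L$: set $x_i := (iL, 0)$ and $y_j := (-jL, 0)$ for $i, j \ge k$. A union bound then gives
\begin{equation*}
  \phi_p[\Lambda_{kL} \not\leftrightarrow \infty]
  \;\le\; \sum_{i,j \ge k} \phi_p\!\left[\Lambda_L(x_i) \xlra{\omega^*} \Lambda_L(y_j) \right].
\end{equation*}
Passing to the dual lattice, the summand is $\phi_{p^*}[\Lambda_L(x_i) \leftrightarrow \Lambda_L(y_j)]$ up to a harmless translation. Since $p^* < p_c$, Proposition~\ref{prop:estimate subcritical}, combined with $L(p^*) \asymp L$ (which at worst modifies the constant in the exponent), yields
\begin{equation*}
  \phi_{p^*}[\Lambda_L(x_i) \leftrightarrow \Lambda_L(y_j)] \;\les\; \exp[-c(i+j)]
\end{equation*}
for a constant $c > 0$ depending only on $\delta$ and uniform in $p$, $i$, $j$ and $k$.

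Summing the geometric series over $i, j \ge k$ produces a bound of the form $C\exp[-2ck]$, from which the corollary follows after decreasing $c$ so as to absorb the multiplicative constant. The main conceptual obstacle is the choice of scale in the union bound: partitioning the axes into unit intervals would introduce an $L^2$ prefactor which would spoil the exponential rate in $k$, whereas partitioning at scale $L$ produces exactly a clean geometric series in the indices $i$ and $j$. Uniformity of the constants in $p$ is inherited from the uniformity of Proposition~\ref{prop:estimate subcritical} over $p \le p_c$ combined with the duality-based equivalence $L(p^*) \asymp L(p)$ already established in the proof of Theorem~\ref{thm:L_equiv_xi}.
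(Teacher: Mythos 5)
Your proof is correct and follows essentially the same route as the paper: pass to the dual circuit surrounding $\Lambda_{kL}$, locate where it crosses the horizontal axis at scale $L$, union bound, and invoke the subcritical estimate of Proposition~\ref{prop:estimate subcritical} together with $L(p^*)\asymp L(p)$. The only cosmetic difference is that the paper uses a single crossing point and the one-arm event $\{\Lambda_L\leftrightarrow\partial\Lambda_{jL}\}$ (giving a single geometric series) whereas you use two crossing points and a two-box connection, which is equally valid.
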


\begin{proof}
Note that the previous proof implies that for some constant~$c>0$, we have that for every~$p>p_c$,
\[
\phi_{p^*}[\Lambda_{L(p)}\longleftrightarrow\partial\Lambda_{kL(p)}]\le \exp[-ck].
\]
By the same counting argument as in the proof of the supercritical case of Theorem~\ref{thm:L_equiv_xi}, 
we deduce from the above that the probability that there exists a circuit in~$\omega^*$ surrounding~$\Lambda_{k L(p)}$ is bounded from above by
$\sum_{j \geq k} \phi_{p^*}[\Lambda_{L(p)}\longleftrightarrow\partial\Lambda_{j L(p)}]\les \exp[-ck]$.
\end{proof}

\section{Boosting pairs of boundary conditions for flower domains}\label{sec:boost}\label{sec:9}

Fix~$q \in (1,4]$ for the whole section; we will omit it from the notation. 
The results of this section do not apply to~$q = 1$. 

\subsection{Flower domains}\label{sec:flower_domains}

we start by introducing the crucial notion of flower domain. 

\begin{definition}[Flower domain]
	An  \emph{inner flower domain} on~$\La_R$ is a simply connected finite domain~$\calF$ containing~$\La_R$,
	whose boundary is formed of a sequence of arcs~$(a_{j}a_{j+1})_{j = 1,\dots,2k}$ (with the convention~$a_{2k+1} = a_1$)
	where each point~$a_{j}$ is on~$\partial \La_R$.
		
	An  \emph{outer flower domain} on~$\La_R$ is the complement~$\calF$ of a simply connected finite domain, 
	with~$\La_R^c \subset \calF$ and 
	whose boundary is formed of a sequence of primal arcs~$(a_{j}a_{j+1})_{j = 1,\dots,2k}$ (with the convention~$a_{2k+1} = a_1$)
	where  each point~$a_{j}$ is on~$\partial \La_R$.

		The arcs of the boundary are called primal and dual petals depending on whether~$j$ is even or odd respectively. In both cases, we identify~$\calF$ to the graph formed of the edges strictly inside~$\calF$, plus the edges on the dual petals.

	For~$\eta > 0$, the flower domain~$\calF$ is said to be {\em~$\eta$-well-separated} 
	if the distance between any two distinct points~$a_i$ and~$a_j$ is greater than~$\eta R$. 
	
	A boundary condition~$\xi$ is said to be coherent with~$\calF$ if all vertices of any primal petal are wired together
	and all vertices of dual petals (except the endpoints) are wired to no other vertex of~$\partial \calF$. 
\end{definition}

Formally flower domains should be defined as the couple formed of~$\calF$ and of the points~$a_1,\dots, a_{2k}$;
we will, however, allow ourselves this small abuse of notation.
See Figure~\ref{fig:flowerdefinition} for an illustration. 
When considering a flower domain with a coherent boundary condition, 
it will be useful to view the flower domain as containing the edges of the primal petals which are conditioned to be opened. We will also often identify dual arcs~$(a_ja_{j+1})$ with the dual path made of the dual edges~$e^*$ with~$e$ incident to~$x\in (a_ja_{j+1})$ and~$y\notin \calF$, and assume it is made of open dual edges.

Notice that, when~$\calF$ has at least four petals, there are several boundary conditions that are coherent with~$\calF$ as different primal petals may be wired together or not. 

	\begin{figure}
	\begin{center}
	\includegraphics[width=0.8\textwidth]{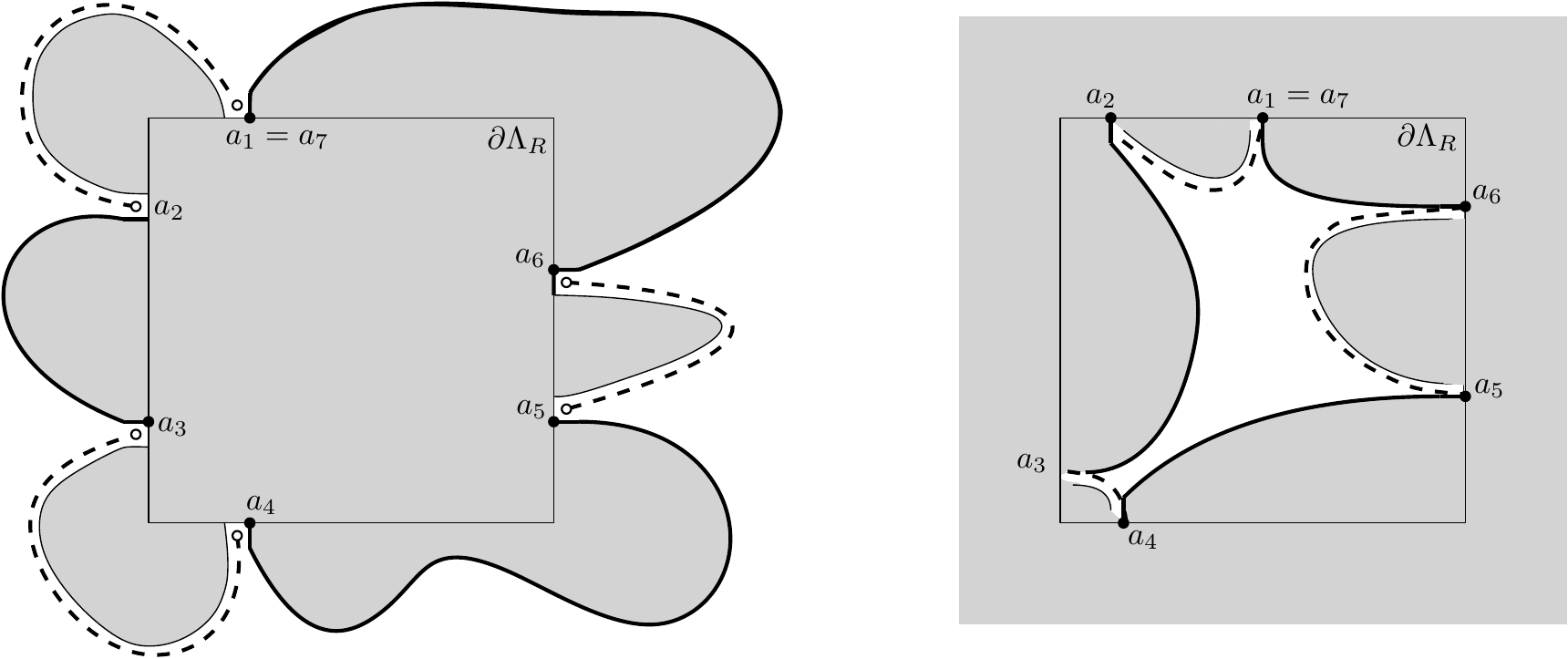}
	\caption{An inner flower domain on the left and an outer one on the right.}
	\label{fig:flowerdefinition}
	\end{center}
	\end{figure}
\paragraph{Example} 
The example that we will most commonly use is that of a flower domain explored from the inside or outside. 
Consider~$r < R$ and let~$\omega$ be a configuration on the annulus~$\Ann(r,R)$. 

The {\em inner flower domain~$\calF$ from~$\La_{R}$ to~$\La_r$} is obtained as follows. 
Consider all interfaces of~$\omega$ contained in~$\Ann(r,R)$ starting on~$\partial \La_R$;
these are paths in the loop representation of the random cluster model with endpoints on~$\partial \La_R$ or~$\partial \La_r$. 
Write~$\mathsf{Exp}$ for the set of edges adjacent or intersecting any such interface. 
Loosely speaking, these are the edges revealed during the exploration of the interfaces starting on~$\partial \La_R$. 

If at least one such interface has an endpoint on~$\partial \La_r$, define~$\calF$ as being the connected component of~$\La_r$ in~$\La_R \setminus \mathsf{Exp}$.
Otherwise~$\calF$ is not defined (formally set~$\calF = \emptyset$ in this case). 
Observe that the number of interfaces between~$\partial \La_R$ and~$\partial \La_r$ is necessarily even. 
Their endpoints~$\partial \La_r$ naturally partition~$\partial \calF$ into primal and dual petals. See Figure~\ref{fig:flowerdefinition} for an illustration.

To define the {\em outer flower domain from~$\La_{r}$ to~$\La_R$}, similarly explore the interfaces starting on~$\partial \La_r^c$. 

\begin{lemma}\label{lem:well-separated}
	For every~$\eps >0$, there exists~$\eta > 0$ such that 
	for any~$p \in (0,1)$, any $R <L(p)$, and any boundary conditions~$\xi$,
	\begin{align*}
		\phi_{\Lambda_{2R}}^\xi [\calF \text{ exists but is not~$\eta$-well-separated}] <\eps,
	\end{align*}
	when~$\calF$ denotes the inner flower domain explored from~$\La_{2R}$ to~$\La_R$, and 
	\begin{align*}
		\phi_{\Lambda_{R}^c}^\xi [\calF \text{ exists but is not~$\eta$-well-separated}] <\eps,
	\end{align*}
	when~$\calF$ denotes the outer flower domain explored from~$\La_{R}$ to~$\La_{2R}$.
\end{lemma}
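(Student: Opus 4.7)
The plan is a standard \emph{separation-of-arms} argument adapted to the random-cluster setting; I describe it for the inner case, the outer case being entirely analogous. Fix $p$, $R\le L(p)$ and boundary conditions $\xi$, and write $\calF$ for the inner flower domain obtained by exploring interfaces from $\partial\La_{2R}$ to $\partial\La_R$.

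First I would reduce to a local estimate via a union bound. If $\calF$ exists but is not $\eta$-well-separated, then two distinct interface endpoints lie within distance $\eta R$ on $\partial\La_R$. Cover $\partial\La_R$ by a family $\calN$ of $O(1/\eta)$ boxes of radius $2\eta R$; for each $x\in\calN$, let $E_x$ be the event that the exploration produces two distinct interface endpoints in $\partial\La_R\cap\La_{2\eta R}(x)$. It then suffices to show $\phi_{\La_{2R}}^\xi[E_x]\le C\eta^{1+c}$ for some $c>0$ uniform in $x$, $p$, $R$ and $\xi$, so that summing over $\calN$ yields a $O(\eta^c)$ bound, which can be made smaller than $\eps$ for $\eta$ small.

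Next I would unpack $E_x$ geometrically. On $E_x$, two disjoint loop-representation paths of $\omega$ connect $\partial\La_R\cap\La_{2\eta R}(x)$ to $\partial\La_{2R}$ through the half-annular region $\La_{2R}\setminus\La_R$, and their types alternate primal/dual by the flower structure. Since $\partial\La_R$ locally behaves like a straight half-space boundary, $E_x$ is contained in a polychromatic half-plane $2$-arm event from scale $\eta R$ to scale $\asymp R$. Using Theorem~\ref{thm:RSWnear} and the mixing estimate~\eqref{eq:mix}, one can decouple the external condition $\xi$ uniformly, so the task reduces to controlling this half-plane arm event.

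The main obstacle is that the naive half-plane polychromatic $2$-arm bound is only $\eta^{1}$ (for Bernoulli it is \emph{exactly} $\eta^{1+o(1)}$), which just cancels the $1/\eta$ factor from the union bound; an extra power is needed. I would recover this power by a dyadic iteration in the spirit of Kesten's separation lemmas, exploiting the exploration couplings of Section~\ref{sec:coupling exploration}. Concretely, at each dyadic scale $\rho=2^j\eta R$ I condition on the pair of interfaces already discovered inside $\La_\rho(x)$; by~\eqref{eq:RSWnear} (applied in the half-annulus $\La_{2\rho}(x)\setminus\La_\rho(x)$) combined with~\eqref{eq:mix}, with probability at least some $c_0\in(0,1)$ uniform in the conditioning the two arms land in a well-separated ``landing sequence'' configuration at scale $2\rho$; otherwise they remain close and one pays a multiplicative factor $\le 1-c_0$. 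Iterating this multiplicative loss across the $\asymp\log(1/\eta)$ dyadic scales between $\eta R$ and $R/2$, and using quasi-multiplicativity~\eqref{eq:MULTIPLICATIVITY} to patch scales uniformly in $p$ and $\xi$, yields the missing polynomial factor and delivers the bound $\phi_{\La_{2R}}^\xi[E_x]\le C\eta^{1+c}$, concluding the proof.
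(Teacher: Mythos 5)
Your overall architecture (cover $\partial\La_R$ by $O(1/\eta)$ arcs, bound a local arm event uniformly, sum) matches the paper's, but the mechanism you propose for the crucial extra polynomial factor does not work, and it discards the very feature that supplies it. Two distinct interface endpoints falling in a box of radius $2\eta R$ do not merely produce a polychromatic $2$-arm event: the petal lying between the two endpoints contributes one arm, and the opposite sides of the two interfaces contribute two further arms of the opposite type, so the local event is a \emph{three}-arm event of alternating type ($A_{101}^\square$ in the paper's notation). The paper gains the missing power precisely from this third arm: conditioning on the interface realising the first two arms and applying the one-arm upper bound \eqref{eq:LOWER_BOUND_ONE_ARM} gives $\phi[A_{101}^\square(\ell_i)]\le C_0\eta^{c_0}\phi[A_{10}^\square(\ell_i)]$. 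It then avoids any individual two-arm upper bound altogether by bounding the \emph{sum} $\sum_i\phi[A_{10}^\square(\ell_i)]$ by the expected number of clusters crossing $\Ann(R,2R)$, which is $O(1)$ by an RSW argument. By contrast, your reduction to a $2$-arm event throws the third arm away, and you are then forced to assert an individual bound $\pi_2^+(\eta R,R)\le C\eta^{1+c}$ that is nowhere established in the paper.

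The dyadic iteration you propose cannot supply that bound. The event $E_x$ only requires both interfaces to \emph{start} in the small box; it in no way forces the two arms to remain within distance $O(\rho)$ of each other at intermediate scales $\rho$, so there is no per-scale cost $(1-c_0)$ to accumulate — separation of arms says the arms \emph{are} well separated at larger scales with positive conditional probability, which is a statement about the geometry of typical configurations in $E_x$, not a decay estimate for $\phi[E_x]$. Even reading your iteration charitably as quasi-multiplicativity plus a per-scale RSW factor $\le 1-c_0$, this yields only $\pi_2^+(\eta R,R)\le\eta^{c}$ for some small $c$, which after the union bound over $O(1/\eta)$ boxes gives the divergent $\eta^{c-1}$. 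The exponent ``$1$'' must come from an averaging or cluster-counting argument (as in the paper), and the strict improvement ``$+c$'' must come from the third arm; your proposal delivers neither.
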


\begin{proof}
We treat the case of inner flower domains, that of outer domains can be solved similarly. 
For~$\calF$ to exist but not be~$\eta$-well-separated, it needs to contain a primal or dual petal of diameter smaller than~$\eta R$. 
We will exclude below the possibility of a small dual petal, the case of a primal one is identical. 

Divide~$\partial\Lambda_R$ into arcs~$\ell_1,\dots,\ell_N$ of length~$2\eta R$ successively overlapping on a segment of length $\eta R$. 
Let~$A_{10}^\square(\ell_i)$ and~$A_{101}^\square(\ell_i)$ be the events that there exist two or three, respectively,  arms of alternating type
contained in~$\Ann(R,2R)$ from~$\ell_i$ to~$\partial \La_{2R}$; for the three arms, we require two primal ones with a dual one in between. 
Notice that, if~$\calF$ contains a dual petal of diameter smaller than~$\eta R$, then there exists at least one arc~$\ell_i$ for which~$A_{101}^\square(\ell_i)$ occurs. 
Our goal is therefore to bound the probability of the events~$A_{101}^\square(\ell_i)$. 

From Theorem~\ref{thm:RSWnear} it is easily deduced by an exploration argument that for each~$i$,
\begin{equation}\label{eq:ah1}
	\phi_{\Lambda_{2R}}^\xi[A_{101}^\square(\ell_i)]\le C_0\eta^{c_0}\,\phi_{\Lambda_{2R}}^\xi[A_{10}^\square(\ell_i)],
\end{equation}
for universal constants~$c_0,C_0 > 0$. 
Indeed, explore first the interface from~$\ell_i$ to~$\partial \La_{2R}$ closest to a chosen endpoint of~$\ell_i$.
The existence of such an interface is synonymous to~$A_{10}^\square(\ell_i)$. 
Condition next on this interface, and bound the probability of existence of the second primal arm. 
The conditioning induces both positive and negative information on the remaining edges, but 
the information favorable to the existence of a primal path is at every scale around $\ell_i$ at a macroscopic distance from~$\ell_i$.
This suffices to obtain the extra polynomial term using \eqref{eq:LOWER_BOUND_ONE_ARM}. 
	
To bound the probabilities of the events~$A_{10}^\square(\ell_i)$, define~$\mathbf N$ to be  the number of disjoint clusters crossing~${\rm Ann}(R,2R)$ from inside to outside. Then
\begin{equation}\label{eq:ah2}
	\sum_{i=1}^k\phi_{\Lambda_{2R}}^\xi[A_{10}^\square(\ell_i)]\le \phi_{\Lambda_{2R}}^\xi[\mathbf N]\le C_1,
\end{equation}
 where the first inequality is a deterministic bound, and the second uniform bound on the expectation of~$\mathbf N$ which is a standard consequence of Theorem~\ref{thm:RSWnear} sketched below:
 observe that there exists~$c_2>0$ such that for every~$k\ge0$, 
 \begin{align}\label{eq:ah23}
 \phi_{\Lambda_{2R}}^\xi[\mathbf N\ge k+1|\mathbf N\ge k]\le 1-c_2. 
 \end{align} 
 Indeed, conditionally on the~$k$ first clusters (in clockwise order around~$\partial \La_R$ starting from some arbitrary point), observe that the complement $\Omega$  in~${\rm Ann}(R,2R)$ of these clusters is a subset of~${\rm Ann}(R,2R)$ with free boundary conditions on the part of the boundary that lies strictly inside~${\rm Ann}(R,2R)$; on the rest of the boundary, the boundary conditions are dominated by wired ones. Then, a dual path disconnecting~$\partial\Lambda_R$ from~$\partial\Lambda_{2R}$ in~$\Omega$ occurs with probability at least~$c_2>0$ by Theorem~\ref{thm:RSWnear}. 
 This proves \eqref{eq:ah23}, which in turn implies \eqref{eq:ah2}.
 
 Combining \eqref{eq:ah1} and \eqref{eq:ah2}, and adding a factor~$2$ to account for the existence of small primal petals, we find
 \[
 \phi_{\Lambda_{2R}}^\xi [\calF \text{ exists but is not~$\eta$-well-separated}] \le 2C_0\eta^{c_0} \, \sum_{i=1}^k\phi_{\Lambda_{2R}}^\xi[A_{10}^\square(\ell_i)]\le 2C_0C_1\eta^{c_0}.
 \]
Fixing~$\eta$ small enough concludes the proof.
\end{proof}

\begin{definition}[Double four-petal flower domain]
Fix~$1 \le r < R$. 
We say that there exists a double four-petal flower domain between~$\La_r$ and~$\La_R$ if
\begin{itemize}
	\item the outer flower domain~$\calF_{out}$ explored from~$\La_{(rR)^{1/2}}$ to~$\La_{R}$ 
	exists,	is~$1/2$-well-separated and has exactly four petals~$P_1^{out},\dots,P_4^{out}$;
	\item the inner flower domain~$\calF_{in}$ explored from~$\La_{(rR)^{1/2}}$ to~$\La_{r}$ 
	exists, is~$1/2$-well-separated and has exactly four petals~$P_1^{in},\dots,P_4^{in}$;
	\item~$P_1^{in}$ is connected to~$P_1^{out}$ and~$P_3^{in}$ to~$P_3^{out}$ in~$\omega\cap \calF_{in}^c \cap \calF_{out}^c$;
	\item~$P_2^{in}$ is connected to~$P_2^{out}$ and~$P_4^{in}$ to~$P_4^{out}$ in~$\omega^*\cap \calF_{in}^c \cap \calF_{out}^c$.	
\end{itemize}
\end{definition}

The advantage of the double four-petal flower domain is that it can be explored from~$\partial\La_{(rR)^{1/2}}$ towards the inside and outside, 
and limits the interaction between the configurations in~$\calF_{in}$ and~$\calF_{out}$.

\begin{lemma}\label{lem:D4PFD}
	For any~$\eta > 1~$, there exists~$c = c(\eta) > 0$ such that 
	for any~$p \in (0,1)$, any ~$R <L(p)$ large enough and any boundary conditions~$\xi$ on~$\Ann := \Ann((1-\eta)R, (1 + 2\eta)R)$
	\begin{align*}
		\phi_{\Ann}^\xi [\text{there exists a double four-petal flower domain between~$\La_R$ and~$\La_{(1 + \eta) R}$}] > c.
	\end{align*}
\end{lemma}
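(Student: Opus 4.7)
The plan is to build an event $\calE$ inside the annulus $\Ann$ with $\phi_{\Ann}^\xi[\calE] \ge c(\eta) > 0$, uniformly in $p \in (0,1)$, in boundary conditions $\xi$ on $\partial \Ann$, and in $R \le L(p)$ sufficiently large, on which the double four-petal flower domain between $\La_R$ and $\La_{(1+\eta)R}$ is realized. The construction is a standard skeleton of four alternating macroscopic arms supplemented by pinching arcs; each constituent event is a crossing of a rectangle of bounded aspect ratio well inside $\Ann$ with a buffer $\asymp \eta R$, and the uniform lower bound will come from \eqref{eq:RSWnear} combined with the FKG inequality and \eqref{eq:CBC}.

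Concretely, I would fix four angles $\theta_k = k\pi/2$, $k \in \{0,1,2,3\}$, and for each one a thin radial tube $T_k$ of angular half-width $\eta/40$ spanning radially from $\partial \La_{(1-\eta/4)R}$ to $\partial \La_{(1+5\eta/4) R}$. Between consecutive tubes I would designate a "pinch" region $G_k$ crossing the intermediate circle $\partial \La_{R\sqrt{1+\eta}}$ at angle $(2k+1)\pi/4$, containing a landmark sub-arc of angular width $\eta/100$. The event $\calE$ asks for:
\begin{itemize}
\item a radial primal crossing of $T_k$ for $k$ even;
\item a radial dual crossing of $T_k$ for $k$ odd;
\item in each $G_k$, a primal arc emanating from the adjacent primal tube and a dual arc emanating from the adjacent dual tube, both reaching the landmark and together forcing a single interface to pass through $G_k$ inside the prescribed landmark arc.
\end{itemize}
Each constituent event is a crossing of a rectangle of aspect ratio $O(1/\eta)$ contained in $\Ann$ with a buffer of at least $\eta R/4$ to $\partial \Ann$, so Theorem~\ref{thm:RSWnear} gives a uniform lower bound $c_1(\eta) > 0$ on its probability. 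Splitting the events into increasing (primal) and decreasing (dual) families on disjoint edge-sets and applying FKG together with \eqref{eq:CBC} yields $\phi_{\Ann}^\xi[\calE] \ge c(\eta) > 0$.

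On $\calE$, the four radial crossings provide two primal and two dual macroscopic arms spanning $\Ann(R, (1+\eta)R)$, alternating at the angles $\theta_k$. Exploring interfaces inward from $\partial \La_{R\sqrt{1+\eta}}$ produces $\calF_{in}$ and exploring outward produces $\calF_{out}$; the pinching arcs in each $G_k$ force every crossing interface to traverse a unique gap and land inside the prescribed landmark, so both $\calF_{in}$ and $\calF_{out}$ have exactly four petals at angular positions pairwise separated by $\pi/2$, hence $1/2$-well-separated. Since each radial crossing of $T_k$ spans the whole annulus, each primal (respectively dual) petal of $\calF_{in}$ is connected in $\omega$ (respectively $\omega^*$) to the corresponding petal of $\calF_{out}$ through the middle portion of $T_k$ lying in $\calF_{in}^c \cap \calF_{out}^c$, which verifies the matching condition.

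The main obstacle is securing \emph{exactly} four petals and not merely at least four: an unchecked random configuration outside the tubes could a priori introduce spurious crossing interfaces. The pinching arcs in the $G_k$ are precisely what is needed to rule this out: any additional interface crossing $\Ann(R, R\sqrt{1+\eta})$ or $\Ann(R\sqrt{1+\eta}, (1+\eta)R)$ would have to traverse some $G_k$ and land at a second point, contradicting the "pinch" enforced there. Since the strengthened pinch is still a finite conjunction of rectangle crossings of bounded aspect ratio, the RSW-FKG combination preserves the uniform lower bound $c(\eta) > 0$ and concludes the proof.
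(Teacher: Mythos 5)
Your skeleton of four alternating radial crossings plus RSW/FKG is the right first step (it is essentially the paper's event $E$), but the crucial step — guaranteeing \emph{exactly} four petals — has a genuine gap. Your ``pinch'' in $G_k$ asks for a primal arc and a dual arc that each reach a landmark sub-arc of angular width $\eta/100$, i.e.\ of macroscopic linear width $\asymp \eta R$. Such a pair of arcs does \emph{not} form a gap-free barrier across the sector: inside the landmark window there is room for further alternating radial crossings of the sub-annulus, each producing an additional interface and hence additional petals, and nothing in your event forbids them. A primal arc and a dual arc only block the sector deterministically if they meet with no gap (are adjacent), and that adjacency is not a conjunction of monotone crossing events on disjoint edge-sets, so it cannot be produced by the pure RSW$+$FKG$+$\eqref{eq:CBC} scheme you invoke. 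A second, related problem is well-separation: your pinches constrain the interfaces only at the intermediate circle, not where they land on $\partial\La_R$ and $\partial\La_{(1+\eta)R}$; two consecutive landing points can sit on either side of the same tube at distance $o(R)$, so the claim that the petals are ``at angular positions pairwise separated by $\pi/2$'' does not follow.

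The paper's proof circumvents both issues with two devices you are missing. First, it uses \emph{eight} radial crossings in a P\,P\,D\,D\,P\,P\,D\,D pattern (the pairs ${\rm Rect}_i,{\rm Rect}_i'$), so that each quadrant of each sub-annulus is bounded by two crossings of the \emph{same} type and the four interfaces $\Gamma_1,\dots,\Gamma_4$ are pinned inside thin slabs around the axes — this is what yields $1/2$-well-separation of the petal endpoints. Second, and decisively, it \emph{explores} the interfaces $\Gamma_i$ (which are automatically gap-free barriers with primal on one side and dual on the other) and then, \emph{conditionally} on their realisations, uses Theorem~\ref{thm:RSWnear} and the spatial Markov property to draw a same-type arc in each quadrant connecting $\Gamma_i$ to $\Gamma_{i+1}$; any further radial crossing of that quadrant must hit this arc and be absorbed into the corresponding cluster, which forces exactly two primal and two dual crossing clusters per sub-annulus, hence exactly four petals. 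To repair your argument you would need to replace the landmark pinches by this exploration-plus-conditional-RSW step (or an equivalent conditioning on interfaces); as written, the event $\calE$ does not imply the existence of the double four-petal flower domain.
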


\begin{proof}
We recommend taking a look at Figure~\ref{fig:double-flower}. Set~$R'':=(1+\eta)R$ and~$R':=(RR'')^{1/2}$.
Consider the rectangles 
\begin{align*}
{\rm Rect}_1&:=[R,R'']\times[0,\eta R], \\
{\rm Rect}_1'&:=[R,R'']\times[-\eta R,0],
\end{align*} as well as their rotations~${\rm Rect}_2,{\rm Rect}_3,{\rm Rect}_4$ and~${\rm Rect}'_2,{\rm Rect}'_3,{\rm Rect}_4'$ by angles of~$\pi/2,\pi,3\pi/2$.

Define the event~$E$ that~${\rm Rect}_1,{\rm Rect}'_2,{\rm Rect}_3,{\rm Rect}_4'$ (resp.~${\rm Rect}'_1,{\rm Rect}_2,{\rm Rect}'_3,{\rm Rect}_4$) are crossed by a primal (resp.~dual) path from~$\partial\Lambda_{R}$ to~$\partial\Lambda_{R''}$. Theorem~\ref{thm:RSWnear} implies the existence of~$c_0>0$ such that 
\begin{equation}\label{eq:hk00}
\phi^\xi_{\rm Ann}[E]\ge c_0.
\end{equation}
Let~$F$ be the event that there exist exactly two clusters in~${\rm Ann}(R,R')$ crossing from~$\partial\Lambda_R$ to~$\partial \Lambda_{R'}$, and exactly two clusters in~${\rm Ann}(R',R'')$ crossing from~$\partial\Lambda_{R'}$ to~$\partial \Lambda_{R''}$. We claim that 
\begin{equation}\label{eq:hk}
\phi^\xi_{\rm Ann}[F|E]\ge c_1.
\end{equation}
When~$F\cap E$ occurs, both flower domains~$\calF_{in}$ and~$\calF_{out}$ exist, have four petals and are~$1/2$-well separated. 
Thus, \eqref{eq:hk00} and \eqref{eq:hk} imply directly the result, and we focus next on its proof. 

Let~$\Gamma_1$ be the bottom boundary of the cluster of~$\{R\}\times[0,\eta R]$ in~${\rm Rect}_1\cup {\rm Rect}_1'$,  
$\Gamma_2$ be the left boundary of the cluster of~$[0,\eta R]\times\{R\}$ in~${\rm Rect}_2\cup {\rm Rect}'_2$,   
$\Gamma_3$ be the top boundary of the cluster of~$\{-R\}\times[-\eta R,0]$ in~${\rm Rect}_3\cup {\rm Rect}_3'$ and 
$\Gamma_4$ be the right boundary of the cluster of~$[-\eta R,0]\times\{-R\}$ in~${\rm Rect}_4\cup {\rm Rect}'_4$. 
Notice that each~$\Gamma_i$ may be explored by a standard procedure, starting from the common inner corner of~${\rm Rect}_i$ and~${\rm Rect}_i'$.
Moreover,~$E$ implies that each~$\Gamma_i$ is contained in~${\rm Rect}_i\cup {\rm Rect}'_i$ and ends on~$\partial \La_{R''}$.

Condition now on a realisation of~$\Gamma_1,\dots,\Gamma_4$ with the properties above. 
By Theorem~\ref{thm:RSWnear}, one may construct with uniformly positive probability four primal paths and four dual ones as in Figure~\ref{fig:double-flower}, namely:
two primal paths connecting~$\Gamma_1$ and~$\Gamma_2$ in the top-right corners of~${\rm Ann}(R,R')$ and~${\rm Ann}(R',R'')$, respectively; 
two primal paths connecting~$\Gamma_3$ and~$\Gamma_4$ in the bottom-left corners of the same annuli; 
two dual paths connecting the dual vertices adjacent to~$\Gamma_2$ and~$\Gamma_3$ in the top-left corners and 
and two more between the dual vertices adjacent to~$\Gamma_4$ and~$\Gamma_1$ in the bottom-right corner of these two annuli. 
When all these paths exist, both~$E$ and~$F$ occur. This concludes the proof of \eqref{eq:hk} and therefore the whole argument.
\end{proof}

	\begin{figure}
	\begin{center}
	\includegraphics[width=0.6\textwidth]{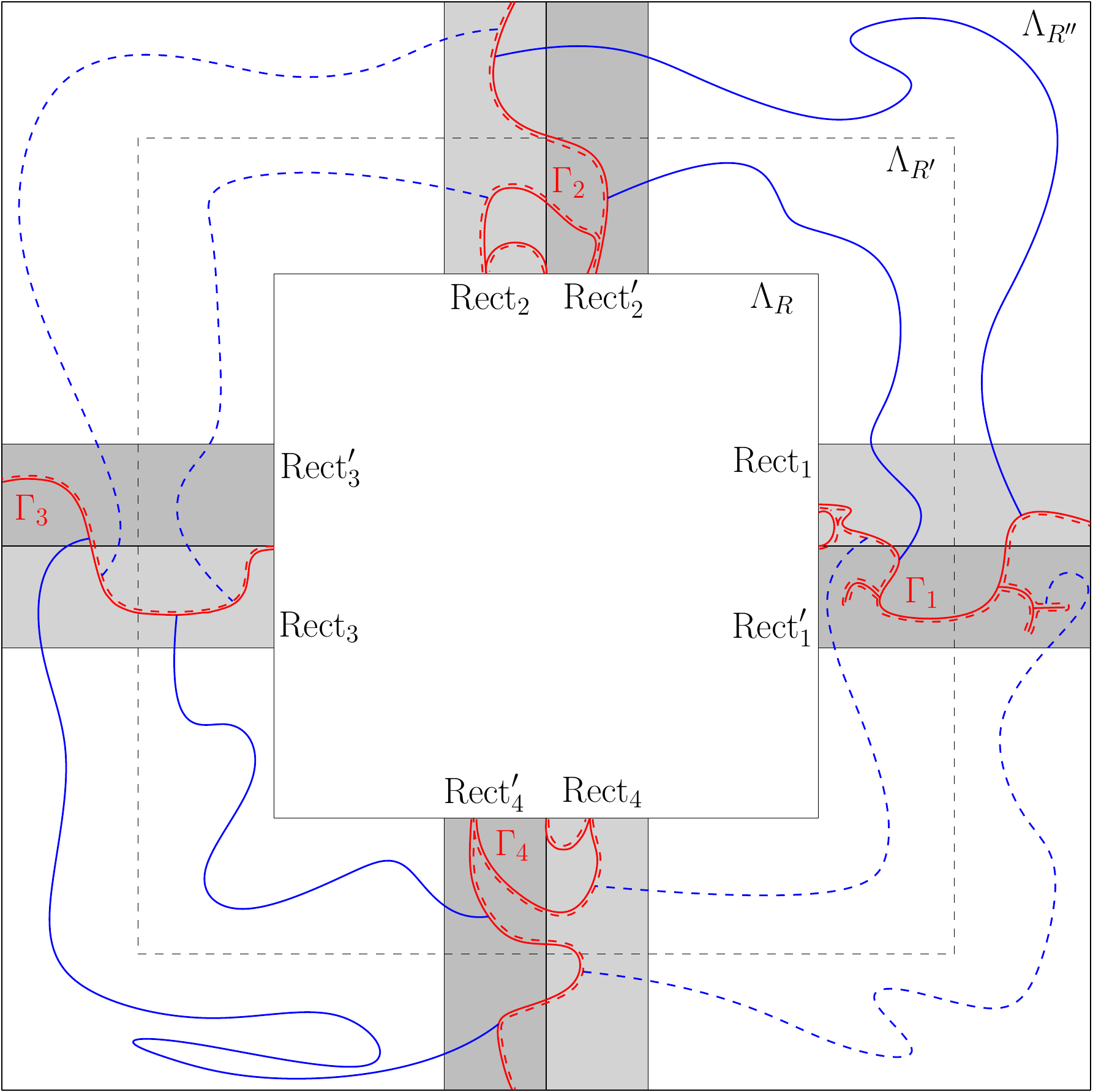}
	\caption{In grey and darker grey (respectively): the rectangles~${\rm Rect}_i$ and~${\rm Rect}_i'$. We depicted in red some possible realization for the paths~$\Gamma_1,\dots,\Gamma_4$. In blue, possible realizations of paths and dual paths enforcing the event in blue.}
	\label{fig:double-flower}
	\end{center}
	\end{figure}

\subsection{Boosting pair of boundary conditions}

The goal of this section is to study how changes of boundary conditions impact crossing probabilities.

\begin{definition}
	A {\em boosting pair} of boundary conditions for a flower domain~$\calF$ 
	is a pair of boundary conditions~$(\xi,\xi')$ such that 
	\begin{itemize}[noitemsep,nolistsep]
	\item~$\xi$ and~$\xi'$ are compatible with~$\calF$, 
	\item~$\xi \le \xi'$, 
	\item there exist two primal petals of~$\calF$ that are wired together in~$\xi'$ but not in~$\xi$.
	\end{itemize}
\end{definition}

In a slight abuse of notation, we will henceforth also call a pair~$(\zeta,\zeta')$ of boundary conditions on~$\calF$ boosting 
if there exists a boosting pair~$(\xi,\xi')$ of boundary conditions on~$\calF$ (in the sense of the definition above) such that~$\zeta \le \xi < \xi'\le \zeta'$. 

Recall from Section~\ref{sec:crossing estimates} that $A_R$ is the event that there exists a circuit in~${\rm Ann}(R,2R)$ surrounding~$0$.
The object of this section is to prove the following theorem.

\begin{theorem}\label{thm:boosting_pair}
    Fix~$q\in(1,4]$. For every~$\eta>0$, there exists~$\delta=\delta(\eta,q)>0$ such that the following holds.
    \begin{itemize}
 		\item[(i)] 
		For every~$p \in (0,1)$, every~$R \le L(p)$, 
		every~$\eta$-well-separated inner flower domain~$\calF$ on~$\La_{2R}$, 
		every  boosting pair~$(\xi,\xi')$ of boundary conditions on~$\calF$, 
	    and every~$\eta$-regular quad~$(\calD,a,b,c,d)$ of size~$R$,
	    \begin{align}
		    \phi^{\xi'}_{\calF}[\calC(\calD)]  &\ge \phi^{\xi}_{\calF}[\calC(\calD)]+\delta  \label{eq:boosting_pair_i},\\	
	    	\phi^{\xi'}_{\calF}[A_{R}]&\ge \phi^{\xi}_{\calF}[A_{R}]+\delta.\label{eq:boosting_pair_circ_i}
	    \end{align}
		\item[(ii)]
		For every~$p \in (0,1)$, every~$R \le L(p)$, 
		every~$\eta$-well-separated outer flower domain~$\calF$ on~$\La_R$, 
		every  boosting pair~$(\xi,\xi')$ of boundary conditions on~$\calF$, 
		and every~$\eta$-regular quad~$(\calD,a,b,c,d)$ of size~$R$ translated in such a way that it is contained in~$\Ann(2R, 4R)$,
	    \begin{align}
		    \phi^{\xi'}_{\calF}[\calC(\calD)] &\ge \phi^{\xi}_{\calF}[\calC(\calD)]+\delta,  \label{eq:boosting_pair_o}\\
	    	\phi^{\xi'}_{\calF}[A_R]&\ge \phi^{\xi}_{\calF}[A_R]+\delta.  \label{eq:boosting_pair_circ}
    	\end{align}
	
    \end{itemize}
\end{theorem}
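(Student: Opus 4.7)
The plan is to convert each of the four boost inequalities into a covariance lower bound, and to prove that lower bound through an RSW-based exploration argument.

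First, by iterating merging petals one pair at a time and invoking \eqref{eq:CBC}, I reduce to the case where $\xi \le \xi'$ are two coherent boundary conditions differing by a single merging: two primal petals $P, P'$ that lie in distinct blocks of $\xi$ and in a common block of $\xi'$ (the sandwich form of the definition is absorbed by the same monotonicity). Set $B := \{P \longleftrightarrow P' \text{ in } \omega^\xi\}$. Merging the blocks of $P$ and $P'$ reduces the cluster count of $\omega^\xi$ by exactly $\mathbbm{1}_{B^c}(\omega)$, so the Radon--Nikodym derivative is
\[
	\frac{d\phi^{\xi'}_{\calF}}{d\phi^{\xi}_{\calF}}(\omega) \;=\; \frac{q^{-\mathbbm{1}_{B^c}(\omega)}}{\phi^{\xi}_{\calF}[q^{-\mathbbm{1}_{B^c}}]}.
\]
Writing $q^{-\mathbbm{1}_{B^c}} = 1 + (q^{-1}-1)\mathbbm{1}_{B^c}$ and rearranging, I obtain for every increasing event $A$
\[
	\phi^{\xi'}_{\calF}[A] - \phi^{\xi}_{\calF}[A] \;=\; \frac{(1 - q^{-1})\,\Cov_{\phi^{\xi}_{\calF}}(\mathbbm{1}_A, \mathbbm{1}_B)}{1 + (q^{-1} - 1)\phi^{\xi}_{\calF}[B^c]} \;\ge\; (1 - q^{-1})\,\Cov_{\phi^{\xi}_{\calF}}(\mathbbm{1}_A, \mathbbm{1}_B).
\]
Since $q \in (1,4]$, the prefactor is bounded below by a constant, so it suffices to show $\Cov_{\phi^{\xi}_{\calF}}(\mathbbm{1}_A, \mathbbm{1}_B) \ge c(\eta, q) > 0$ for both $A = \calC(\calD)$ and $A = A_R$.

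I then use $\Cov(\mathbbm{1}_A, \mathbbm{1}_B) = \phi[B]\phi[B^c]\bigl(\phi[A \mid B] - \phi[A \mid B^c]\bigr)$ and control the three factors separately. The $\eta$-well-separatedness of $\calF$ provides macroscopic regions along $\partial \calF$ in which Theorem~\ref{thm:RSWnear} applies uniformly in $p \le L(p)$, yielding both a primal path from $P$ to $P'$ and a dual path separating them with uniformly positive probability; hence $\phi^{\xi}_{\calF}[B], \phi^{\xi}_{\calF}[B^c] \in [c_\eta, 1 - c_\eta]$. For the conditional gap, I explore the clusters of $P$ and $P'$ by an interface-following decision tree of the type described in Section~\ref{sec:coupling exploration}. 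On $B$ the exploration reveals a primal path joining $P$ to $P'$; on $B^c$ it reveals, between the two distinct clusters, a dual separating arc. In both cases the unexplored region carries a conditional measure with free/wired-alternating boundary conditions to which Theorem~\ref{thm:RSWnear} still applies, allowing me to with uniformly positive probability (a) on $B$, glue the revealed primal connection into a crossing of $\calD$, or into a circuit surrounding $\La_R$ in $\Ann(R, 2R)$, and (b) on $B^c$, extend the revealed dual separator into a dual obstruction of $\calD$, resp.\ of $A_R$. The $\eta$-regularity of $\calD$ and, in part (ii), its placement inside $\Ann(2R, 4R)$, provide a fixed macroscopic corridor in which to perform these gluings; the outer flower-domain case is handled symmetrically by swapping the roles of inside and outside.

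The main difficulty is the conditional gap $\phi^{\xi}_{\calF}[A \mid B] - \phi^{\xi}_{\calF}[A \mid B^c] \ge c$, which does not follow from FKG (that only yields $\ge 0$). It is delicate because $B^c$ is a decreasing event while $A$ is increasing, so after the cluster exploration the residual measure has boundary conditions dominated by neither $0$ nor $1$, and one must simultaneously produce a favorable RSW construction on $B$ and an unfavorable one on $B^c$ with matching uniform constants. The remedy is to explore along interfaces rather than along clusters alone, so that the residual boundary condition alternates in a controlled way between free and wired, preserving the applicability of Theorem~\ref{thm:RSWnear} and of the well-separated geometry of $\calF$. Once this is in place, the circuit statements \eqref{eq:boosting_pair_circ_i} and \eqref{eq:boosting_pair_circ}, as well as the outer flower-domain variants, follow from the same scheme up to the obvious geometric modifications.
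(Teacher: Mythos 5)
Your Radon--Nikodym computation is correct and is in fact equivalent to the paper's key algebraic input (Lemma~\ref{lem:boost}): merging two blocks changes $k(\omega^\xi)$ by $\ind_{B^c}$, and the resulting identity
$\phi^{\xi'}_\calF[A]-\phi^{\xi}_\calF[A]\ge (1-q^{-1})\,\Cov_{\phi^{\xi}_\calF}(\ind_A,\ind_B)$
is a clean reformulation. The reduction to a single merge by monotonicity is also fine. The gap is in the covariance lower bound. Writing $\Cov(\ind_A,\ind_B)=\phi[B]\phi[B^c]\bigl(\phi[A\mid B]-\phi[A\mid B^c]\bigr)$, your exploration argument only delivers $\phi[A\mid B]\ge c_1$ (glue a crossing on $B$) and $\phi[A\mid B^c]\le 1-c_2$ (glue a dual obstruction on $B^c$). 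Unless $c_1+c_2>1$ -- which RSW constants will never give you -- these two bounds are compatible with $\phi[A\mid B]=\phi[A\mid B^c]$ and hence with zero covariance. Producing a coupling of $\phi[\cdot\mid B]$ and $\phi[\cdot\mid B^c]$ (or of $\phi^{\xi'}_\calF$ and $\phi^{\xi}_\calF$) under which the two configurations disagree on $A$ with uniformly positive probability is essentially the statement you are trying to prove; the machinery that would do this (Theorem~\ref{thm:coupling}) is developed later and itself takes Theorem~\ref{thm:boosting_pair} as input, so this route is circular as it stands.

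The missing idea is localization: note that your identity becomes unconditional when $A=B$, since then $\Cov(\ind_B,\ind_B)=\phi[B]\phi[B^c]\ge c$ by RSW. The paper exploits exactly this. It explores a $1/2$-well-separated double four-petal flower domain $(\calF_{\rm in},\calF_{\rm out})$ at a small scale inside $\calD$ and conditions on an RSW-likely event $E$ under which (a) the two primal petals of $\calF_{\rm out}$ are hooked, in $\omega$, to $P_i$ and $P_j$ and to the arcs $(ab)$, $(cd)$, with dual petals hooked to $(bc)$, $(da)$, so that $\calC(\calD)$ is \emph{decided} by the connection of $P_1^{\rm in}$ to $P_3^{\rm in}$ inside $\calF_{\rm in}$; and (b) the boundary conditions induced on $\calF_{\rm in}$ by $\omega$ and $\omega'$ are respectively the unwired ones $\zeta$ and ones dominating the wired $\zeta'$. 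At that point your identity, applied with $A=B=\{P_1^{\rm in}\lra P_3^{\rm in}\}$ in the four-petal domain $\calF_{\rm in}$ (where RSW bounds the connection probability away from $0$ and $1$), gives a uniform gap $\phi^{\zeta'}_{\calF_{\rm in}}[B]-\phi^{\zeta}_{\calF_{\rm in}}[B]\ge c_1$, and the theorem follows. Be warned also that the circuit events \eqref{eq:boosting_pair_circ_i} and \eqref{eq:boosting_pair_circ} are not an "obvious geometric modification": there the flower domain sits in the annulus, the decomposition of the law of $\omega$ on $\calF_{\rm in}$ into a convex combination $(1-\la)\phi^{\zeta}+\la\phi^{\zeta'}$ must be compared for $\xi$ and $\xi'$, and a separate pivotality event $H$ in $\calF_{\rm out}$ is needed after resampling.
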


In light of the RSW theory, the crossing probabilities~$\phi^{\xi}_{\calF}[\calC(\calD)]$ and~$\phi^{\xi'}_{\calF}[\calC(\calD)]$ are bounded away from~$0$ and~$1$ by constants depending only on~$\eta$. 
Above we are concerned with the amount by which such crossing probabilities increase when the boundary conditions change from~$\xi$ to~$\xi'$. 
Indeed, the theorem states that the increase is positive, uniformly in the scale, the quad to be crossed and the boosting pair of boundary conditions. 
The rest of the section is dedicated to proving Theorem~\ref{thm:boosting_pair}. \smallskip

The following lemma is the cornerstone for the proof.
For a quad~$(\calD,a,b,c,d)$, let~${\rm mix}$ be the boundary conditions on~$\calD$ corresponding to the partitions containing~$(ab)$,~$(cd)$ and singletons, and~${\rm mix}'$ be the one containing~$(ab)\cup(cd)$ and singletons. 

\begin{lemma}\label{lem:boost}
For every~$q>1$,~$p \in (0,1)$, and every quad~$(\calD,a,b,c,d)$, we have
\begin{equation}
\phi_\calD^{\rm mix'}[\calC(\calD)]= \frac{q}{1+(q-1)\phi_\calD^{\rm mix}[\calC(\calD)]}\phi_\calD^{\rm mix}[\calC(\calD)].
\end{equation} 
\end{lemma}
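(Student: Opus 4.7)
The plan is a direct calculation from the partition-function definition of the random-cluster measure. The boundary conditions ${\rm mix}$ and ${\rm mix}'$ differ only in whether the arcs $(ab)$ and $(cd)$ lie in the same block of the boundary partition (every other block is a singleton and is the same in both). Consequently, for any configuration $\omega$ on $\calD$, the identified graphs $\omega^{{\rm mix}}$ and $\omega^{{\rm mix}'}$ have the same number of connected components exactly when $(ab)$ and $(cd)$ are already connected in $\omega^{{\rm mix}}$, i.e.\ precisely on the event $\calC(\calD)$; otherwise $k(\omega^{{\rm mix}'}) = k(\omega^{{\rm mix}}) - 1$. First I would state this dichotomy cleanly.

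Next I would write the Radon--Nikodym derivative
\[
\frac{d\phi^{{\rm mix}'}_\calD}{d\phi^{{\rm mix}}_\calD}(\omega)
\;=\; \frac{Z^{{\rm mix}}}{Z^{{\rm mix}'}}\, q^{\,k(\omega^{{\rm mix}'}) - k(\omega^{{\rm mix}})}
\;=\; \frac{Z^{{\rm mix}}}{Z^{{\rm mix}'}}\bigl(\mathbf 1_{\calC(\calD)} + q^{-1}\mathbf 1_{\calC(\calD)^c}\bigr),
\]
where the edge factors $(p/(1-p))^{|\omega|}$ cancel. Setting $x := \phi^{{\rm mix}}_\calD[\calC(\calD)]$ and using that $\phi^{{\rm mix}'}_\calD$ is a probability measure,
\[
1 \;=\; \frac{Z^{{\rm mix}}}{Z^{{\rm mix}'}}\Bigl(x + \tfrac{1-x}{q}\Bigr),
\qquad\text{hence}\qquad
\frac{Z^{{\rm mix}}}{Z^{{\rm mix}'}} \;=\; \frac{q}{1+(q-1)x}.
\]

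The claimed identity then drops out by integrating the indicator of $\calC(\calD)$ against this Radon--Nikodym derivative, since the derivative reduces to $Z^{{\rm mix}}/Z^{{\rm mix}'}$ on $\calC(\calD)$: one reads off $\phi^{{\rm mix}'}_\calD[\calC(\calD)] = \frac{Z^{{\rm mix}}}{Z^{{\rm mix}'}}\,x = \frac{qx}{1+(q-1)x}$, which is exactly the stated formula. There is no genuine obstacle here; the only subtlety worth flagging is the direction of the cluster-count change, which hinges on the fact that under ${\rm mix}$ the arcs $(ab)$ and $(cd)$ are already internally wired into two \emph{separate} blocks, so that fusing them into a single block in $\omega^{{\rm mix}'}$ decreases the cluster count by exactly one precisely when no open path of $\omega$ inside $\calD$ already joins the two arcs, i.e.\ on $\calC(\calD)^c$.
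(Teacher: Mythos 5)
Your proposal is correct and is essentially the paper's own argument: the paper's proof rests on the same observation, phrased as $w(\omega)=w'(\omega)$ on $\calC(\calD)$ and $w(\omega)=qw'(\omega)$ off it, and then performs the identical normalization algebra by writing out the partition-function ratios rather than packaging them as a Radon--Nikodym derivative. No gap; nothing further to add.
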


Notice that the ratio in the right-hand side above is always larger than~$1$, and considerably so when~$\phi_\calD^{\rm mix}[\calC(\calD)]$
is far from~$1$. 

\begin{proof}
Let~$w(\omega):=(\tfrac p{1-p})^{|\omega|}q^{k(\omega^{\rm mix})}$ and~$w'(\omega):=(\tfrac p{1-p})^{|\omega|}q^{k(\omega^{\rm mix'})}$ and observe that 
\begin{equation*}
w(\omega)=\begin{cases} w'(\omega)&\text{ if }\omega\in\calC(\calD),\\ qw'(\omega)&\text{ if }\omega\notin\calC(\calD).\end{cases}
\end{equation*}
Now, set 
\begin{align*}
Z[\calC(\calD)]&:=\sum_{\omega\in\calC(\calD)}w(\omega)\quad\ ,\quad Z[\calC(\calD)^c]:=\sum_{\omega\notin\calC(\calD)}w(\omega),\\
Z'[\calC(\calD)]&:=\sum_{\omega\in\calC(\calD)}w'(\omega)\quad,\quad
Z'[\calC(\calD)^c]:=\sum_{\omega\notin\calC(\calD)}w'(\omega).
\end{align*}
Then
\begin{align*}
{\phi^{\rm mix'}_\calD[\calC(\calD)]}
=\frac{Z'[\calC(\calD)]}{Z'[\calC(\calD)]+Z'[\calC(\calD)^c]}
&=\frac{Z[\calC(\calD)]}{Z[\calC(\calD)]+\tfrac1qZ[\calC(\calD)^c]}\\
&=\frac{\phi^{\rm mix}_\calD[\calC(\calD)]}{\phi^{\rm mix}_\calD[\calC(\calD)]+\tfrac1q(1-\phi^{\rm mix}_\calD[\calC(\calD)])},
\end{align*}
which is the desired equality. 
\end{proof}

	\begin{figure}
	\begin{center}
	\includegraphics[width=01.0\textwidth]{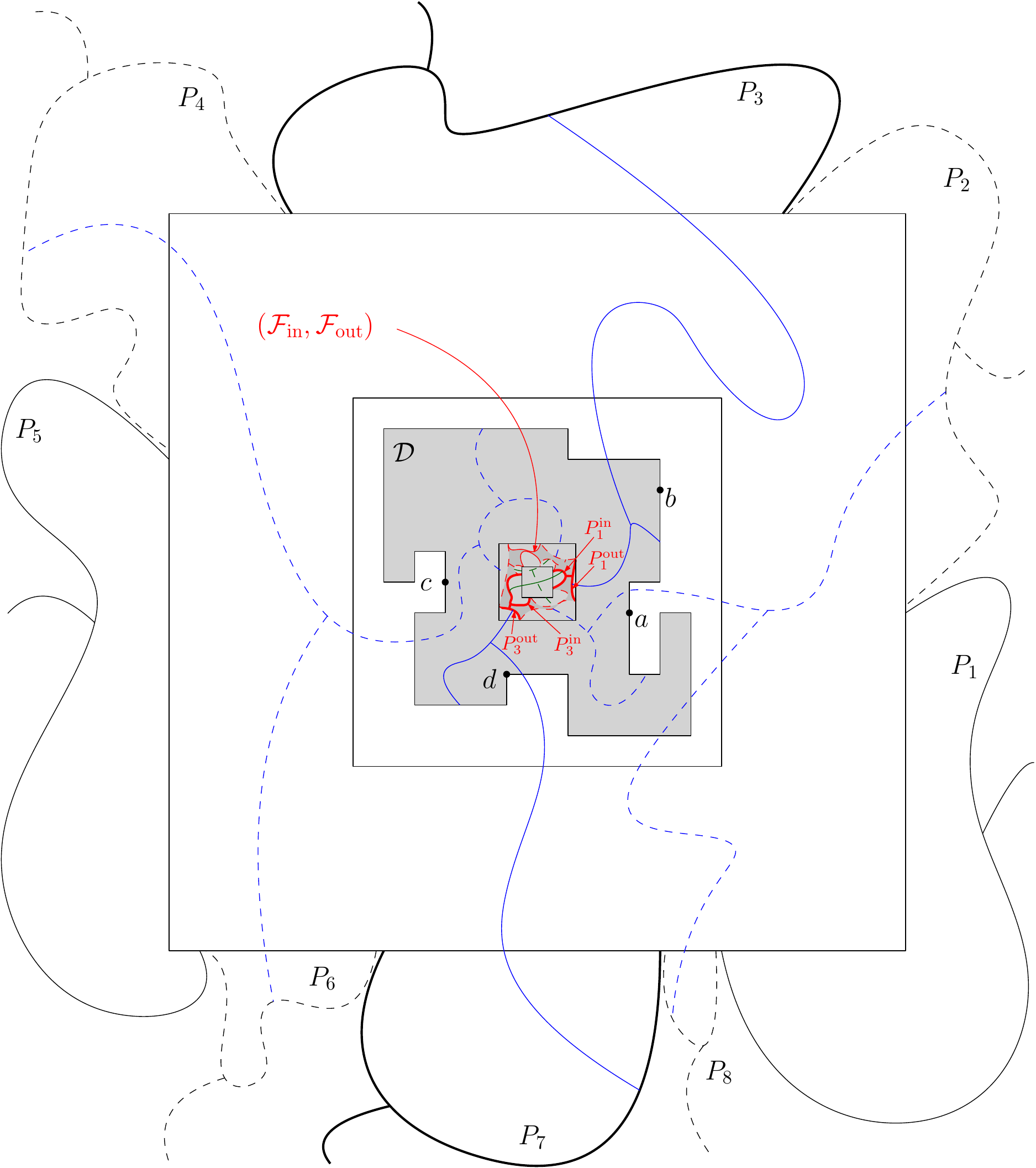}
	\caption{The domain~$\mathcal D$ with the four marked points~$a,b,c,d$ in grey. The outer flower~$\calD$ is in black. In this case,~$P_3$ is assumed to be connected to~$P_7$ in~$\xi'$ but not in~$\xi$. 
	The red depicts the double flower-domain~$(\calF_{\rm in},\calF_{\rm out})$ which is explored first. 
	Then, conditionally on the realization of~$(\calF_{\rm in},\calF_{\rm out})$, the conditions for the event~$E$ to occur are depicted in blue (note that the blue connections from~$\calF_{\rm out}$ to~$\calF$ do not necessarily need to cross the arcs~$(ab)$ and~$(cd)$ of~$\calD$). 
	At the time~$\tau$ of the procedure, the red and blue parts have been revealed and only the inside of~$\calF_{\rm in}$ is unexplored. 
	Then, the event~$\calC(\calD)$ depends on the connection inside~$\calF_{\rm in}$ between its primal petals, 
	which, with positive probability, are connected in~$\omega'$, but not in~$\omega$  (see the green paths).
	}
	\label{fig:h1}
	\end{center}
	\end{figure}
	
\begin{proof}[Theorem~\ref{thm:boosting_pair}]
	We will focus on inner flower domains; the case of outer flower domains is very similar. 
	Let~$\calF$ be an~$\eta$-well-separated inner flower domain on~$\Lambda_{2R}$ 	
	and let~$(\xi,\xi')$  be a  boosting pair of boundary conditions. 
	Write~$P_1,\dots, P_{2k}$ for the petals of~$\calF$ in counter-clockwise order, indexed in such a way that~$P_1$ is primal. 
	Fix~$i$ and~$j$ odd such that~$P_i$ is wired to~$P_j$ in~$\xi'$ but not in~$\xi$. 
	Below,~$c_0,\dots,c_4$ will denote strictly positive constants depending only on~$\eta$. 
	
	We start by proving~\eqref{eq:boosting_pair_i}. We recommend to take a look at Figure~\ref{fig:h1}.
	Fix an~$\eta$-regular quad~$(\calD,a,b,c,d)$ of size~$R$ and translate everything in such a way that the box~$\Lambda_{\eta R}$ 
	is included in~$\calD$. 
	Consider the event~$E$ that there exists a double four-petal flower domain~$(\calF_{\rm in}, \calF_{\rm out})$ between~$\La_{\eta R/4}$ and~$\La_{\eta R/2}$
	and that
	\begin{itemize}[noitemsep]
	\item~$P_1^{\rm out}$ and~$P_3^{\rm out}$ are connected to~$P_i$ and~$P_j$ in~$\calF \cap \calF_{\rm out}$, respectively;  
	\item~$P_1^{\rm out}$ and~$P_3^{\rm out}$ are not connected to each other, nor to any other primal petal of~$\calF$ in~$\calF \cap \calF_{\rm out}$;
	\item~$P_1^{\rm out}$ and~$P_3^{\rm out}$ are connected to the arcs~$(ab)$ and~$(cd)$ in~$\calD \cap \calF_{\rm out}$ respectively.
	\item~$P_2^{\rm out}$ and~$P_4^{\rm out}$ are dually-connected to the arcs~$(bc)$ and~$(da)$ in~$\calD \cap \calF_{\rm out}$.
	\end{itemize}
 	Theorem~\ref{thm:RSWnear} and Lemma~\ref{lem:D4PFD} give	\begin{equation}
		\phi^{\xi}_\calF[E]\ge c_0 >0.
	\end{equation}
	
	Consider the coupling~$\mathbb P_{\bf T}$ between~$\phi^{\xi}_\calF$ and~$\phi^{\xi'}_\calF$, 
	obtained by first exploring~$(\calF_{\rm in}, \calF_{\rm out})$ in~$\omega$, 
	then revealing all the edges in~$\calF_{\rm out}\cap\calF$, then all those inside~$\calF_{\rm in}$. 
	Let $\tau$ be the stopping time corresponding to the end of the second step of this exploration.
	
	Suppose that~$\omega_{[\tau]}\in E$. 
	Then~$\calD$ is crossed in~$\omega$ if and only if the petals~$P_1^{\rm in}$ and~$P_3^{\rm in}$ are connected inside~$\calF_{\rm in}$. 
	Write~$\zeta < \zeta'$ for the two boundary conditions on~$\calF_{\rm in}$ which are coherent with the flower domain structure 
	(with~$P_1^{\rm in}$ wired to~$P_3^{\rm in}$ in~$\zeta'$, but not in~$\zeta$). 
	Then the boundary conditions induced by~$\omega_{[\tau]}$ and~$\xi$ on~$\calF_{\rm in}$ are~$\zeta$. 
	Moreover, since~$\omega' \geq \omega$ and due to the wiring of~$P_i$ and~$P_j$ in~$\xi'$, 
	$\omega_{[\tau]}'$ induces the boundary conditions  on~$\calF_{\rm in}$ that dominate~$\zeta'$.
	Thus, 
	\begin{align*}
		\phi^{\xi'}_\calF[\calC(\calD)]-\phi^{\xi}_\calF[\calC(\calD)]
		&=\bbP_{\bf T}[\omega'\in\calC(\calD),\,\omega\notin\calC(\calD)]\\
		&\geq \bbE_{\bf T}\Big[\big( \phi_{\calF_{\rm in}}^{\zeta'}[P_1^{\rm in} \lra P_3^{\rm in}]-\phi_{\calF_{\rm in}}^{\zeta}[P_1^{\rm in} \lra P_3^{\rm in}]\big)
		\mathbf 1_{\omega_{[\tau]}\in E}\Big]\\
		&\ge c_1 \bbP_{\bf T}[\omega_{[\tau]}\in E]\ge c_1\,c_0 >0, 
	\end{align*}
	where~$c_1>0$ is given by Lemma~\ref{lem:boost} and Theorem~\ref{thm:RSWnear}.
	This concludes the proof of~\eqref{eq:boosting_pair_i}.
	\bigbreak
	
	We turn to~\eqref{eq:boosting_pair_circ_i} and refer the reader to Figure~\ref{fig:h2}.
	Write~$z$ for the point~$(3R/2, 0)$. 
	Consider the coupling~$\bbP_\bfT$ between~$\phi^{\xi}_\calF$ and~$\phi^{\xi'}_\calF$ obtained by first exploring the  
	double flower domain~$(\calF_{\rm in}, \calF_{\rm out})$ in~$\omega$ between~$\La_{R/16}(z)$ and~$\La_{R/8}(z)$, 
	then the configurations inside~$\calF_{\rm in}$ and finally those in~$\calF_{\rm out}$. 
	If no double flower domain exists, reveal all remaining edges in arbitrary order. 
	Write~$\tau_1$ for the stopping time marking the end of the exploration of~$(\calF_{\rm in}, \calF_{\rm out})$, 
	and~$\tau_2$ the stopping time after further exploring~$\calF_{\rm in}$.

	Condition on a pair of configurations~$(\omega_{[\tau_1]}, \omega'_{[\tau_1]})$ such that the double flower domain exists, 
	and write~$\zeta < \zeta'$ for the two boundary conditions on~$\calF_{\rm in}$ that are coherent with the flower domain structure. 
	The configuration~$\omega$ inside~$\calF_{\rm in}$ is sampled according to a convex combination of~$\phi_{\calF_{\rm in}}^\zeta$
	and~$\phi_{\calF_{\rm in}}^{\zeta'}$. 
	Indeed, the coefficient~$\la$ for the latter measure is given by 
	the probability that~$P_1^{\rm out}$ is connected to~$P_3^{\rm out}$ in~$\calF_{\rm out}$, including via the boundary conditions~$\xi$. 
	Similarly, the law of~$\omega'$ inside~$\calF_{\rm in}$ dominates a convex combination of~$\phi_{\calF_{\rm in}}^\zeta$
	and~$\phi_{\calF_{\rm in}}^{\zeta'}$, with the coefficient~$\la'$ for the latter given by 
	the probability that~$P_1^{\rm out}$ is connected to~$P_3^{\rm out}$ in~$\calF_{\rm out}$, including via the boundary conditions~$\xi'$. 
	
	As shown in the first point of the proof with the event~$E$, 
	there is a uniformly positive probability $c_2 > 0$ for $P_1^{\rm out}$ and~$P_3^{\rm out}$ to be connected
	in~$\calF_{\rm out}$ to~$P_i$ and~$P_j$ respectively, but not to each other. 
	Thus,~$\la' \geq \la + c_2$. 
	Applying Lemma~\ref{lem:boost} and Theorem~\ref{thm:RSWnear} we find
	\begin{align*}
	\bbP_\bfT [ P_1^{\rm in} \xlra{\omega' \cap \calF_{\rm in}} P_3^{\rm in}
	\text{ but }  P_1^{\rm in} \nxlra{\omega \cap \calF_{\rm in}} P_3^{\rm in}
	\,|\, \omega_{[\tau_1]},  \omega'_{[\tau_1]} \text{ s.t.~$(\calF_{\rm in}, \calF_{\rm out})$ exists}] &\\
	\geq 
	(\phi_{\calF_{\rm in}}^{\zeta'} [P_1^{\rm in} \xlra{\calF_{\rm in}} P_3^{\rm in}] - 
	\phi_{\calF_{\rm in}}^{\zeta} [P_1^{\rm in} \xlra{\calF_{\rm in}} P_3^{\rm in}]) 
	(\la - \la')
	&\geq c_3, 
	\end{align*}
	for some~$c_3 >0$. 
	
	Write~$F$ for the event that~$(\calF_{\rm in}, \calF_{\rm out})$ exists and that~$P_1^{\rm in}$ is connected to~$P_3^{\rm in}$
	inside~$\calF_{\rm in}$ in~$\omega'$, but not in~$\omega$. 
	This event is measurable in terms of the configurations at the stopping time~$\tau_2$. 
	Finally, write~$H$ for the event that in~$\omega$, 
	\begin{itemize}[noitemsep,nolistsep]
	\item in~$\calF_{\rm out} \cap {\rm Ann}(R,2R)$, 
	$P_1^{\rm out}$ is connected to~$P_3^{\rm out}$ by a primal path, 
	\item~$P_2^{\rm out}$ is connected to~$\La_{R}$ by a dual path, and
	\item~$P_4^{\rm out}$ is connected to~$\La_{2R}^c$ by a dual path.
	\end{itemize}
	Notice that, if~$H$ occurs, the primal path connecting~$P_1^{\rm out}$ to~$P_3^{\rm out}$ needs to ``wind around''~$\La_{R}$. 
	Thus,~$H$ may be understood as the connection between~$ P_1^{\rm in}$ and~$P_3^{\rm in}$ in~$\calF_{\rm in}$ being ``pivotal'' for~$A_{R}$ (for~$\omega$). 
	By Theorem ~\ref{thm:RSWnear}, 
	\begin{align*}
	\bbP_{\bfT}[\omega \in H\,| \, (\omega_{[\tau_2]},\omega'_{[\tau_2]})\text{ such that~$F$ occurs}] > c_4, 
	\end{align*}
	for some~$c_4 > 0$. 
	Moreover, when~$F$ and~$H$ occur, then~$A_{R}$ occurs for~$\omega'$, but not for~$\omega$. 
	Thus,
	\begin{align*}
		\phi^{\xi^1}_\calF[A_{R}]-\phi^{\xi^0}_\calF[A_{R}]
		=\bbP_{\bf T}[\omega\in H \text{ and } F] \ge c_3\,c_4 >0, 
	\end{align*}
	which is the desired conclusion. 
	\end{proof}
		\begin{figure}
	\begin{center}
	\includegraphics[width=0.55\textwidth]{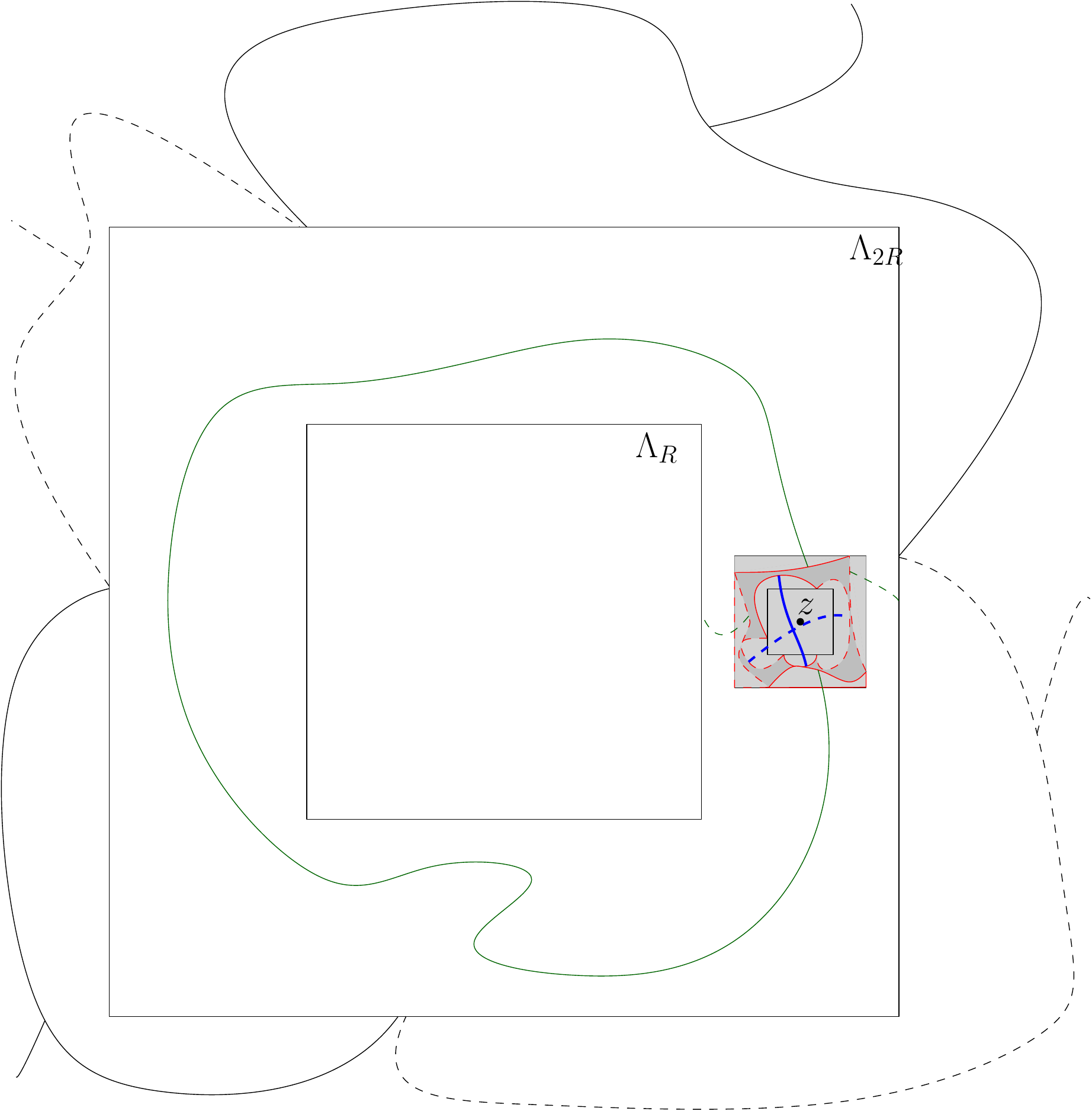}
	\caption{A depiction of the annulus configuration in the proof of \eqref{eq:boosting_pair_circ_i}. In black, the flower domain~$\calF$. The red part corresponds to the double flower-domain~$(\calF_{\rm in},\calF_{\rm out})$ explored at time~$\tau_1$. The blue part corresponds to what is explored at time~$\tau_2$, i.e.~a connection in~$\omega'$ between two wired petals of~$\calF_{\rm in}$ together with a dual connection in~$\omega^*$ between two free petals. Then after time~$\tau_2$, we construct the event~$H$ (in green). }
	\label{fig:h2}
	\end{center}
	\end{figure}
	
	\begin{remark}
		The proof of~\eqref{eq:boosting_pair_circ_i} may appear contradictory, 
		as we are first arguing that~$P_1^{\rm out}$ and~$P_3^{\rm out}$ may appear wired in~$\omega'$ but not in~$\omega$, 
		then we focus on the event~$\omega \in H$ which ensures that~$P_1^{\rm out}$ and~$P_3^{\rm out}$ are connected in both~$\omega$ and~$\omega'$. 
		The reader should keep in mind that the configurations~$\omega$ and~$\omega'$ in~$\calF_{\rm in}$ are sampled before sampling the configurations in~$\calF_{\rm out}$, 
		and their laws are obtained by averaging over the possible configurations in~$\calF_{\rm out}$. 
		
		Alternatively, one may imagine a coupling where~$(\calF_{\rm in}, \calF_{\rm out})$ is explored first, 
		then the configurations in~$\calF_{\rm out}$ are revealed, then those in~$\calF_{\rm in}$
		and finally the configurations in~$\calF_{\rm out}$ are resampled. 
		In this context, we are investigating the situation where, 
		in the first sampling of the configurations in~$\calF_{\rm out}$, 
		$P_1^{\rm out}$ and~$P_3^{\rm out}$ are connected to~$P_i$ and~$P_j$, respectively, but not to each other, 
		then, in the sampling inside~$\calF_{\rm in}$,~$P_1^{\rm in}$ and~$P_3^{\rm in}$ are connected in~$\omega'$, but not in~$\omega$, 
		and finally, in the second sampling in~$\calF_{\rm out}$,~$H$ occurs for~$\omega$. 
		
	\end{remark}

\subsection{Crossing quads produce boosting pairs}

This section is concerned with the following result which roughly states that 
conditioning on the existence of the crossing of a quad has the same effect as a boosting pair of boundary conditions. 

\begin{proposition}\label{prop:cross_is_boosting}
	For any~$\eta > 0$, there exists a constant~$c = c(\eta,q) > 0$ such that the following holds. 
	Fix~$p$ and let $(\calD,a,b,c,d)$ be an~$\eta$-regular discrete quad at scale~$R\le L(p)$. 
	Then there exists a coupling via decision trees~$\bbP$ of~$\phi_{p}$ and~$\phi_{p}[\cdot\,|\calC(\calD)]$,	
	and a stopping time~$\tau$ such that, when~$\tau < \infty$,
	$\calF_\tau = \bbZ^2\setminus e_{[\tau]}$ is a~$1/2$-well-separated outer flower domain on~$\La_{2R}$, 
	and the boundary conditions induced by~$\omega'_{[\tau]}$ on~$\calF_\tau$ are a boost of those induced by~$\omega_{[\tau]}$. 
	Finally, 
	\begin{align*}
	\bbP[\tau < \infty] \geq c.
	\end{align*}
\end{proposition}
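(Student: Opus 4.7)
The plan is to couple~$\phi_p$ with~$\phi_p[\,\cdot\,|\,\calC(\calD)]$ monotonically via a decision tree, run a two-phase exploration that simultaneously uncovers an outer four-petal flower domain around~$\La_{2R}$ and the primal-crossing status of~$\calD$, and then isolate a good event of positive probability on which the conditioning on~$\calC(\calD)$ produces the extra wiring of two primal petals required by the definition of boosting pair. Because~$\calC(\calD)$ is increasing, FKG gives~$\phi_p\leqst\phi_p[\,\cdot\,|\,\calC(\calD)]$, and the edge-by-edge scheme of Proposition~\ref{prop:coupling} applies verbatim, using at each revealed edge~$e$ a common uniform~$U_e$ together with the two relevant conditional laws (the second carrying the global conditioning on~$\calC(\calD)$); this produces~$\omega\le\omega'$ with~$\omega\sim\phi_p$ and~$\omega'\sim\phi_p[\,\cdot\,|\,\calC(\calD)]$.

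Phase~1 reveals every edge of~$\La_R$ in a fixed deterministic order, so that the crossing status of~$\calD$ becomes known in both configurations. Phase~2 then uses the data revealed on~$\partial\La_R$ to explore the interfaces of~$\omega$ in~$\Ann(R,2R)$ that start on~$\partial\La_R$, tracing each until it reaches~$\partial\La_{2R}$ (and also revealing all edges enclosed by any interface that returns to~$\partial\La_R$, in order to keep the explored region simply connected). Let~$D$ denote the set of edges revealed at the end of Phase~2;~$D$ contains~$\La_R$, is simply connected, and its portion lying in~$\La_{2R}\setminus\La_R$ consists of~$2k$ disjoint spokes joining~$\partial\La_R$ to~$\partial\La_{2R}$. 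Consequently~$\calF_\tau:=\bbZ^2\setminus D$ is an outer flower domain on~$\La_{2R}$ with~$2k$ alternating primal and dual petals, the type of each petal being read off from the sides of the flanking interfaces in~$\omega$.

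Let~$E$ be the event, measurable from~$\omega$ alone, that: (a) exactly four interfaces of~$\omega$ in~$\Ann(R,2R)$ reach~$\partial\La_{2R}$ and land in small neighbourhoods of the four corners of~$\La_{2R}$, so that~$\calF_\tau$ is~$1/2$-well-separated with four alternating petals~$P_1,P_2,P_3,P_4$ and~$P_1,P_3$ primal (at the bottom-right and top-left corners, say); (b) in~$\omega$, primal paths inside~$D$ connect~$(ab)$ to~$P_1$ and~$(cd)$ to~$P_3$; (c) a dual crossing of~$\calD$ from~$(bc)$ to~$(da)$ together with the two dual-type spokes separates~$P_1$ from~$P_3$ inside~$\omega\cap D$; in particular~$\omega\notin\calC(\calD)$. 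Clauses (a)--(c) can be imposed simultaneously by the RSW construction underlying the proof of Lemma~\ref{lem:D4PFD}: each of the required long primal and dual crossings in corner rectangles at scale~$R$ has~$\phi_p$-probability bounded below by a universal constant via~\eqref{eq:RSWnear}, and these crossings may be combined through~\eqref{eq:FKG} and~\eqref{eq:CBC}, yielding~$\bbP[E]\ge c(\eta)>0$ uniformly in~$p$ and~$R\le L(p)$.

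Let~$\tau$ be the time at which Phase~2 terminates when~$E$ holds, and~$\tau=\infty$ otherwise. On~$E$, the boundary conditions~$\zeta$ induced on~$\calF_\tau$ by~$\omega_{[\tau]}$ wire each primal petal internally (by~(a)--(b)) but leave~$P_1$ and~$P_3$ in distinct components by~(c). Conversely,~$\omega'\ge\omega$ ensures that the primal paths from~(b) persist in~$\omega'$, and~$\omega'\in\calC(\calD)$ supplies a primal crossing of~$\calD$ from~$(ab)$ to~$(cd)$ in~$\omega'$; concatenating these three primal pieces yields a primal path inside~$D\cap\omega'$ joining~$P_1$ to~$P_3$, so the induced~$\zeta'$ from~$\omega'_{[\tau]}$ wires~$P_1\sim P_3$. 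Writing~$\xi$ for the coherent boundary condition on~$\calF_\tau$ with each primal petal individually wired and no inter-petal wirings, and~$\xi'$ for the same with the extra identification~$P_1\sim P_3$, one checks~$\zeta\le\xi<\xi'\le\zeta'$, so~$(\zeta,\zeta')$ is a boosting pair in the extended sense and~$\bbP[\tau<\infty]=\bbP[E]\ge c$. The main obstacle is clause~(c): simultaneously demanding the \emph{absence} of a primal crossing of~$\calD$ and the presence of specific primal connections from~$(ab)$ and~$(cd)$ to the two primal petals; the orthogonal dual crossing of~$\calD$ handles both at once, as it forbids the primal crossing and, together with the two dual spokes, severs~$P_1$ from~$P_3$ inside~$D$.
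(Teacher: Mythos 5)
Your overall strategy --- reveal the crossing status of~$\calD$ together with an outer flower domain on~$\La_{2R}$, then use the conditioning on~$\calC(\calD)$ to wire two primal petals in~$\omega'$ but not in~$\omega$ --- is the same as the paper's, but the gluing step has a genuine gap, on both the primal and the dual side. On your event~$E$ you have, in~$\omega$, a path~$\gamma_1$ from~$P_1$ to some point~$x\in(ab)$ and a path~$\gamma_3$ from~$P_3$ to some point~$w\in(cd)$, while~$\omega'\in\calC(\calD)$ only supplies a crossing~$\Gamma$ of~$\calD$ from some point~$y\in(ab)$ to some point~$z\in(cd)$. Three open paths that merely touch a common boundary arc of the quad at (generally distinct) points are not connected to one another --- the arcs~$(ab)$ and~$(cd)$ are not wired --- so ``concatenating these three primal pieces'' does not produce a connected~$\omega'$-open path from~$P_1$ to~$P_3$, and~$\zeta'$ need not wire~$P_1$ to~$P_3$. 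Symmetrically, the dual crossing of~$\calD$ from~$(bc)$ to~$(da)$ and the two dual spokes of the flower domain are three disjoint dual paths; unless they are additionally connected in~$\omega^*$ they do not form a separating circuit, so they do not rule out an~$\omega$-open connection from~$P_1$ to~$P_3$ inside~$D\setminus\calD$, and~$\zeta$ may already wire~$P_1$ to~$P_3$. Either failure alone destroys the claim that~$(\zeta,\zeta')$ is a boosting pair.

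This is precisely the difficulty the paper's proof is engineered around. It introduces the enlarged and shrunk quads~$\calD^+\supset\calD\supset\calD^-$ and uses Lemma~\ref{lem:lengthen_crossings} to guarantee~$\bbP[\omega'\in\calC(\calD^+),\,\omega\notin\calC(\calD^-)]\ges 1$; it then explores the lowest~$\omega'$-crossing~$\Gamma^+$ of~$\calD^+$ and the extremal dual~$\omega$-crossing~$\Gamma^-$ of~$\calD^-$ as explicit explored paths, and finally builds RSW connections from the petals \emph{directly to}~$\Gamma^+$ (in~$\omega'$) and to~$\Gamma^-$ (in~$\omega^*$), working in the still-unexplored collar regions~$\calD^+\setminus\calD$ and~$\calD^-\setminus\calD$. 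Ending each connecting path on the explored crossing itself, rather than on an arc of~$\partial\calD$, is what makes the concatenation legitimate; the collars are what leave room to do this after the crossings have been explored. Your construction has no analogue of this mechanism (and also needs the care flagged in the paper's footnote about making the primal connections in~$\omega'$ and the dual ones in~$\omega$ in disjoint predetermined regions), so as written the positive-probability event you exhibit does not yield the required boost.
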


\begin{remark}\label{rem:circ_is_boosting}
	We may replace~$\calC(\calD)$ by~$A_{R/2}$ in the statement above. 
	Indeed, if we set~$x = (0,3R/4)$, observe that~$A_{R/2}$ induces a vertical crossing of the~$\eta$-regular discrete quad~$\La_{R/4}(x)$ 
	(with~$a,b,c,d$ the corners of~$\La_{R/4}(x)$, starting with the top-left one). 
	As a consequence, the measure~$\phi_{p}[\cdot\,|A_{R/2}]$ dominates~$\phi_{p}[\cdot\,|\calC(\La_{R/4}(x))]$.
	Now, apply the decision tree provided by Proposition~\ref{prop:cross_is_boosting} to couple in an increasing fashion the measures 
	$\phi_p$,~$\phi_{p}[\cdot\,|\calC(\La_{R/4}(x))]$ and~$\phi_{p}[\cdot\,|A_{R/2}]$; call~$\omega \le \omega' \le \omega''$ the resulting configurations. 
	Then, if~$\tau < \infty$, 
	the boundary conditions induced by ~$\omega''_{[\tau]}$ on~$\calF_\tau$ dominate those induced by ~$\omega'_{[\tau]}$,
	and hence are a boost of those induced by~$\omega_{[\tau]}$.
\end{remark}

The proof of Proposition~\ref{prop:cross_is_boosting} is based on the following lemma, 
which allows us to ``lengthen'' crossings at a small cost.
For an~$\eta$-regular discrete quad~$(\calD,a,b,c,d)$ at some scale~$R$ and for some $m \le \eta R/4$, 
define two modified quads~$(\calD^+,a^+,b^+,c^+,d^+)$ and~$(\calD^-,a^-,b^-,c^-,d^-)$ as follows (see Figure~\ref{fig:D+-} for an illustration). 
The domains~$\calD^+$ and~$\calD^-$ are formed by the union of~$\calD$ 
with the set of edges of~$\bbZ^2 \setminus \calD$ that are at a~$\ell^\infty$ distance at most~$m$ from
the arcs~$(ab)$ and~$(cd)$ (respectively~$(bc)$ and~$(da)$),  
but at least distance~$m$ from~$a$,~$b$,~$c$ and~$d$. 
The point~$a^+$ is the first point of~$\partial \calD^+$ in counter-clockwise order after~$a$ 
that is at a distance~$m$ from~$\partial \calD$; the point~$b^+$ is the last such point before~$b$. 
The points~$c^+$ and~$d^+$ are defined similarly in terms of~$c$ and~$d$ respectively. 
A similar definition applies to~$a^-$,~$b^-$,~$c^-$ and~$d^-$. 
Notice that $(\calD^+,a^+,b^+,c^+,d^+)$ and~$(\calD^-,a^-,b^-,c^-,d^-)$ are both discrete quads and that 
\begin{align*}
\calC(\calD^+) \subset \calC(\calD) \subset \calC(\calD^-).
\end{align*}
 
\begin{figure}
    \begin{center}
        \includegraphics[width = 0.34\textwidth, page = 1]{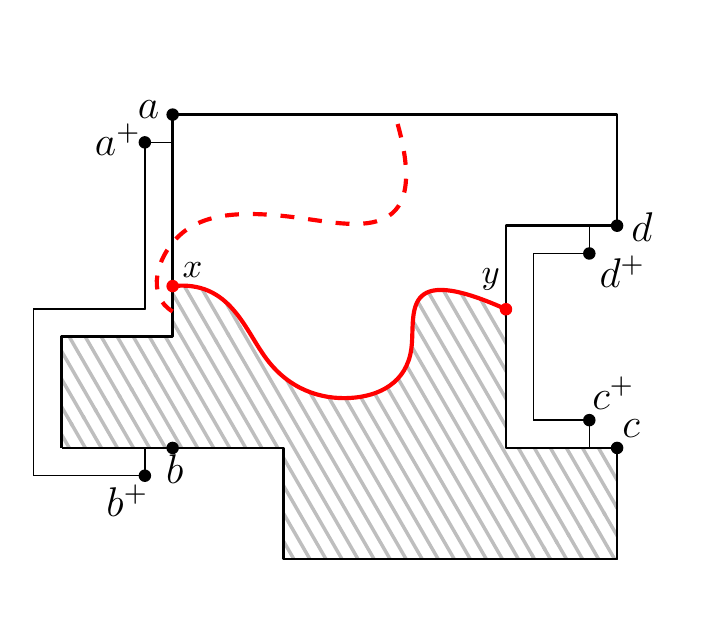}
        \includegraphics[width = 0.34\textwidth, page = 2]{D+-.pdf}
                \includegraphics[width = 0.3\textwidth, page = 3]{D+-.pdf}
        \caption{{\em Left:} For the lowest crossing~$\Gamma$ of~$\calD$ not to be connected to~$(a^+b^+)$, 
            there needs to exist a dual path from~$\La_m(x)$ to the arc~$(d^+a^+)$ in the unexplored region above~$\Gamma$.   \newline
            {\em Middle:} At the time~$\tau_2 < \infty$, when~$\Gamma^+$ and~$\Gamma^-$ have been revealed, 
            the region of~$\calD^+\setminus \calD$ above~$\Gamma^+$ and that of~$\calD^-\setminus \calD$ left of~$\Gamma^-$ are unexplored. \newline
            {\em Right:} The unexplored regions may be used to connect~$\Gamma^+$ to the primal petals of~$\calF$ in~$\omega$ 
            and~$\Gamma^-$ to the dual petals of~$\calF$ in~$\omega^*$. This ensures that the boundary conditions induced by~$\omega'_{[\tau]}$ on~$\calF$
            are a boost of those induced by~$\omega_{[\tau]}$.}
		\label{fig:D+-}
    \end{center}
\end{figure}
 
 \begin{remark}
 The choice of~$m\le \eta R/4$ and the fact that~$\calD$ is~$\eta$-regular guarantee that~$\calD^+\setminus\calD$ and~$\calD^-\setminus\calD$ 
are at a distance at least~$\eta R/4$ of each other and are each made of two separate connected components. In particular, the complement of~$\calD^+\cup\calD^-$ is connected.
 \end{remark}

\begin{lemma}\label{lem:lengthen_crossings}
	For any~$\eta > 0$, there exist~$C=C(\eta)>0$, $\eps=\eps(\eta) > 0$ such that the following holds. 
	For any~$p \in (0,1)$,~$R \le L(p)$, $m < \eta R/4$
	and any~$\eta$-regular discrete quad~$(\calD,a,b,c,d)$ at scale~$R \le L(p)$.
	\begin{align}\label{eq:lengthen_crossings}
		\phi_{p}[\calC(\calD)] - \phi_{p} [\calC(\calD^+)]  &\le C (m/R)^{\eps},\\
				\phi_{p}[\calC(\calD^-)] - \phi_{p} [\calC(\calD)]  &\le C (m/R)^{\eps}.
	\end{align}
\end{lemma}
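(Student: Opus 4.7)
Since the two inequalities are related by the planar-duality swap $p\leftrightarrow p^*$ exchanging the roles of primal and dual and of the crossed and uncrossed arcs, I focus on bounding $\phi_p[\calC(\calD)\setminus\calC(\calD^+)]$. On that event, a primal crossing of $\calD$ coexists with the failure of a primal crossing of $\calD^+$; by planar duality inside $\calD^+$, the latter produces a dual crossing $\Gamma^*$ of $\calD^+$ from $(b^+c^+)$ to $(d^+a^+)$. Since $\Gamma^*$ cannot intersect any primal crossing of $\calD$, it must enter at least one of the two width-$m$ strips $S^+_{\rm top}, S^+_{\rm bot}$ composing $\calD^+\setminus\calD$, looping around an endpoint of the primal crossing. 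Assume WLOG that this happens in $S^+_{\rm bot}$, so that the restriction of $\Gamma^*$ to $S^+_{\rm bot}$ contains a dual sub-arc $\delta$ whose endpoints $u_1, u_2$ on $(ab)^*$ enclose an endpoint $v\in(ab)$ of the primal crossing. Set $\ell := |u_1-u_2|$.

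Near $v$, this produces two almost independent constraints. On the $\calD$-side of $(ab)$, there is a polychromatic three-arm event in the half-plane, emanating from a neighbourhood of $v$ of diameter $\ell$ and reaching distance of order $R$: a primal arm from $v$ to $(cd)$, together with the two pieces of $\Gamma^*$ lying in $\calD$, namely a dual arm from near $u_1$ to $(da)$ and a dual arm from near $u_2$ to $(bc)$; the three arms are disjoint by construction. By the universal half-plane three-arm estimate for random-cluster measures with $q\in[1,4]$, derived from Theorem~\ref{thm:RSWnear} via the Aizenman--Burchard non-crossing argument (see \cite{AizBur99} and its adaptation in \cite{DumManTas20}), this event has probability at most $C_\eta (\ell/R)^2$. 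On the $S^+_{\rm bot}$-side of $(ab)$, the arc $\delta$ spans horizontal distance $\ell$ inside a strip of width $m$; when $\ell > m$, this forces a dual path in the long direction of a rectangle of aspect ratio $\ell/m$, whose probability is bounded by $C(m/\ell)^{\kappa}$ for some universal $\kappa > 0$, again by Theorem~\ref{thm:RSWnear}. The two constraints are almost independent thanks to the spatial Markov property \eqref{eq:SMP} and the comparison between boundary conditions \eqref{eq:CBC}.

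A union bound over $O(R/\ell)$ essentially disjoint positions for $v$ at each dyadic scale $\ell$, followed by summation over dyadic scales $1\le \ell \le R$, then yields
\[
\phi_p\bigl[\calC(\calD)\setminus\calC(\calD^+)\bigr]
\les \sum_{\ell \le m}\frac{R}{\ell}\Bigl(\frac{\ell}{R}\Bigr)^{2}
\;+\; \sum_{m<\ell\le R}\frac{R}{\ell}\Bigl(\frac{\ell}{R}\Bigr)^{2}\Bigl(\frac{m}{\ell}\Bigr)^{\kappa}
\les \Bigl(\frac{m}{R}\Bigr)^{\eps},
\]
with $\eps := \min(1,\kappa) > 0$. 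The main obstacle is to rigorously establish the universal three-arm estimate uniformly in $p\in(0,1)$ and $R\le L(p)$ in our dependent setting; this follows from Theorem~\ref{thm:RSWnear} combined with the exploration techniques of Section~\ref{sec:coupling exploration}. The long-thin RSW bound and the near-independence of the two constraints are standard consequences of Theorem~\ref{thm:RSWnear} and the Markov property.
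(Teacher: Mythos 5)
Your proof is not the paper's proof: the paper explicitly mentions your strategy in the remark immediately following its argument (``Alternatively, one may use bounds on the probability of the three-arm event in the half-plane to prove the above'') and deliberately avoids it. The paper instead conditions on the lowest crossing $\Gamma$ of $\calD$ with endpoints $x\in(ab)$, $y\in(cd)$, splits into four cases according to whether $x$ (resp.\ $y$) is within distance $\sqrt{mR}$ of the corner $a$ (resp.\ $d$), and in each case only needs the \emph{one-arm} upper bound \eqref{eq:LOWER_BOUND_ONE_ARM}: either a primal arm from $\La_{\sqrt{mR}}(a)$ to distance $\eta R$, or, conditionally on $\Gamma$, a dual arm from $\La_m(x)$ to $(d^+a^+)$ in the unexplored region above $\Gamma$. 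This is lighter on inputs and, as the authors note, adapts to fractal quads.

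Two points in your write-up need repair before it is a complete proof. First, the half-plane three-arm bound $C_\eta(\ell/R)^2$ is over-claimed and mis-attributed: the Aizenman--Burchard argument gives regularity of interfaces, not arm exponents, and the universal exponent $2$ for $\pi_3^+$ is not established in this paper (and is delicate at $q=4$, where RSW is not uniform in boundary conditions). Fortunately your final summation only requires a bound of the form $(\ell/R)^{1+c}$ for some $c>0$; this weaker estimate does follow from the mechanism of Lemma~\ref{lem:well-separated} (bound the three-arm event by $(\ell/R)^{c}$ times a two-arm event via \eqref{eq:ah1}, then sum the two-arm probabilities over disjoint boundary intervals using the expected number of crossing clusters as in \eqref{eq:ah2}), at the cost of replacing $\eps=\min(1,\kappa)$ by $\min(c,\kappa)$. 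You should make this substitution explicit rather than invoking the exponent $2$. Second, your case analysis assumes the dual sub-arc $\delta$ has both endpoints on $(ab)^*$ and encloses an interior point $v$ of $(ab)$; when the endpoint of the primal crossing lies within distance $O(m)$ of a corner $a$ or $b$, the arc $\Gamma^*$ may terminate on $(d^+a^+)$ without producing such a $\delta$, and the three-arm structure degenerates. This corner case must be treated separately (a one-arm event from a box of size $O(m)$, or $O(\sqrt{mR})$ as in the paper, around the corner to distance $\eta R$ suffices and contributes $(m/R)^{c'}$). With these two patches your argument is correct and yields the stated bound.
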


\begin{proof}
	We will focus on the first inequality; the second one is the same inequality applied to the dual model. 
	Since~$\calC(\calD^+) \subset \calC(\calD)$, we are searching for an upper bound on 
	$\phi_{p}[\calC(\calD) \setminus \calC(\calD^+)]$. 
	When~$\calC(\calD)$ occurs, let~$\Gamma$ be the ``lowest'' crossing of~$\calD$, that is the open path closest to the arc~$(bc)$, 
	with endpoints~$x \in (ab)$ and~$y \in (cd)$. Write~$\mathsf{Under}(\Gamma)$ for the set of edges of~$\calD$ between~$\Gamma$ and~$(bc)$. 
	
 	If~$\calC(\calD) \setminus \calC(\calD^+)$ occurs, then at least one of the following four events needs to occur
	\begin{itemize}
	\item[(i)]~$x$ is at a distance at most~$\sqrt {mR}$ from~$a$;
	\item[(ii)]~$x$ is at a distance at least~$\sqrt {mR}$ from~$a$, but~$\Gamma$ is not connected to~$(a^+b^+)$ in~$\calD^+$;
	\item[(iii)]~$y$ is at a distance at most~$\sqrt {mR}$ from~$d$;
	\item[(iv)]~$y$ is at a distance at least~$\sqrt {mR}$ from~$d$, but~$\Gamma$ is not connected to~$(c^+d^+)$ in~$\calD^+$.
	\end{itemize} 
	Next we bound the probability of each of the four events described above. 
	
	If the event in (i) occurs, there exists a primal arm, namely~$\Gamma$, contained in~$\calD$ from~$\La_{\sqrt {mR}}(a)$ to distance~$\eta R$; and therefore using \eqref{eq:LOWER_BOUND_ONE_ARM} we get
	\[
	\phi_p[(i)\text{ occurs}]\le \pi_1(p; \sqrt {mR}, \eta R) \le C(\eta) (m/R)^\eps.
	\]
		
	If the event in (ii) occurs, then there exists a dual-open path from~$\La_{m}(x)$ to~$(d^+a^+)$ contained in~$\calD^+ \setminus \mathsf{Under}(\Gamma)$; see Figure~\ref{fig:D+-} (left diagram). 
	It is a standard consequence of Theorem~\ref{thm:RSWnear} (the near-critical RSW) that there exist~$\eps, C > 0$ such that, 
	for any realisation of~$\Gamma$ such that (i) fails,
	\begin{align*}
		\phi_{p}[ \La_{m}(x)\xlra{\omega^* \cap (\calD^+ \setminus \mathsf{Under}(\Gamma))}(da)\,|\, \Gamma \text{ and } \omega \text{ on } \mathsf{Under}(\Gamma)] 
		\le C (m/R)^\eps.
	\end{align*}
	Indeed, notice that the conditioning above induces both positive and negative information on~$ \mathsf{Under}(\Gamma)^c$.
	However, due to the~$\eta$-regularity of~$\calD$, the information that is favorable to the existence of dual connections is at every scale around $x$ at a macroscopic distance from 
	$\La_{m}^c(x) \cap (\calD^+ \setminus \mathsf{Under}(\Gamma))$; see Figure~\ref{fig:D+-} (left diagram).
	
	The bounds above also apply to the events in (iii) and (iv). When combined using a union bound, we find
	$$\phi_{p}[\calC(\calD) \setminus \calC(\calD^+)]  \le 4 C (m/R)^{\eps}.$$
\end{proof}

\begin{remark}
	Alternatively, one may use bounds on the probability of the three-arm event in the half-plane to prove the above. 
	We prefer the strategy above as it adapts easily to fractal quads of bounded extremal distance; 
	the cases (i) and (ii) (and (iii) and (iv), respectively) should then be distinguished using extremal distance rather than the geometric average of~$m$ and~$R$.
	Above, the quad~$\calD$ is assumed to be~$\eta$-regular simply for convenience. 
\end{remark}

\begin{proof}[Proposition~\ref{prop:cross_is_boosting}]
	Fix~$\eta > 0$ and let~$C=C(\eta)$ and~$\eps=\eps(\eta)$ be the constants given by Lemma~\ref{lem:lengthen_crossings} for this value of~$\eta$. 
	Below,~$c_0,\dots,c_3$ denote strictly positive constants depending only on~$\eta$. 
	
	For~$p$ and~$R$ with~$R \le L(p)$, let~$m = c_0 R$ be such that 
	$$ C(m/R)^\eps < \tfrac14 \inf_{\calD}	\phi_{p}[\calC(\calD)]\phi_{p}[\calC(\calD)^c],$$
	where the infimum is taken over all~$\eta$-regular quads~$\calD$ at scale~$R$. 
	By Theorem~\ref{thm:RSWnear}, the infimum is uniformly positive, and therefore~$c_0 >0$ above does indeed depend only on~$\eta$. 
	
	Fix~$\calD$ as in the statement of the proposition. 
	Then, in any increasing coupling~$\bbP$ of~$\phi_{p}[\cdot]$ and~$\phi_{p}[\cdot\,|\calC(\calD)]$ (note that~$\calC(\calD)$ occurs automatically for~$\omega'$),
	\begin{align*}
	\bbP[\omega' \in \calC(\calD^+), &\,\omega \notin \calC(\calD^-)]
\\
	& \geq 1-\phi_p[\calC(\calD)]- \phi_p[\calC(\calD^-) \setminus \calC(\calD)] - \phi_p[ \calC(\calD^+)^c\setminus\calC(\calD)^c \,|\, \calC(\calD)] \\
	& \geq \phi_p[\calC(\calD)^c] -   C(m/R)^\eps - \frac{C(m/R)^\eps}{\phi_{p}[\calC(\calD)]}  \\
	& \geq \tfrac12\phi_p[\calC(\calD)^c] \geq c_1 >0.
	\end{align*}

	Next, we create an increasing coupling~$\bbP$ between~$\phi_{p}$ and~$\phi_{p}[\cdot\,|\calC(\calD)]$ using a specific decision tree described below. 
	Start by exploring the lowest crossing~$\Gamma^+$ in~$\calD^+$ from~$(a^+b^+)$ to~$(c^+d^+)$ (that is the crossing closest to~$(b^+c^+)$) 
	in the larger configuration~$\omega'$. 
	Write~$\tau_1$ for the stopping time when this crossing is found; set~$\tau_1 = \infty$ if no such crossing exists. 
	If~$\tau_1 < \infty$, explore the ``right-most'' dual crossing~$\Gamma^-$ in~$\calD^-$ from~$(b^-c^-)$ to~$(d^-a^-)$ 
	(that is the one closest to~$(c^-d^-)$) in the configuration~$\omega^*$;
	define~$\tau_2$ for the stopping time when this crossing is found (if~$\tau_1 = \infty$ or no such crossing is found, set~$\tau_2 = \infty$). 
	Notice that 
	\begin{align*}
		\bbP[\tau_2 < \infty]
		= \bbP[\omega' \in \calC(\calD^+),\omega \notin \calC(\calD^-)]
		\geq c_1.	
	\end{align*}
	Assuming that~$\tau_2 < \infty$, the explored edges~$e_{[\tau_2]}$ are those of~$\calD^+$ below~$\Gamma^+$ and those of~$\calD^-$ right or~$\Gamma^-$. 
	In particular, the edges of~$\calD^+\setminus\calD$ that are above~$\Gamma^+$, 
	as well as those of~$\calD^- \setminus \calD$ that are left of~$\Gamma^-$ are unexplored. See Figure~\ref{fig:D+-} (middle diagram). 
	
	Next, explore the double four-petal flower domain~$(\calF_{\rm in}, \calF_{\rm out})$ between~$\La_{3R/2}$ and~$\La_{2R}$; 
	let~$\tau_3$ be the stopping time marking the end of this exploration 
	(with~$\tau_3 = \infty$ if no double four-petal flower domain exists or if~$\tau_2 = \infty$). 
	Due to Lemma~\ref{lem:D4PFD}, 
	\begin{align*}
		\bbP[\tau_3<\infty \,|\,(\omega_{[\tau_2]}, \omega'_{[\tau_2]}) \text{ s.t. } \tau_2 < \infty] \geq c_2.
	\end{align*}
	
	Finally, reveal the configurations in the unexplored regions of~$\calF_{\rm in}$ and write~$\tau_4$ for the stopping time marking the end of this stage. 
	Let~$H$ be the event that~$P_1^{\rm in}$ and~$P_3^{\rm in}$ are connected by paths of~$\omega' \cap (\calF_{\rm in}^c \setminus \calD)$ to~$\Gamma^+$
	and ~$P_2^{\rm in}$ and~$P_4^{\rm in}$ are connected by paths of~$\omega^* \cap (\calF_{\rm in}^c \setminus \calD)$ to~$\Gamma^-$.
	Due to  Theorem~\ref{thm:RSWnear} (see Figure~\ref{fig:D+-}, right diagram), 
	\begin{align*}
		\bbP[H \,|\,(\omega_{[\tau_3]}, \omega'_{[\tau_3]}) \text{ s.t. } \tau_3 < \infty] \geq c_3.
	\end{align*}
	Special care should be taken as the primal connections occur in~$\omega'$ while the dual ones occur in~$\omega$.
	This may be easily overcome by considering connections in pre-determined disjoint regions\footnote{
	One may be tempted to ask for both the primal and dual connections to occur in the same configuration, for instance in~$\omega$. 
	This would be conceptually simpler, 
	but would require a stronger RSW result, as the sections of~$\Gamma^+$ outside of~$\calD$ are wired in~$\omega'$, but not in~$\omega$. This stronger RSW result is true for~$q<4$ (see~\cite{DumManTas20}), but it is expected to be wrong for~$q=4$.}. 

	Now, if~$\tau_3 < \infty$ and~$H$ occurs,  
	then~$P_1^{\rm out}$ and~$P_3^{\rm out}$ are disconnected in~$\omega \cap \calF_{\rm out}^c$, 
	but are connected in~$\omega'\cap \calF^c_{\rm out}$. 
	Set~$\tau = \tau_4$ if the above occurs and~$\tau = \infty$ otherwise. 
	Then~$\tau$ satisfies the requirements of the proposition and 
	$$ \bbP [\tau < \infty] \geq c_1c_2c_3>0.$$	
\end{proof}

We conclude this section by the following lemma, which is straightforward application of Theorem~\ref{thm:RSWnear}. 
This lemma will be useful in the future sections.

\begin{lemma}\label{lem:boosting_out_to_boosting_in}
	For any~$\eta > 0$, there exists a constant~$c = c(\eta) > 0$ such that the following holds. 
	Fix~$p$ and~$\calF$ an~$\eta$-well-separated outer flower domain on~$\La_{R}$ for some~$R\le L(p)$. 
	Let~$\xi,\xi'$ be a boosting pair of boundary conditions on~$\calF$ and~$x$ be a point of~$\Ann(2R,4R)$. 
	Then, there exists a coupling via decision trees~$\bbP$ of~$\phi_{\calF,p}^\xi$ and~$\phi_{\calF,p}^{\xi'}$
	and a stopping time~$\tau$ such that, when~$\tau < \infty$,
	$\calF_\tau = \calF\setminus e_{[\tau]}$ is a~$1/2$-well-separated inner flower domain on~$\La_{R/4}(x)$, 
	and the boundary conditions induced by~$\omega'_{[\tau]}$ on~$\calF_\tau$ are a boost of those induced by~$\omega_{[\tau]}$. 
	Finally, 
	\begin{align*}
	\bbP[\tau < \infty] \geq c.
	\end{align*}
\end{lemma}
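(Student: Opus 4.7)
The plan is to mirror the proof of Proposition~\ref{prop:cross_is_boosting}, exchanging the roles of inner and outer flower domains: starting from a boosting pair on the outer flower~$\calF$ at scale~$R$, the boost is propagated to an inner flower around~$x$ at scale~$R/4$. All constants below depend only on~$\eta$ and~$q$. The key input is Theorem~\ref{thm:boosting_pair}(ii) applied to an~$\eta$-regular quad~$(\calD,a,b,c,d)$ at scale~$R$ contained in~$\Ann(2R,4R)$ and chosen near~$x$; it yields
\begin{equation*}
\phi_{\calF,p}^{\xi'}[\calC(\calD)] \ge \phi_{\calF,p}^{\xi}[\calC(\calD)] + \delta
\end{equation*}
for some~$\delta > 0$. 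Combining this with Lemma~\ref{lem:lengthen_crossings} exactly as in the first display of the proof of Proposition~\ref{prop:cross_is_boosting}, we obtain, for~$m = c_0 R$ with~$c_0 > 0$ small enough and any increasing coupling~$\bbP$ of~$\phi_{\calF,p}^\xi$ and~$\phi_{\calF,p}^{\xi'}$,
\begin{equation*}
\bbP\bigl[\omega' \in \calC(\calD^+) \text{ and } \omega \notin \calC(\calD^-)\bigr] \ge c_1 > 0.
\end{equation*}

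I would then construct the coupling~$\bbP$ via a decision tree that explores edges in the following order: (a) the lowest primal crossing~$\Gamma^+$ of~$\calD^+$ in~$\omega'$, with stopping time~$\tau_1$; (b) the rightmost dual crossing~$\Gamma^-$ of~$\calD^-$ in~$\omega^*$, with stopping time~$\tau_2$; (c) a double four-petal flower domain~$(\calF_{\rm in},\calF_{\rm out})$ between~$\La_{R/4}(x)$ and~$\La_{R/2}(x)$ in~$\omega$, with stopping time~$\tau_3$; (d) in the remaining unexplored region, primal arms in~$\omega'$ wiring two primal petals of~$\calF_{\rm out}$ to~$\Gamma^+$ together with dual arms in~$\omega^*$ wiring two dual petals of~$\calF_{\rm out}$ to~$\Gamma^-$, routed through pre-chosen disjoint corridors so that the two do not interfere, with stopping time~$\tau_4$. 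Set~$\tau = \tau_4$ when every stage succeeds and~$\tau = \infty$ otherwise. Each stage succeeds with uniformly positive probability conditionally on the previous: (a)--(b) by the second display; (c) by Lemma~\ref{lem:D4PFD} translated so that its annulus is centred at~$x$; (d) by Theorem~\ref{thm:RSWnear} applied in the disjoint corridors of the unexplored region.

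On~$\{\tau < \infty\}$, the unexplored region~$\calF_\tau = \calF \setminus e_{[\tau]}$ equals~$\calF_{\rm in}$, which is by construction a~$1/2$-well-separated inner flower domain on~$\La_{R/4}(x)$. To conclude, one verifies that the boundary conditions induced on~$\calF_{\rm in}$ by~$\omega'_{[\tau]}$ and~$\omega_{[\tau]}$ form a boosting pair: in~$\omega'_{[\tau]}$, two primal petals of~$\calF_{\rm in}$ are wired together along the chain~$\calF_{\rm in} \to \calF_{\rm out} \to \Gamma^+ \to (a^+b^+)\cup(c^+d^+) \to \calF$, closed via the extra wiring~$P_i \sim P_j$ in~$\xi'$; in~$\omega_{[\tau]}$ the same two petals are not wired, since the dual path~$\Gamma^-$ in~$\omega^*$ together with the dual petals of~$\calF_{\rm out}$ fully disconnects them within the explored region. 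The main obstacle is maintaining this clean separation throughout the construction -- specifically, arranging the primal connections of step~(d) in~$\omega'$ and the dual connections in~$\omega^*$ in disjoint corridors so that the two sets of requirements are simultaneously realisable with uniformly positive probability. This is precisely the issue that arose in the proof of~\eqref{eq:boosting_pair_circ_i} of Theorem~\ref{thm:boosting_pair}, and we would handle it in the same way.
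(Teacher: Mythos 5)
Your strategy is genuinely different from the paper's, and as written it has a gap in the final disconnection step. You route the boost through Theorem~\ref{thm:boosting_pair}(ii) to obtain a quad~$\calD$ with $\bbP[\omega'\in\calC(\calD^+),\,\omega\notin\calC(\calD^-)]\ge c_1$, and then try to funnel the resulting crossing discrepancy into the double flower domain around~$x$, mimicking Proposition~\ref{prop:cross_is_boosting}. The problem is topological. In Proposition~\ref{prop:cross_is_boosting} the region in which the two petals must be \emph{dis}connected in~$\omega$ is the bounded disk~$\calF_{\rm out}^c$, entirely enclosed by the double flower domain; there the dual crosscut $P_2^{\rm out}\to\Gamma^-\to P_4^{\rm out}$ has both endpoints on dual petals of the boundary of that disk and therefore genuinely separates $P_1^{\rm out}$ from $P_3^{\rm out}$. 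In your setting the region to control is $\calF\setminus\calF_{\rm in}$, a topological annulus between $\partial\calF_{\rm in}$ and $\partial\calF$. The dual structure you provide (dual petals of $\calF_{\rm out}$ connected by corridors to~$\Gamma^-$) terminates on $\partial\calD^-$, in the bulk of~$\calF$, not on~$\partial\calF$: an open path of~$\omega$ can leave $P_1^{\rm in}$ through $P_1^{\rm out}$, travel \emph{around} the quad~$\calD^-$ in $\calF\setminus\calD^-$, and reach $P_3^{\rm out}$, or the two sides can be rewired through the primal petals of~$\partial\calF$ via the wirings already present in~$\xi$. So "fully disconnects them within the explored region" does not follow from what you construct. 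Repairing this requires extending dual arms all the way to dual petals of~$\partial\calF$ \emph{and} ensuring that the primal petals of~$\partial\calF$ reached by the two sides are not wired in~$\xi$ --- at which point you are forced back to tracking exactly which petals $P_i,P_j$ of~$\calF$ differ between $\xi$ and~$\xi'$.

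That tracking is precisely the paper's (much shorter) argument, which does not use Theorem~\ref{thm:boosting_pair}, Lemma~\ref{lem:lengthen_crossings} or the $\Gamma^\pm$ explorations at all. One first explores the double four-petal flower domain $(\calF_{\rm in},\calF_{\rm out})$ between $\La_{R/4}(x)$ and $\La_{R/2}(x)$, then reveals $\calF\cap\calF_{\rm out}$ and uses Theorem~\ref{thm:RSWnear} to realise, with probability $\ges 1$, the event~$H$ that in~$\omega$ the petal $P_1^{\rm out}$ is connected to~$P_i$, $P_3^{\rm out}$ is connected to~$P_j$, and $P_1^{\rm out},P_3^{\rm out}$ are connected neither to each other nor to any other petal of~$\calF$. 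On~$H$ the only possible wiring of $P_1^{\rm out}$ to $P_3^{\rm out}$ passes through the boundary partition, so they are wired under $(\omega'_{[\tau]})^{\xi'}$ (since $P_i\sim P_j$ in~$\xi'$ and $\omega'\ge\omega$) but not under $(\omega_{[\tau]})^{\xi}$; this descends to $P_1^{\rm in},P_3^{\rm in}$ through the primal and dual connections built into the double flower domain. I would encourage you to adopt this direct route: the boosting pair is a combinatorial datum on~$\partial\calF$ that can be transported by a single RSW construction, and converting it into a crossing-probability gap only to reconstruct a geometric difference afterwards both lengthens the proof and creates the separation problem above.
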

The proof is similar (yet much easier) to the one of Theorem~\ref{thm:boosting_pair}. 
\begin{proof}
	 Write~$P_i$,~$P_j$ for two petals of~$\calF$ that are wired in~$\xi'$, but not in~$\xi$. 
	
	Start by exploring the double four-petal flower domain~$(\calF_{\rm in},\calF_{\rm out})$ between~$\La_{R/4}(x)$ and~$\La_{R/2}(x)$.
	If no such double flower domain exists, set~$\tau = \infty$ and proceed in an arbitrary way. 
	If~$(\calF_{\rm in},\calF_{\rm out})$ exists, continue by revealing the configurations in~$\calF \cap \calF_{\rm out}$. 
	Write~$H$ for the event that in~$\omega \cap \calF_{\rm out}$, 
	$P^{\rm out}_1$ is connected to~$P_i$,~$P^{\rm out}_3$ is connected to~$P_j$, but that~$P^{\rm out}_1$ and~$P^{\rm out}_3$
	are not connected to each other, nor to any other petal of~$\calF$. 
	Theorem~\ref{thm:RSWnear} implies that 
	\begin{align}\label{eq:hhHH}
	\bbP[H \,|\,\calF_{\rm in},\calF_{\rm out} ] \geq c, 
	\end{align}
	where~$c > 0$ depends only on~$\eta$. 
	If~$H$ occurs, set~$\tau$ to be the stopping time at which the configurations in~$\calF_{\rm out}$ have been revealed; 
	otherwise set~$\tau = \infty$.
	
	Then due to Lemma~\ref{lem:D4PFD} and \eqref{eq:hhHH},~$\bbP[H ] \ges c$. 
	Finally, when~$\tau < \infty$, 
	the boundary conditions induced by~$\omega'_{[\tau]}$ on~$\calF_\tau = \calF_{\rm in}$ are indeed a boost of those induced by~$\omega_{[\tau]}$, since~$P^1_{\rm in}$ is connected to~$P^3_{\rm in}$  in~$\omega'_{[\tau]}$, but not in~$\omega_{[\tau]}$.
\end{proof}

\section{Properties of the mixing rate}\label{sec:coupling_Delta}\label{sec:10}

Fix~$q \in (1,4]$ and~$\eta > 0$ for the whole of this section; 
all constants, including those in~$\les$ and~$\asymp$, may depend on~$\eta$.  
In this section, we always work with a single edge-parameter~$p\in(0,1)$, 
and therefore omit it often from the measure~$\phi_{G,p}$ for notational convenience.

\subsection{No coupling induces boosting boundary conditions}

The main result of this subsection is be the following; 
it is the cornerstone to the other results proved later in this and other sections. 

\begin{theorem}\label{thm:coupling}
	For any~$\eta >0$,~$p$, and~$4r < R \le L(p)$.
	\begin{itemize}
	\item[(i)] Let~$\calG$ be an~$\eta$-well-separated inner flower domain on~$\La_R$ and~$\xi,\xi'$ be a boosting pair of boundary conditions on~$\calG$. 
	There exists an increasing coupling~$\bbP$ of~$\phi_{\calG,p}^\xi$ and~$\phi_{\calG,p}^{\xi'}$ on~$\calG \setminus \La_r$ via decision trees, 
	and a stopping time~$\tau$ with the following property.  
	When~$\tau <\infty$,~$\calF_\tau = \calG \setminus e_{[\tau]}$ is a~$1/2$-well-separated inner flower domain on~$\La_r$, 
	and the boundary conditions induced by~$(\omega'_{[\tau]})^{\xi'}$ on~$\calF_\tau$ are a boost of those induced by~$\omega_{[\tau]}^\xi$. 
	Moreover, if we write~$\zeta$ and~$\zeta'$ for the boundary conditions induced by~$\omega'$ and~$\omega$ on~$\partial \La_r$, then 
	\begin{equation}\label{eq:eta boost in} 
\bbP[\tau < \infty] 
		\asymp \bbP[\zeta \neq\zeta' ]. 
	\end{equation}
	\item[(ii)] 	Let~$\calG$ be an~$\eta$-well-separated outer flower domain on~$\La_r$ and~$\xi,\xi'$ be a boosting pair of boundary conditions on~$\calG$. 
	There exists a increasing coupling~$\bbP$ of~$\phi_{\calG,p}^\xi$ and~$\phi_{\calG,p}^{\xi'}$ on~$\calG \cap \La_R$ via decision trees, 
	and a stopping time~$\tau$ with the following property.  
	When~$\tau <\infty$,~$\calF_\tau = \calG\setminus e_{[\tau]}$ is a~$1/2$-well-separated outer flower domain on~$\La_R$, 
	and the boundary conditions induced by~$(\omega'_{[\tau]})^{\xi'}$ on~$\calF_\tau$ are a boost of those induced by~$\omega_{[\tau]}^\xi$. 
	Moreover, if we write~$\zeta$ and~$\zeta'$ for the boundary conditions induced by~$\omega'$ and~$\omega$ on~$\partial \La_R$, then 
	\begin{equation}\label{eq:eta boost in} 
	\bbP[\tau < \infty] 
		\asymp \bbP[\zeta \neq\zeta' ]. 
\end{equation}
	\end{itemize}
\end{theorem}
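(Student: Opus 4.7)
My plan is to prove both parts by a multi-scale iterative construction of the coupling. I focus on part~(i); part~(ii) follows by the analogous argument with inner and outer flower domains exchanged and the iteration oriented outward rather than inward. Set $\rho_k = R/2^k$ for $k = 0, 1, \ldots, N$, with $N = \lceil \log_2(R/(4r)) \rceil$ so that $\rho_N \asymp r$. Starting with $\calF_0 = \calG$, I will iteratively construct, inside a single decision-tree coupling, a $1/2$-well-separated inner flower $\calF_k$ on $\Lambda_{\rho_k}$ together with a boosting pair of boundary conditions induced by $\omega_{[\sigma_k]}$ and $\omega'_{[\sigma_k]}$, where $\sigma_k$ denotes the stopping time marking the end of step $k$. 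If all $N$ steps succeed, set $\tau := \sigma_N$; otherwise set $\tau := \infty$ and complete the coupling increasingly but arbitrarily.

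The single step from $\calF_k$ to $\calF_{k+1}$ is a local adaptation of the proof of Theorem~\ref{thm:boosting_pair}(i). Given $\calF_k$ with boosted petals $P_i, P_j$, I would first explore in $\omega$ a double four-petal flower domain $(\calF_{\rm in}, \calF_{\rm out})$ between $\Lambda_{\rho_{k+1}}$ and $\Lambda_{3\rho_{k+1}/2}$; by Lemma~\ref{lem:D4PFD} this succeeds with probability at least $c(\eta) > 0$. Then reveal the configuration in the annular region $\calF_k \setminus \calF_{\rm in}$ and check the analog of event $E$ from the proof of Theorem~\ref{thm:boosting_pair}: two primal petals of $\calF_{\rm out}$ are connected in $\omega \cap \calF_{\rm out}$ to $P_i$ and $P_j$ respectively but not to each other nor to any other petal of $\calF_k$, while the two dual petals of $\calF_{\rm out}$ carry dual-open paths to distinct free petals of $\calF_k$. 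By Theorem~\ref{thm:RSWnear} this event has uniformly positive conditional probability. On the joint success, $\calF_{k+1} := \calF_{\rm in}$ is a $1/2$-well-separated inner flower on $\Lambda_{\rho_{k+1}}$, and the wiring between $P_i$ and $P_j$ present in $\omega'_{[\sigma_{k+1}]}$ but not $\omega_{[\sigma_{k+1}]}$ translates into a wiring between two primal petals of $\calF_{k+1}$, so the induced boundary conditions on $\calF_{k+1}$ form a new boosting pair.

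The upper bound $\bbP[\tau < \infty] \les \bbP[\zeta \neq \zeta']$ follows from one final application of Theorem~\ref{thm:boosting_pair}(i) at scale $r$. Conditional on $\tau < \infty$, the subsequent increasing exploration of $\calF_\tau \setminus \Lambda_r$ produces an open circuit around $\Lambda_{r/2}$ in $\Ann(r/2, r)$ in $\omega'$ but not in $\omega$ with probability at least $\delta$; when this occurs, the extra circuit forces $\zeta \ne \zeta'$. Thus $\bbP[\zeta \ne \zeta' \mid \tau < \infty] \ges 1$, yielding the bound.

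The hardest step is the matching lower bound $\bbP[\tau < \infty] \ges \bbP[\zeta \neq \zeta']$: the iteration above succeeds with probability at least $c_0(\eta)^N \asymp (r/R)^\alpha$ for some $\alpha > 0$, but a priori the coupling could produce $\{\zeta \ne \zeta'\}$ also on $\{\tau = \infty\}$ if the boost propagates through geometries not captured by my specific decision tree. To close the gap, I would enrich the decision tree by allowing multiple attempts at each scale (shifted dyadic levels, different starting angular positions, flower domains with more than four petals), and then prove a telescoping inequality: conditional on the boundary conditions differing on $\partial \Lambda_{\rho_k}$, the probability that they also differ on $\partial \Lambda_{\rho_{k+1}}$ is of the same order as the per-scale success probability of the enriched iteration. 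Establishing this uniform-in-$k$ matching, via the spatial Markov property and the RSW estimates of Theorem~\ref{thm:RSWnear}, is the main technical hurdle.
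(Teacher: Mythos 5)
Your upper bound is fine, and your single-scale step is a reasonable adaptation of the proof of Theorem~\ref{thm:boosting_pair}. The genuine gap is in the lower bound; you have correctly located it, but the proposed fix cannot work. Your iteration requires a geometric success event at every one of the $N \asymp \log_2(R/r)$ scales, each costing a constant factor $c_0(\eta)<1$, so it only yields $\bbP[\tau<\infty] \ges c_0^N \asymp (r/R)^{\alpha}$ for some $\alpha>0$. Meanwhile $\bbP[\zeta\neq\zeta']$ is of order $\Delta_p(r,R)$, and there is no reason for these exponents to coincide. The ``telescoping inequality'' you propose --- that the per-scale propagation probability of the difference matches the per-scale success probability of the enriched construction up to constants --- cannot rescue this: both quantities are of constant order, and a ratio bounded by some $C>1$ at each of $N$ scales compounds to a factor $C^{N}$, which is again polynomial in $R/r$. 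No enrichment of the decision tree scale by scale removes this compounding; the obstruction is structural, not a matter of sharpening the per-scale constants.

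The paper's proof avoids paying any cost at intermediate scales. It runs an exploration that merely tracks whether the difference between the two induced boundary conditions survives (introducing the time $T$ at which they coincide), and classifies the half-integer stopping times $\tau_{j+1/2}$ as promising or not. Lemma~\ref{lem:deconstruction} shows that a non-promising time kills the difference before the next scale with probability at least $1-\eps$; Lemma~\ref{lem:reconstruction} shows that from a promising time at scale $4^{k-j}r$ one can build the required $1/2$-well-separated boosting flower domain on $\La_r$ at cost $c(\delta)\pi_4(p;r,4^{k-j}r)$. Combining the two with Theorem~\ref{thm:boosting_pair}, one deduces that conditionally on $T=\infty$ (equivalently $\zeta\neq\zeta'$), at least one of the \emph{last $\ell$} stopping times is promising with probability at least $1/2$, for a constant $\ell$ independent of $R/r$. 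The expensive reconstruction is therefore performed only once, over a bounded number of scales, at constant cost. This one-shot reconstruction from a promising time within a bounded factor of the final scale is the idea missing from your proposal.
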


\begin{remark}
In the previous theorem we sample edges only outside $\Lambda_r$ in Case (i), and only inside $\Lambda_R$ in Case (ii), but, of course, once this is done, one may use an arbitrary coupling inside $\Lambda_r$ or outside $\Lambda_R$ to obtain couplings in the whole flower domain $\calG$. We stated the previous theorem in this setting to be able to reuse the coupling in applications we have in mind.
\end{remark}

The first point of the theorem should be understood as follows. 
We reveal the configurations~$\omega$ and~$\omega'$ in the coupling~$\bbP$ starting from the outside and moving inwards; 
while doing this, we follow the difference between the boundary conditions that the configurations induce on the unexplored region. 
If this difference survives until the whole of~$\calG \setminus \La_r$ is revealed, 
then there is a positive probability that it survives as a significant difference, namely in the form of a boosting pair of boundary conditions on a well-separated flower domain. 

The second point is analogous, with the revealment starting inside and moving outwards.

\begin{proof}[Theorem~\ref{thm:coupling}]
	We will only prove point (i); the proof of point (ii) is identical.
	Fix~$p \in (0,1)$,~$4r \le  R \le L(p)$ and~$\calG$,~$\xi$ and~$\xi'$ as in the statement. 
	All constants below are independent of~$r$~$R$,~$\calG$,~$\xi$ and~$\xi'$, unless explicitly stated. 
	When referring to connections in configurations~$\omega$ and~$\omega'$ 
	with laws ~$\phi_{\calG,p}^\xi$ and~$\phi_{\calG,p}^{\xi'}$, respectively, 
	we will implicitly include connections that use the boundary conditions. 
	In other words, we omit the superscript in the notation~$\omega^\xi$ and~$(\omega')^{\xi'}$.

	First, for any increasing coupling~$\bbP$ between~$\phi_\calG^\xi$ and~$\phi_\calG^{\xi'}$ obtained by a decision tree, 
	and any stopping time~$\tau$ with the properties of the theorem, 
	$\bbP[\tau < \infty] \le \bbP[\zeta \neq \zeta']$.
	Indeed, the requirement that the boundary conditions induced by~$\omega'_{[\tau]}$ on~$\calF_\tau$ are a boost of those induced by~$\omega_{[\tau]}$, 
	imposes that the boundary conditions induced by~$\omega$ and~$\omega'$ on~$\La_{r}$ are distinct. 
	The rest of the proof is dedicated to the converse bound.
	\medbreak

	Assume for simplicity that~$R = 4^k r$ for some integer~$k \geq 2$.
	By monotonicity, it suffices to treat the case where~$\xi$ and~$\xi'$ are identical, 
	with the exception of two petals that are wired together in~$\xi'$, but not in~$\xi$. 
	Assume this is the case, and write $P_1$ and~$P_3$ for the two primal petals of~$\calG$ that are wired together in~$\xi'$, but not in~$\xi$
	(contrary to what the notation suggests,~$P_1$ and~$P_3$ need not be separated by a single dual petal).
		
	Below, we describe an increasing coupling~$\bfP$ of  
	$\phi_\calG^\xi$ and~$\phi_\calG^{\xi'}$ on~$\calG \cap \La_r^c$ obtained through a decision tree. 
	The actual coupling of the theorem is a slight variation of~$\bfP$ that we will describe at the end of the proof.  
	
	At any time~$t$, write~$\calF_t$ for the connected component of~$\La_r$ in~$\calG \setminus e_{[t]}$.
	We will abusively consider that all edges of~$\calG$ that are not revealed at time~$t$ 
	and which are not part of~$\calF_t$, are revealed instantaneously. 
	Thus, from now on, we have~$\calF_t = \calG \setminus e_{[t]}$.
	We will identify~$\omega_{[t]}$ and~$\omega_{[t]}'$ to the boundary conditions they induce on~$\calF_t$. 
	
	Write~$T$ for the  first time~$t$ when~$\omega_{[t]}$ and~$\omega'_{[t]}$ induce the same boundary conditions on~$\calF_t$,
	and~$T = \infty$ if the boundary conditions are never the same. 
	If~$T <\infty$, the configurations~$\omega,\omega'$ in~$\calF_{T}$ are identical, regardless of the decision tree used after~$T$. 
	Thus, when this occurs, reveal the rest of the edges using lexicographical order.

	The coupling proceeds in several stages numbered~$j = 0,\dots, k$. 
	If at any point~$T$ occurs, the procedure described below stops, and the revealment by lexicographical order is used. 
	Stage~$j$ corresponds to revealing information in~$\La_{4^{k-j} r}^c := \calG \setminus \La_{4^{k-j} r}$.
	We will write~$\tau_j$ for the stopping time that marks the end of stage~$j$.
	At time~$\tau_j$, the revealed edges are those of the cluster of~$P_1$ and~$P_3$ in~$\omega' \cap \La_{4^{k-j} r}^c$, 
	any edges of~$\La_{4^{k-j} r}^c$ adjacent to this cluster, 
	as well as any edges separated from~$\La_r$ by the two categories of edges mentioned above. 
	
	Let us now describe precisely the revealment algorithm. \bigbreak 
	
	\begin{center} Revealment algorithm\end{center}
	\bigbreak
	\noindent{\bf Stage 0:}
	Reveal the cluster of~$P_1$ and~$P_3$ inside~$\omega' \cap \La_R^c$  in arbitrary order. 
	Let~$\tau_{0}$ be the stopping time marking the end of this stage.  \smallskip 

	\noindent{\bf Stage~$j+1$:}	
	Fix~$j\geq0$ and assume the coupling defined up to time~$\tau_j$ (see Figure~\ref{fig:exploration} for an illustration). Stage~$j+1$ is itself formed of two steps. 
	As already mentioned, this is only valid when~$T > \tau_{j}$.
	Write~$\rho = 4^{k-j-1} r$. 
	In~$\omega'_{[\tau_j]}$, the boundary of~$\calF_{\tau_j}$ is formed of dual arcs outside of~$\La_{4\rho}$ 
	with endpoints on~$\partial \La_{4\rho}$, 
	along with points on~$\La_{4\rho}$ that are connected in~$\omega'_{\tau_j}$ to~$P_1$ or~$P_3$. 
	Call the latter points the ``wired'' points of~$\partial \calF_{\tau_j}$.
	Since~$T$ has not yet occurred, there exists at least one wired point on~$\partial \calF_{\tau_j}$. 
	
	\begin{figure}
	\begin{center}
	\includegraphics[width=0.3\textwidth, page =2]{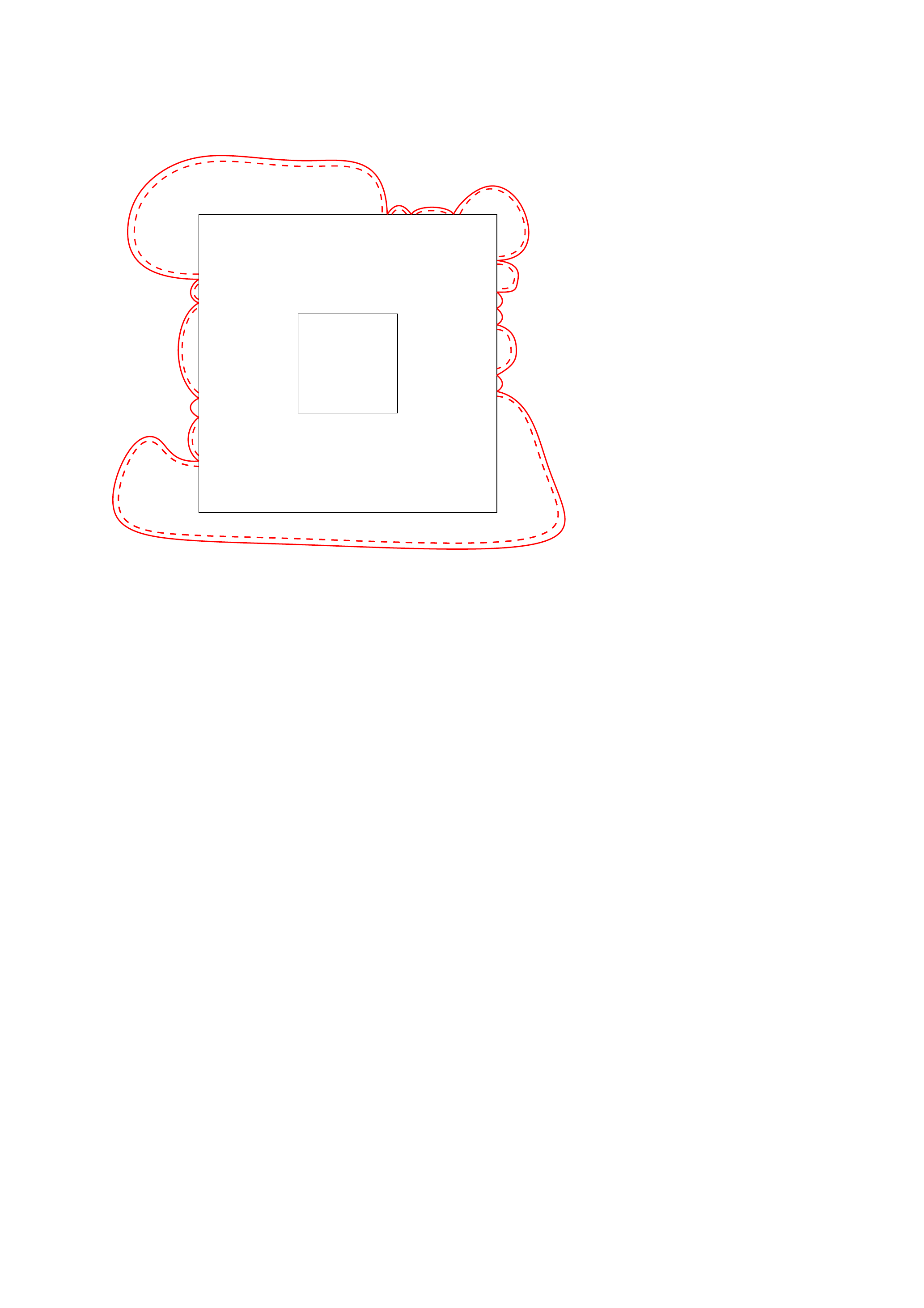}\hspace{0.02\textwidth}
	\includegraphics[width=0.3\textwidth, page =3]{exploration.pdf}\hspace{0.02\textwidth}
	\includegraphics[width=0.3\textwidth, page =4]{exploration.pdf}\hspace{0.02\textwidth}
	\caption{Red solid lines represent open edges in~$\omega'$, while blue ones represent open edges in~$\omega$. 
		Dotted lines represent closed (or equivalently open dual edges) in the configurations corresponding to their color. 
		{\em Left:} At the end of stage~$j$, the revealed edges are those of the cluster of~$P_1$ and~$P_3$ in~$\omega' \cap \La_{4\rho}^c$ 
		and its boundary. 
		Then, by time~$\tau_{j+1/2}$, the interfaces in~$\omega$ starting at the wired vertices previously exposed are explored until they touch~$\La_{2\rho}$.
		At this time, the unexplored region is a flower domain that is likely well-separated. 
		{\em Middle:} Assuming that the primal petals of~$\calF_{\tau_{j+1/2}}$ are all wired together in~$\omega$, 
		we turn our attention to the set~$\calA$ of points that lie on dual petals of~$\calF_{\tau_{j+1/2}}$, but which are connected to~$P_1$ or~$P_3$
		in~$\omega'_{[\tau_{j+1/2}]}$. If this set has a positive probability to be connected to~$\La_\rho$, then we call~$\tau_{j+1/2}$ promising. 
		{\em Right:} For~$\tau_{j+1/2}$ promising, we may connect two separate regions of~$\calA$ to the two primal external petals of a double flower-domain at a smaller scale. Then these petals will be connected in~$\omega'$ but not in~$\omega$.}
	\label{fig:exploration}
	\end{center}
	\end{figure}
	
	First, reveal all interfaces in~$\omega \cap \calF_{\tau_j} \cap \La_{2\rho}^c$ that start at wired points of~$\partial \calF_{\tau_j}$.
	This may be done by tracking the left and right boundaries of the clusters of each wired point of~$\partial \calF_{\tau_j}$ 
	until they finish on~$\partial \calF_{\tau_j}$ or they reach~$\partial \La_{2\rho}$.
	Write~$\tau_{j+1/2}$ for the stopping time that marks the end of this step. 
	Formally, if~$T$ occurs before time~$\tau_{j+1/2}$, set~$\tau_{j+1/2} = \infty$. 
	 
	Next, explore the cluster of~$P_1$ and~$P_3$ inside~$\omega' \cap \La_\rho^c$ and write~$\tau_{j+1}$ 
	for the stopping time marking the end of this stage (set~$\tau_{j+1} = \infty$ if~$T$ occurs before the end of this stage). In fact, we will sometimes require that the clusters are explored in a certain order (see the coupling ${\rm P}$ defined by  Lemma~\ref{lem:reconstruction} for details).
	\smallskip
	
	\noindent{\bf After stage~$k$:} Assuming that~$T > \tau_k$, reveal all remaining edges in lexicographical order.
\bigbreak
	Each stopping time~$\tau_{j+1/2}$ 
	will be declared promising or not (see the precise definition below), depending on the configuration at that stage 
	and on some constant threshold~$\delta > 0$ to be fixed below.
	Fix~$0\le j <k$ and assume~$\tau_{j+1/2} < \infty$ (otherwise~$\tau_{j+1/2}$ is not promising). 
	Then~$\calF_{\tau_{j+1/2}}$ is either a simply connected domain containing~$\La_{2\rho}$ 
	and $\omega_{[\tau_{j+1/2}]}$ induces free boundary conditions on it, 
	or it is an inner flower domain on~$\La_{2\rho}$ and $\omega_{[\tau_{j+1/2}]}$ induces  coherent boundary conditions on it.
	The former occurs when  none of the revealed interfaces in the first step of stage~$j$ reaches~$\La_{2\rho}$.
	
	First, we analyse the case where~$\omega_{[\tau_{j+1/2}]}$ induces free boundary conditions~$\calF_{\tau_{j+1/2}}$. 
	Write~$\mathcal A$ for the set of vertices on~$\partial \calF_{\tau_{j+1/2}}$ that are connected to~$P_1\cup P_3$ 
	in~$\omega'_{[\tau_{j+1/2}]}$. 
	Notice that, due to the exploration procedure, 
	for any edge~$uv$ with~$u \in \partial \calF_{\tau_{j+1/2}}$ and~$v \in \calF_{\tau_{j+1/2}}^c$, 
	$v$ is connected in~$\omega'_{[\tau_{j+1/2}]}$ to~$P_1$ or~$P_3$. 
	Thus,~$\mathcal A$ is exactly the set of vertices~$u \in \partial \calF_{\tau_{j+1/2}}$  
	for which there exists an explored edge~$uv$ with~$v \in \calF_{\tau_{j+1/2}}^c$ that is open in~$\omega'_{[\tau_{j+1/2}]}$.
	In particular, in the boundary conditions induced by~$\omega'_{[\tau_{j+1/2}]}$ on~$\calF_{\tau_{j+1/2}}$, 
	all vertices of~$\mathcal A$ are wired together and all other boundary vertices are free 
	(that is they are not wired to any other boundary vertices). 
	
	In this case, we call~$\tau_{j+1/2}$ {\em promising} if 
	\begin{align*}
		\phi_{\calF_{\tau_{j+1/2}}}^{\omega'_{[\tau_{j+1/2}]}}[\calA \lra \La_{\rho}] \geq \delta.
	\end{align*}
	
	Next, we turn our attention to the case where~$\calF_{\tau_{j+1/2}}$ is a flower domain.
	Write~$\calA$ for the set of points on the dual petals of~$\calF_{\tau_{j+1/2}}$ 
	that are connected to~$P_1$ or~$P_3$ in~$\omega'_{[\tau_{j+1/2}]}$. 
	By the same argument as in the previous case, in the boundary conditions imposed by~$\omega'_{[\tau_{j+1/2}]}$ on~$\calF_{\tau_{j+1/2}}$
 	there exists a single wired component formed of~$\calA$ along with all primal petals of ~$\calF_{\tau_{j+1/2}}$.
		
	If $\calF_{\tau_{j+1/2}}$ is not~$\delta$-well-separated, then we say that~$\tau_{j+1/2}$ is not promising. 
	If $\calF_{\tau_{j+1/2}}$ is~$\delta$-well-separated 
	and contains at least two primal petals that are not wired in~$\omega_{[\tau_{j+1/2}]}$, then we call~$\tau_{j+1/2}$ promising. 
	Finally, if~$\calF_{\tau_{j+1/2}}$ is~$\delta$-well-separated and all its primal petals are wired together in~$\omega_{[\tau_{j+1/2}]}$, 
	we call~$\tau_{j+1/2}$ promising if and only if  
	\begin{align*}
		\phi_{\calF_{\tau_{j+1/2}}}^{\omega'_{[\tau_{j+1/2}]}}[\calA \lra \La_{\rho}] \geq \delta.
	\end{align*}
	
	For formal reasons, set~$\tau_{-1/2} = 0$ and call it promising. 
	We now state two results that are instrumental in the proof of the theorem. 
	Roughly speaking, the first one states that if~$\tau_{j+1/2}$ is not promising, then it is very likely for~$T$ to arise before~$\tau_{j+1}$. 
	The second lemma states that as soon as~$\tau_{j+1/2}$ is promising, then there exists a coupling guaranteeing a fairly good probability that the flower domain on~$\partial\Lambda_r$ is~$1/2$-well-separated and the boundary conditions in~$\omega$ and~$\omega'$ induced on this domain correspond to a boosting pair.
		
	\begin{lemma}\label{lem:deconstruction}
		For any~$\eps > 0$, we may choose~$\delta=\delta(\ep) >0$ (independent of~$r$,~$R$,~$p$,~$\calG$,~$\xi$ and~$\xi'$) so that 
		for every~$0 \le j < k$,
		\begin{align*}
			\bfP[\text{$\tau_{j+1/2}$ not promising and~$T > \tau_{j+1}$}\, | \, \omega_{[\tau_{j}]},  \omega'_{[\tau_{j}]}]<\eps.
		\end{align*}
	\end{lemma}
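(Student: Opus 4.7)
The plan is to split ``$\tau_{j+1/2}$ not promising'' into three sub-events according to the definitions preceding the lemma:
(a) $\calF_{\tau_{j+1/2}}$ is an inner flower domain on~$\La_{2\rho}$ that fails to be $\delta$-well-separated;
(b) $\calF_{\tau_{j+1/2}}$ is $\delta$-well-separated, all its primal petals are wired together in $\omega_{[\tau_{j+1/2}]}$, and $\phi^{\omega'_{[\tau_{j+1/2}]}}_{\calF_{\tau_{j+1/2}}}[\calA\lra\partial\La_\rho]<\delta$;
(c) $\calF_{\tau_{j+1/2}}$ is simply connected (no interface reached~$\partial\La_{2\rho}$) and the same probability bound holds.
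Throughout I condition on $(\omega_{[\tau_j]},\omega_{[\tau_j]}')$ with $T>\tau_j$, since the event is vacuous otherwise.

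For~(a), step~1 of stage~$j+1$ is by construction the exploration of the inner flower domain from $\partial\calF_{\tau_j}$ down to $\partial\La_{2\rho}$ in the configuration $\omega$, and, conditionally on $\omega_{[\tau_j]}$, the restriction of $\omega$ to $\calF_{\tau_j}$ has law $\phi^{\omega_{[\tau_j]}}_{\calF_{\tau_j}}$. Since $\rho\le R\le L(p)$, Lemma~\ref{lem:well-separated} (whose proof adapts directly to this slightly more general geometry) yields a bound $\eps_1(\delta)$ for the conditional probability of (a), with $\eps_1(\delta)\to 0$ as $\delta\to 0$. I fix $\delta$ small enough that $\eps_1(\delta)\le\eps/2$.

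For~(b) and~(c) the key structural observation is the following: at time~$\tau_{j+1/2}$, the boundary conditions induced on $\calF_{\tau_{j+1/2}}$ by $\omega_{[\tau_{j+1/2}]}^\xi$ and by $(\omega'_{[\tau_{j+1/2}]})^{\xi'}$ differ exactly by the additional wiring of $\calA$ into the unique primal wired class (this class is trivial in case~(c)). Indeed, every edge revealed during step~1 is adjacent to the $\omega$-cluster of some wired point of $\partial\calF_{\tau_j}$, and any such edge that is $\omega'$-open necessarily lies in the $\omega'$-cluster of~$P_1\cup P_3$; hence any vertex of $\partial\calF_{\tau_{j+1/2}}$ whose $\omega'$-class strictly exceeds its $\omega$-class must belong to~$\calA$. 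Step~2 then reveals the cluster of $P_1\cup P_3$ in $\omega'\cap\La_\rho^c$ inside $\calF_{\tau_{j+1/2}}$; by the spatial Markov property, conditionally on $\omega'_{[\tau_{j+1/2}]}$ the probability that this cluster reaches $\partial\La_\rho$ equals $\phi^{\omega'_{[\tau_{j+1/2}]}}_{\calF_{\tau_{j+1/2}}}[\calA\lra\partial\La_\rho]<\delta$.

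The main technical step is to check that, whenever this cluster fails to reach $\partial\La_\rho$, one has $T\le\tau_{j+1}$. In this favourable event the cluster is fully revealed by time $\tau_{j+1}$, so every vertex of~$\calA$ is absorbed into the explored region and disappears from $\partial\calF_{\tau_{j+1}}$. Applying the structural fact of the previous paragraph to step~2 rules out the creation of any new $\omega'$-wiring between vertices of $\partial\calF_{\tau_{j+1}}$, because such a wiring would force the involved vertices to sit inside the explored $\omega'$-cluster, contradicting the fact that they lie on $\partial\calF_{\tau_{j+1}}$. Consequently $\omega_{[\tau_{j+1}]}^\xi$ and $(\omega'_{[\tau_{j+1}]})^{\xi'}$ induce identical boundary conditions on $\calF_{\tau_{j+1}}$, so $T\le\tau_{j+1}$. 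The contribution of (b) and (c) is therefore at most $\delta$, and choosing $\delta\le\eps/2$ concludes the proof.
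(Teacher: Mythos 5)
Your overall strategy is the paper's: dispose of the ill-separated case via Lemma~\ref{lem:well-separated} (at cost $\eps/2$ for $\delta$ small), and then show that on the remaining non-promising configurations the two induced boundary conditions coalesce before $\tau_{j+1}$ except on an event of probability at most $\delta$. Your structural observation that the discrepancy between the boundary conditions induced by $\omega_{[\tau_{j+1/2}]}^\xi$ and $(\omega'_{[\tau_{j+1/2}]})^{\xi'}$ is carried entirely by the wiring of $\calA$ is also exactly the mechanism the paper uses.

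There is, however, a genuine gap in your treatment of case (b), i.e.\ when $\calF_{\tau_{j+1/2}}$ is a well-separated flower domain all of whose primal petals are already wired together in $\omega_{[\tau_{j+1/2}]}$. You assert that the probability that the $\omega'$-cluster of $P_1\cup P_3$ reaches $\partial\La_\rho$ equals $\phi^{\omega'_{[\tau_{j+1/2}]}}_{\calF_{\tau_{j+1/2}}}[\calA\lra\La_\rho]$, and you condition your coalescence argument on that cluster failing to reach $\La_\rho$. But in case (b) the $\omega'$-cluster of $P_1\cup P_3$ inside $\calF_{\tau_{j+1/2}}$ enters the domain both through $\calA$ \emph{and} through the primal petals; the petals' clusters can (and typically do, with probability bounded away from $0$) reach $\La_\rho$ even when the open cluster of $\calA$ itself is tiny. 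Since the petals are wired identically in both configurations, their reaching $\La_\rho$ creates no discrepancy and does not obstruct $T\le\tau_{j+1}$ — but it does put you outside your ``favourable event'', so your bound degenerates to $\bfP[T>\tau_{j+1}]\le\bfP[\text{cluster of }P_1\cup P_3\text{ reaches }\La_\rho]$, which is not $<\delta$. The implication you prove is correct but too weak. The fix is the paper's finer ordering of the revealment: first reveal only the connected component $\sfC$ of $\calA$ in $\omega'\cap\La_\rho^c$; if $\sfC$ does not reach $\La_\rho$ it is sealed off by edges closed in $\omega'$ (hence in $\omega$), at which point $\calA$ has been absorbed and the two induced boundary conditions on the unexplored region already coincide, whatever the petals' clusters subsequently do. This yields $\bfP[T>\tau_{j+1}\,|\,\cdot]\le\phi^{\omega'_{[\tau_{j+1/2}]}}_{\calF_{\tau_{j+1/2}}}[\calA\lra\La_\rho]<\delta$, which is what the lemma needs. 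In case (c) there are no primal petals, so your identification is harmless there.
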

	
	\begin{lemma}\label{lem:reconstruction}
		For any~$\delta > 0$, there exists~$c(\delta) > 0$ with the following property. 
		Fix some~$-1 \le j \le k-1$ 
		and a realisation of~$\tau_{j+1/2}$,~$\omega_{[\tau_{j+1/2}]}$,~$\omega'_{[\tau_{j+1/2}]}$ and~$\calF_{\tau_{j+1/2}}$ 
		for which~$\tau_{j+1/2}$ is promising. 
		Then there exists an increasing coupling 
		${\rm P}$ of~$\phi_{\calF_{\tau_{j+1/2}}}^{\omega_{[\tau_{j+1/2}]}}$ and~$\phi_{\calF_{\tau_{j+1/2}}}^{\omega'_{[\tau_{j+1/2}]}}$ via decision trees, 
		and a stopping time~$\sigma$ with the following property.  
		When~$\sigma <\infty$,~$\calF_\sigma$ is a~$1/2$-well-separated inner flower domain on~$\La_r$, 
		and the boundary conditions induced by~$\omega'_{[\sigma]}$ on~$\calF_\sigma$ are a boost of those induced by~$\omega_{[\sigma]}$. 
		Moreover, 
		\begin{align}\label{eq:reconstruction}
			{\rm P}[\sigma < \infty] \geq c(\delta) \pi_4(p;r,4^{k-j}r) .
		\end{align}
	\end{lemma}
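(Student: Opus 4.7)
The plan is to manufacture, from the boost present on $\calF_{\tau_{j+1/2}}$ at scale $2\rho$ where $\rho := 4^{k-j-1} r$, a boost on a $\tfrac12$-well-separated inner flower domain on $\La_r$, paying the geometric cost $\pi_4(p; r, 4^{k-j} r)$ of bridging the annulus $\Ann(r, 2\rho)$. The strategy is modelled on the proofs of Theorem~\ref{thm:boosting_pair} and Lemma~\ref{lem:boosting_out_to_boosting_in}: we build a small flower domain at scale $r$ using the tools of Section~\ref{sec:flower_domains}, and we connect its petals to the corresponding ``wired'' and ``free'' features of $\partial \calF_{\tau_{j+1/2}}$ via four disjoint arms of alternating type.

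Concretely, the coupling ${\rm P}$ proceeds in three phases. In the first phase, explore in $\omega$ a $\tfrac12$-well-separated double four-petal flower domain $(\calF_{\rm in}, \calF_{\rm out})$ between $\La_r$ and $\La_{2r}$, which succeeds with uniformly positive probability by Lemma~\ref{lem:D4PFD}; label the four petals of $\calF_{\rm in}$ as $P_1^{\rm in}, \ldots, P_4^{\rm in}$ with odd indices primal and even indices dual. In the second phase, reveal in $\omega^*$ two disjoint dual arms in $\Ann(2r, 2\rho) \cap \calF_{\tau_{j+1/2}}$ joining the two dual petals of $\calF_{\rm out}$ to two distinct free portions of $\partial \calF_{\tau_{j+1/2}}$. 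In the third phase, reveal in $\omega'$ two disjoint primal arms in the same annulus joining the two primal petals of $\calF_{\rm out}$ to two distinct elements of the target set $\calA$ of vertices wired together by $\omega'_{[\tau_{j+1/2}]}$ on $\partial \calF_{\tau_{j+1/2}}$. Define $\sigma$ as the stopping time at which this exploration is successfully completed, with $\sigma = \infty$ if any of the connectivity requirements fails.

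When $\sigma < \infty$, the primal petals $P_1^{\rm in}$ and $P_3^{\rm in}$ are wired together in $\omega'_{[\sigma]}$ via the two primal arms, the wiring of $\calA$ imposed by $\omega'_{[\tau_{j+1/2}]}$, and ultimately the boosted wiring of $\calG$, but they are dually separated in $\omega_{[\sigma]}$ by the two dual arms. Hence the boundary conditions induced by $\omega'_{[\sigma]}$ on $\calF_{\rm in}$ are a boost of those induced by $\omega_{[\sigma]}$, and $\calF_{\rm in}$ is by construction a $\tfrac12$-well-separated inner flower domain on $\La_r$. The lower bound ${\rm P}[\sigma < \infty] \ges c(\delta)\, \pi_4(p; r, 4^{k-j} r)$ should follow by combining the uniformly positive success probability of the first phase, the promising assumption $\phi_{\calF_{\tau_{j+1/2}}}^{\omega'_{[\tau_{j+1/2}]}}[\calA \lra \La_\rho] \ges \delta$, and a standard separation-of-arms argument that turns the ``chase the wiring'' scheme across $\Ann(2r, 2\rho)$ into a genuine four-arm event of probability $\ges \pi_4(p; 2r, 2\rho)$, after which quasi-multiplicativity of $\pi_4$ gives $\pi_4(p; r, 2r)\, \pi_4(p; 2r, 2\rho) \asymp \pi_4(p; r, 4^{k-j} r)$.

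The main obstacle will be the separation-of-arms step, especially in case~1 (free b.c.\ on $\calF_{\tau_{j+1/2}}$), where $\calA$ is a random subset of $\partial \calF_{\tau_{j+1/2}}$ determined by the cluster in $\omega'$ of the boosted petals of $\calG$ within the already-explored exterior region. Ensuring that $\calA$ contains two macroscopically separated pieces on which the primal arms in $\omega'$ can land, that the two dual arms in $\omega^*$ land on unrelated free portions of $\partial \calF_{\tau_{j+1/2}}$, and that all four arms remain disjoint, requires careful ordering of the decision tree and a conditional RSW argument given the previously explored exterior. Cases~2 and~3 are handled by the same construction, with case~2 simpler since the cleanly separated pair of wired petals of $\calF_{\tau_{j+1/2}}$ directly serves as the landing targets for the primal arms.
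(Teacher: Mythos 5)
Your architecture matches the paper's: explore a double four-petal flower domain $(\calF_{\rm in},\calF_{\rm out})$ between $\La_r$ and $\La_{2r}$, bridge the annulus with a ``mixed'' four-arm event (primal arms in $\omega'$, dual arms in $\omega^*$), land the primal arms on $\calA$ and the dual arms on free parts of $\partial\calF_{\tau_{j+1/2}}$, and conclude that the induced boundary conditions on $\calF_{\rm in}$ form a boosting pair. You also correctly locate the crux in the free-boundary case. But you leave that crux unresolved, and it is not a routine separation-of-arms issue. First, the promising hypothesis only gives $\phi^{\zeta'}_{\calF}[\calA\lra\La_\rho]\ge\delta$ for the \emph{whole} random set $\calA$; to land two disjoint primal arms on $\calA$ you need two \emph{macroscopically separated} portions of $\partial\calF$ each carrying a non-negligible piece of $\calA$. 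The paper extracts these by a contradiction argument (equations \eqref{eq:two_arcs}--\eqref{eq:two_arcs3}): if at most one boundary arc were good, exploring the cluster of the rest of $\calA$ and applying RSW to the dual model repeatedly would force $\phi[\calA\lra\La_\rho]<\delta$, contradicting promising. Nothing in your proposal produces this dichotomy. Second, $\calF_{\tau_{j+1/2}}$ is an arbitrary simply connected domain with possible bottlenecks, and for $q=4$ the RSW estimate \eqref{eq:RSWnear} is not available with arbitrary boundary conditions in arbitrary quads; a ``conditional RSW argument given the previously explored exterior'' is exactly what fails here. The paper circumvents this with Claim~\ref{claim:tubes}: four disjoint tubes $D_1,\dots,D_4$ built via conformal uniformization of the topological annulus $\calF\cap\overline\calF_{\rm out}$ and controlled by the extremal-distance RSW of Proposition~\ref{prop:conformal_RSW}. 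This is the main technical content of the lemma and is absent from your argument.

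A related structural point: you run the four arms directly from scale $2r$ to $\partial\calF_{\tau_{j+1/2}}$ inside $\Ann(2r,2\rho)\cap\calF_{\tau_{j+1/2}}$, so the entire $\pi_4$ cost is incurred in a domain of uncontrolled geometry with random landing sets. The paper instead interposes a \emph{second} double flower domain $(\overline\calF_{\rm in},\overline\calF_{\rm out})$ at scale $\rho$: the four-arm event (and its separation-of-arms analysis) takes place only in the clean annulus between the two double flower domains, costing $c\,\pi_4(p;r,4\rho)$, while the passage from $\overline\calF_{\rm out}$ to $\calA$ and $\partial\calF$ is confined to the tubes and costs only a $\delta$-dependent constant $c_1(\delta)^4$. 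Without this decomposition your claim that a ``standard separation-of-arms argument'' yields the bound $\ges\pi_4(p;2r,2\rho)$ does not go through. Your Case~1-style argument (two wired petals of $\calF$ not wired in $\omega$) is fine as stated and matches the paper's Case~1, but the free-boundary and already-wired cases need the two-arc extraction and the tube construction to be complete.
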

	
	\begin{remark}
	
		It is essential in the second lemmata that~$c(\delta)$ is allowed to depend on~$\delta$, but that 
		it only appears as a multiplicative constant in~\eqref{eq:reconstruction}. 
		Indeed, the upper bound in \eqref{eq:reconstruction} depends on the ratio between the 
		scales of~$\calF_{\tau_{j+1/2}}$  and~$\calF_\sigma$ in a way that is uniform in~$\delta$. 
	\end{remark}

	Before proving the two lemmas, let us conclude the proof of the theorem. 
	Fix~$\eps > 0$ so that~$\pi_4(p;r,4^j r) \geq (2\eps)^{j}$ for all~$j \geq 0$ with~$4^j r \le L(p)$. 
	Due to~\eqref{eq:RSWnear}, ~$\eps$ may be chosen independently of~$r$ or~$p$. 
	Let~$\delta = \delta(\eps)$ be the quantity given by Lemma~\ref{lem:deconstruction} for this value of~$\eps$. 
	Below,~$c_0$ and~$c_1$ stand for strictly positive constants that may depend on~$\eps$, 
	but not on~$r$,~$R$ or~$\calG$. 
		
	First observe that, due to Lemma~\ref{lem:reconstruction} and Theorem~\ref{thm:boosting_pair},
	there exists~$c_0 > 0$ such that for any~$1 \le j \le k$,
	\begin{align}\label{eq:reconstruction2}
		 \bfP[T = \infty] \geq \phi_\calG^{\xi'}[\calC(\La_r)] - \phi_\calG^\xi[\calC(\La_r)] 
		 \geq  c_0 \,(2\eps)^{j}\, \bfP[\text{$\tau_{k-j-1/2}$ promising}].
	\end{align}
	Indeed, the first inequality follows from the more general observation that~$\bfP[T = \infty]$ 
	bounds the distance in total variation between the restrictions of~$\phi_\calG^{\xi'}$ and~$\phi_\calG^\xi$ to~$\La_r$. 
	For the second inequality, 
	consider the increasing coupling of~$\phi_\calG^{\xi'}$ and~$\phi_\calG^\xi$ obtained by following~$\bfP$ 
	up to the stopping time~$\tau_{k-j-1/2}$, then, if~$\tau_{k-j-1/2}$ is promising, 
	using the coupling~$\rm P$ of Lemma~\ref{lem:reconstruction} and applying Theorem~\ref{thm:boosting_pair}. 
	
	Next, we claim that there exists some~$\ell \geq 1$ such that 
	if~$T = \infty$, then with positive probability there exists one promising stopping time among the last~$\ell$ ones.  
	It is essential here that~$\ell$ is independent of~$R/r$. 	
	Indeed, fix~$\ell \geq 1$, and observe that
	\begin{align*}
		&\bfP[T = \infty \text{ and~$\tau_{k-1/2},\dots,\tau_{k-\ell+1/2}$ not promising}] \\
		&\quad  = \sum_{j = \ell}^{k} \bfP[T = \infty ;\text{ ~$\tau_{k-1/2},\dots,\tau_{k-j+1/2}$ not promising;~$\tau_{k-j-1/2}$ promising}]  \\
		&\quad  \le \sum_{j = \ell}^{k}\eps^{j-1} \bfP[\text{$\tau_{k-j-1/2}$ promising}]  \\
		&\quad  \le c_0 \, \bfP[T = \infty]\, \sum_{j = \ell}^{k}  2^{-j}
		\le  c_0 \, 2^{-\ell+1} \, \bfP[T = \infty].
	\end{align*}
	(Note that this is where we use the convention that~$\tau_{-1/2}$ is promising.)
	The first inequality is obtained by repeated applications of Lemma~\ref{lem:deconstruction};
	the second is due to~\eqref{eq:reconstruction2}.
	Thus, if we fix~$\ell \geq - \log_2  c_0 + 2$, then indeed, 
	\begin{align}\label{eq:promising}
		\bfP[\text{at least one of~$\tau_{k-1/2},\dots,\tau_{k-\ell+1/2}$ is promising} \,|\, T = \infty ] \geq \tfrac12.
	\end{align}
	
	We are now ready to define the coupling~$\bbP$ of the theorem.
	Follow~$\bfP$ up to the first promising stopping time ~$\tau_{j+1/2}$ with~$j \geq k-\ell$. 
	Then follow the coupling~$\rm P$ of Lemma~\ref{lem:reconstruction}; write~$\tau_{\text{final}}$ for the stopping time described in said lemma. 
	If~$\bfP$ does not encounter a promising stopping time~$\tau_{j+1/2}$ with~$j \geq k-\ell$, set~$\tau_{\text{final}} = \infty$. 
	Then, due to Lemma~\ref{lem:reconstruction} and to~\eqref{eq:promising},
	\begin{align*}
		\bbP[\tau_{\rm final} < \infty \,|\, T = \infty ] \geq \tfrac12 c_1 \,c_2^\ell .
	\end{align*}
	Since~$\eps$ and~$\ell$ are independent of~$r$ and~$R$, the right hand side is bounded away from~$0$
	uniformly in~$r \le R/4$. Moreover,~$\tau_{\rm final}$ satisfies the conditions stated in the theorem. 
	Multiply the above by~$\bbP[T = \infty ] = \bbP[\zeta \neq \zeta']$ to obtain the desired inequality. 
\end{proof}

\begin{proof}[Lemma~\ref{lem:deconstruction}]
	Fix~$\eps > 0$ and fix a realisation of~$\tau_j$,~$\omega_{[\tau_j]}$ and~$\omega'_{[\tau_j]}$.
	Lemma~\ref{lem:well-separated} ensures that, by choosing~$\delta > 0$ small enough, we have 
	\begin{align*}
		\bfP[\text{$\calF_{\tau_{j+1/2}}$ is a flower domain which is not~$\delta$-well-separated}\, | \, \omega_{[\tau_{j}]},  \omega'_{[\tau_{j}]}]<\eps/2.
	\end{align*}
	Suppose now that either~$\calF_{\tau_{j+1/2}}$ is a~$\delta$-well-separated flower domain, 
	or that~$\omega_{\tau_{j+1/2}}$ induced free boundary conditions on it. 
	In either case, if~$\tau_{j+1/2}$ is not promising, 
	it is because~$\calA$ has a conditionally small probability to be connected to~$\La_{\rho}$ in~$\omega'$. 
	
	Continue the coupling~$\bfP$ by revealing first the connected component~$\sfC$ of~$\calA$ in~$\omega' \cap \La_{\rho}^c$,
	then the rest of the connected component of~$P_1$ and~$P_3$.
	If~$\sfC$ does not reach~$\La_{\rho}$, then it is entirely surrounded by closed edges of~$\omega'$. 
	Thus, the boundary conditions in~$\omega$ and~$\omega'$ on the complement of~$\sfC$ are identical, 
	which is to say that~$T$ occurs before~$\tau_{j+1}$. 
	Thus
	\begin{align*}
		\bfP[T > \tau_{j+1}\,|\, \omega_{[\tau_{j+1/2}]},  \omega'_{[\tau_{j+1/2}]}] \leq
		\phi_{\calF_{\tau_{j+1/2}}}^{\omega'_{[\tau_{j+1/2}]}}[\calA \lra \La_{\rho}]< \delta.
	\end{align*}
	By choosing~$\delta > 0$ small enough, both displays may be rendered smaller than~$\eps /2$. 
	Apply the union bound to obtain the desired inequality. 
\end{proof}

\begin{proof}[Lemma~\ref{lem:reconstruction}]
	Fix~$\delta > 0$ and a realisation~$\calF$ of~$\calF_{\tau_{j+1/2}}$, with boundary conditions~$\zeta$ and~$\zeta'$ 
	induced by~$\omega_{[\tau_{j+1/2}]}$ and~$\omega'_{[\tau_{j+1/2}]}$, respectively, on~$\calF$.
	Assume that~$\calF$,~$\zeta$ and~$\zeta'$ satisfy the assumptions of the lemma. 
	There are three situations that need to be analysed; we will proceed in increasing order of difficulty. Set~$\rho:=4^{k-j}r$.
	\medskip 
	
	\noindent{\bf Case 1:~$\calF$ is a flower domain with two petals wired in~$\zeta'$ but not in~$\zeta$.}
	First assume that~$\calF$ is a~$\delta$-well-separated flower domain 
	containing two primal petals~$P_i$ and~$P_j$ that are not wired together in~$\zeta$. 
	Recall that~$P_i$ and~$P_j$ are necessarily wired together in~$\zeta'$.
	
	Then~$\rm P$ is constructed as follows. 
	Attempt to explore the double four-petal flower domain~$(\calF_{\rm in}, \calF_{\rm out})$ between~$\La_r$ and~$\La_{2r}$ 
	for the configuration~$\omega$. 
	If no such double four-petal flower domain exists set~$\sigma = \infty$ and reveal the remaining edges in lexicographical order.
	If it exists, reveal the configurations in~$\calF_{\rm out} \cap \calF$ and let~$\sigma$ be the stopping time marking the end of this stage. 
	
	Let~$H$ be the event that the double four-petal flower domain~$(\calF_{\rm in}, \calF_{\rm out})$ exists, 
	that the two primal petals~$P_1^{\rm out}$ and~$P_3^{\rm out}$ 
	of ~$\calF_{\rm out}$ are connected in~$\omega'_{[\sigma]}$ to~$P_i$ and~$P_j$, respectively, 
	while  $P_i$ and~$P_j$ are not connected to each other 
	nor to any primal petal of~$\calF$ in~$\omega_{[\sigma]}$. 
	It is a standard consequence of the separation of arms that there exists a constant~$c(\delta) > 0$ such that 
	\begin{align*}
		{\rm P}[H\,|\,(\calF_{\rm in}, \calF_{\rm out}) \text{ double four-petal flower domain}] 
		\geq c(\delta) \, \pi_4(p;r,4\rho). 
	\end{align*}
	Lemma~\ref{lem:D4PFD} ensures that the event in the conditioning has uniformly positive probability, 
	and hence so does~$H$.  
	Finally, notice that when~$H$ occurs, the boundary conditions induced by~$\omega'_{[\sigma]}$ on~$\partial \calF_{\rm in}$ are a boost of those induced by~$\omega_{[\sigma]}$. This concludes the proof in this case. 
	\medskip 

	\noindent{\bf Case 2:~$\calF$ is not a flower domain.}
	Next, assume that~$\calF$ is not a flower domain, and therefore that~$\zeta = 0$. 
	Start the coupling by attempting to reveal 
	the double four-petal flower domains~$(\calF_{\rm in}, \calF_{\rm out})$ between~$\La_{r}$ and~$\La_{2r}$
	and~$(\overline{\calF}_{\rm in}, \overline{\calF}_{\rm out})$ between~$\La_{\rho}$ and $\La_{3\rho/2}$, respectively, for the configuration~$\omega$ (if~$\rho = r$, only perform the latter exploration). 
	If these two double four-petal flower domains exist, proceed by revealing the configurations in~$\overline{\calF}_{\rm in}\cap {\calF}_{\rm out}$; call~$\sigma_0$ the stopping time marking the end of this stage. 
	
	Let~$H$ be the event that the two double four-petal flower domains above exist,
	that~$\overline{P}_1^{\rm in}$ is connected to~${P}_1^{\rm out}$ 
	and~$\overline{P}_3^{\rm in}$ to~${P}_3^{\rm out}$ in~$\omega \cap \overline{\calF}_{\rm in}\cap {\calF}_{\rm out}$
	and that~$\overline{P}_2^{\rm in}$ is connected to~${P}_2^{\rm out}$ 
	and~$\overline{P}_4^{\rm in}$ to~${P}_4^{\rm out}$ in~$\omega^* \cap \overline{\calF}_{\rm in}\cap {\calF}_{\rm out}$.
	As in the first case, it is a consequence of the separation of arms and Lemma~\ref{lem:D4PFD}
	that there exists~$c_0 > 0$ such that 
	\begin{align}\label{eq:rmH}
		{\rm P}[H] \geq c_0 \, \pi_4(p;r,4\rho). 
	\end{align}
	
	Next we reveal the configurations in~$\calF \cap \overline\calF_{\rm out}$ in a fashion described below. 
	Write~$H'$ for the event that~$\overline{P}_1^{\rm out}$ and~$\overline{P}_3^{\rm out}$ are connected
	to each other in~$\omega' \cap \overline\calF_{\rm out}$, but not in~$\omega \cap \overline\calF_{\rm out}$.
	Observe that if~$H$ and~$H'$ occur, then the boundary conditions induced on~$\calF_{\rm in}$ 
	by~$\omega\cap\calF_{\rm in}$ and~$\omega'\cap\calF_{\rm in}$, respectively, form a boosting pair. 
	At this stage, it will be useful to introduce the following claim.

	\begin{claim}\label{claim:tubes}
		There exists~$c_1 = c_1(\delta)> 0$ and 
		four disjoint domains~$D_1,\dots, D_4$ contained in~$\calF \cap  \overline\calF_{\rm out}$ (they depend on $\calA$) such that
		\begin{align}
			\phi_{\calD_1}^{\omega'_{[\sigma_0]}, 0}[\calA  \longleftrightarrow \overline P_1^{\rm out}]> c_1, \qquad &\phi_{\calD_3}^{\omega'_{[\sigma_0]}, 0}[\calA  \longleftrightarrow \overline P_3^{\rm out}]> c_1,\label{eq:bound_conf1}\\
			 \phi_{\calD_2}^{\omega_{[\sigma_0]}, 1}[\partial \calF  \xlra{\omega^*}\overline P_2^{\rm out}]> c_1,
			\qquad 
			& \phi_{\calD_4}^{\omega_{[\sigma_0]}, 1}[\partial \calF  \xlra{\omega^*} \overline P_4^{\rm out}]> c_1,\label{eq:bound_conf2}
		\end{align}
		where~$\omega'_{[\sigma_0]}, 0$ (and~$\omega_{[\sigma_0]}, 1$) are the boundary conditions on~$\partial D_i$ induced 
		by the configuration equal to~$\omega'$ (and~$\omega$, respectively), completed in the unexplored region of~$D_i^c$ by closed (respectively,  open) edges. 
	\end{claim}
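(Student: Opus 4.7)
The plan is to construct the four disjoint subdomains~$D_1,\ldots,D_4$ of~$\calF \cap \overline{\calF}_{\rm out}$ in two stages, relying on the~$1/2$-well-separatedness of the outer double flower~$\overline{\calF}_{\rm out}$ and on the promising condition defining~$\tau_{j+1/2}$.

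First I would use the~$1/2$-well-separation of~$\overline{\calF}_{\rm out}$ to fix four disjoint angular sectors~$S_1,\ldots,S_4$ of~$\calF \cap \overline{\calF}_{\rm out}$, each of diameter comparable to~$\rho$, joining the inner boundary around petal~$\overline P_i^{\rm out}$ to the outer boundary~$\partial\calF$, and with any two sectors at~$\ell^\infty$-distance at least~$\rho/16$ apart. The domains~$D_2,D_4$ would then be taken to be~$S_2,S_4$. The dual-connection estimate~\eqref{eq:bound_conf2} follows directly from Theorem~\ref{thm:RSWnear}: since the assumption~$\zeta=0$ implies that~$\omega_{[\sigma_0]}$ induces free boundary conditions on~$\partial\calF$ (dually wired in the dual model), and since~$\overline P_i^{\rm out}$ for~$i\in\{2,4\}$ carries an open dual arc, a dual crossing of~$S_i$ joining these two dually-wired arcs exists with uniformly positive probability; the open completion on unexplored edges, though dually free, affects only a portion of~$\partial D_i$ at a macroscopic distance from the target dual connection, so RSW applies unchanged.

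For~$D_1,D_3$ and the bound~\eqref{eq:bound_conf1}, I would combine the promising condition with Proposition~\ref{prop:mixing} to deduce that, conditionally on the configurations revealed by time~$\sigma_0$ (including the occurrence of~$H$), one still has~$\phi_{\calF \cap \overline{\calF}_{\rm out}}^{\omega'_{[\sigma_0]},0}[\calA \lra \partial\La_{7\rho/4}] \gtrsim \delta$. One then explores the cluster of~$\calA$ inside the annulus~$\calF \cap \overline{\calF}_{\rm out} \cap \La_{7\rho/4}^c$ in~$\omega'$, in a manner analogous to the outward exploration used in the proof of Theorem~\ref{thm:coupling}. By a variation of Lemma~\ref{lem:well-separated} (separation of arms), with uniformly positive conditional probability this exploration produces a~$1/2$-well-separated flower structure on~$\La_{7\rho/4}$ containing two wired arcs~$Q,Q'$ that both inherit their wiring from~$\calA$, and that lie on opposite halves of~$\partial\La_{7\rho/4}$. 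After possibly relabeling~$\overline P_1^{\rm out}$ and~$\overline P_3^{\rm out}$, we may assume~$Q$ is adjacent to~$S_1$ and~$Q'$ to~$S_3$. One then defines~$D_1$ (resp.~$D_3$) as the union of a slightly trimmed version of~$S_1$ (resp.~$S_3$) with the portion of the explored cluster joining~$\calA$ to~$Q$ (resp.~$Q'$); Theorem~\ref{thm:RSWnear} then yields a primal path inside~$D_1$ (resp.~$D_3$) from~$Q$ to~$\overline P_1^{\rm out}$ (resp.~from~$Q'$ to~$\overline P_3^{\rm out}$) with uniformly positive probability.

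The principal obstacle is to extract, from the single promised connection~$\calA \lra \La_\rho$, two disjoint ``arms'' of~$\calA$ exiting the annulus on opposite sides of~$\partial\La_{7\rho/4}$. This is a variant of the classical separation-of-arms argument, but it must be adapted to the fact that~$\calA$ is a possibly irregular subset of~$\partial\calF$ whose vertices are all identified by the~$\omega'$-boundary conditions, so that standard two-arm lower bounds do not apply as-is. It is handled by combining the separation-of-arms technology (as in Lemma~\ref{lem:well-separated}) with conditioning on the leftmost exiting arm of the cluster of~$\calA$ and applying conditional RSW in the remaining region to produce a second, disjoint arm on the opposite side.
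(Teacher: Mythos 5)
Your construction runs into two genuine gaps, one geometric and one probabilistic. On the geometry: $\calF$ here is an essentially arbitrary simply connected domain whose boundary merely touches $\partial\La_{2\rho}$; it may have deep fjords and bottlenecks and may extend far beyond scale $\rho$ in some directions. ``Angular sectors of diameter comparable to $\rho$ joining $\overline P_i^{\rm out}$ to $\partial\calF$'' need not exist, and even where they do, \eqref{eq:RSWnear} does not yield crossings of such irregular quads under arbitrary boundary conditions --- the paper explicitly warns that for $q=4$ the RSW estimate fails in arbitrary quads with arbitrary boundary conditions. This is why the paper's proof uniformizes the topological annulus $\calF\cap\overline\calF_{\rm out}$ (whose modulus is uniformly bounded) and invokes Proposition~\ref{prop:conformal_RSW}, the RSW bound in terms of extremal distance, checking in each tube that the induced boundary conditions are of the admissible wired/free/wired/free type. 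Your tubes would need the same tool and the same verification; ``RSW applies unchanged'' does not cover this.

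The more serious gap is your mechanism for producing \emph{two} primal accesses to $\calA$. The claim asks for two \emph{disjoint, pre-determined} domains $D_1,D_3$, each equipped with the pessimistic boundary conditions $\omega'_{[\sigma_0]},0$, in each of which $\calA$ reaches the corresponding petal with probability at least $c_1$. Conditioning on the leftmost exiting arm of the cluster of $\calA$ and applying ``conditional RSW in the remaining region'' can produce open crossings of nice quads, but it cannot produce a second connection \emph{terminating on $\calA$}: a priori nothing prevents $\calA$ from being reachable only through one small portion of $\partial\calF$ (for instance $\calA$ a single vertex), in which case no second disjoint arm emanating from $\calA$ exists with uniform probability, and no RSW input can create one. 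What excludes this scenario is precisely the promising hypothesis, which the paper exploits through a pigeonhole argument over the sub-arcs $\calA_i=\calA\cap\psi^{-1}((a_ia_{i+1}))$ of the uniformized boundary: if no two well-separated arcs each connect outward with probability at least $c_0\delta/2$, one explores the cluster of the remaining part of $\calA$ and applies Proposition~\ref{prop:conformal_RSW} to the dual model to contradict $\phi^{\zeta'}_{\calF}[\calA\lra\La_{\rho}]\ge\delta$. Turning the single quantitative bound of the promising condition into two spatially separated quantitative bounds is the heart of the claim, and it is missing from your argument; neither mixing nor Lemma~\ref{lem:well-separated} substitutes for it, since those control the geometry of interfaces rather than lower bounds on connection probabilities to a prescribed, possibly tiny, boundary set.
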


	Essentially~$D_1,\dots, D_4$ should be viewed as disjoint tubes connecting the petals~$\overline P_i^{\rm out}$
	to four disjoint regions of~$\partial \calF$. Each tube should be thought of as thick enough to contain either primal or dual paths with positive probability (for instance we may think of each tube as being constructed using a finite number of rectangles of aspect ratio~$2$). 
	Moreover, we should further require that the regions of~$\partial \calF$ contained in~$D_1$ and~$D_3$ contain sufficiently many points of~$\calA$ that a connection reaches these points with positive probability. 
	Let us delay the proof of this technical result and finish the proof of the lemma in this case. 
	
	Fix the four domains~$D_1,\dots, D_4$ given by the claim. 
	Reveal the configuration in~$\calF \cap \overline\calF_{\rm out}$ by first revealing the configurations outside $D_1,\dots, D_4$, 
	then in each of the quads $D_1,\dots D_4$. 
	Due to \eqref{eq:CBC},
	\begin{align*}
		{\rm P}\big[
		\calA  \xlra{\omega' \cap D_1} \overline P_1^{\rm out}, \, 
		\calA  \xlra{\omega' \cap D_3} \overline P_3^{\rm out},\,
		\partial \calF  \xlra{\omega^* \cap D_2} \overline P_2^{\rm out},\,
		\partial \calF  \xlra{\omega^* \cap D_4} \overline P_4^{\rm out}
		\,\big|\, (\omega_{[\sigma_0]},\omega_{[\sigma_0]}')\big]
		\geq c_1^4,
	\end{align*}	
	whenever~$(\omega_{[\sigma_0]},\omega_{[\sigma_0]}')$ are such that~$H$ occur. 
	Notice now that, since all points of~$\calA$ are wired together in~$\omega'$ outside of~$\calF$, 
	but that all the boundary of~$\calF$ is free in~$\omega$, 
	if the event above occurs, then so does~$H'$. 
	Thus, 
	\begin{align*}
		{\rm P}[H' \cap H] \geq c_1^4{\rm P}[H] \geq c_0 \, \pi_4(p;r,4\rho).
	\end{align*}
	If we now set~$\sigma$ to be the stopping time at which all of~$\calF_{\rm in}^c$ has been explored and~$H'$ has been found to occur, 
	and~$\sigma = \infty$ otherwise, then~$\sigma$ satisfies the properties claimed in the lemma.

		\medskip 
	
	\noindent{\bf Case 3:~$\calF$ is a flower domain but the petals wired in~$\zeta'$ are wired in~$\zeta$.}
	The construction in this is very similar to that of Case 2.  
	Start off by revealing 
	the double four-petal flower domains~$(\calF_{\rm in}, \calF_{\rm out})$  between~$\La_{r}$ and~$\La_{2r}$
	and~$(\overline{\calF}_{\rm in}, \overline{\calF}_{\rm out})$ between~$\La_{\rho}$ and ~$\La_{3\rho/2}$, respectively, for the configuration~$\omega$. 
	Define~$H$ and~$H'$ in the same way as in Case 2.
	The only difference is in the way in which the configurations in~$\calF \cap \overline\calF_{\rm out}$ are revealed so that
	${\rm P}[H'\,|\, H]$ is bounded below by a constant depending only on~$\delta$. 

	Write~$P_1,\dots, P_{2k}$ for the petals of~$\calF$. 
	By the union bound and the assumption on~$\calA$, there exists a free petal~$P_j$ such that 
	\begin{align*}
		\phi^{\zeta'}_\calF [P_j \cap \calA \xlra{\omega'} \La_\rho] \geq  \delta/k.
	\end{align*}
	Recall that~$\calF$ is~$\delta$-well-separated, and therefore~$k$ is bounded in terms of~$\delta$ only. 
	A construction similar to that of the claim may be performed, 
	with~$\calA$ replaced by~$\calA \cap P_j$ and~$\partial \calF$ replaced by~$P_j$. 
	We conclude in the same way as in Case 2.
\end{proof}

We now turn to the proof of Claim~\ref{claim:tubes}. 
This type of construction is usually tedious because of the general form of~$\calF$, but may be performed fairly explicitly. 
We warn the reader of the difficulties arising from the potential bottlenecks of~$\calF$ and the fact that the RSW result \eqref{eq:RSWnear} is not valid with arbitrary boundary conditions in arbitrary quads when $q=4$. 
Nevertheless, with sufficient care, the domains~$D_1,\dots,D_4$ may be constructed as unions of small squares~$\La_{c \rho n}(x)$ with~$x \in c\rho \bbZ^2$ and~$c > 0$ a small constant independent of~$\rho$. 
We propose below an alternative, more innovative construction of~$D_1,\dots, D_4$.
We will use the following result from~\cite{DumManTas20} (see Remark~4.2 to be precise).

The extremal distance between the arcs~$(ab)$ and~$(cd)$ of a quad~$(\calD, a,b,c,d)$ is the unique value 
$\ell = \ell_\calD[(ab),(cd)]$ such that there exists a conformal transformation~$\psi$ from~$\calD$ (seen as a domain in the continuum)
to~$(0, 1)\times(0, \ell)$, with~$a, b, c, d$ being mapped (by the continuous extension of~$\psi$) to the corners of~$[0, 1] \times [0, \ell]$, in counterclockwise order, starting with the lower-left corner.

\begin{proposition}[RSW in terms of extremal distance]\label{prop:conformal_RSW}
	For all $L > 0$, there exists $\eta(L) >0$ such that, for all $1\le q\le 4$, $p \in (0,1)$, $R < L(p)$,
	and $(\calD, a,b,c,d)$ a discrete quad contained in~$\La_R$, 
	if $\ell_\calD[(ab),(cd)] \le L$ then
	\[\phi_{\calD,p}^{1/0}[\calC(\calD)] \geq\eta(L),\]
	where~$1/0$ denotes the boundary condition on~$\calD$ where the arcs~$(ab)$ and~$(cd)$ are wired and the rest of the boundary is free. 
\end{proposition}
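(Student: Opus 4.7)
The plan is to construct a crossing of $\calD$ from $(ab)$ to $(cd)$ as a concatenation of crossings of a bounded number of near-critical rectangles, and then apply \eqref{eq:FKG}.

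First I would perform a conformal decomposition of $\calD$. Let $\psi : \calD \to [0,1] \times [0, \ell_\calD]$ be the conformal map sending the marked points to the corners of the target rectangle, so that $(ab)$ and $(cd)$ are sent to the horizontal sides. Since $\ell_\calD \le L$, one may cover the target rectangle by a chain of $k = O(L)$ overlapping squares $Q_1, \dots, Q_k$ such that horizontal crossings of each $Q_i$ combined with vertical crossings of each overlap $Q_i \cap Q_{i+1}$ together produce a vertical crossing of the entire rectangle. Pulling back by $\psi^{-1}$ yields a chain of sub-quads $\calD_i \subset \calD$ whose internal extremal distances are uniformly bounded in terms of $L$ only, and likewise for the overlaps.

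Second, inside each $\calD_i$ I would extract a genuine discrete rectangle of bounded aspect ratio lying well inside $\calD_i$, at a scale at most $R \le L(p)$. Theorem~\ref{thm:RSWnear}, applied inside each such rectangle with the boundary conditions induced on it dominated by the free ones via \eqref{eq:CBC}, yields that this rectangle is crossed in the appropriate direction with probability at least some $c > 0$ depending only on the aspect ratio (hence on $L$). The same argument handles the overlaps $\calD_i \cap \calD_{i+1}$. All the events in question are increasing, so the FKG inequality \eqref{eq:FKG} implies that they occur simultaneously with probability at least $c^{2k-1}$. Their concatenation forces $\calC(\calD)$, and since the $1/0$ boundary conditions dominate the free ones on $\calD$ by \eqref{eq:CBC}, we obtain $\phi_{\calD,p}^{1/0}[\calC(\calD)] \ge c^{2k-1} =: \eta(L)$.

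The main obstacle is the extraction of honest discrete rectangles of bounded aspect ratio inside each sub-quad $\calD_i$. An arbitrary discrete quad may possess bottlenecks or a fractal boundary, and the conformal map $\psi$ can distort scales arbitrarily badly near $\partial \calD$. Producing such rectangles requires Koebe-type distortion estimates combined with a discrete approximation step exploiting that $\calD \subset \La_R$ with $R \le L(p)$. For $q < 4$, this step can be bypassed by invoking directly the uniform RSW estimates for simply connected domains of bounded extremal distance proved in~\cite{DumManTas20}. For $q=4$ no such uniform RSW with arbitrary boundary conditions is known; however the $1/0$ boundary condition, being strictly stronger than free and effectively collapsing each arc $(ab)$ and $(cd)$ to a single wired vertex, compensates for this and permits the chaining argument above, provided one is careful to route each intermediate crossing through a region whose boundary conditions can be dominated by free ones via \eqref{eq:CBC}.
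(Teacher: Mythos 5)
There is a genuine gap, and it sits exactly where you flag it. Your chaining scheme crosses a sequence of discrete rectangles lying \emph{well inside} the sub-quads $\calD_i=\psi^{-1}(Q_i)$, but a crossing of $\calD$ in the sense of $\calC(\calD)$ must actually attach to the arcs $(ab)$ and $(cd)$, which lie on the (possibly fractal) boundary $\partial\calD$. Crossings of interior rectangles never touch $\partial\calD$, and near the boundary the uniformization map $\psi$ has no distortion control whatsoever, so no Koebe-type estimate will produce a macroscopic rectangle whose crossing reaches $(ab)$. Worse, the pulled-back pieces $\calD_i$ are themselves arbitrary discrete quads of bounded extremal distance — exactly the class of objects the proposition is about — so the decomposition does not reduce the difficulty; it reproduces the original problem at each link of the chain. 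The hard instances are precisely quads where $(ab)$ and $(cd)$ are separated by a fractal geometry containing no chain of bounded-aspect-ratio rectangles joining them, and for those your argument gives nothing. Your closing assertion that for $q=4$ the $1/0$ boundary condition ``compensates'' and ``permits the chaining argument'' is not an argument: it is a restatement of the proposition. The known proofs of this statement (for $q<4$ via uniform crossing estimates, and for $q\le 4$ with the specific $1/0$ boundary condition) go through the machinery of~\cite{DumManTas20}, including the parafermionic observable; there is no elementary chaining proof.

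For comparison: the paper does not prove this proposition at all. It is imported verbatim from~\cite{DumManTas20} (Remark~4.2 there), where it is established at $p=p_c$; the only content added here is the one-line observation that the proof extends to all $R\le L(p)$ because the near-critical crossing estimate~\eqref{eq:RSWA} supplies the inputs that the critical RSW theory supplied in~\cite{DumManTas20}. If you intend to cite~\cite{DumManTas20} for the hard geometric step — as your own fallback for $q<4$ suggests — then the honest version of your proof is simply that citation plus the remark about~\eqref{eq:RSWA}, and the chaining scaffolding should be dropped. If you intend a self-contained proof, the boundary-attachment step and the $q=4$ case both remain open in your write-up.
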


While the result is stated for $p=p_c$ only in \cite{DumManTas20}, the reader will easily check that its proof extends readily to the near-critical regime using~\eqref{eq:RSWA}.

\begin{proof}[Claim~\ref{claim:tubes}]
	Write~$B_s$ for the euclidian ball of~$\bbR^2$ of radius~$s$ centred at~$0$, and~$B_s(z)$ for its translate by~$z \in \bbR^2$.
	Let~$\psi$ be a conformal map from~$\calF \cap \overline\calF_{\rm out}$ to some~$B_1 \setminus B_s$. 
	The existence of such a map is given by the uniformization theorem for the topological annulus $\calF \cap \overline\calF_{\rm out}$. 
	Note that~$s$ is determined by~$\calF \cap \overline\calF_{\rm out}$. 
	Moreover, since~$\partial \calF$ intersects~$\La_{2\rho}$ and 
	$\partial \overline \calF$ is contained in~$\Ann(\rho,\frac32\rho)$,~$s$ is bounded uniformly away from~$0$ and~$1$, 
	and the endpoints of $\psi(P_1),\dots,\psi(P_4)$ are uniformly far from each other. 
	
	Fix some small positive constant~$c_0 = c_0(\delta) < (1-s)/16$ which will be chosen below and will depend only on~$\delta$. 
	Let~$a_0,\dots, a_K = a_0$ (with~$K =  2\pi/c_0$) be points on the circle~$\partial B_1$ indexed in counterclockwise order and at a distance~$c_0$ of each other. 
	Write~$(a_{i}a_{i+1})$ for the arc of~$\partial B_1$ contained between~$a_i$ and~$a_{i+1}$;
	identify~$\psi^{-1}(a_{i}a_{i+1})$ to the corresponding vertices of~$\partial \calF$. Let $\calA_i:=\calA \cap \psi^{-1}(a_{i}a_{i+1})$.

	Next, we argue that there exist two indices~$i,j$ with~$|i - j| \geq 8$ (modulo~$K$) such that 
	\begin{align}\label{eq:two_arcs}
		\phi_{\calF}^{\zeta'}	[\calA_i \longleftrightarrow \psi^{-1}(B_{2c_0}(a_i))^c] 	&\geq c_0\delta/2,\\
		\phi_{\calF}^{\zeta'}	[\calA_j\longleftrightarrow \psi^{-1}(B_{2c_0}(a_j))^c] 	&\geq c_0\delta/2.\nonumber
	\end{align}
	Indeed, if the above fails, then there necessarily exists~$i$ such that for all~$j \notin \{i,\dots, i+7\}$, 
	\begin{align}\label{eq:two_arcs1}
		\phi_{\calF}^{\zeta'}	[\calA _j \longleftrightarrow \psi^{-1}(B_{2c_0}(a_j))^c] 	< c_0\delta/2.
	\end{align} 
	Assuming that this is the case, explore the open cluster~$\mathbf C$ of~$\calA \setminus \psi^{-1}(a_i a_{i+8})$. 
	Due to the small value of~$c_0$ and to the assumption above, 
	we conclude by the union bound that 
	\begin{align}\label{eq:two_arcs2}
		\phi_{\calF}^{\zeta'}	[\mathbf C \text{ intersects } \La_{\rho}] 	<  \delta/2.
	\end{align} 
	If~$\mathbf C$ does not intersects~$\La_{\rho}$, the measure on~$\calF\setminus \mathbf C$ obtained by conditioning on~$\mathbf C$ has free boundary conditions for all vertices adjacent to~$\mathbf C$. 
	Then, due to Proposition~\ref{prop:conformal_RSW} applied repeatedly to the dual model, we conclude that for~$c_0$ small enough, 
	\begin{align}\label{eq:two_arcs3}
		\phi_{\calF}^{\zeta'}	[\calA \longleftrightarrow \La_{\rho} \,|\, \mathbf C \text{ does no intersect } \La_{\rho}] 	<  \delta/2.
	\end{align} 
	Combining \eqref{eq:two_arcs2} and \eqref{eq:two_arcs3} we conclude that~$\phi_{\calF}^{\zeta'}	[\calA \lra \La_{\rho}] <  \delta$,
	which contradicts the assumption that the stopping time at which~$\calF$ was discovered was promising. 
	Thus \eqref{eq:two_arcs} is proved. 
	
Fix $i,j$ with $|i - j| \geq 8$ (modulo~$K$) and satisfying \eqref{eq:two_arcs}. For what comes next, we refer to Figure~\ref{fig:claim}. 
For $k = 1,2,3,4$, set  
\[
(D_k,a_k,b_k,c_k,d_k):=\psi^{-1}[(D'_k,a'_k,b'_k,c'_k,d'_k)],
\] 
where the quads $(D_k',a_k',b_k',c_k',d_k')$ satisfy:
\begin{itemize}[noitemsep]
\item $D'_k\subset B_1\setminus B_s$ for every $k$; 
\item the extremal distances $\ell_{\calD_k}[(a_kb_k),(c_kd_k)]$ belong to $(\kappa,1/\kappa)$ with $\kappa=\kappa(c_0)\in(0,\infty)$ for every $k$;
\item $(a'_kb'_k)$ is equal to $(a_{i-3}a_{i+3})$, $(a_{i+3}a_{j-3})$, $(a_{j-3}a_{j+3})$ and $(a_{j+3}a_{i-3})$ for $k=1,2,3$ and~$4$, respectively;
\item $(c'_kd'_k)=\psi(\overline P_k^{\rm out}$) for every $k$;
\item $D'_1$ and $D'_3$ contain $B_{3c_0}(a_i)$ and $B_{3c_0}(a_j)$ respectively.
\end{itemize} 
The construction of the domains $(D'_k,a'_k,b'_k,c'_k,d'_k)$ is straightforward. 

We now check \eqref{eq:bound_conf1} and \eqref{eq:bound_conf2}. We start by the latter and focus on the first inequality. The boundary conditions induced by $\{\omega_{[\sigma_0]},1\}$ on $D_2$ are free on $\overline P_2^{\rm out}$ and on $\psi^{-1}((a_{i+3}a_{j-3}))$. Therefore, the result follows directly from Proposition~\ref{prop:conformal_RSW} and duality. 
We now turn to \eqref{eq:bound_conf1} and focus on the first inequality. Let 
\begin{align*}
\ell&:=\partial\psi^{-1}(B_{2c_0}(a_i))^c\setminus\partial (\calF\cap\overline\calF^{\rm out}).
\end{align*}
	First, a mixing-type argument using Proposition~\ref{prop:conformal_RSW} and \eqref{eq:two_arcs} combine to give
	 \begin{align*}
		\phi_{\calD_1}^{\omega'_{[\sigma_0]},0}	[\calA_i\longleftrightarrow \ell] \ge c_2
		\phi_{\calF}^{\zeta'}	[\calA_i \longleftrightarrow \ell] \stackrel{\eqref{eq:two_arcs}}\ge c_3(\delta).
		\end{align*}
		Second, if $\ell^\pm:=\ell\cap\psi^{-1}(H^\pm)$, where $H^+$ is the  half-plane on the left of the line going from 0 to $\tfrac12(a_i+a_{i+1})$ (where left is understood when looking in the direction $\tfrac12(a_i+a_{i+1})$) and $H^-=\mathbb C\setminus H^+$. 
		The union bound implies that $\#$ may be chosen equal to $+$ or $-$ so that
		\begin{align*}
		\phi_{\calD_1}^{\omega'_{[\sigma_0]},0}	[\calA_i\longleftrightarrow \ell^\#] 
		\ge \tfrac12\phi_{\calD_1}^{\omega'_{[\sigma_0]},0}	[\calA_i\longleftrightarrow \ell] .
		\end{align*}
		We assume below that $\#=+$ (the same can be done for $\#=-$). Condition on the left-most path $\Gamma$ going from $\calA_i$ to $\ell^+$. Then, conditioned on $\Gamma$, the domain carved from $D_1$ by removing $\Gamma$ and all the edges explored to determine $\Gamma$ has wired boundary conditions on $\overline P_1^{\rm out}$, wired on $\Gamma$, and boundary conditions dominating the free boundary conditions  elsewhere. Since the extremal distance between $\overline P_1^{\rm out}$ and $\Gamma$ in this new domain is larger than $\kappa'=\kappa'(\kappa,\delta)>0$, we deduce from Proposition~\ref{prop:conformal_RSW} that 
		\[
		\phi_{\calD_1}^{\omega'_{[\sigma_0]},0}[\calA_i\longleftrightarrow\overline P_1^{\rm out}|\calA_i\longleftrightarrow \ell^+]\ge c_4.
		\]
		Combining the last three displayed equations gives the first inequality of \eqref{eq:bound_conf1}, and therefore concludes the proof.
	\end{proof}

	\begin{figure}
	\begin{center}
	\includegraphics[width=1.05\textwidth]{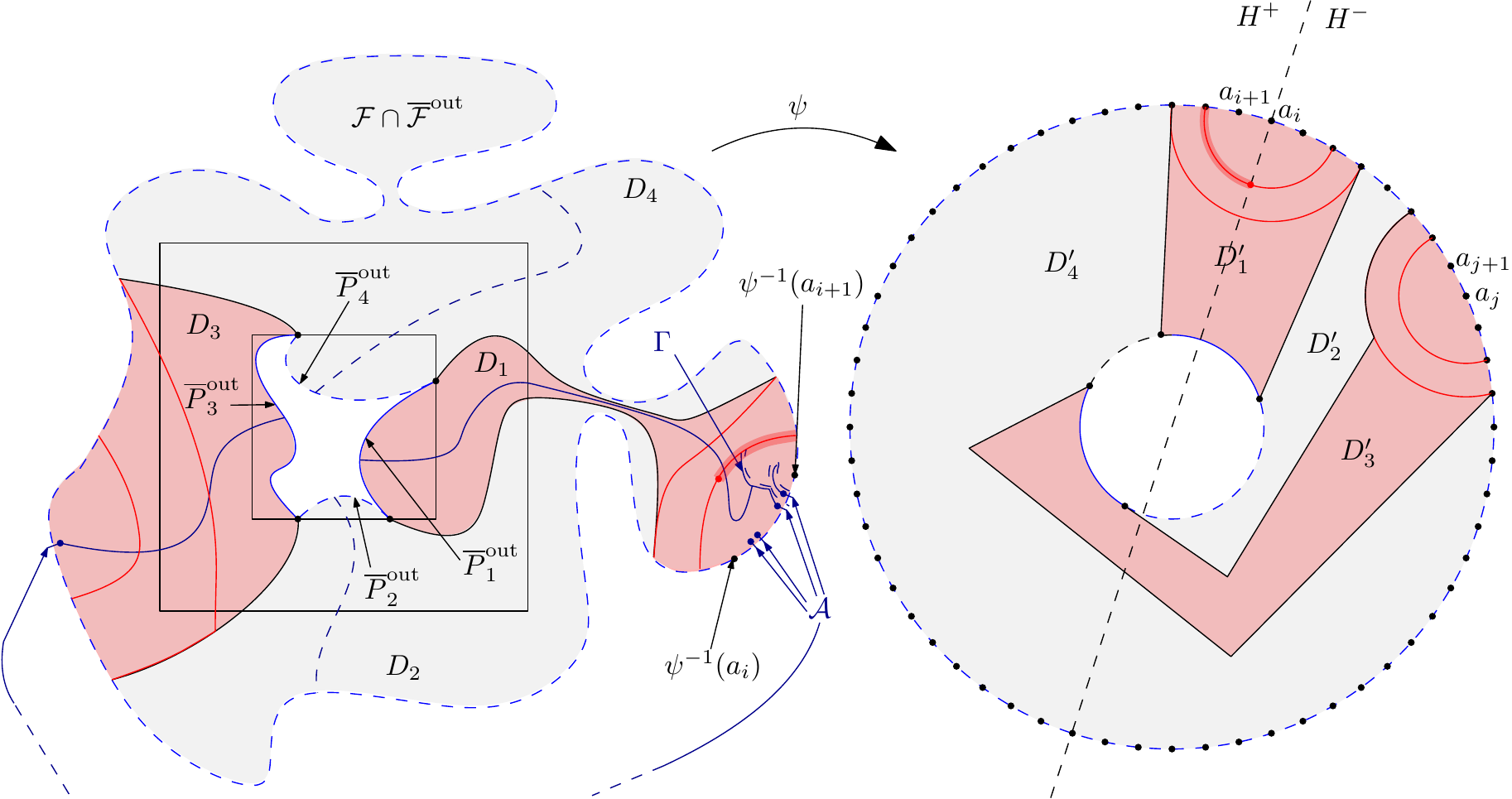}
	\caption{The uniformization map $\Psi$ from $\calF\cap\overline\calF^{\rm out}$ into $B_1\setminus B_s$. On the right, the black dots denote the $a_i$. We depicted the balls $B_{2c_0}(a_i)$ and $B_{3c_0}(a_i)$. The four domains $D_1',\dots,D_4'$ can be chosen in many ways. In bigger red, the path $\ell^+$ and its preimage $\psi^{-1}(\ell^+)$. The path $\Gamma$ from $\mathcal A$ into $\psi^{-1}(\ell^+)$ is drawn in dark blue.}
	\label{fig:claim}
	\end{center}
	\end{figure}

\subsection{Mixing rate versus coupling}

In this section we estimate the probability under the coupling of Theorem~\ref{thm:coupling} that~$\tau$ occurs. 
The relevant result of this section is the following theorem.

\begin{theorem}\label{thm:Delta_coupling}
	For any~$p\in(0,1)$ and any~$4r < R \le L(p)$, the following holds. 
	Fix~$\calG$ an~$\eta$-well-separated inner flower domain on~$\La_R$ or outer flower domain on~$\La_r$
	and~$\xi$ and~$\xi'$ a boosting pair of boundary conditions on~$\calG$.
	Let~$\bbP$ be the coupling of Theorem~\ref{thm:coupling} 
	(points (i) or (ii), depending on whether~$\calG$ is an inner or outer flower domain), 
	and recall the stopping time~$\tau$ associated to~$\bbP$. 
	Then 
	\begin{equation}\label{eq:Delta_coupling1}
		 \bbP[\tau < \infty] \asymp \Delta_p(r,R). 
	\end{equation}
	As a consequence, 
	if~$\calG$ is an inner flower domain and~$H$ is either the crossing event of an~$\eta$-regular quad at scale~$r$ or~$H = A_{r/2}$,
	or 	if~$\calG$ is an outer flower domain and~$H$ is either the crossing event of an~$\eta$-regular quad at scale~$R$
	translated so that it is contained in~$\Ann(R,2R)$ or~$H = A_{R}$, then 
	\begin{equation}\label{eq:Delta_coupling2}
		\phi_\calG^{\xi'}[H] - \phi_\calG^{\xi}[H]
		\asymp \Delta_p(r,R). 
	\end{equation}
\end{theorem}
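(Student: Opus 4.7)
The plan is to prove the two claims in tandem by first reducing the coupling probability in \eqref{eq:Delta_coupling1} to a difference of crossing/arm-event probabilities, and then comparing this difference with $\Delta_p(r,R)$. I focus on the inner flower-domain case; the outer case is handled symmetrically by the same strategy. Throughout, $H$ denotes an appropriate event at the inner scale (the event $A_{r/2}$ in the inner case, $A_R$ in the outer case; the crossing events are handled similarly).

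\textbf{Step 1 (equivalence with an event difference).} I first establish
\[
\bbP[\tau<\infty] \;\asymp\; \phi_\calG^{\xi'}[H] - \phi_\calG^{\xi}[H].
\]
For the lower bound on the right-hand side, on $\{\tau<\infty\}$ the unexplored region $\calF_\tau$ is a $1/2$-well-separated flower domain at the inner scale equipped with a boosting pair, so Theorem~\ref{thm:boosting_pair} gives $\phi_{\calF_\tau}^{\omega'_{[\tau]}}[H] - \phi_{\calF_\tau}^{\omega_{[\tau]}}[H] \geq \delta$. Applying \eqref{eq:SMP} at the stopping time $\tau$ and using that the integrand is non-negative on $\{\tau=\infty\}$ by \eqref{eq:CBC} yields $\phi_\calG^{\xi'}[H] - \phi_\calG^{\xi}[H] \geq \delta\,\bbP[\tau<\infty]$. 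For the upper bound I extend $\bbP$ to a full coupling on $\calG$ by sampling the interior of $\La_r$ identically whenever the induced boundary conditions $\zeta,\zeta'$ on $\partial\La_r$ coincide. Since $H\in\calF(\La_r)$, \eqref{eq:SMP} gives $\phi_\calG^{\xi'}[H] - \phi_\calG^{\xi}[H] = \bbE_\bbP\big[\phi_{\La_r}^{\zeta'}[H] - \phi_{\La_r}^{\zeta}[H]\big] \leq \bbP[\zeta\neq\zeta']$, and Theorem~\ref{thm:coupling} gives $\bbP[\zeta\neq\zeta'] \asymp \bbP[\tau<\infty]$.

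\textbf{Step 2 (comparison with $\Delta_p(r,R)$).} Next I show $\phi_\calG^{\xi'}[H] - \phi_\calG^{\xi}[H] \asymp \Delta_p(r,R)$. The upper bound is pure monotonicity: \eqref{eq:CBC} yields $\phi_\calG^{\xi'}[H] - \phi_\calG^{\xi}[H] \leq \phi_\calG^{1}[H] - \phi_\calG^{0}[H]$, and domain monotonicity (applied via \eqref{eq:SMP} and \eqref{eq:CBC} on $\La_R\subset\calG$) gives $\phi_\calG^1|_{\La_R}\leq\phi_{\La_R}^1$ and $\phi_\calG^0|_{\La_R}\geq\phi_{\La_R}^0$ stochastically; hence $\phi_\calG^{\xi'}[H] - \phi_\calG^{\xi}[H] \leq \phi_{\La_R}^1[H] - \phi_{\La_R}^0[H]$. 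Specialising Step~1 to $(\calG,\xi,\xi')=(\La_R,0,1)$ with an artificial flower structure (four petals so that $(0,1)$ sandwiches a boosting pair) and taking $H=\calC(\La_r)$ versus $H=A_{r/2}$, both admissible, I get $\phi_{\La_R}^1[H] - \phi_{\La_R}^0[H] \asymp \bbP_{\La_R,0,1}[\tau<\infty] \asymp \phi_{\La_R}^1[\calC(\La_r)] - \phi_{\La_R}^0[\calC(\La_r)] = \Delta_p(r,R)$.

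For the lower bound $\phi_\calG^{\xi'}[H] - \phi_\calG^{\xi}[H] \gtrsim \Delta_p(r,R)$ (equivalently, $\bbP_{\calG,\xi,\xi'}[\tau<\infty] \gtrsim \bbP_{\La_R,0,1}[\tau<\infty]$) I compare the two couplings. The key observation is that the exploration procedure of Theorem~\ref{thm:coupling} is self-similar: after one macroscopic stage (from scale $R$ to scale $\sim R/4$) both couplings produce, with uniformly positive conditional probability guaranteed by Lemma~\ref{lem:well-separated} and Lemma~\ref{lem:D4PFD}, a $1/2$-well-separated flower domain at a comparable intermediate scale equipped with a canonical minimal boosting pair. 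From this common intermediate state the residual coupling probabilities are governed only by the remaining scale ratio, and an inductive argument on $\log(R/r)$ (with base case $R/r = O(1)$ handled directly by Theorem~\ref{thm:boosting_pair}) transfers the bound from the canonical $(\La_R,0,1)$ coupling to $(\calG,\xi,\xi')$.

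\textbf{Main obstacle.} The delicate point is the coupling-comparison argument in the lower bound of Step~2: formalising that after one stage of the exploration the state becomes essentially canonical, independently of the starting boosting pair. This requires a careful application of the separation-of-arms techniques from Section~\ref{sec:boost} to ensure the intermediate flower domain is genuinely well-separated, together with precise bookkeeping of the induced boundary conditions propagated through the procedure, so that the inductive descent on the scale ratio does not accumulate losses.
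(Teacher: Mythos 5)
Your Step 1 is correct and is exactly the paper's argument (it is the content of Lemma~\ref{lem:Delta_out_in}: combine $\bbP[\tau<\infty]\lesssim$-bounds from Theorem~\ref{thm:coupling} with the uniform boost of Theorem~\ref{thm:boosting_pair} on $\{\tau<\infty\}$). The upper bound in Step 2, via \eqref{eq:CBC} and the specialisation to $(\La_R,0,1)$, is also fine. The problems are concentrated in the lower bound of Step 2 and in the dismissal of the outer case as ``symmetric''.

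For the lower bound, the claim that after one macroscopic stage both couplings reach a ``canonical minimal boosting pair'' with \emph{uniformly positive} conditional probability is the gap. Unconditionally, the probability that the difference between $\omega$ and $\omega'$ survives one stage as a boosting pair at scale $R/4$ is of order $\Delta_p(R/4,R)$, not of order $1$; so your product decomposition would require the residual probability from $R/4$ to $r$ to be $\gtrsim \Delta_p(r,R)/\Delta_p(R/4,R)$, i.e.\ quasi-multiplicativity of $\Delta_p$ --- which in this paper is Corollary~\ref{cor:Delta_quasi}, a \emph{consequence} of the theorem you are proving. If instead you condition on survival and run an induction on $\log(R/r)$ losing a multiplicative constant per scale (well-separation, separation of arms, etc.), the accumulated loss is $(r/R)^{O(1)}$, which destroys the $\asymp$. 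The paper's actual mechanism (Lemma~\ref{lem:Delta_boosing_pair}) is entirely different and avoids any descent through scales: it collapses the wired components of $\xi$, adds an auxiliary edge $f$ between the two groups of petals wired in $\xi'$ but not in $\xi$, and uses Bayes to rewrite $\phi_\calG^{\xi'}[A_{r/2}]-\phi_\calG^{\xi}[A_{r/2}]$ as (up to constants) $\phi_G[\omega_f=1\,|\,A_{r/2}]-\phi_G[\omega_f=1]$; this reversed influence is then bounded below by running the inside-out coupling started from the conditioning on $A_{r/2}$ (Remark~\ref{rem:circ_is_boosting} plus Theorem~\ref{thm:coupling}(ii)) and gluing with an RSW event that routes the resulting boost into a connection between the endpoints of $f$. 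Separately, the outer flower-domain case is not a mirror image: $\Delta_p(r,R)$ is defined as an outside-in quantity, so relating an inside-out boost $\phi_{\La_r^c}^{1}[A_R]-\phi_{\La_r^c}^{0}[A_R]$ to it requires the covariance identity $\Cov[A_{r/2},A_R]\asymp\Delta_p(r,R)$ of Lemma~\ref{lem:Delta_in_out}, proved by coupling $\phi$ with $\phi[\cdot\,|\,A_{r/2}]$ via Proposition~\ref{prop:cross_is_boosting}; nothing in your proposal supplies this bridge.
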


When~$\calG$ is an inner flower domain, \eqref{eq:Delta_coupling2} should be understood as follows. 
Any boosting pair of boundary conditions at scale~$R$ boosts the probability of any crossing event at scale~$r$ 
by a quantity comparable to~$\Delta_p(r,R)$. 
The same holds when the boundary conditions are at scale~$r$ and the crossing event is at scale~$R$.
Recall that~$\Delta_p(r,R)$ was defined in terms of the boost that a specific pair of boundary conditions at scale~$R$ 
has on a specific crossing event at scale~$r$. 
Thus, in addition to stating that the boost of any pair of boundary conditions on any crossing event is comparable, 
the proposition also links the boost from outside in to that from inside out. 

The rest of this section is dedicated to showing Theorem~\ref{thm:Delta_coupling}.
The proof is split into several steps, each corresponding to a lemma. 
First, as a consequence of Theorem~\ref{thm:coupling}(i), 
we prove that the influence of boundary conditions on the crossing of any two regular quads is comparable.

\begin{lemma}\label{lem:Delta_out_in}
	Fix~$p\in(0,1)$ and~$2r < R \le L(p)$. 
	Let~$\calG$ be an~$\eta$-well-separated inner flower domain on~$\La_R$
	and~$\xi$ and~$\xi'$ be a boosting pair of boundary conditions on~$\calG$. 
	Moreover, let~$H$ be either the crossing event of an~$\eta$-regular quad at scale~$r$ or~$H = A_{r/2}$. 
	Then
	\begin{align}\label{eq:Delta_out_in0}
		\phi_{\calG}^{\xi'}[H] - \phi_{\calG}^{\xi}[H] 
		\asymp 	\phi_{\calG}^{\xi'}[\calC(\Lambda_r)] - \phi_{\calG}^{\xi}[\calC(\Lambda_r)].
	\end{align}
	In particular, 
	\begin{align}\label{eq:Delta_out_in2}		
		\phi_{\La_R}^1[H] - \phi_{\La_R}^0[H] \asymp \Delta_p(r,R).
	\end{align}
\end{lemma}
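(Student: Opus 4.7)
The plan is to combine Theorem~\ref{thm:coupling}(i), which propagates a boosting pair of boundary conditions from scale~$R$ down to scale~$r$, with Theorem~\ref{thm:boosting_pair}(i), which converts a boosting pair on a well-separated inner flower domain into a uniform gap in the probability of any sufficiently inner crossing event. Let $\bbP$ be the coupling via decision trees of $\phi_\calG^\xi$ and $\phi_\calG^{\xi'}$ on $\calG \setminus \La_r$ provided by Theorem~\ref{thm:coupling}(i), with associated stopping time $\tau$. Extend it to a coupling on all of $\calG$ by continuing, inside $\La_r$, with an arbitrary increasing coupling of the random-cluster measures with the induced boundary conditions $\zeta \le \zeta'$ on $\partial\La_r$, arranged so that $\omega = \omega'$ on $\La_r$ whenever $\zeta = \zeta'$.

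For the upper bound, I would use that $H$ is an increasing event depending only on edges inside $\La_r$ (clear for $A_{r/2}$ and for the crossing of an $\eta$-regular quad at scale~$r$). Since the coupling is increasing,
\begin{align*}
\phi_\calG^{\xi'}[H] - \phi_\calG^{\xi}[H]
= \bbP[\omega' \in H,\, \omega \notin H]
\le \bbP[\zeta \ne \zeta']
\asymp \bbP[\tau < \infty],
\end{align*}
where the last comparison is Theorem~\ref{thm:coupling}(i). Applying the same inequality with $H$ replaced by $\calC(\La_r)$ gives $\phi_\calG^{\xi'}[\calC(\La_r)] - \phi_\calG^{\xi}[\calC(\La_r)] \les \bbP[\tau < \infty]$.

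For the lower bound, I would condition on $\tau < \infty$: then $\calF_\tau$ is a $1/2$-well-separated inner flower domain on $\La_r$ and $(\omega_{[\tau]}, \omega'_{[\tau]})$ induce on it a boosting pair of boundary conditions. Theorem~\ref{thm:boosting_pair}(i) then yields a uniform $\delta = \delta(\eta) > 0$ with
\begin{align*}
\phi_{\calF_\tau}^{\omega'_{[\tau]}}[H] - \phi_{\calF_\tau}^{\omega_{[\tau]}}[H] \ge \delta
\quad\text{and}\quad
\phi_{\calF_\tau}^{\omega'_{[\tau]}}[\calC(\La_r)] - \phi_{\calF_\tau}^{\omega_{[\tau]}}[\calC(\La_r)] \ge \delta.
\end{align*}
Integrating over $(\omega_{[\tau]}, \omega'_{[\tau]})$ and using the spatial Markov property for the coupling, both $\phi_\calG^{\xi'}[H] - \phi_\calG^{\xi}[H]$ and $\phi_\calG^{\xi'}[\calC(\La_r)] - \phi_\calG^{\xi}[\calC(\La_r)]$ are $\ges \bbP[\tau < \infty]$. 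Combining with the upper bound, both quantities are $\asymp \bbP[\tau < \infty]$, proving~\eqref{eq:Delta_out_in0}. Finally,~\eqref{eq:Delta_out_in2} is the special case $\calG = \La_R$ (viewed as a $4$-petal inner flower domain) with $(\xi,\xi') = (0,1)$, since the right-hand side of~\eqref{eq:Delta_out_in0} then becomes exactly $\Delta_p(r,R)$ by definition.

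The main subtle point I expect is the scale matching between the flower domain $\calF_\tau$ on $\La_r$ and an event $H$ ``at scale~$r$'' when invoking Theorem~\ref{thm:boosting_pair}(i), since the latter is stated for a flower domain on $\La_{2R}$ and a quad at scale~$R$. The fix is to apply Theorem~\ref{thm:coupling}(i) with $r$ replaced by a constant multiple $c r$, so that the resulting flower domain comfortably contains $H$ and $\La_r$ with an $\eta$-macroscopic buffer; this only affects the multiplicative constants, and a standard quasi-multiplicativity argument (together with \eqref{eq:RSWnear}) then allows one to pass between $\Delta_p(cr,R)$ and $\Delta_p(r,R)$ when deducing~\eqref{eq:Delta_out_in2}.
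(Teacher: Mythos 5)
Your proposal is correct and follows essentially the same route as the paper: the same coupling from Theorem~\ref{thm:coupling}(i), the upper bound via $\bbP[\zeta\neq\zeta']\asymp\bbP[\tau<\infty]$, and the lower bound by conditioning on $\tau<\infty$ and invoking the uniform gap of Theorem~\ref{thm:boosting_pair}(i). Your closing remark on the scale matching between $\calF_\tau$ and the event $H$ is a legitimate point that the paper glosses over, and your proposed fix (adjusting the inner scale by a constant factor, then using~\eqref{eq:RSWnear} and quasi-multiplicativity to absorb it into constants) is the standard and correct way to handle it.
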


A particular case of~\eqref{eq:Delta_out_in2} shows that~$\Delta_p(1,R)\asymp \Delta_p(R)$.
In addition, by taking~$H = \calC(\La_{r/2})$,~\eqref{eq:Delta_out_in2} also proves that 
\begin{align}\label{eq:Delta_lengthen_in}
	\Delta_p(r,R)\asymp \Delta_p(r/2,R), 
\end{align}
and more generally that replacing~$r$ by a multiple of~$r$ only affects~$\Delta_p$ by a multiplicative constant. 

As will be apparent from the proof, 
an equivalent of~\eqref{eq:Delta_out_in0} may also be proved for outer flower domains on~$\La_r$ 
and crossing events at scale~$R$. However, an equivalent of~\eqref{eq:Delta_out_in2} cannot be shown with the same proof, as least for now (but will be later). 

\begin{proof}
	Fix~$p$,~$r$,~$R$,~$\calG$,~$\xi$,~$\xi'$ and~$H$ as in the statement. 
	Let~$\bbP$ be the coupling of Theorem~\ref{thm:coupling}~(i) between~$\phi_{\La_R}^{\xi'}$ and~$\phi_{\La_R}^{\xi}$.
	Then, using the notation of the theorem,
	\begin{align*}
		\phi_{\calG}^{\xi'}[H] - \phi_{\calG}^{\xi}[H]
		=\bbP[\omega' \in H ,\omega \notin H] 
		\le \bbP[\zeta \neq \zeta'] 
		\les \bbP[\tau < \infty].
	\end{align*}
	Moreover, by Theorem~\ref{thm:boosting_pair}, 
	\begin{align*}
		\phi_{\calG}^{\xi'}[H] - \phi_{\calG}^{\xi}[H]
		\geq \bbE\big[\ind_{\{\tau < \infty\}}\big(\phi_{\calF_\tau}^{\omega'_{[\tau]}}[H] - \phi_{\calF_\tau}^{\omega_{[\tau]}}[H]\big)\big]
		\ges \bbP[\tau < \infty], 
	\end{align*}
	where~$\bbE$ stands for the expectation associated to~$\bbP$.
	The two displays above imply that 
	\begin{align}\label{eq:Delta_out_in_tau}
		\phi_{\calG}^{\xi'}[H] - \phi_{\calG}^{\xi}[H]\asymp \bbP[\tau < \infty].
	\end{align}
	Apply this to a generic~$H$ and to~$H = \calC(\La_r)$ to obtain~\eqref{eq:Delta_out_in0}.
	Finally,~\eqref{eq:Delta_out_in0} applied with~$\calG = \La_R$,~$\xi = 0$,~$\xi' = 1$ gives~\eqref{eq:Delta_out_in2}.
\end{proof}

Next, we deduce an interpretation of~$\Delta_p(r,R)$ as a covariance between events at scales~$r$ and~$R$. 
Considering the symmetry between~$r$ and~$R$ below, this result is used to link the influence from outside in to that from inside out. 

\begin{lemma}\label{lem:Delta_in_out}	
	For every~$p$ and every~$4 r\le R \le  L(p)$,
    \begin{align}\label{eq:Delta_in_out}	
    	\mathrm{Cov}[A_{r/2}, A_R] 
		\asymp \phi_{\La_r^c}^1[A_R] - 	\phi_{\La_r^c}^0[A_R]
		\asymp \Delta_p(r,R).
    \end{align}
\end{lemma}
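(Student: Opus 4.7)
My plan is to establish the two asymptotic equivalences via four inequalities: the upper bounds $\mathrm{Cov}[A_{r/2}, A_R] \lesssim \phi_{\La_r^c}^1[A_R] - \phi_{\La_r^c}^0[A_R]$ and $\mathrm{Cov}[A_{r/2}, A_R] \lesssim \Delta_p(r,R)$, together with their matching lower bounds.

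The upper bounds follow quickly from the spatial Markov property. I would condition on the configuration inside~$\La_r$: spatial Markov gives $\phi_p[A_R\,|\,\mathcal F_{\La_r}] = \phi_{\La_r^c}^\xi[A_R]$, where~$\xi$ is the boundary condition induced on~$\partial \La_r$. Since this conditional probability always lies in $[\phi_{\La_r^c}^0[A_R], \phi_{\La_r^c}^1[A_R]]$ by~\eqref{eq:CBC}, the covariance $\mathrm{Cov}[\ind_{A_{r/2}}, \phi_{\La_r^c}^\xi[A_R]]$ is at most a quarter of that range, which is the first upper bound. The second follows symmetrically upon conditioning on the configuration outside~$\La_R$ and invoking~\eqref{eq:Delta_out_in2} to identify $\phi_{\La_R}^1[A_{r/2}] - \phi_{\La_R}^0[A_{r/2}] \asymp \Delta_p(r,R)$.

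For the reverse inequality $\mathrm{Cov}[A_{r/2}, A_R] \gtrsim \phi_{\La_r^c}^1[A_R] - \phi_{\La_r^c}^0[A_R]$, I would build a two-stage coupling. First, applying Proposition~\ref{prop:cross_is_boosting} via Remark~\ref{rem:circ_is_boosting} at scale~$r$ yields a decision-tree coupling of~$\phi_p$ and~$\phi_p[\cdot\,|\,A_{r/2}]$ with stopping time~$\tau_1$ producing, on an event of probability at least $c_1 > 0$, a $1/2$-well-separated outer flower domain on~$\La_{2r}$ carrying a boosting pair of boundary conditions. Second, I would apply Theorem~\ref{thm:coupling}(ii) to extend the coupling outward, obtaining a further stopping time~$\tau_2$ that, when finite, delivers a $1/2$-well-separated outer flower domain on~$\La_R$ with a boosting pair; Theorem~\ref{thm:boosting_pair}(ii) then forces a boost of at least~$\delta > 0$ on~$A_R$. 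Combined with $\mathrm{Cov}[A_{r/2}, A_R] \asymp \phi_p[A_R\,|\,A_{r/2}] - \phi_p[A_R]$ (since $\phi_p[A_{r/2}], \phi_p[A_{r/2}^c] \asymp 1$ by RSW), this reduces matters to lower-bounding $\bbP[\tau_1 < \infty,\, \tau_2 < \infty]$. I will use the characterization $\bbP[\tau_2 < \infty\,|\,\tau_1 < \infty] \asymp \bbP[\zeta \neq \zeta' \text{ on } \partial \La_R\,|\,\tau_1 < \infty]$ from Theorem~\ref{thm:coupling}(ii), combined with an outer-flower analog of Lemma~\ref{lem:Delta_out_in} (whose proof is identical with Theorem~\ref{thm:coupling}(ii) in place of~(i)), to show that this last probability is comparable to $\phi_{\La_r^c}^1[A_R] - \phi_{\La_r^c}^0[A_R]$.

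The other lower bound $\mathrm{Cov}[A_{r/2}, A_R] \gtrsim \Delta_p(r,R)$ will follow by the completely symmetric inner-flower construction: an inner-flower variant of Proposition~\ref{prop:cross_is_boosting}, obtained by coupling~$\phi_p$ and~$\phi_p[\cdot\,|\,A_R]$ via exploration from outside inward, produces an inner flower on~$\La_{R/2}$ with a boosting pair; Theorem~\ref{thm:coupling}(i) then extends the coupling inward to scale~$r$, and Theorem~\ref{thm:boosting_pair}(i) together with Lemma~\ref{lem:Delta_out_in} converts this into a boost on~$A_{r/2}$. The hard part throughout will be verifying that the probability of the combined coupling success is genuinely $\asymp$ (not merely $\lesssim$) the corresponding extremal boost quantity; this is exactly what the sharp equivalence $\bbP[\tau < \infty] \asymp \bbP[\zeta \neq \zeta']$ of Theorem~\ref{thm:coupling} provides, and without it only one direction of the asserted asymptotic equivalences would be reachable.
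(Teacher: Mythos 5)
Your four-inequality architecture and both upper bounds are sound and essentially match the paper's. The gap is in the lower bounds, at the step where you convert $\bbP[\zeta\ne\zeta']$ into an \emph{extremal} difference. The outer-flower analogue of Lemma~\ref{lem:Delta_out_in} that you invoke only says that, for the \emph{fixed} flower domain and boosting pair produced by your first stage, the boosts of different target events at scale~$R$ are mutually comparable (and comparable to $\bbP[\tau_2<\infty]$). It does not compare the boost of that particular random pair with the boost of the extremal pair $(0,1)$ on $\La_r^c$ --- and indeed the discussion following Lemma~\ref{lem:Delta_out_in} warns that the outer-domain analogue of \eqref{eq:Delta_out_in2} ``cannot be shown with the same proof''. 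The statement you actually need is Lemma~\ref{lem:Delta_boosing_pair} (uniformity of the boost over all boosting pairs), but its proof comes later and relies on \eqref{eq:acov3}, i.e.\ on the present lemma, so invoking it here would be circular. The same issue affects your symmetric inner-flower argument for $\Cov\ges\Delta_p(r,R)$: after Theorem~\ref{thm:coupling}(i) you are left with the boost produced by a random inner flower domain on $\La_{R/2}$, not by the pair $(\La_R,0,1)$ that defines $\Delta_p(r,R)$.

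The missing idea is a primal/dual sandwich. In the coupling of $\phi$ and $\phi[\cdot\,|\,A_{r/2}]$ run from inside out to $\La_{R/2}$, one bounds $\bbP[\zeta\ne\zeta']$ from below both by $\bbP[\omega'\in A_{R/2},\,\omega\notin A_{R/2}]\ge\Cov(A_{r/2},A_{R/2})$ and by the analogous quantity for the dual event $A_{R/2}^*$; dividing by $\phi[A_{R/2}]$ and $\phi[A_{R/2}^*]$ (both of order one) and summing yields $\phi[A_{r/2}\,|\,A_{R/2}]-\phi[A_{r/2}\,|\,A_{R/2}^*]$, which by \eqref{eq:CBC} dominates $\phi_{\La_R}^1[A_{r/2}]-\phi_{\La_R}^0[A_{r/2}]\asymp\Delta_p(r,R)$ --- no comparison between boosting pairs is required. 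For the second equivalence one exploits the symmetry of the covariance: since $\phi[A_r],\phi[A_r^*],\phi[A_R]\asymp1$,
\begin{align*}
\phi_{\La_r^c}^1[A_R]-\phi_{\La_r^c}^0[A_R]
\le\phi[A_R\,|\,A_r]-\phi[A_R\,|\,A_r^*]
\les\big(\phi[A_r\,|\,A_R]-\phi[A_r]\big)+\big(\phi[A_r^*]-\phi[A_r^*\,|\,A_R]\big)
\les\Delta_p(r,R),
\end{align*}
which turns the inside-out boost into an outside-in one that Lemma~\ref{lem:Delta_out_in} does control. Without one of these devices your two lower bounds do not close.
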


\begin{proof}
	We start by proving the equivalence between~$\Cov(A_{r/2},A_R)$ and~$\Delta(r,R)$. 
	By the monotonicity of boundary conditions~\eqref{eq:CBC}, 
	\begin{align}\label{eq:acov}
    	\Cov(A_{r/2}, A_R) 
    	= \phi[A_R](\phi[A_{r/2}\,|\, A_R] -\phi[A_{r/2}])
    	\le \phi_{\La_R}^1[A_{r/2}] -\phi_{\La_R}^0[A_{r/2}]
    	\les \Delta(r,R),	
	\end{align}
	where the last inequality is due to Lemma~\ref{lem:Delta_out_in}.
	
	For the converse bound, some additional work is needed. 
	Let~$\bbP$ denote the coupling between~$\phi$ and~$\phi[\cdot\,|A_{r/2}]$ inside~$\La_{R/2}$ produced by applying Remark~\ref{rem:circ_is_boosting} then Theorem~\ref{thm:coupling}(ii). 
	Recall that~$\zeta$ and~$\zeta'$ denote the boosting pair of boundary conditions on~$\La_{R/2}^c$ 
	induced by~$\omega$ and~$\omega'$, respectively. 
	Complete the coupling outside of~$\La_{R/2}$ 
	by an arbitrary increasing coupling. 
	Then, Theorems~\ref{thm:boosting_pair} and~\ref{thm:coupling}(ii) show that
	\begin{align}
		{\rm Cov}(A_{r/2},A_R)&=\bbP[\omega' \in A_R,\, \omega \notin A_R]\nonumber\\
		&		\geq \bbE \big[\ind_{\{\tau < \infty\}}\big( \phi_{\calF_\tau}^{\omega_{[\tau]}'}[A_R]- \phi_{\calF_\tau}^{\omega_{[\tau]}}[A_R]\big)\big]\nonumber\\
		&\ges \bbP[\tau < \infty] \ges \bbP[\zeta' \neq \zeta].\label{eq:acov2}
	\end{align}
	Using that~$\phi[A_{r/2}]\ges 1$, we deduce from the display above that 
	$\Cov(A_{r/2}, A_R)  \ges \bbP[\zeta' \neq \zeta]$.	
	Moreover	
	\begin{align}\label{eq:AR/2}
		\bbP[\zeta' \neq \zeta]
		&\ges \bbP[\omega' \in A_{R/2},\, \omega \notin A_{R/2}] 
		\ge \Cov(A_{r/2}, A_{R/2}) \quad \text{ and} \\
		\bbP[\zeta' \neq \zeta]
		&\ges \bbP[\omega' \notin A_{R/2},\, \omega \in A_{R/2}^*] 
		\ge \Cov(A_{r/2}, A_{R/2}^*),\nonumber
	\end{align}
	where~$A_{R/2}^* = \{\La_{R/2} \not\longleftrightarrow \partial \La_{R}\}$ is the event~$A_{R/2}$ applied to the dual model. 
	Divide the equations above by~$\phi[A_{R/2}]$ and~$\phi[A_{R/2}^*]$, respectively, both of which are uniformly positive quantities. 
	Using the monotonicity of boundary conditions~\eqref{eq:CBC}, we conclude that 
	\begin{align*}
		\bbP[\zeta' \neq \zeta]
		\ges \phi[ A_{r/2}\,|\, A_{R/2}] - \phi[ A_{r/2}\,|\, A_{R/2}^*]
		\ge  \phi_{\La_R}^1[ A_{r/2}] - \phi_{\La_R}^0[ A_{r/2}]
		\ges \Delta(r,R). 
	\end{align*}
	Together with~\eqref{eq:acov2} and~\eqref{eq:acov} this shows that 
	\begin{align}\label{eq:acov3}
    	\Delta(r,R) \les \bbP[\zeta' \neq \zeta]\les	\Cov(A_{r/2}, A_R) \les  \Delta(r,R).
	\end{align}
	
	Finally, we turn to the equivalence between $\Cov(A_{r/2}, A_R)$ and $\phi_{\La_r^c}^1[A_R] -	\phi_{\La_r^c}^0[A_R]$.
	The monotonicity of boundary conditions \eqref{eq:CBC} shows that
	\begin{align*}
		\mathrm{Cov}(A_{r/2}, A_R)
		\le \phi[A_R\,|\,A_{r/2}] - \phi[A_R]
		\le \phi_{\La_r^c}^1[A_R] -	\phi_{\La_r^c}^0[A_R].
	\end{align*}
	Conversely, 
	\begin{align*}
		\phi_{\La_r^c}^1[A_R] -	\phi_{\La_r^c}^0[A_R]
		&\leq \phi[A_R\,|\,A_{r}] - \phi[A_R\,|\, A_r^*] &\text{by~\eqref{eq:CBC}}&\\
		&\les \Cov(A_R,A_{r}) - \Cov(A_R,A_{r}^*)&\hspace{-1cm}\text{as~$\phi[A_{r}]\ges1$ \&~$\phi[A_r^*]\ges1$}&\\
		&\les  ( \phi[A_r \,|\, A_R] - \phi[A_r])+ ( \phi[A_r^*] - \phi[A_r^* \,|\, A_R])&\text{since~$\phi[A_R]\le1$}&\\
		&\les \Delta_p(2r,R) &\text{by~\eqref{eq:CBC} and \eqref{eq:Delta_coupling2}}&\\
		&\les \Delta_p(r,R) &\text{by~\eqref{eq:Delta_lengthen_in}}&\\
		&\les 		\mathrm{Cov}[A_{r/2}, A_R]  &\text{by~\eqref{eq:acov3}.}&
	\end{align*}
	The last two displays prove that~$\mathrm{Cov}[A_{r/2}, A_R] \asymp \phi_{\La_r^c}^1[A_R] -	\phi_{\La_r^c}^0[A_R]$. 
\end{proof}

Notice that~\eqref{eq:acov}--\eqref{eq:AR/2} also show that 
\begin{align}\label{eq:Delta_lengthen_out}
	\Delta_p(r,R) \ges \mathrm{Cov}[A_{r/2}, A_R] \ges \mathrm{Cov}[A_{r/2}, A_{R/2}] \ges \Delta_p(r,R/2) \ge \Delta_p(r,R) ,
\end{align}
and more generally that replacing~$R$ by a constant multiple of~$R$ only affects~$\Delta_p$ by a multiplicative constant. 

Finally, we claim that all boosting pairs of boundary conditions at scale~$R$
influence~$A_{r/2}$ by a similar amount; the same holds from the inside out.

\begin{lemma}\label{lem:Delta_boosing_pair}
	Fix~$p$ and~$4 r\le R \le  L(p)$. Then \\
	(i) 
	for any~$\eta$-well-separated inner flower domain~$\calG$ on~$\La_R$ and
	any boosting pair of boundary conditions~$\xi,\xi'$ on~$\calG$,
	\begin{align}\label{eq:dbp1}
		\phi_{\calG}^{\xi'} [A_{r/2}] - \phi_{\calG}^{\xi} [A_{r/2}] 
		\asymp \phi_{\La_R}^1 [A_{r/2}] - \phi_{\La_R}^{0} [A_{r/2}] ;
	\end{align}
	(ii) 
	for any~$\eta$-well-separated outer flower domain~$\calG$ on~$\La_r$ and
	any boosting pair of boundary conditions~$\xi,\xi'$ on~$\calG$,
	\begin{align}\label{eq:bdp2}
		\phi_{\calG}^{\xi'} [A_R] - \phi_{\calG}^{\xi} [A_R] 
		\asymp \phi_{\La_r^c}^{1} [A_R] - \phi_{\La_r^c}^{0} [A_R] .
	\end{align}
\end{lemma}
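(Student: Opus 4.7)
The plan is to prove parts (i) and (ii) in parallel, using the same structure: a trivial upper bound via the spatial Markov property plus comparison between boundary conditions, and a more delicate lower bound that relies on the coupling technology developed in Theorems~\ref{thm:coupling} and~\ref{thm:boosting_pair}. I focus on (i); part (ii) is analogous after swapping the roles of $r$ and $R$, inner and outer flower domains, and using Lemma~\ref{lem:Delta_in_out} in place of \eqref{eq:Delta_out_in2}.

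For the upper bound of (i), note that an inner flower domain $\calG$ on $\La_R$ contains $\La_R$. Thus, by \eqref{eq:SMP} applied to $\phi_{\calG}^\xi$ and conditioning on the restriction of $\omega$ to $\calG \setminus \La_R$, the conditional measure on $\La_R$ is a random-cluster measure with some boundary condition $\psi$ on $\partial \La_R$. Since $0 \le \psi \le 1$ as partitions and $A_{r/2}$ is increasing, \eqref{eq:CBC} yields $\phi_{\La_R}^0[A_{r/2}] \le \phi_{\calG}^\xi[A_{r/2}] \le \phi_{\La_R}^1[A_{r/2}]$; the same bounds hold for $\xi'$. Subtracting gives directly $\phi_{\calG}^{\xi'}[A_{r/2}] - \phi_{\calG}^{\xi}[A_{r/2}] \le \phi_{\La_R}^1[A_{r/2}] - \phi_{\La_R}^0[A_{r/2}]$.

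For the lower bound, I apply the coupling $\bbP$ of Theorem~\ref{thm:coupling}(i) between $\phi_\calG^\xi$ and $\phi_\calG^{\xi'}$, together with its stopping time $\tau$. When $\tau<\infty$, the configurations induce a boosting pair on the $1/2$-well-separated inner flower domain $\calF_\tau$ on $\La_r$, so Theorem~\ref{thm:boosting_pair}(i) (applied via \eqref{eq:boosting_pair_circ_i}) gives a uniform positive boost for $A_{r/2}$ at scale $r$. Averaging then produces
\[
\phi_\calG^{\xi'}[A_{r/2}] - \phi_\calG^{\xi}[A_{r/2}] \ge \bbE\bigl[\ind_{\{\tau<\infty\}}\bigl(\phi_{\calF_\tau}^{\omega'_{[\tau]}}[A_{r/2}] - \phi_{\calF_\tau}^{\omega_{[\tau]}}[A_{r/2}]\bigr)\bigr] \ges \bbP[\tau<\infty].
\]
The task therefore reduces to showing $\bbP[\tau<\infty] \ges \Delta_p(r,R)$, which by Theorem~\ref{thm:coupling}(i) is equivalent to $\bbP[\zeta \ne \zeta'] \ges \Delta_p(r,R)$ where $\zeta,\zeta'$ are the induced boundary conditions on $\partial \La_r$. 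To establish this, I compare with the specific boosting pair $(0_{\La_R}, 1_{\La_R})$ on $\La_R$: by \eqref{eq:Delta_out_in2}, the corresponding coupling probability is of order $\Delta_p(r,R)$, and the argument of Theorem~\ref{thm:coupling}(i) shows that any boosting pair provides a ``seed'' of difference that survives inward down to $\La_r$ with a probability comparable, up to multiplicative constants independent of the choice of pair, to that of the extremal pair.

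The principal obstacle is precisely this last step, namely obtaining a lower bound on $\bbP[\tau<\infty]$ that depends only on the scales $r$ and $R$ and not on the particular boosting pair or flower domain. This is what makes the statement of the lemma non-trivial: although Theorem~\ref{thm:boosting_pair} transports the boost across scales once a boosting pair is established, the decoupled step of maintaining such a pair from scale $R$ down to scale $r$ is governed by the full strength of Theorem~\ref{thm:coupling}, which asserts that the survival probability is comparable to that of the maximal $(0,1)$-pair, i.e. to $\Delta_p(r,R)$.
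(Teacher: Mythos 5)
Your upper bound is the paper's, and your reduction of the lower bound to showing $\bbP[\tau<\infty]\ges\Delta_p(r,R)$ (equivalently $\bbP[\zeta\neq\zeta']\ges\Delta_p(r,R)$) for an \emph{arbitrary} boosting pair is also the right way to frame the difficulty. But the final step is a genuine gap, not a citation. Theorem~\ref{thm:coupling}(i) only asserts, for a \emph{fixed} boosting pair $(\xi,\xi')$, that $\bbP[\tau<\infty]\asymp\bbP[\zeta\neq\zeta']$; it makes no comparison between different boosting pairs, and its revealment procedure (exploring the cluster of the two petals wired in $\xi'$ but not $\xi$) depends on the pair. The claim that ``any boosting pair provides a seed of difference that survives inward with probability comparable to that of the extremal $(0,1)$-pair'' is precisely the content of the lemma you are proving, and nothing in Theorem~\ref{thm:coupling} or in \eqref{eq:Delta_out_in2} delivers it. (Citing Theorem~\ref{thm:Delta_coupling} instead would be circular: its proof invokes this very lemma.)

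The paper closes this gap with a different idea. By monotonicity one reduces to $\xi,\xi'$ differing by the wiring of exactly one pair of sets; one then builds an auxiliary graph $G$ by collapsing the wired classes of $\xi$ to points $a_1,\dots,a_k$ and adding an extra edge $f$ between the two points $a_i,a_j$ that are wired in $\xi'$. Then $\phi_\calG^\xi$ and $\phi_\calG^{\xi'}$ are the restrictions of $\phi_G[\,\cdot\,|\,\omega_f=0]$ and $\phi_G[\,\cdot\,|\,\omega_f=1]$, and Bayes's formula converts the desired lower bound into $\phi_G[\omega_f=1\,|\,A_{r/2}]-\phi_G[\omega_f=1]$, i.e.\ into the boost that the \emph{bulk event} $A_{r/2}$ gives to the \emph{edge} $f$. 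This is then handled by running the coupling from the inside out (Remark~\ref{rem:circ_is_boosting} followed by Theorem~\ref{thm:coupling}(ii)): when the outward-propagating difference survives to scale $R/2$, an RSW construction (the event $H$) connects $a_i$ and $a_j$ in $\omega'\setminus\{f\}$ but not in $\omega\setminus\{f\}$, so $f$ itself differs with probability $\ges1$, and the surviving probability is $\ges\Cov_G(A_{r/2},A_{R/2})\ges\phi_{\La_R}^1[A_{r/2}]-\phi_{\La_R}^0[A_{r/2}]$ by \eqref{eq:Delta_out_in2}. Without this reversal (or some substitute argument comparing the survival probabilities of distinct boosting pairs), your proof does not go through.
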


\begin{figure}
	\begin{center}
	\includegraphics[width=0.6\textwidth]{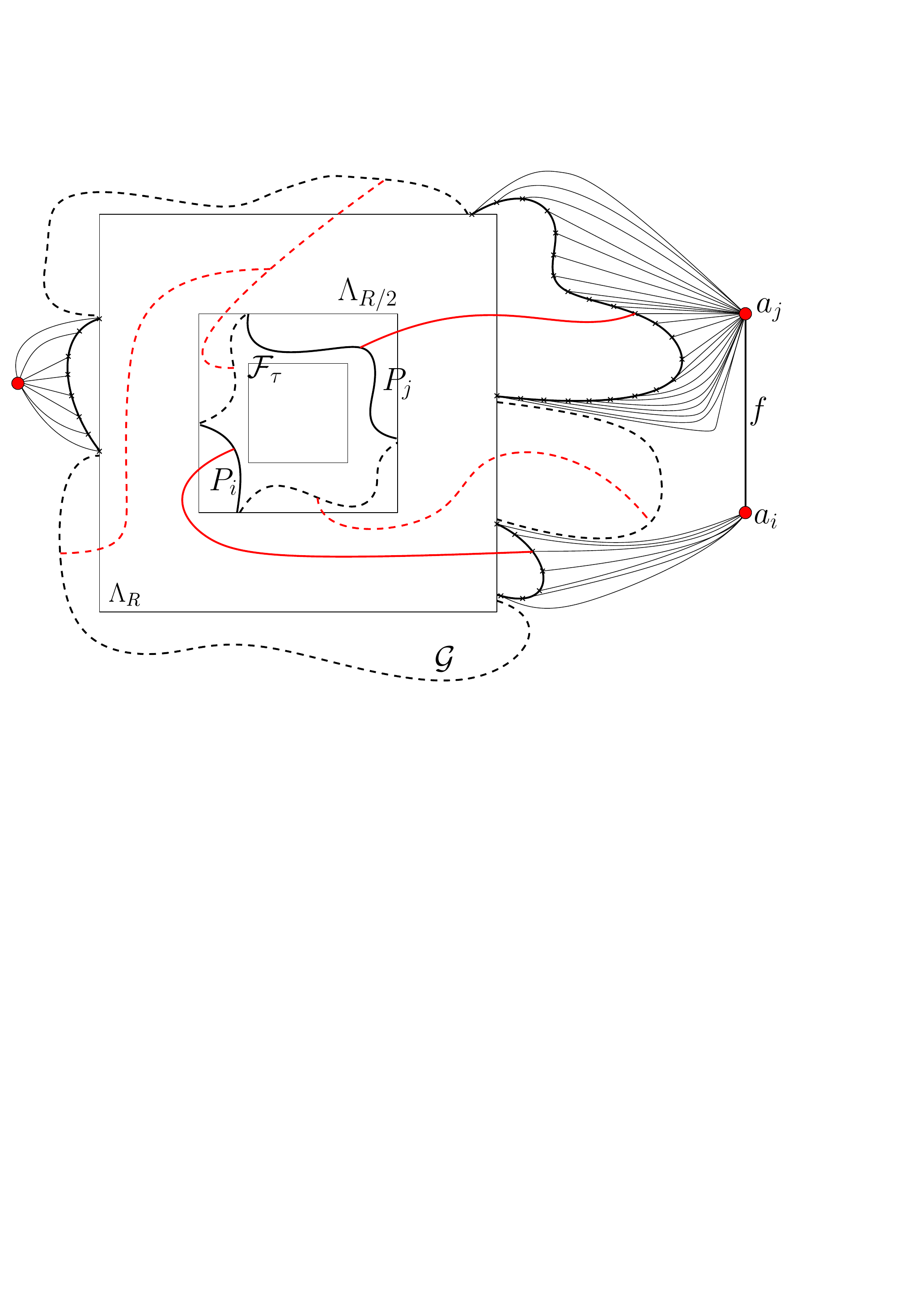}
	\caption{The graph~$G$ (the black edges between the true vertices of the petals and the vertices in red mean that they are all merged into the red vertex, or equivalently that we added {\em open} edges between them and the red vertices. We also depicted the flower domains~$\calG$ and~$\calF_\tau$, as well as the event~$H$, that corresponds to the occurrence of the red paths.
	} 
	\label{fig:graphG}
	\end{center}
	\end{figure}

\begin{proof}
	We will only treat point~$(i)$ as point~$(ii)$ is identical. 
	
	By the monotonicity of boundary conditions, 
 	\begin{align*}
		\phi_{\calG}^{\xi'} [A_{r/2}] - \phi_{\calG}^{\xi} [A_{r/2}] 
		\le \phi_{\La_R}^1 [A_{r/2}] - \phi_{\La_R}^{0} [A_{r/2}].
	\end{align*}
	We turn to the converse bound. We recommend to take a look at Figure~\ref{fig:graphG}.
	By monotonicity, we may assume that~$\xi,\xi'$ are both coherent with~$\calG$, 
	and that there exists exactly one pair of sets in the partition~$\xi$ that are wired together in~$\xi'$ (due to the coherence condition, each such set contains at least one primal petal of~$\calG$).
	Consider the graph~$G$ obtained from~$\calG$ as follows. 
	All vertices contained in a non-singleton set of the partition~$\xi$ 
	are collapsed to a single point (in particular, all points on each primal petal of~$\calG$ get collapsed together). 
	Write~$a_1,\dots, a_k$ for the points thus obtained. 
	Then there exist two distinct points~$a_i$ and~$a_j$ 
	such that the corresponding groups of petals are wired in~$\xi'$. 
	Finally,~$G$ is the graph obtained after the collapsing procedure described above, 
	with an additional edge~$f$ between~$a_i$ and~$a_j$. 
	There is an obvious correspondence between the edges of~$\calG$ and those of~$G\setminus \{f\}$, 
	and we will identify them from now on. Let~$\phi_G$ be the random-cluster measure on the finite graph~$G$ (note that~$G$ is not a subgraph of~$\bbZ^2$).
	
	With the construction above,~$\phi^{\xi}_{\calG}$ and~$\phi^{\xi'}_\calG$ are simply the restrictions of 
	$\phi_G[\cdot\,|\omega_f = 0]$ and~$\phi_G[\cdot\,|\omega_f = 1]$, respectively, to~$\calG$. 
	As such,~$\phi_G[A_{r/2}] \ges 1$ and Bayes's formula imply that
	\begin{align}
    	\phi_{\calG}^{\xi'} [A_{r/2}] - \phi_{\calG}^{\xi} [A_{r/2}]
    	&= \phi_G [A_{r/2}|\omega_f =1] - \phi_G [A_{r/2}|\omega_f =0] \nonumber\\
    	&\geq \phi_{G} [A_{r/2}|\omega_f =1] - \phi_G[A_{r/2}]\nonumber\\
		& \ges  \phi_{G} [\omega_f =1|A_{r/2}] - \phi_{G}[\omega_f =1].
    \label{eq:qm1}	
	\end{align}
	Let now~$\bbP$ be the coupling between~$\phi_{G}$ and~$\phi_{G} [\cdot\,|A_{r/2}]$ obtained as follows: 
	\begin{itemize}[noitemsep]
		\item reveal the edges of~$\La_{R/2}$ in the order dictated by Remark~\ref{rem:circ_is_boosting}. 
		If this stage produces an outer flower domain with a boosting pair of boundary conditions, 
		apply Theorem~\ref{thm:coupling}(ii) up to the associated stopping time~$\tau$;
		\item reveal all remaining edges of~$\calG$;
		\item reveal the state of~$f$. 
	\end{itemize}
	If~$\tau < \infty$, 
	there exist two primal petals~$P_i$ and~$P_j$ of~$\calF_\tau$ that are wired in~$\omega_{[\tau]}'$ but not in~$\omega_{[\tau]}$.
	Choose arbitrarily one such pair of petals. 
	Let~$H = H(\calF_\tau, \omega_{[\tau]},\omega_{[\tau]}')$ be the event that~$a_i$ is connected to~$P_i$ in~$\omega \cap \calF_\tau$, 
	that~$a_j$ is connected to~$P_j$ in~$\omega \cap \calF_\tau$, 
	but that~$a_i$ and~$a_j$ are not connected to each other or to any other primal petal of~$\calF_\tau$ in~$\omega \cap \calF_\tau$.

	Since~$\calF_\tau$ is~$1/2$-well-separated and~$\calG$ is~$\eta$-well-separated, and since~$\calF_\tau \cap \calG$ contains the annulus~$\Ann(R/2,R)$, 
	by standard applications of Theorem~\ref{thm:RSWnear}, 
	we find that
	\begin{align*}
		\bbP(H \,|\, \tau < \infty,\calF_\tau, \omega_{[\tau]},\omega_{[\tau]}') \ges 1.
	\end{align*}
	The occurrence of~$H$ may be determined before revealing the state of the edge~$f$.
	Moreover, if~$H$ occurs, then the endpoints~$a_i$ and~$a_j$ of~$f$
	are connected in~$\omega' \setminus \{f\}$, but not in~$\omega \setminus \{f\}$.
	Indeed,~$\omega'$ dominates~$\omega$, which implies that 
	$a_i$ is connected to~$P_i$ (in~$\omega' \cap \calF_\tau$), which is connected to~$P_j$ (in~$\omega'_{[\tau]}$), 
	which in turn is connected to~$a_j$ (in~$\omega' \cap \calF_\tau$).
		
	It follows that, at the last step of the coupling, if~$\tau < \infty$ and~$H$ occurs, 
	there is a probability~$\frac{(1-p)q}{p + (1-p)q}$ that~$f$ is closed in~$\omega$ but open in~$\omega'$. 
	To summarise, we find
	\begin{align}\label{eq:qm2}
		\phi_{G} [\omega_f =1|A_{r/2}]- \phi_{G}[\omega_f =1]
		= \bbP[\omega_f = 0, \omega'_f = 1] 
		\ges \bbP[H \text{ and } \tau < \infty] 
		\ges \bbP[\tau < \infty].
	\end{align}
	Finally, by Theorem~\ref{thm:coupling}(ii), the fact that~$\phi_{G} [A_{R/2}] \ges 1$ 
	and the comparison between boundary conditions~\eqref{eq:CBC}, we have 
	\begin{align}\label{eq:qm3}
		 \bbP[\tau < \infty]
		 &\ges \bbP[\text{$\omega$ and~$\omega'$ induce different b.c. on~$\La_{R/2}^c$}]\\
		 &\ge \bbP[\omega' \in A_{R/2} \text{ and } \omega \notin A_{R/2}]\nonumber\\
		 &\ges \Cov_G(A_{r/2}, A_{R/2})
		 \ges \phi_{\La_R}^{1} [A_{r/2}]- \phi_{\La_R}^{0} [A_{r/2}]
		, \nonumber
	\end{align}
	where~$\Cov_G$ is the covariance under~$\phi_G$ and the last inequality is given by~\eqref{eq:Delta_out_in2}. 
	Equations~\eqref{eq:qm1}--\eqref{eq:qm3} prove 	\[\phi_{\calG}^{\xi'} [A_{r/2}] - \phi_{\calG}^{\xi} [A_{r/2}] 
		\ges \phi_{\La_R}^1 [A_{r/2}] - \phi_{\La_R}^{0} [A_{r/2}],\] as desired. 
\end{proof}

We are finally ready to prove Theorem~\ref{thm:Delta_coupling}.

\begin{proof}[Theorem~\ref{thm:Delta_coupling}]
	We focus on the case where~$\calG$ is an inner flower domain on~$\La_R$; 
	the case where~$\calG$ is an outer flower domain is identical. 
	Recall that~$\xi$ and~$\xi'$ form a boosting pair of boundary conditions on~$\calG$ and that~$\tau$ is the stopping time 
	associated to the coupling of Theorem~\ref{thm:coupling} between~$\phi_{\calG}^{\xi'}$ and~$\phi_{\calG}^{\xi}$.
	Due to Lemmata~\ref{lem:Delta_out_in}, we find that
	\begin{equation*}
		\bbP[\tau < \infty] 
		\asymp \phi_{\calG}^{\xi'} [A_{r/2}]- \phi_{\calG}^{\xi} [A_{r/2}].
	\end{equation*}
	Now, Lemmata~\ref{lem:Delta_boosing_pair} and \ref{lem:Delta_boosing_pair} indicate that the right-hand side above is of order~$\Delta_p(r,R)$, and the proof is complete. 
\end{proof}

\subsection{$\Delta_p$ controls the mixing rate: proof of Theorem~\ref{thm:delta}(v)}

For the lower bound, \eqref{eq:Delta_in_out} gives that
\begin{equation*}
	\max\Big\{\Big|\frac{\phi_p[A\cap B]}{\phi_p[A]\phi_p[B]}-1\Big|\, :\, A\in\mathcal F(\Lambda_r),
	B\in\mathcal F(\mathbb Z^2\setminus\Lambda_R)\Big\}
	\ge \frac{\phi_p[A_{r/2}\cap A_R]}{\phi_p[A_{r/2}]\phi_p[A_R]}-1
	\ges \Delta_p(r,R).
\end{equation*}
For the upper bound, use the spatial Markov property \eqref{eq:SMP} to get that for every~$A\in\mathcal F(\Lambda_r)$ and~$B\in\mathcal F(\mathbb Z^2\setminus\Lambda_R)$, 
\begin{equation}\label{eq:delta_thm2b}
	\Big|\frac{\phi_p[A\cap B]}{\phi_p[A]\phi_p[B]}-1\Big|\le \max\Big\{\Big|\frac{\phi_{\Lambda_R,p}^{\xi'}[A]}{\phi_{\Lambda_R,p}^{\xi}[A]}-1\Big|\ :\xi,\xi'\text{ b.c. on }\partial\Lambda_R\Big\}.  
\end{equation}
Now, consider the coupling~$\mathbb P$ between~$\omega^0$ and~$ \omega^1$ constructed in Theorem~\ref{thm:Delta_coupling} 
on~$\La_R \setminus \La_{2r}$ and boundary conditions~$0$ and~$1$ respectively on~$\La_{R}$ (which form a boosting pair). 
By applying the same decision tree for the boundary conditions~$\xi$ and~$\xi'$, we obtain two additional configurations
$\omega^\xi,\omega^{\xi'}$ with laws~$\phi_{\Lambda_R}^{\xi}$ and~$\phi_{\Lambda_R}^{\xi'}$, respectively,
and such that~$\omega^0 \leq \omega^\xi \leq \omega^1$ and~$\omega^0 \leq \omega^{\xi'} \leq \omega^1$.
If~$\zeta$ and~$\zeta'$ are the boundary conditions induced on~$\partial\Lambda_{2r}$ by~$\omega^0$ and~$\omega^1$, we see that
	   \[
	   \phi_{\Lambda_R,p}^{\xi'}[A]-\phi_{\Lambda_R,p}^\xi[A]\le \mathbb P[\zeta\ne \zeta']\max_{\psi}\phi_{\Lambda_{2r},p}^\psi[A]\les \Delta_p(r,R)\phi_{\Lambda_R,p}^\xi[A],
	   \]
	   where in the second inequality we used Theorem~\ref{thm:coupling} as well as the mixing property to replace~$\phi_{\Lambda_{2r},p}^\psi[A]$ by~$\phi_{\Lambda_R,p}^\xi[A]$.  This concludes the proof.

\subsection{Quasi-multiplicativity of~$\Delta_p$: proof of Theorem~\ref{thm:delta}(ii)}

The following corollary is a slight generalisation (in the introduction it is the case $r=0$ only) of the quasi-multiplicativity property of Theorem~\ref{thm:delta}.

\begin{corollary}[Quasi-multiplicativity of~$\Delta_p$]\label{cor:Delta_quasi}
	For any~$p$ and~$r < n <R \le L(p)$, 
	\begin{align}\label{eq:delta_quasi}
		\Delta_p(r,R) \les \Delta_p(r,n)\Delta_p(n,R) \les  \Delta_p(r,R).
	\end{align}
\end{corollary}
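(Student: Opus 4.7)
The plan is to derive~\eqref{eq:delta_quasi} by combining the couplings from Theorem~\ref{thm:coupling}(i) with their identification to $\Delta_p$ from Theorem~\ref{thm:Delta_coupling}, together with Theorem~\ref{thm:boosting_pair}, the spatial Markov property~\eqref{eq:SMP} and monotonicity~\eqref{eq:CBC}. We may assume $4r<n$ and $4n<R$, since otherwise at least one of $\Delta_p(r,n)$, $\Delta_p(n,R)$, $\Delta_p(r,R)$ is of order $1$ by~\eqref{eq:RSWnear}, and \eqref{eq:delta_quasi} then follows from~\eqref{eq:Delta_lengthen_in}--\eqref{eq:Delta_lengthen_out} and \eqref{eq:CBC}.

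For the lower bound $\Delta_p(r,n)\Delta_p(n,R)\les \Delta_p(r,R)$, the strategy is to construct a two-stage increasing coupling of $\phi_{\La_R}^0$ and $\phi_{\La_R}^1$ that passes through scale $n$. Viewing $\La_R$ itself as a $1/2$-well-separated inner flower domain on $\La_R$ and $(0,1)$ as a (generalised) boosting pair, first apply Theorem~\ref{thm:coupling}(i) down to scale $n$, with associated stopping time $\tau_1$. By Theorem~\ref{thm:Delta_coupling}, $\bbP[\tau_1<\infty]\asymp\Delta_p(n,R)$, and on $\{\tau_1<\infty\}$ the unexplored region $\calF_{\tau_1}$ is a $1/2$-well-separated inner flower domain on $\La_n$ carrying a boosting pair of induced boundary conditions. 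Conditionally on $(\tau_1,\omega_{[\tau_1]},\omega'_{[\tau_1]})$, continue by applying Theorem~\ref{thm:coupling}(i) with $\calG=\calF_{\tau_1}$, producing a further stopping time $\tau_2$ such that $\bbP[\tau_2<\infty\mid \tau_1<\infty]\asymp\Delta_p(r,n)$ and, on $\{\tau_2<\infty\}$, a boosting pair is induced on a $1/2$-well-separated flower domain on $\La_r$. The validity of this concatenation of decision-tree couplings is guaranteed by Proposition~\ref{prop:coupling} and Remark~\ref{rem:T_stopping_time}. Finally, Theorem~\ref{thm:boosting_pair} provides a universal $\delta>0$ such that, conditionally on reaching $\tau_2<\infty$ with the above data, $\bbP[\omega'\in\calC(\La_r),\,\omega\notin \calC(\La_r)\mid \calF_{\tau_2},\omega_{[\tau_2]},\omega'_{[\tau_2]}]\geq \delta$, whence
\[
\Delta_p(r,R)=\phi_{\La_R}^1[\calC(\La_r)]-\phi_{\La_R}^0[\calC(\La_r)]\geq \delta\,\bbP[\tau_2<\infty]\ges \Delta_p(r,n)\Delta_p(n,R).
\]

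For the upper bound $\Delta_p(r,R)\les \Delta_p(r,n)\Delta_p(n,R)$, the idea is to use only the first stage of the coupling. Run the coupling of Theorem~\ref{thm:coupling}(i) between $\phi_{\La_R}^0$ and $\phi_{\La_R}^1$ down to scale $n$, and let $\zeta\le\zeta'$ denote the (ordered) boundary conditions induced on $\partial\La_n$ by $\omega$ and $\omega'$. The spatial Markov property gives
\[
\Delta_p(r,R)=\bbE\bigl[\phi_{\La_n}^{\zeta'}[\calC(\La_r)]-\phi_{\La_n}^{\zeta}[\calC(\La_r)]\bigr],
\]
the expectation being with respect to the coupling marginal on $\La_R\setminus\La_n$. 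The integrand vanishes on $\{\zeta=\zeta'\}$, while~\eqref{eq:CBC} yields the pointwise bound $\phi_{\La_n}^{\zeta'}[\calC(\La_r)]-\phi_{\La_n}^{\zeta}[\calC(\La_r)]\le \phi_{\La_n}^{1}[\calC(\La_r)]-\phi_{\La_n}^{0}[\calC(\La_r)]=\Delta_p(r,n)$ on $\{\zeta\ne\zeta'\}$. Hence $\Delta_p(r,R)\le \bbP[\zeta\ne \zeta']\,\Delta_p(r,n)$, and since Theorem~\ref{thm:coupling}(i) combined with Theorem~\ref{thm:Delta_coupling} gives $\bbP[\zeta\ne\zeta']\asymp \bbP[\tau<\infty]\asymp \Delta_p(n,R)$, the inequality follows.

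I do not anticipate any genuine obstacle: the deep content is packaged in Theorems~\ref{thm:coupling}, \ref{thm:Delta_coupling} and~\ref{thm:boosting_pair}, and the present corollary is essentially bookkeeping around a two-stage coupling. The one point requiring mild care is the legitimacy of concatenating the two decision-tree couplings in the lower bound, which is standard given Remark~\ref{rem:T_stopping_time}; and one must also be a little careful in the reduction step to make sure that, when $r,n,R$ are not well-separated, the inequality follows trivially from the already-established stability properties of $\Delta_p$ under a constant change of scale.
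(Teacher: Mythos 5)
Your proof is correct and follows essentially the same route as the paper: the upper bound is identical (stop the coupling of Theorem~\ref{thm:coupling}(i) at scale $n$, use \eqref{eq:SMP}, \eqref{eq:CBC} and $\bbP[\zeta\ne\zeta']\asymp\Delta_p(n,R)$), and your lower bound merely unpacks the paper's one-line application of \eqref{eq:Delta_coupling2} at scale $n$ into an explicit second coupling stage followed by Theorem~\ref{thm:boosting_pair}. The only cosmetic point is that after $\tau_2$ the flower domain sits on $\La_r$ itself, so one should formally target $\calC(\La_{r/2})$ and invoke \eqref{eq:Delta_lengthen_in}, exactly as in your reduction step.
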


\begin{proof}
	Let~$\bbP$ be the coupling between~$\phi_{\La_R}^{1}$ and~$\phi_{\La_R}^{0}$ in~${\rm Ann}(n,R)$ given by Theorem~\ref{thm:coupling}(i)
	and let~$\zeta$ and $\zeta'$ be the boundary conditions induced by~$\omega$ and~$\omega'$ on~$\partial\La_{n}$.  
	Complete the coupling inside~$\La_n$ by an arbitrary increasing coupling of~$\phi_{\La_n}^{\zeta}$ and~$\phi_{\La_n}^{\zeta'}$.
	Write~$\bbE$ for the expectation associated to~$\bbP$. 
	
	On the one hand, Theorem~\ref{thm:Delta_coupling} and the comparison between boundary conditions~\eqref{eq:CBC} yields
	\begin{align*}
		\Delta_p(r,R) 
		&= \bbE[\ind_{\{\zeta \neq \zeta'\}}( \phi_{\La_n}^{\zeta'}[\calC(\La_r)]- \phi_{\La_n}^{\zeta}[\calC(\La_r)])]\\
		&\le \bbP[ \zeta \neq \zeta' ]( \phi_{\La_n}^{1}[\calC(\La_r)]- \phi_{\La_n}^{0}[\calC(\La_r)])\\
		& \les \Delta_p(n,R)\Delta_p(r,n).
	\end{align*}
	
	On the other hand, recall that when~$\tau < \infty$, 
	$\calF_\tau$ is a~$1/2$-well-separated inner flower domain on~$\La_n$
	and that~$\omega_{[\tau]}$ and~$\omega'_{[\tau]}$ induce a boosting pair of boundary conditions on~$\calF_\tau$. 
	Thus, Theorem~\ref{thm:Delta_coupling} implies
	\begin{align*}
		\Delta_p(r,R) 
		&\geq \bbE [\ind_{\{\tau < \infty\}}( \phi_{\calF_\tau}^{\omega'_{[\tau]}}[\calC(\La_r)]- \phi_{\calF_\tau}^{\omega_{[\tau]}}[\calC(\La_r)])]\\
		&\ges  \bbP[\tau < \infty] \Delta_p(n,R)  \\
		& \ges\Delta_p(r,n)\Delta_p(n,R).
	\end{align*}
\end{proof}

\subsection{Mixing rate versus pivotality: proof of Theorem~\ref{thm:delta}(v)}\label{sec:influence vs. pivotality}

This section concerns the proof of \eqref{eq:delta_pi4_improvement}. This property is not used in the rest of the paper, but it proves that Kesten's scaling relation does not extend with~$\pi_4(p,r,R)$ instead of~$\Delta_p(r,R)$. 

The upper bound follows trivially from Proposition~\ref{prop:mixing}, hence we are left with proving 
\begin{align}\label{eq:Delta_improvement}
	\Delta_p(r,R) \ges (R/r)^c\, \pi_4(p;r,R),
\end{align}
for some constant~$c > 0$ and all~$p$ and~$r \leq R \leq L(p)$. 

\begin{remark}Note that the bound~$\Delta_p(r,R)\ges \pi_4(p,r,R)$ follows readily from Theorems~\ref{thm:coupling} and~\ref{thm:Delta_coupling}. 
Thus, it is the polynomial improvement of~$(R/r)^c$ that is the core of the above inequality. 
The rest of the section is devoted to the proof of \eqref{eq:Delta_improvement}.
\end{remark}

We will use the following notation.  
Let~$r < R$,~$\calF$ be an outer flower domain on~$\La_r$ and~$\calG$ be an inner flower domain on~$\La_R$, 
each containing exactly four petals denoted by~$P_1,\dots, P_4$ and~$P_1',\dots, P_4'$, respectively.
Then~$\xi^1$ (resp.~$\xi^0$) are the boundary conditions on~$\calF$ which are coherent with its flower domain structure
and in which the primal petals~$P_1$ and~$P_3$ are wired (resp.~not wired) together. 
The similarly defined boundary conditions on~$\calG$ are written~$\zeta^1$ and~$\zeta^0$, respectively. 

For~$i,j \in \{0,1\}$, denote by~$\phi_{\calF\cap\calG}^{\xi^i\cup\zeta^j}$ the measure on the subgraph~$\calF\cap \calG$ with
the boundary condition~$\xi^i\cup\zeta^j$ which is the partition of~$\partial(\calF\cap\calG)=\partial\calF\cup\partial\calG$ given by the   union of the partitions~$\xi^i$ of the inner boundary and~$\zeta^j$ of the outer one.

For configurations on~$\calF \cap \calG$, define the events
\begin{align*}
&A_4(\calF,\calG) := \{ P_1 \xlra{\omega} P_1',\, P_3 \xlra{\omega} P_3',\, P_2 \xlra{\omega^*}P_2',P_4 \xlra{\omega^*}P_4'\},\\
&\tilde A_4(\calF,\calG) := \{ P_1 \xlra{\omega'} P_1',\, P_3 \xlra{\omega'} P_3',\, P_2 \xlra{\omega^*}P_2',P_4 \xlra{\omega^*}P_4'\}.
\end{align*}

Also write~$\phi_{\calF \cap \La_R}^{\xi^i\cup 1}$ and~$\phi_{\calF \cap \La_R}^{\xi^i\cup 0}$ for the measure on~$\calF \cap \La_R$
with boundary conditions~$\xi^i$ on~$\partial \calF$ and respectively  	wired and free boundary conditions on~$\partial \La_R$. 
Define~$A_4(\calF,R)$ and~$\tilde A_4(\calF,R)$ as in the last display, with~$P_1',\dots, P_4'$ all replaced by~$\partial \La_R$.

The following lemma states that the probability of the four-arm event~$A_{4}(\calF,\calG)$
increases substantially if we allow the primal arms to be in~$\omega'$ rather than in~$\omega$. It is particularly important in the lemma below that~$\calF$ and~$\calG$ are not assumed to be well-separated.

\begin{lemma}\label{lem:tildeAA}
	There exists $\delta > 0$ such that the following holds. 
	For any~$p \in (0,1)$ and~$1 \leq r \leq L(p)$, 
	any outer flower domain~$\calF$ on~$\La_r$ and any inner flower domain~$\calG$ on~$ \La_{2r}$, 
	both containing exactly four petals, and any~$i \in \{0,1\}$, 
	there exists a coupling~$\bbP$ of~$\phi_{\calF\cap\calG}^{\xi^i\cup\zeta^0}$ and~$\phi_{\calF\cap\calG}^{\xi^i\cup\zeta^1}$ such that 
	\begin{align}\label{eq:tildeAA}
	\bbP[\tilde A_{4}(\calF,\calG)]
	\geq (1 + \delta) \bbP[A_{4}(\calF,\calG)].
	\end{align}
\end{lemma}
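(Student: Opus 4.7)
The plan is to construct an explicit increasing coupling via the decision-tree machinery of Proposition~\ref{prop:coupling}, and to extract the multiplicative $(1+\delta)$ boost by applying Lemma~\ref{lem:boost} on a suitable unexplored quad. By monotonicity of boundary conditions in~$\xi$, it suffices to treat~$i=0$. The decision tree will first reveal, in the configuration~$\omega$, the ``leftmost'' dual interface~$\Gamma_2$ from~$P_2$ to~$P_2'$ and the ``rightmost'' dual interface~$\Gamma_4$ from~$P_4$ to~$P_4'$ via standard dual-interface explorations in~$\omega^*$ (these simultaneously reveal~$\omega'$ on the same edges). When both interfaces are successfully discovered, the unexplored region inside~$\calF\cap\calG$ splits into two ``quads''~$Q_1$ (containing~$P_1,P_1'$) and~$Q_3$ (containing~$P_3,P_3'$). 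The tree then reveals every edge inside~$Q_1$, and we condition on the sub-event that~$P_1\xlra{\omega} P_1'$ inside~$Q_1$.

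The critical observation is that, after this exploration, the boundary conditions induced by~$\omega'$ on~$Q_3$ differ from those induced by~$\omega$ in a favorable way, despite both forcing~$P_3$ and~$P_3'$ into distinct external clusters. Namely, some edges along~$\Gamma_2$ or~$\Gamma_4$ are closed in~$\omega$ but open in~$\omega'$ with uniformly positive conditional probability; combined with the external wiring~$P_1'\sim P_3'$ coming from~$\zeta^1$ and the connection~$P_1\xlra{\omega'} P_1'$ in~$Q_1$, such extra edges produce wirings in~$\omega'$'s boundary condition on~$\partial Q_3$ that effectively merge~$P_3'$ with an additional primal-side boundary point of~$Q_3$. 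This is precisely the ``mix~vs.~mix''' situation of Lemma~\ref{lem:boost}.

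Applying Lemma~\ref{lem:boost} to the crossing event~$P_3\leftrightarrow P_3'$ inside~$Q_3$ (with the boost coming from wiring an external vertex to~$P_3'$ in~$\omega'$ but not in~$\omega$), together with the RSW estimate Theorem~\ref{thm:RSWnear} to guarantee that the corresponding crossing probability is bounded away from~$0$ and~$1$, yields the existence of a constant~$c(q)>0$ such that, conditionally on the exploration succeeding, the event~$\{P_3\xlra{\omega'} P_3'\text{ in }Q_3\}\setminus\{P_3\xlra{\omega} P_3'\text{ in }Q_3\}$ has probability at least~$c(q)$ times~$\phi_{Q_3}^{\omega}[P_3\leftrightarrow P_3']$. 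Combining everything and summing over the two roles of~$Q_1$ and~$Q_3$ yields a sub-event of~$\tilde A_4\setminus A_4$ with probability at least~$\delta\cdot\bbP[A_4]$, which rearranges to the claimed inequality~\eqref{eq:tildeAA}.

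The principal obstacle is that~$\calF$ and~$\calG$ are \emph{not} assumed to be~$\eta$-well-separated, so the quads~$Q_1$ and~$Q_3$ produced by the dual exploration may be geometrically degenerate. One cannot directly invoke Theorem~\ref{thm:boosting_pair}, and the extraction of a uniform boost must instead be performed via a direct combinatorial application of Lemma~\ref{lem:boost} on~$Q_3$. The delicate point is to show that, with conditional probability bounded below independently of the shape of~$Q_3$, there is enough ``room'' along~$\Gamma_2$ or~$\Gamma_4$ for the favorable extra open edges of~$\omega'$ to appear and create the requisite wiring, as well as enough room inside~$Q_3$ for the primal crossing~$P_3\leftrightarrow P_3'$ to have non-degenerate probability under both boundary conditions. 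This local separation-of-arms type argument, very much in the spirit of the proof of Lemma~\ref{lem:well-separated}, is the technical heart of the argument.
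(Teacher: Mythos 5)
Your overall strategy (an increasing exploration coupling followed by an application of Lemma~\ref{lem:boost} on an unexplored region) is in the right spirit, and you correctly identify the lack of well-separation as the crux. But the argument has a genuine gap precisely there, and it is not a technicality that a ``separation-of-arms type argument in the spirit of Lemma~\ref{lem:well-separated}'' can absorb. First, a structural error: the claim that ``by monotonicity in $\xi$ it suffices to treat $i=0$'' is unjustified (both measures in the coupling carry the same $\xi^i$, so there is nothing to compare monotonically), and $i=0$ is exactly the case where your mechanism breaks. Your extra wirings in $\omega'$ merge $P_3'$ with the cluster of $P_1'\cup P_1$ and possibly with ``an additional primal-side boundary point of $Q_3$''; unless $i=1$ (so that $P_3\sim P_1$ through $\xi^1$), this does \emph{not} wire $P_3$ to $P_3'$, hence it is \emph{not} the mix/mix$'$ situation of Lemma~\ref{lem:boost}. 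Quantifying the effect of wiring one arc to a third boundary point is the content of Theorem~\ref{thm:boosting_pair}, which requires well-separated flower domains and $\eta$-regular quads --- exactly what $Q_3$ lacks. Second, even when Lemma~\ref{lem:boost} does apply (the $i=1$ chain $P_3\sim P_1\sim P_1'\sim P_3'$), the additive boost it yields is of order $\phi^{\rm mix}_{Q_3}[\calC]\,(1-\phi^{\rm mix}_{Q_3}[\calC])$, which degenerates when the crossing probability of $Q_3$ is close to $1$; since $Q_3$ is carved out by two dual interfaces inside an arbitrary $\calF\cap\calG$, it can be a thin tube for which no uniform upper bound on the crossing probability holds. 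Your appeal to Theorem~\ref{thm:RSWnear} to keep this probability ``bounded away from $0$ and $1$'' is not available in an arbitrary quad with partially wired boundary conditions (and is expected to be outright false for $q=4$; cf.\ the paper's footnote and its restricted use of Proposition~\ref{prop:conformal_RSW} under extremal-distance control).

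The paper circumvents all of this by a two-case reduction you do not have. When $\calF$ and $\calG$ are $1/2$-well-separated, the boost is not extracted on a macroscopic crossing region at all, but \emph{localized}: one explores a double four-petal flower domain $(\calF_{\rm in},\calF_{\rm out})$ in a fixed small box at distance of order $r$, writes the conditional law of $\omega$ (resp.\ $\omega'$) in $\calF_{\rm in}$ as a convex combination $(1-\la)\phi^{\zeta}+\la\phi^{\zeta'}$ (resp.\ with $\la'$), shows $\la'-\la\ges 1$ by RSW in the well-separated annulus, and applies Lemma~\ref{lem:boost} only inside $\calF_{\rm in}$, where the crossing probability between the petals is uniformly bounded away from $0$ and $1$ by Lemma~\ref{lem:D4PFD} and Theorem~\ref{thm:RSWnear}; a pivotality event $H$ built by RSW then transfers the discrepancy to $A_4(\calF,\calG)$. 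In the general case, one first explores well-separated four-petal flower domains $\overline\calF,\overline\calG$ at the intermediate scales $5r/8$ and $7r/8$ together with connecting arms of alternating type; the probability that this succeeds is $\ges\bbP[A_4(\calF,\calG)]$ by separation of arms, on this event $A_4(\calF,\calG)$ and $A_4(\overline\calF,\overline\calG)$ coincide, and the well-separated case applies to $(\overline\calF,\overline\calG)$. If you want to salvage your approach, you essentially have to import this reduction; as written, the uniform $(1+\delta)$ boost on the arbitrary region $Q_3$ is asserted rather than proved, and it is the whole difficulty of the lemma.
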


\begin{proof}
	Fix~$p$,~$r$,~$i$,~$\calF$ and~$\calG$ as above. 
	We will first treat the case where both flower domains are well-separated, then use it to solve the general case. 
	\bigbreak
	
	\noindent{\em Proof when~$\calF$ and~$\calG$  are~$1/2$-well-separated.}
	Let~$x = (3n/2,0)$ and start the coupling by exploring the double four-petal flower domain~$(\calF_{\rm in},\calF_{\rm out})$
	between~$\La_{n/8}(x)$ and~$\La_{n/4}(x)$ in~$\omega$. 
	If this stage fails, reveal the rest of the configuration in an arbitrary increasing fashion. 
	If~$(\calF_{\rm in},\calF_{\rm out})$ exists, continue by revealing the configuration inside~$\calF_{\rm in}$. 
	Write~$\xi_{{\rm in}}^{0}<\xi_{{\rm in}}^{1}$ for the two boundary conditions on~$\calF_{\rm in}$ which are coherent with the flower domain structure. 
	
	We now use the same argument as in the proof of \eqref{eq:boosting_pair_circ_i} to study the connection probability between 
	$P_1^{\rm in}$ and~$P_3^{\rm in}$ inside~$\calF_{\rm in}$ in~$\omega$ and~$\omega'$. 
	The conditional law of  $\omega$ in~$\calF_{\rm in}$ is 
	\begin{align*}
	(1-\lambda) \phi_{\calF_{\rm in}}^{\xi_{\rm in}^{0}} + \lambda \phi_{\calF_{\rm in}}^{\xi_{\rm in}^{1}}\qquad
	 \text{ with }\qquad\la  := \phi_{\calF \cap \calG}^{\xi^i \cup\zeta^0}[P_1^{\rm out} \xlra{\calF_{\rm out}} P_3^{\rm out}\,|\, \calF_{\rm in}, \calF_{\rm out}],
	\end{align*}
	while that of~$\omega'$ dominates
	\begin{align*}
	(1-\lambda') \phi_{\calF_{\rm in}}^{\xi_{\rm in}^{0}} + \lambda' \phi_{\calF_{\rm in}}^{\xi_{\rm in}^{i}}\qquad
	 \text{ with }\qquad \la'  := \phi_{\calF \cap \calG}^{\xi^i\cup\zeta^1}[P_1^{\rm out} \xlra{\calF_{\rm out}} P_3^{\rm out}\,|\, \calF_{\rm in}, \calF_{\rm out}].
	\end{align*}
	Thus,~$\la' - \la$ may be lower bounded by the probability that~$P_1^{\rm out}$ and~$P_3^{\rm out}$
	are connected in~$\calF_{\rm out}$ to the two primal petals of~$\calG$, but not to each other. 
	By Theorem~\ref{thm:RSWnear}, we conclude that~$\la' - \la \ges 1$, and by Lemma~\ref{lem:boost} that 
	\begin{align}\label{eq:tildeAAPP}
	 \bbP[\omega' \in \{P_1^{\rm in} \xlra{ \calF_{\rm in}} P_3^{\rm in}\} \text{ but }
	 \omega \notin \{P_1^{\rm in} \xlra{ \calF_{\rm in}} P_3^{\rm in}\} \,|\, \calF_{\rm in}, \calF_{\rm out} ] \ges 1. 
	\end{align}
	Finally, reveal the configurations on~$\calF_{\rm out}$. 
	
	Let~$H$ be the event that in~$\calF_{\rm out}$:
	\begin{itemize}[noitemsep]
	\item~$P_1^{\rm out}$ is connected to~$P_1$ in~$\omega$,
	\item~$P_3^{\rm out}$ is connected to ~$P_1'$	 in~$\omega$,
	\item~$P_3$ is connected to~$P_3'$ in~$\omega$,
	\item~$P_2$,~$P_2'$ and~$P_4^{\rm out}$ are connected in~$\omega^*$, and
	\item~$P_4$,~$P_4'$ and~$P_2^{\rm out}$ are connected in~$\omega^*$.
	\end{itemize}
	In other words,~$H$ is the event that the connection between~$P_1^{\rm in}$ and~$P_3^{\rm in}$ inside~$\calF_{\rm in}$ 
	is pivotal for~$A_{4}(\calF,\calG)$. See Figure~\ref{fig:improvement} for an illustration. 
	Theorem~\ref{thm:RSWnear} and the well-separation of~$\calF$,~$\calF_{\rm out}$ and~$\calG$ imply that 
	\begin{align}\label{eq:tildeAAH}
	 \bbP[H \,|\, \calF_{\rm in}, \calF_{\rm out} \text{ and~$(\omega,\omega')$ on~$\calF_{\rm in}$}] \ges 1. 
	\end{align}
	
	\begin{figure}
	\begin{center}
	\includegraphics[width = 0.65\textwidth]{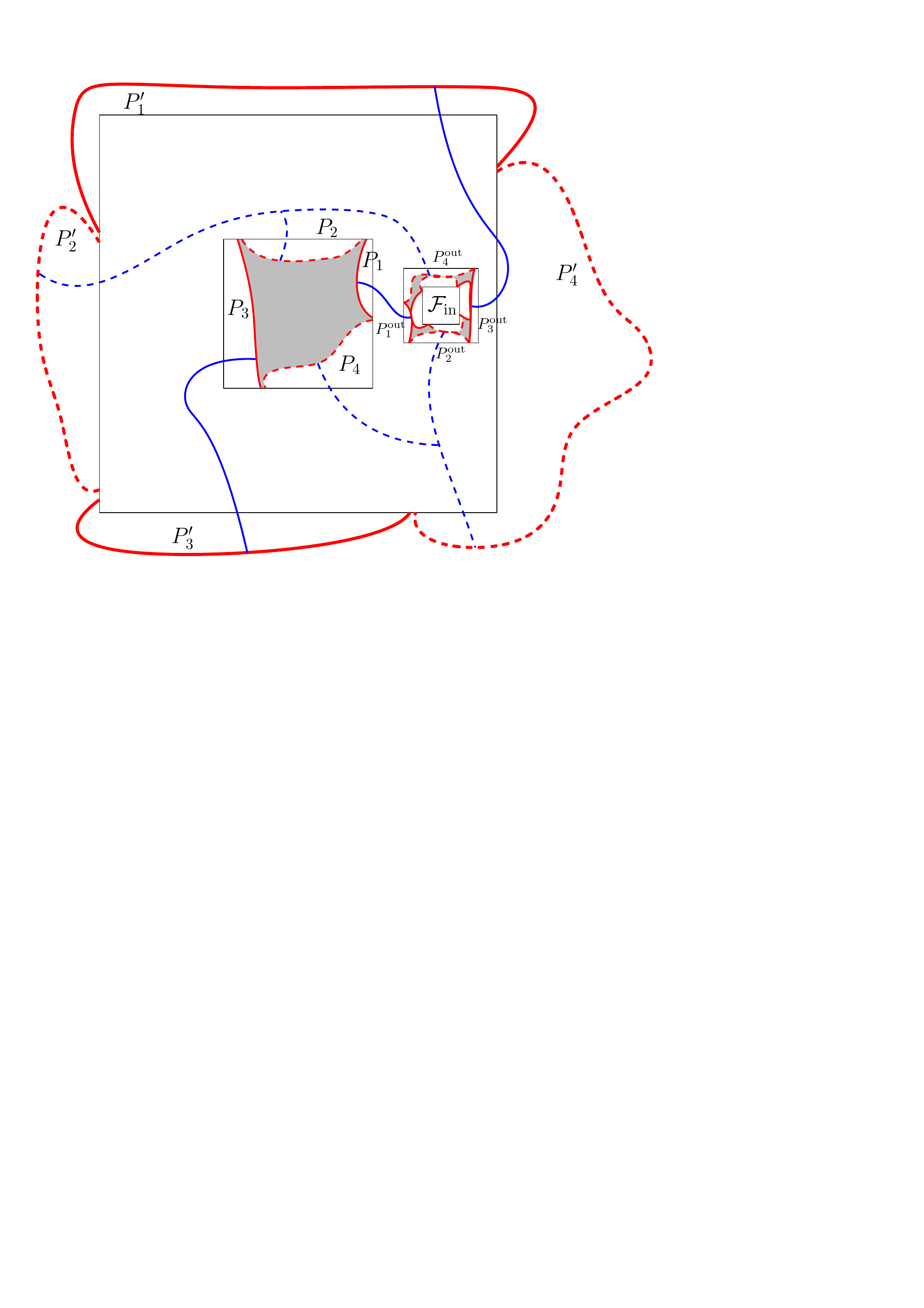}
	\caption{In the graph $\calF\cap \calG$, we first explore the double four-petal flower domain $(\calF_{\rm in}, \calF_{\rm out})$, 
	then reveal the configurations in $\calF_{\rm in}$ and $\calF_{\rm out}$. If $H$ occurs (see the blue paths), 
	then $A_{4}(\calF,\calG)$ depends on the connection inside $\calF_{\rm in}$ between its primal petals.
	If this connection occurs in $\omega'$ but not in $\omega$, then the configurations are in $\tilde A_{4}(\calF,\calG) \setminus A_{4}(\calF,\calG)$.}
	\label{fig:improvement}
	\end{center}
	\end{figure}
	Now, since~$A_{4}(\calF,\calG)$ implies the occurrence of~$\tilde A_{4}(\calF,\calG)$, we conclude that
	\begin{align*}
		&\bbP[\tilde A_{4}(\calF,\calG)] - \bbP[A_{4}(\calF,\calG)]\geq \bbP[ (\calF_{\rm in}, \calF_{\rm out}) \text{ exist},\,
		P_1^{\rm out} \xlra{ \calF_{\rm in}} P_3^{\rm out} \text{ in~$\omega'$ but not in~$\omega$}, \, H].
	\end{align*}
	Finally, \eqref{eq:tildeAAPP}, \eqref{eq:tildeAAH} and Lemma~\ref{lem:D4PFD} together imply that the right-hand side is larger than $c_0>0$.
	This concludes the proof of \eqref{eq:tildeAA} when~$\calF$ and~$\calG$ are~$1/2$-well-separated.  
	\bigbreak
	
	\noindent{\em Proof when~$\calF$ and~$\calG$ are not~$1/2$-well-separated.}
	We will construct the coupling~$\bbP$ between~$\phi_{\calF\cap\calG}^{\xi^i,\zeta^0}$ and~$\phi_{\calF\cap\calG}^{\xi^i,\zeta^1}$ in two steps. 
	We start by exploring the outer flower domain ~$\overline \calF$ between~$\La_r$ and~$\La_{5r/8}$ in~$\omega$
	and the inner flower domain~$\overline \calG$ from~$\La_{2r}$ to~$\La_{7r/8}$ also in~$\omega$. 
	Say that~$(\overline \calF,\overline \calG)$ is {\em good} if 
	\begin{itemize}[noitemsep]
	\item~$\overline \calF$ and~$\overline \calG$ each contain exactly four petals 
	and are~$1/2$-well-separated;
	\item the four petals of~$\overline \calF$ are connected in~$\calF\setminus \overline \calF$ to the corresponding petals of~$\calF$ 
	by paths of alternating types in~$\omega$;
	\item the four petals of~$\overline \calG$ are connected in~$ \calG\setminus \overline \calG$ to the corresponding petals of~$\calG$ 
	by paths of alternating types in~$\omega$.
	\end{itemize} 
	If $(\overline \calF,\overline \calG)$ is not good, complete~$(\omega,\omega')$ inside~$\overline \calF\cap \overline \calG$
	using an arbitrary increasing coupling. 
	When~$(\overline \calF,\overline \calG)$ is good, 
	the first case may be applied to~$\overline \calF$ and~$\overline \calG$,
	and provides a way to complete~$(\omega,\omega')$ inside~$\overline \calF\cap \overline \calG$ 
	so that 
	\begin{align*}
		\bbP[  \tilde A_{4}(\overline\calF,\overline\calG)  \setminus A_{4}(\overline\calF,\overline\calG)
		\,|\,\text{$(\overline \calF,\overline \calG)$ good}] \ges 1,
	\end{align*}
	Moreover, notice that in this case~$A_{4}(\calF,\calG)$ and~$\tilde A_{4}(\calF,\calG)$ occur if and only if 
	$A_{4}(\overline\calF,\overline\calG)$ and~$\tilde A_{4}(\overline\calF,\overline\calG)$, respectively, do. 
	By summing the display above, we conclude that 
	\begin{align*}
		\bbP[\tilde A_{4}(\calF,\calG)] - 	\bbP[A_{4}(\calF,\calG)]
		\ges \bbP[\text{$(\overline \calF,\overline \calG)$ good}].
	\end{align*}
	Finally, it is a standard consequence of the RSW theory (specifically of the separation of arms) that 
	\begin{align*}
	\bbP[\text{$(\overline \calF,\overline \calG)$ good}]
	\ges \bbP[A_{4}(\calF,\calG)],
	\end{align*} 
	where the constant in~$\ges$ does not depend on~$\calF$ and~$\calG$. 
	The last two displays provide the desired conclusion.
\end{proof}

\begin{corollary}\label{cor:induction}
	For~$\rho$ large enough,~$p \in (0,1)$ and~$r < R \leq L(p)$ with~$R = (\rho^2 + 2)^k r$, 
	any outer flower domain~$\calF$ on~$\La_r$ with exactly four petals
	and any~$i \in \{0,1\}$, 
	there exists an increasing coupling~$\bbP$ of~$\phi_{\calF \cap \La_R}^{\xi^i\cup0}$ and~$\phi_{\calF \cap \La_R}^{\xi^i\cup1}$ such that 
	\begin{align}\label{eq:FR}
		\bbP[ \tilde A_4(\calF,R)]
		\geq (1 + \tfrac\delta2)^k \, \bbP[  A_4(\calF,R)],
	\end{align}
	where~$\delta > 0$ is the constant given by Lemma~\ref{lem:tildeAA}.
\end{corollary}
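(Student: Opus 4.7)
The plan is to prove Corollary~\ref{cor:induction} by induction on $k$, iteratively applying Lemma~\ref{lem:tildeAA} at geometrically spaced scales. The base case $k = 0$ is trivial since $R = r$ and the events $A_4(\calF,R)$ and $\tilde A_4(\calF,R)$ coincide; any increasing coupling works.

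For the inductive step, write $r' = (\rho^2 + 2) r$, so $R = (\rho^2 + 2)^{k-1} r'$. The coupling will be constructed in two stages, concatenated via a stopping time so as to fit into the decision tree framework. \emph{Stage~1} handles the annular region $\calF \cap \La_{r'}$: I will first reveal in $\omega$ (following the spirit of the proof of Lemma~\ref{lem:tildeAA}) an inner flower domain $\calG$ on $\La_{r'}$ with exactly four petals that is $1/2$-well-separated. Because $\rho$ is taken large enough, the annulus $\Ann(r,r')$ is wide enough to accommodate the construction of the auxiliary double four-petal flower domain $(\calF_{\rm in},\calF_{\rm out})$ that drove the proof of Lemma~\ref{lem:tildeAA}. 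Thus essentially the same argument, applied with the ratio $r'/r = \rho^2 + 2$ in place of $2$, yields an increasing coupling $\bbP_1$ of $\phi_{\calF\cap \calG}^{\xi^i \cup \zeta^0}$ and $\phi_{\calF\cap \calG}^{\xi^i\cup \zeta^1}$ satisfying
\[
\bbP_1[\tilde A_4(\calF,\calG)] \ge (1+\delta)\, \bbP_1[A_4(\calF,\calG)],
\]
with $\delta$ the constant of Lemma~\ref{lem:tildeAA}.

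\emph{Stage~2} extends the coupling to the remaining region $\calG \cap \La_R$. Viewed from outside, $\calG$ has four well-separated petals and a coherent boundary condition on $\partial \calG$ induced by $\omega_{[\tau_1]}$ (resp.\ $\omega'_{[\tau_1]}$), where $\tau_1$ is the stopping time at which Stage~1 ends. I apply the inductive hypothesis to $\calG$ (playing the role of the outer four-petal flower domain at scale $r'$) to produce an increasing coupling on $\calG \cap \La_R$ with boost factor $(1+\tfrac\delta 2)^{k-1}$. Concatenating with Stage~1 via the spatial Markov property \eqref{eq:SMP} and using the fact that $A_4(\calF,R)$ decomposes as $A_4(\calF,\calG) \cap A_4(\calG, R)$ (and similarly for $\tilde A_4$) whenever $\calG$ is a compatible four-petal flower domain in between, gives the combined boost.

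The principal point of the argument, and the main obstacle, is that the good event $\mathrm{Good}$ on which Stage~1 produces a usable $\calG$ need not have probability close to $1$ — it is only uniformly positive. On the complement, one must use an arbitrary increasing coupling, which provides no boost. The key quantitative input is that separation-of-arms arguments (Theorem~\ref{thm:RSWnear}, Lemma~\ref{lem:well-separated}, Lemma~\ref{lem:D4PFD}) ensure
\[
\bbP[\mathrm{Good} \mid A_4(\calF,R)] \ge \tfrac{1}{2},
\]
uniformly in $r$, $R$ and $\calF$, provided $\rho$ is taken large enough. Writing the two-stage coupling as $\bbP$, we then estimate
\[
\bbP[\tilde A_4(\calF,R)] \ge (1+\delta)(1+\tfrac\delta 2)^{k-1} \bbP[A_4(\calF,R) \cap \mathrm{Good}] + \bbP[A_4(\calF,R) \setminus \mathrm{Good}],
\]
and the conditional lower bound on $\mathrm{Good}$ converts the factor $(1+\delta)$ on the good part into an effective factor $(1+\tfrac\delta 2)$ over all of $A_4(\calF,R)$. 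Combined with the inductive boost, this yields the desired $(1+\tfrac\delta 2)^k$, closing the induction. The delicate part is the careful bookkeeping to ensure that the boundary conditions produced by Stage~1 on $\partial \calG$ are exactly of the type required to invoke the inductive hypothesis at Stage~2, and that the good event truly guarantees the multiplicative decomposition of $A_4(\calF,R)$ and $\tilde A_4(\calF,R)$ across the two scales.
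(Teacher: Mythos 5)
Your decomposition (Lemma~\ref{lem:tildeAA} in the innermost annulus, the induction hypothesis in the outer region) matches the paper's, but the order in which you sample the two regions is reversed, and this is fatal. In Stage~1 you couple $\phi_{\calF\cap\calG}^{\xi^i\cup\zeta^0}$ with $\phi_{\calF\cap\calG}^{\xi^i\cup\zeta^1}$, i.e.\ you assume that the two global configurations already induce the genuinely boosting pair $(\zeta^0,\zeta^1)$ on $\partial\calG$. But the two measures being coupled in the corollary carry the \emph{same} boundary condition $\xi^i$ on $\partial\calF$ and differ only on $\partial\La_R$; before anything outside $\La_{r'}$ has been revealed, their restrictions to $\calF\cap\La_{r'}$ are mixtures over induced boundary conditions on $\partial\calG$ whose weights are nearly identical. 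Concretely, writing $E$ for the increasing event that $P_1\lra P_1'$ and $P_3\lra P_3'$ in $\calF\cap\calG$, any increasing coupling satisfies $\bbP_1[\tilde A_4(\calF,\calG)\setminus A_4(\calF,\calG)]\le\bbP_1[\omega'\in E]-\bbP_1[\omega\in E]\les(r'/R)^{c_{\rm mix}}$ by Proposition~\ref{prop:mixing}, while $\bbP_1[A_4(\calF,\calG)]\ges\pi_4(p;r,r')$ is bounded below by a constant depending only on $\rho$. For $k$ large, the claimed boost $\bbP_1[\tilde A_4(\calF,\calG)]\ge(1+\delta)\bbP_1[A_4(\calF,\calG)]$ is therefore false for \emph{every} coupling. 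The paper avoids this by sampling outside-in: it first explores the pair of flower domains $(\calG_{\rm in},\calG_{\rm out})$ in the intermediate annulus, then applies the induction hypothesis to $\calG_{\rm out}\cap\La_R$, and only \emph{then} --- on the event $\tilde A_4(\calG_{\rm out},R)$, whose dual arms disconnect the primal petals of $\calG_{\rm in}$ in $\omega$ while its primal arms in $\omega'$ wire them to $\partial\La_R$ --- invokes Lemma~\ref{lem:tildeAA} in $\calF\cap\calG_{\rm in}$. The boosting pair is manufactured by the outer event; it is not available a priori.

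A second, smaller gap: the bound $\bbP[\mathrm{Good}\mid A_4(\calF,R)]\ge\tfrac12$ does not suffice. Your final inequality needs $(1+\delta)(1+\tfrac\delta2)^{k-1}\bbP[A_4\cap\mathrm{Good}]+\bbP[A_4\setminus\mathrm{Good}]\ge(1+\tfrac\delta2)^k\bbP[A_4]$, which for large $k$ forces $\bbP[\mathrm{Good}\mid A_4]\ge\frac{1+\delta/2}{1+\delta}=1-\Theta(\delta)$; generic separation-of-arms only yields a uniformly positive constant. The paper obtains the $1-O(\delta)$ bound by noting that $A_4(\calF,R)\cap\mathrm{Good}^c$ forces one of the intermediate flower domains to have at least six petals, hence a six-arm event across an annulus of aspect ratio $\rho$, and then choosing $\rho$ so large that $\pi_6(2r,2\rho r)/\pi_4(2r,2\rho r)\le\delta/4$ (up to the quasi-multiplicativity constants).
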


\begin{proof}
	The proof proceeds by induction on~$k$. 
	Consider the value of~$\rho>1$ fixed; it will be chosen at the end of the proof 
	and it will be apparent that it is independent of~$p$,~$r$,~$R$ or~$\calF$.
	The case~$k = 0$ is trivially true. 
	
	Fix~$k \geq 0$ and assume that~\eqref{eq:FR} holds for this value of~$k$; we will now prove~\eqref{eq:FR} for~$k+1$. 
	Let~$\calF$ be an inner flower domain on~$\La_r$ and fix boundary conditions~$\xi \in \{\xi^0,\xi^1\}$ on~$\partial \calF$. 
	The coupling~$\bbP$ is built in three steps: 
	\bigbreak
	
	\noindent {\em Step 1:}
	Explore the inner flower domain from~$\partial \La_{2r\rho}$ to~$\partial\La_{2r}$ 
	and the outer one from~$\partial\La_{2r\rho}$ to~$\partial\La_{2r\rho^2}$ in~$\omega$;
	call these~$\calG_{{\rm in}}$ and~$\calG_{{\rm out}}$, respectively. 
	We will abuse notation by identifying~$(\calG_{{\rm in}},\calG_{{\rm out}})$ with the entire configuration~$(\omega,\omega')$ on~$\calG_{{\rm in}}^c \cap \calG_{{\rm out}}^c$. 
	Say that ~$(\calG_{{\rm in}},\calG_{{\rm out}})$ is {\em good} if both~$\calG_{{\rm in}}$ and~$\calG_{{\rm out}}$ have exactly four petals 
	$P^{\rm in}_1,\dots, P^{\rm in}_{4}$ and~$P^{\rm out}_{1},\dots, P^{\rm out}_{4}$, respectively, and if 
	in~$\omega \cap (\calG_{{\rm in}}^c \cap \calG_{{\rm out}}^c)$ there exist open paths connecting 
	$P^{\rm in}_{1}$ to~$P^{\rm out}_{1}$ and~$P^{\rm in}_{3}$ to~$P^{\rm out}_{3}$
	and dual-open paths connecting 	$P^{\rm in}_{2}$ to~$P^{\rm out}_{2}$ and~$P^{\rm in}_{4}$ to~$P^{\rm out}_{4}$.
	As before, we use the notation~$\zeta_{{\rm in}}^{0}$,~$\zeta_{{\rm in}}^{1}$ for the two boundary conditions on~$\calG_{{\rm in}}$ coherent with it being a flower domain, and~$\zeta_{{\rm out}}^{0}$ and~$\zeta_{{\rm out}}^{1}$ for those on~$\calG_{{\rm out}}$. 
	\bigbreak
	
	\noindent {\em Stage 2:}
	If~$(\calG_{{\rm in}},\calG_{{\rm out}})$ is not good, we sample the rest of the configurations according to an arbitrary increasing coupling. 
	If~$(\calG_{{\rm in}},\calG_{{\rm out}})$ is good, 
	observe that the law of~$\omega$ in~$\calG_{\rm out}$, conditionally on the revealed set, is a linear combination 
	$$(1-\lambda) \phi_{\calG_{{\rm out}} \cap \La_R}^{\zeta_{{\rm out}}^{0}\cup 0} + \lambda \phi_{\calG_{{\rm out}} \cap \La_R}^{\zeta_{{\rm out}}^{1}\cup 0}\qquad
	\text{where}\qquad\la  := \phi_{\calF \cap \La_R}^{\xi\cup0}[P^{\rm in}_{1}\xlra{\calG_{{\rm in}}} P^{\rm in}_{3} \,|\,(\calG_{{\rm in}}, \calG_{{\rm out}}) ].$$
	Moreover, the conditional law of~$\omega'$ dominates
	\begin{align}\label{eq:omega^+}
		(1-\lambda) \phi_{\calG_{{\rm out}} \cap \La_R}^{\zeta_{{\rm out}}^{0}\cup 1} 
		+ \lambda \phi_{\calG_{{\rm out}} \cap \La_R}^{\zeta_{{\rm out}}^{1}\cup 1}, 
	\end{align}
	for the same value~$\la$. 
	
	Since~$\tilde A_4(\calG_{{\rm out}}, R)$ is increasing in~$\omega'$ and decreasing in~$\omega$, 
	we may use~\eqref{eq:FR} for~$k$ (which is our induction hypothesis) applied between~$\calG_{\rm out}$ and~$\La_R$,
	once with the boundary conditions $\zeta_{{\rm out}}^{0}$ on $\calG_{\rm out}$ and once with the boundary conditions $\zeta_{{\rm out}}^{1}$, 
	to produce an increasing coupling of~$\omega$ and~$\omega'$ on~$\calG_{\rm out}$ so that 
	\begin{align}\label{eq:omega^+2}
		\bbP[\tilde A_4(\calG_{{\rm out}},R) | \, (\calG_{{\rm in}},\calG_{{\rm out}}) \text{ good}] 
		&\geq (1 + \tfrac\delta2)^k\, \bbP[  A_4(\calG_{{\rm out}},R)| \, (\calG_{{\rm in}},\calG_{{\rm out}})\text{ good}].
	\end{align}

	\bigbreak 
	\noindent {\em Step 3:}
	Finally, we sample~$(\omega,\omega')$ in~$\calG_{{\rm in}} \cap \calF$ according to a specific coupling between the measures in this region,
	with boundary conditions induced by the previously revealed parts of~$\omega$ and~$\omega'$, respectively.
	Recall that this stage is reached only if~$(\calG_{{\rm in}},\calG_{{\rm out}})$ is good
	and that the revealed configurations suffice to decide whether~$\tilde A_4(\calG_{{\rm out}}, R)$ occurred or not.
	
	If $\tilde  A_4(\calG_{{\rm out}}, R)$ did not occur, complete the coupling in an arbitrary increasing way. 
	If~$\tilde  A_4(\calG_{{\rm out}}, R)$ did occur, 
	then the boundary conditions induced by the already revealed parts of~$\omega$ on~$\calG_{\rm in}$ are equal to~$\zeta_{{\rm in}}^{0}$.
	Indeed, in~$\omega \cap \calG_{\rm out}$,~$P^{\rm out}_{1}$ and~$P^{\rm out}_{3}$ are disconnected from each other due to the dual arms. 
	On the contrary, the boundary conditions induced by the revealed region of~$\omega'$ on~$\calG_{\rm in}$ dominate~$\zeta_{{\rm in}}^{1}$, 
	since in~$\omega' \cap \calG_{\rm out}$,~$P^{\rm out}_{1}$ and~$P^{\rm out}_{3}$ are connected to~$\partial \La_R$, which is wired. 
	Thus, we may apply Lemma~\ref{lem:tildeAA} to continue the coupling so that 
	\begin{align}
	& \bbP[\tilde A_4(\calF,\calG_{{\rm in}}) | \, 
	(\calG_{{\rm in}},\calG_{{\rm out}}) \text{ good  and }\tilde A_4(\calG_{{\rm out}}, R)]\nonumber\\
	&\quad  \geq (1+\delta) \bbP[ A_4(\calF,\calG_{{\rm in}}) | \, 
	(\calG_{{\rm in}},\calG_{{\rm out}}) \text{ good and } \tilde A_4(\calG_{{\rm out}}, R)].
	\label{eq:omega^+3}
	\end{align}
	This concludes the construction of the coupling~$\bbP$; next we show that~$\bbP$ satisfies~\eqref{eq:FR}. 
	\bigbreak

	If~$(\calG_{{\rm in}},\calG_{{\rm out}})$ is good and~$\tilde A_4(\calG_{{\rm out}}, R)$ and~$\tilde A_4(\calF,\calG_{{\rm in}})$ both occur,
	then so does~$\tilde A_4(\calF,R)$.
	By~\eqref{eq:omega^+2},~\eqref{eq:omega^+3}, 
	and the fact that $(\calG_{{\rm in}},\calG_{{\rm out}})$ are determined by~$\omega$ alone, we find 
	\begin{align}\label{eq:good_G}
		&\bbP[\tilde A_4(\calF,R)] \nonumber \\
		&\geq (1+\delta)(1+\tfrac{\delta}2)^{k}
		\sum_{(\calG_{{\rm in}},\calG_{{\rm out}}) \text{ good}}\phi_{\calF}^{\xi\cup0} [(\calG_{{\rm in}},\calG_{{\rm out}})]
		\phi_{\calF}^{\xi\cup0}[A_4(\calG_{{\rm out}},R) \cap A_4(\calF,\calG_{{\rm in}})|\,(\calG_{{\rm in}},\calG_{{\rm out}})] \nonumber\\
		&= (1+\delta)(1+\tfrac{\delta}2)^{k}
		\sum_{(\calG_{{\rm in}},\calG_{{\rm out}}) \text{ good}}\phi_{\calF}^{\xi\cup0} [(\calG_{{\rm in}},\calG_{{\rm out}})]
		\phi_{\calF}^{\xi\cup0}[A_4(\calF,R)| \, (\calG_{{\rm in}},\calG_{{\rm out}})] \nonumber\\
		&= (1+\delta)(1+\tfrac{\delta}2)^{k}
		\phi_{\calF}^{\xi\cup0}[A_4(\calF,R) \text{ and } (\calG_{{\rm in}},\calG_{{\rm out}}) \text{ good}].
	\end{align}
	The first equality is due to the fact that, conditionally on a good~$(\calG_{{\rm in}},\calG_{{\rm out}})$, 
	$A_4(\calF,R)$ occurs if and only if both~$A_4(\calG_{{\rm out}},R)$ and~$A_4(\calF,\calG_{{\rm in}})$ do. 
	The last equality is obtained directly by summation. 
	
	Observe now that for~$A_4(\calF,R)$ to occur,~$\calG_{\rm in}$ and~$\calG_{\rm out}$ need to have at least four petals each.
	Moreover, if they each have exactly four petals, then these need to be connected in such a way that 
	$(\calG_{{\rm in}},\calG_{{\rm out}})$ is good. 
	Thus
	\begin{align}\label{eq:bad_G}
	\phi_{\calF}^{\xi\cup0}[A_4(\calF,R),(\calG_{{\rm in}},\calG_{{\rm out}}) \text{ not good}]
	\le 
	&\phi_{\calF}^{\xi\cup0}[A_4(\calF,R),\calG_{{\rm in}} \text{ has at least 6 petals}] \nonumber\\ 	
	+&\phi_{\calF}^{\xi\cup0}[A_4(\calF,R),\calG_{{\rm out}} \text{ has at least 6 petals}].
	\end{align}
	We will argue that both of the terms in the right-hand side are small compared to the quantity $\phi_{\calF}^{\xi,0}[A_4(\calF,R)]$, 
	provided~$\rho$ is large enough. 

	Indeed, due to the quasi-multiplicativity of the four-arm event -- here applied to the slightly unusual event~$A_4(\calF,R)$ -- 
	and the mixing property~\eqref{eq:mix}, 
	there exists a universal constant~$C$ (that does not depend on~$\calF$,~$p$,~$r$ or~$R$) such that 
	\begin{align*}
		\phi_{\calF}^{\xi\cup0}[A_4(\calF,R),\calG_{{\rm in}} \text{ has at least 6 petals}] 
		&\le C \phi_{\calF}^{\xi\cup0}[A_4(\calF,2r)] \phi_{\calF}^{\xi\cup0}[A_4(2\rho r, R)] \pi_6(2r,2\rho r)\\
		&\le C^2 \phi_{\calF}^{\xi\cup0}[A_4(\calF,R)] \frac{\pi_6(2r,2\rho r)}{\pi_4(2r,2\rho r)}.
	\end{align*}
	Due to Theorem~\ref{thm:RSWnear},~$\rho > 0$ may be chosen independently of~$\calF$,~$p$,~$r$ or~$R$ so that
	\begin{align*}
		C^2 \frac{\pi_6(2r,2\rho r)}{\pi_4(2r,2\rho r)} \le \delta/4.
	\end{align*}
	Assuming this is the case, and by the same reasoning for the second term of~\eqref{eq:bad_G}, we find
	\begin{align*}
		(1 + \delta) \phi_{\calF}^{\xi,0}\big[A_4(\calF,R),(\calG_{{\rm in}},\calG_{{\rm out}}) \text{ good}\big] 
		\geq (1+ \delta/2)\phi_{\calF}^{\xi,0}\big[A_4(\calF,R)\big], 
	\end{align*}
	which, when inserted in~\eqref{eq:good_G}, proves~\eqref{eq:FR} for~$R = (\rho^2 + 2)^{k+1} r$.
\end{proof}

\begin{proof}[\eqref{eq:Delta_improvement}]
	Fix~$\delta > 0$ and~$\rho$ given by Corollary~\ref{cor:induction}.
	Due to Theorem~\ref{thm:delta}(ii), 
	it suffices to prove the statement for~$R = 2(\rho^2 + 2)^k r$.  From now on, fix such values~$r$ and~$R$. 
	
	We use the same notation as in the proof of Corollary~\ref{cor:induction} 
	and construct a coupling~$\bbP$ between~$\phi_{\La_R}^0$ and~$\phi_{\La_R}^1$ similar to that of the previous proof. 

	First, explore the double four-petal flower domain between ~$\La_{r}$ and~$\La_{2r}$; 
	call it~$(\calG_{{\rm in}},\calG_{{\rm out}})$. 
	If no such double four-petal flower domain exists, proceed with an arbitrary coupling. 
	If~$(\calG_{{\rm in}},\calG_{{\rm out}})$ exists, use the coupling provided by Corollary~\ref{cor:induction} to complete~$(\omega,\omega')$ in~$\calG_{\rm out}$ so that, 
	\begin{align}
		\bbP[ A_4(\calG_{{\rm out}},R) \,| \, (\calG_{{\rm in}},\calG_{{\rm out}})] 
		&\geq (1 + \tfrac\delta2)^k\, \bbP[   A_4(\calG_{{\rm out}},R)\,| \, (\calG_{{\rm in}},\calG_{{\rm out}})].
	\end{align}
	Finally, use an arbitrary increasing coupling inside~$\calG_{\rm in}$. 
	
	When~$(\calG_{{\rm in}},\calG_{{\rm out}})$ exists and~$\tilde A_4(\calG_{{\rm out}},R)$ occurs, 
	the boundary conditions imposed by the revealed portions of~$\omega$ and~$\omega'$
	are equal to~$\zeta^0_{\rm in}$ and dominate~$\zeta^1_{\rm in}$, respectively. 
	Using Theorem~\ref{thm:boosting_pair} and Lemma~\ref{lem:D4PFD}, we conclude that 
	\begin{align*}
	\Delta_p(r,R) 
	&= \bbP\big[ \omega' \in \calC(\La_r), \omega \notin \calC(\La_r) \big]\\
	&\ges \bbP \big[\tilde A_4(\calG_{{\rm out}},R), \, (\calG_{{\rm in}},\calG_{{\rm out}}) \text{ exists}\big] \\
	&\ges (1+ \tfrac\delta2)^k \bbP \big[A_4(\calG_{{\rm out}},R), \, (\calG_{{\rm in}},\calG_{{\rm out}}) \text{ exists}\big] \\
	&\ges (R/2r)^\eps \pi_4 (r,R),
	\end{align*}
	where~$\eps = \log(1+\frac\delta2)/\log{(\rho^2 + 2)} > 0$.
\end{proof}

\section{Derivatives in terms of~$\Delta_p$}\label{sec:a}

\subsection{Derivatives for crossing and arm events}

In this section we obtain expressions for the derivatives of probabilities of crossing events and arm events in terms of~$\Delta_p$. 
In addition, we upper bound the derivative of the mixing rate~$\Delta_p$ by similar expressions.
The relevant results are Proposition~\ref{prop:deriv_cross} and~\ref{prop:deriv_Delta_ub}, respectively. 
These will hold within the critical window, and are instrumental in proving the main stability results Theorem~\ref{thm:stability} and~\eqref{eq:delta_thm5}.

We start by a proposition which states a slightly weaker form of Corollary~\ref{cor:sta}, 
but extends the expression to logarithmic derivatives of probabilities of arm events. 

\begin{proposition}	\label{prop:deriv_cross}
    Fix~$\eta>0$. For~$p\in(0,1)$ and every~$\eta$-regular quad~$(\calD,a,b,c,d)$ at scale~$R\le L(p)$,
    \begin{equation}\label{eq:deriv_cross}
    	R^2\Delta_p(R)+\sum_{\ell=R}^{L(p)}\ell \Delta_p(\ell)\Delta_p(R,\ell)	
		\les \tfrac{{\rm d}}{{\rm d}p}\phi_{p}[\calC(\calD)]
		\les \sum_{\ell=1}^R \ell\Delta_p(\ell)+\sum_{\ell=R}^{L(p)}\ell \Delta_p(\ell)\Delta_p(R,\ell),
    \end{equation}
    where the constants in~$\les$ depend on~$\eta$.
    Moreover, for any~$\sigma \in \{0,1\}^k$ and any~$r \le R \le L(p)$, 
    \begin{equation*}
    	 R^2\Delta_p(R)+\sum_{\ell=R}^{L(p)}\ell \Delta_p(\ell)\Delta_p(R,\ell)	\les\tfrac{{\rm d}}{{\rm d}p}\log\phi_{p}[A_\sigma(r,R)]\les
		  \sum_{\ell=1}^R \ell\Delta_p(\ell)+\sum_{\ell=R}^{L(p)}\ell \Delta_p(\ell)\Delta_p(R,\ell),
    \end{equation*}
    where the constants in~$\les$ depend on~$\sigma$.
\end{proposition}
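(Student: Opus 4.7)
The starting point is the identity~\eqref{eq:Russ}, which reduces the problem to estimating $\sum_{e\in\bbE}\Cov_p(\omega_e, \calC(\calD))$. I would partition $\bbE$ into three regions by distance from $\calD$: the macroscopic interior $(A)=\{e\in\calD:d(e,\partial\calD)\ge\eta R/2\}$; the near-boundary region $(B)=\{e\in\bbE : d(e,\partial\calD)\le \eta R/2\}$; and the exterior $(C)=\{e: d(e,\calD) \ge \eta R/2\}$, itself organised dyadically by scales $\ell\in[R,L(p)]$. For $(A)$, Theorem~\ref{thm:delta}(i) provides $\Cov_p(\omega_e,\calC(\calD))\asymp\Delta_p(R)$ per edge; summing over the $\asymp R^2$ edges gives the $R^2\Delta_p(R)$ contribution to both bounds.

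For an exterior edge $e$ at dyadic scale $\ell\ge R$, I would establish the sharp estimate $\Cov_p(\omega_e,\calC(\calD))\asymp \Delta_p(\ell)\,\Delta_p(R,\ell)$ by a two-step coupling. The factor $\Delta_p(\ell)$ reflects the self-influence of a single edge at scale $\ell$: switching $\omega_e$ creates a microscopic boundary change that, via an outer flower-domain exploration around $e$ analogous to the construction of Lemma~\ref{lem:boosting_out_to_boosting_in}, gets amplified with probability $\asymp\Delta_p(\ell)$ into a boosting pair of boundary conditions on a well-separated outer flower domain at scale $\ell$. The factor $\Delta_p(R,\ell)$ then comes from Theorem~\ref{thm:Delta_coupling}, which asserts that such a boost modifies the probability of a scale-$R$ crossing event by $\asymp\Delta_p(R,\ell)$. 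Since the dyadic annulus at scale $\ell$ contains $\asymp \ell^2$ edges, summing produces $\sum_{\ell=R}^{L(p)}\ell\Delta_p(\ell)\Delta_p(R,\ell)$ in both the upper and the lower bound.

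For the near-boundary region $(B)$ only the upper bound is needed. For an edge at dyadic distance $\ell$ from $\partial\calD$, the mixing interpretation~\eqref{eq:delta_mixing_interpretation} together with quasi-multiplicativity~\eqref{eq:delta_thm3} yields a covariance bound which, after a careful dyadic counting of edges in layers of width $\ell$ near $\partial\calD$ (with particular attention to the corners, where some arms are constrained to a half-plane and the effective contribution is reduced), sums to $\les\sum_{\ell=1}^R\ell\Delta_p(\ell)$. The arm-event version of the statement follows the same blueprint, with $\calC(\calD)$ replaced by $A_\sigma(r,R)$ and~$\calD$ by the thickened annulus~$\Ann(r,R)$. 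The polynomial lower bound on $\phi_p[A_\sigma(r,R)]$ coming from~\eqref{eq:LOWER_BOUND_ONE_ARM} (and its multi-arm analogues, standard consequences of Theorem~\ref{thm:RSWnear}) ensures that passing from the derivative to the logarithmic derivative costs only multiplicative constants.

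The main obstacle is the sharp bilinear estimate $\Cov_p(\omega_e,\calC(\calD))\asymp\Delta_p(\ell)\Delta_p(R,\ell)$ for exterior edges. It requires orchestrating two couplings—a microscopic one near $e$, converting a single-edge perturbation into a boosting pair at the intermediate scale $\ell$, and a macroscopic one near $\calD$, translating that boost into a shift of $\phi_p[\calC(\calD)]$—and chaining them via the quasi-multiplicativity of $\Delta_p$ without losing constants. Theorems~\ref{thm:coupling} and~\ref{thm:Delta_coupling} are tailor-made for this task; the work lies in applying them with appropriately well-separated flower domains at both ends of the relevant annulus and in verifying that the two sources of influence genuinely multiply rather than only contributing the weaker bound~$\Delta_p(\ell)$ that follows from mixing alone.
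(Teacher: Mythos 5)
Your overall architecture coincides with the paper's: both start from \eqref{eq:Russ}, treat the $\asymp R^2$ bulk edges via Theorem~\ref{thm:delta}(i), and establish $\mathrm{Cov}_p(\omega_e,\calC(\calD))\asymp\Delta_p(\ell)\Delta_p(R,\ell)$ for exterior edges by exactly the two-step coupling you describe (Theorem~\ref{thm:coupling}(ii) around $e$, then Lemma~\ref{lem:boosting_out_to_boosting_in} and Theorem~\ref{thm:Delta_coupling} around $\calD$). However, there is a genuine gap in your treatment of the near-boundary region, which is in fact the delicate part of the proof. For an edge $e$ at distance $n\le 2R$ from $\partial\calD$, neither the mixing interpretation \eqref{eq:delta_mixing_interpretation} nor quasi-multiplicativity applies directly, because $\calC(\calD)$ depends on edges arbitrarily close to $e$; the best bound one can extract from mixing alone is $\mathrm{Cov}_p(\omega_e,\calC(\calD))\les\Delta_p(n)$. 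Summing this over the $\asymp R$ edges at distance $n$ from $\partial\calD$ and over $n\le 2R$ gives $R\sum_{n\le 2R}\Delta_p(n)$, which is polynomially larger than the target $\sum_{\ell\le R}\ell\Delta_p(\ell)$ whenever $\ell\Delta_p(\ell)$ decays (i.e.\ $\iota>1$, conjecturally the case for all $1<q<2$). Since the upper bound of the proposition is later matched against the lower bound $R^2\Delta_p(R)$ to get Corollary~\ref{cor:deriv}, this loss is fatal, not cosmetic.

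What closes the gap in the paper is a pivotality factor that your argument does not produce: one decomposes the covariance over the scales $r$ at which $e$ is $r$-pivotal, yielding $\mathrm{Cov}_p(\omega_e,\calC(\calD))\les\sum_{r\ge n/2}\tfrac{\Delta_p(r)}{r}\,\phi_p[{\rm Piv}_{r,e}(\calD)]$, and then proves the geometric estimate $\sum_{e:\,{\rm dist}(e,\partial\calD)=n}\phi_p[{\rm Piv}_{r,e}(\calD)]\les r$ by covering each arc with $\asymp R/r$ boxes $\La_r(x_i)$, attaching to each a three-arm-type event $E_i$ built from the exploration of the lowest crossing, and observing that only $O(1)$ of the $E_i$ can occur simultaneously. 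Your parenthetical about arms confined to a half-plane points in the right direction but is attached to the wrong place: the gain is needed along the straight portions of every arc, while near the marked points $a,b,c,d$ there is no gain and the $O(1)$ boxes there are handled trivially. Two secondary issues: edges at distance larger than $L(p)$ also contribute to \eqref{eq:Russ} and require the exponential cutoff of \eqref{eq:Cov_Delta3}; and for arm events, passing to the logarithmic derivative is not a matter of dividing by a constant, since $\phi_p[A_\sigma(r,R)]$ is polynomially small --- the normalization by $\phi_p[A_\sigma(r,R)]$ must be built into the per-edge covariance estimate itself, as in Lemma~\ref{lem:Cov_Delta}.
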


After proving Proposition~\ref{prop:lower bound Delta} in the next section, we may use the quasi-multiplicativity of~$\Delta_p$ to replace the first term~$\sum_{\ell\le R}\ell\Delta_p(\ell)$ in the upper bounds above by~$R^2\Delta_p(R)$, hence deducing an up to constant estimate on the derivative. This is stated in Corollary~\ref{cor:deriv}.

\begin{remark}\label{rem:iota<1}
	The formula~\eqref{eq:deriv_cross} for the derivative of the crossing probability of~$\calD$ 
	is significantly different than the one for percolation, 
	since edges far from~$\calD$ -- which correspond to the second term in the formula -- may contribute substantially. 
	Indeed, if we accept the asymptotic~$\Delta(r,R) \asymp (r/R)^{\iota}$, and if~$\iota < 1$, 
	then the edges at distance~$L(p)$ contribute most to~\eqref{eq:deriv_cross}; when~$\iota > 1$ however, 
	the derivative is governed by the contribution of edges close to~$\calD$.  
	See Section~\ref{sec:scaling_iota} for consequences of these two types of behaviour. 
\end{remark}

The proof of Proposition~\ref{prop:deriv_cross} is based on the following lemma, 
which controls the influence of each edge on a crossing event. 
Below, we call~$e$ a (open) {\em $r$-pivotal} in~$\omega$ for a crossing event in~$(\calD,a,b,c,d)$ 
if~$\omega$  contains a crossing of~$\calD$ from~$(ab)$ to~$(cd)$, but~$\omega \cap (\calD\setminus\Lambda_r(e))$ does not. 
Call this event~${\rm Piv}_{r,e}(\calD)$.

\begin{lemma}\label{lem:Cov_Delta}
	There exists~$c > 0$ such that, for any~$\eta>0$, the following holds. 
	For~$p \in (0,1)$,~$R \le L(p)$, every~$\eta$-regular quad~$(\calD,a,b,c,d)$ of size~$R$
	and every edge~$e$ at a distance~$n$ from~$\partial\calD$, 
	\begin{align}
		\Delta_p(R)\les \mathrm{Cov}_{p}[\omega_e;\calC(\calD)] &\les \sum_{r = n/2}^{3R}\frac {\Delta_p(r)}r\phi_p[{\rm Piv}_{r,e}(\calD)]
		&& \text{ if~$n \le 2R$}, \label{eq:Cov_Delta1}\\
		\mathrm{Cov}_{p}[\omega_e;\calC(\calD)] &\asymp  \Delta_p(n)\Delta_p(R,n) 
		&&\text{ if~$2R \le n \le 2L(p)$}, \label{eq:Cov_Delta2}\\
		\mathrm{Cov}_{p}[\omega_e;\calC(\calD)] &\les \Delta_p(L(p))\Delta_p(R,L(p))e^{-cn/L(p)} 
		&& \text{ if~$n \geq 2L(p)$},\label{eq:Cov_Delta3} 
	\end{align}
	where the constants in~$\asymp$ and~$\les$ depend on~$\eta$. 
	
	The same bounds hold with 
	$\mathrm{Cov}_{p}[\omega_e; A_{\sigma}(r,R)]/\phi_p[A_{\sigma}(r,R)]$ instead of~$\mathrm{Cov}_{p}[\omega_e;\calC(\calD)]$, 
	with constants which depend on~$\sigma$;~$n$ should then be replaced with the distance from~$e$ to~$\partial \Ann(r,R)$.
\end{lemma}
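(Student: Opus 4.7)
The plan is to view the covariance $\mathrm{Cov}_p[\omega_e;\calC(\calD)]$ as the boost in crossing probability produced by conditioning on $\omega_e=1$ versus $\omega_e=0$. Conditioning on $\omega_e=1$ (resp.~$\omega_e=0$) should be thought of as wiring (resp.~not wiring) the two endpoints of $e$, and this information propagates to a distance $r$ around $e$ as a boosting (resp.~non-boosting) pair of boundary conditions on any outer flower domain around $e$ at scale $r$. The main tools for transporting this boost from the scale of $e$ to the scale of $\calD$ are Theorems~\ref{thm:coupling} and~\ref{thm:Delta_coupling}, exactly as in Section~\ref{sec:coupling_Delta}.

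For the middle case $2R\le n\le 2L(p)$, the edge $e$ and the quad $\calD$ are at comparable scales in disjoint regions. I would construct a coupling of $\phi_p[\cdot\,|\,\omega_e=1]$ and $\phi_p[\cdot\,|\,\omega_e=0]$ by first exploring outward from $e$ (Theorem~\ref{thm:coupling}(ii)) up to scale $n/2$, where the boost survives with probability $\asymp \Delta_p(n)$, and then continuing inward toward $\calD$ (Theorem~\ref{thm:coupling}(i)) so that, whenever the boost reaches the scale $R$ of $\calD$ with a boosting pair of boundary conditions on a well-separated flower domain, it changes $\phi_p[\calC(\calD)]$ by a constant factor (Theorem~\ref{thm:boosting_pair}). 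The probability of survival all the way through is $\asymp \Delta_p(n)\Delta_p(R,n)$, giving both the upper and the lower bound in \eqref{eq:Cov_Delta2}.

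For the near case $n\le 2R$, I would decompose the covariance along dyadic scales $r_k\asymp 2^k$ with $n/2\le r_k\le 3R$ according to the largest $r_k$ such that $e$ is $r_k$-pivotal. Conditional on the outermost interfaces in $\Lambda_{r_k}(e)$ forming a well-separated outer flower domain on $\Lambda_{r_k}(e)$ (which happens with positive probability given $r_k$-pivotality, by Lemma~\ref{lem:well-separated} and the separation of arms), the conditioning on $\omega_e$ produces a boosting pair of boundary conditions on that flower domain; by Theorem~\ref{thm:Delta_coupling} this boosts $\phi_p[\calC(\calD)]$ by order $\Delta_p(r_k)$, yielding a contribution $\Delta_p(r_k)\phi_p[\mathrm{Piv}_{r_k,e}(\calD)]$. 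Rewriting the dyadic sum as a continuous one produces the extra $1/r$ factor in the claimed upper bound. The lower bound $\Delta_p(R)\les\mathrm{Cov}_p[\omega_e;\calC(\calD)]$ (which is meaningful only for $e$ at macroscopic distance from $\partial\calD$) follows from Theorem~\ref{thm:delta}(i) applied to a slightly shrunk $\eta'$-regular subquad of $\calD$ containing $\Lambda_{\eta' R}(e)$, combined with~\eqref{eq:Delta_out_in2}. For the far case $n\ge 2L(p)$, the same coupling as in the middle case is used up to scale $L(p)$; beyond this scale, any further survival of the boost requires the coupling to be carried through $\lfloor n/L(p)\rfloor$ essentially independent annuli of width $L(p)$, each succeeding with probability bounded away from $1$, so Corollary~\ref{rmk:1} and Proposition~\ref{prop:estimate subcritical} yield the exponential factor $e^{-cn/L(p)}$.

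For the arm-event version, observe that the arm event $A_\sigma(r,R)$ conditions $\omega$ in such a way that it produces a boosting pair of boundary conditions on a well-separated outer flower domain at scale $R$ (this is the content of Proposition~\ref{prop:cross_is_boosting} and Remark~\ref{rem:circ_is_boosting}, which were stated for $\calC(\calD)$ and $A_{R/2}$ but extend without change to $A_\sigma$). Consequently, the ratio $\mathrm{Cov}_p[\omega_e;A_\sigma(r,R)]/\phi_p[A_\sigma(r,R)]$ plays the role of a crossing probability boost and can be bounded by exactly the same argument as above. The main obstacle is the near-case upper bound, specifically the dyadic-scale bookkeeping that produces the $1/r$ weight: one has to be careful that the flower domain produced by the outermost interfaces at scale $r_k$ is $\delta$-well-separated with conditional probability bounded away from $0$, uniformly in $p$ and $r_k$, which requires a quantitative separation-of-arms statement analogous to Lemma~\ref{lem:well-separated} but localized at $e$ and conditional on pivotality.
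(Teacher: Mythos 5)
Your treatment of the middle and far regimes follows the paper's route: for \eqref{eq:Cov_Delta2} the paper indeed couples $\phi_p[\cdot\,|\,\omega_e=0]$ and $\phi_p[\cdot\,|\,\omega_e=1]$ by first running Theorem~\ref{thm:coupling}(ii) outward from $e$ to scale $n/4$ (survival probability $\asymp\Delta_p(n)$), then transferring to a well-separated inner flower domain on $\La_{n/2}$ via Lemma~\ref{lem:boosting_out_to_boosting_in}, and concluding with Theorem~\ref{thm:Delta_coupling}; for \eqref{eq:Cov_Delta3} the paper uses the cluster-exploration coupling of Example~2 together with Proposition~\ref{prop:estimate subcritical}, which is what your annulus argument amounts to. The near case \eqref{eq:Cov_Delta1}, however, contains two genuine problems.

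First, your lower bound is circular: Theorem~\ref{thm:delta}(i) is \emph{deduced} from \eqref{eq:Cov_Delta1} in the paper (it is literally the statement that for $e$ deep inside an $\eta$-regular quad the upper and lower bounds of \eqref{eq:Cov_Delta1} match), so it cannot be used as an input here. The paper instead proves $\Delta_p(R)\les\Cov_p[\omega_e;\calC(\calD)]$ directly, by planting a double four-petal flower domain at scale $\eta R$ inside $\calD$ (translated so that $e$ lies at distance of order $R$ from it), showing via the argument of \eqref{eq:Cov_Delta2} that conditioning on $\omega_e$ shifts the petal-connection probability by $\ges\Delta_p(R)$, and then making that connection pivotal for $\calC(\calD)$ with uniformly positive probability. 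Second, your upper bound argument goes in the wrong direction: conditioning on a \emph{well-separated} flower domain around $e$ at scale $r_k$ (an event of merely positive conditional probability) can only produce a lower bound on the corresponding term; for an upper bound you must control the influence of $\omega_e$ over \emph{all} exterior configurations, including badly separated ones, and the "quantitative separation of arms conditional on pivotality" you flag as the missing ingredient does not resolve this. The paper's actual argument is much simpler and requires no flower domains at all: writing $\calP_s={\rm Piv}_{2^s,e}(\calD)\setminus{\rm Piv}_{2^{s-1},e}(\calD)$, one checks that $\calP_s$ is measurable with respect to the edges outside $\La_{2^{s-1}}(e)$, whence by \eqref{eq:SMP} and \eqref{eq:CBC} one has $\phi_p[\omega_e\,|\,\calP_s]-\phi_p[\omega_e]\le\phi^1_{\La_{2^{s-1}}(e)}[\omega_e]-\phi^0_{\La_{2^{s-1}}(e)}[\omega_e]=\Delta_p(2^{s-1})$, and summing over the partition $\calC(\calD)=\bigsqcup_s\calP_s$ gives the stated bound after reindexing. (Incidentally, the decomposition is by the \emph{smallest} scale at which $e$ is pivotal, not the largest: ${\rm Piv}_{r,e}(\calD)$ is increasing in $r$.) You should replace your near-case argument by this Markov-property computation and supply a non-circular lower bound.
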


\begin{remark}\label{rem:Cov_Delta}
	Due to the choice of~$\calD$, all edges~$e$ inside~$\calD$ are in case~\eqref{eq:Cov_Delta1}.	
	By~\eqref{eq:RSWnear}, $\phi_p[{\rm Piv}_{r,e}(\calD)]$ is uniformly bounded away from~$0$ for all~$R \le r\le 2R$. 
	These terms of the sum account for a contribution of order at least~$\Delta_p(R)$ to the right-hand side of~\eqref{eq:Cov_Delta1}.
	When~$e$ is in the ``bulk'' of~$\calD$ (that is at a distance of order~$R$ from~$\calD^c$), 
	the lower and upper bounds in~\eqref{eq:Cov_Delta1} are comparable.
	
	In addition we point out that the same results hold for covariances under a measure~$\phi_{G}^\xi$ for any sub-graph~$G$ of~$\bbZ^2$ that contains~$\La_{2R}$, any boundary conditions~$\xi$, and any edge~$e$ closer to~$\calD$ than to~$G^c$. The proof below adapts readily. 
\end{remark}

Theorem~\ref{thm:delta}(i) is a particular case of Lemma~\ref{lem:Cov_Delta}. 
\begin{proof}[Theorem~\ref{thm:delta}(i)]
	Apply \eqref{eq:Cov_Delta1} and observe that the right-hand side is smaller than $\sum_{r = \eta R}^R\frac1r\Delta_p(r) \les \Delta_p(R)$.
\end{proof}

\begin{proof}[Lemma~\ref{lem:Cov_Delta}]
We will prove here formulas for the crossing of a quad~$(\calD,a,b,c,d)$; 
the proof for arm events is similar and entails standard adaptations. 
Write~$\calC$ instead of~$\calC(\calD)$. 
Recall that 
\begin{align*}
    \mathrm{Cov}[\omega_e;\calC] =\phi[\omega_e](\phi[\calC \,|\, \omega_e = 1] -\phi[\calC])
    \asymp \phi[\calC \,|\, \omega_e = 1] -\phi[\calC \,|\, \omega_e = 0].
\end{align*}
We will prove each of the equations separately, starting with the second. 
\medskip 

\noindent{\em Proof of~\eqref{eq:Cov_Delta2}:}
	Construct an increasing coupling~$\bbP$ between~$\phi[\cdot\,|\,\omega_e = 0]$ and ~$\phi[\cdot\,|\,\omega_e = 1]$ 
	producing configurations~$\omega$ and~$\omega'$ as follows: the coupling contains three stages based on 
	Theorem~\ref{thm:coupling}~(ii), Lemma~\ref{lem:boosting_out_to_boosting_in} and  Theorem~\ref{thm:Delta_coupling}, respectively. We refer to Figure~\ref{fig:coupling far} for a picture.

	\begin{enumerate}[noitemsep,nolistsep]
\item Apply the coupling inside~$\La_{n/4}(e)$ between 
	~$\phi[\cdot\,|\,\omega_e = 0]$ and ~$\phi[\cdot\,|\,\omega_e = 1]$ 
	using the procedure of Theorem~\ref{thm:coupling}(ii), 
	up to the associated stopping time, which we denote by~$\tau_1$. 
	If~$\tau_1  = \infty$, continue the coupling using an arbitrary increasing coupling. 
	Recall that when~$\tau_1 < \infty$, 
	$\calF_{\tau_1}$ is a~$1/2$-well-separated outer flower domain on~$\La_{n/4}$
	and that~$\omega_{[\tau_1]}$ and~$\omega'_{[\tau_1]}$ induce a boosting pair of boundary conditions on~$\calF_{\tau_1}$. 
	
\item Continue the coupling on~$\La_{n/2}^c$ between 
	$\phi^{\omega'_{[\tau_1]}}_{\calF_{\tau_1}}$ and~$\phi^{\omega_{[\tau_1]}}_{\calF_{\tau_1}}$
	using the procedure of Lemma~\ref{lem:boosting_out_to_boosting_in}, 
	up to the associated stopping time, which we denote by~$\tau_2$. 
	If~$\tau_2  = \infty$, continue the coupling using an arbitrary increasing coupling. 
	Recall that when~$\tau_2 < \infty$, 
	$\calF_{\tau_2}$ is a~$1/2$-well-separated inner flower domain on~$\La_{n/2}$
	and that~$\omega_{[\tau_2]}$ and~$\omega'_{[\tau_2]}$ induce a boosting pair of boundary conditions on~$\calF_{\tau_2}$. 

\item Complete the coupling inside~$\calF_{\tau_2}$ using an arbitrary increasing coupling of 
	$\phi^{\omega'_{[\tau_2]}}_{\calF_{\tau_2}}$ and~$\phi^{\omega_{[\tau_2]}}_{\calF_{\tau_2}}$. 
\end{enumerate}
	\begin{figure}
	\begin{center}
	\includegraphics[width = 0.9\textwidth]{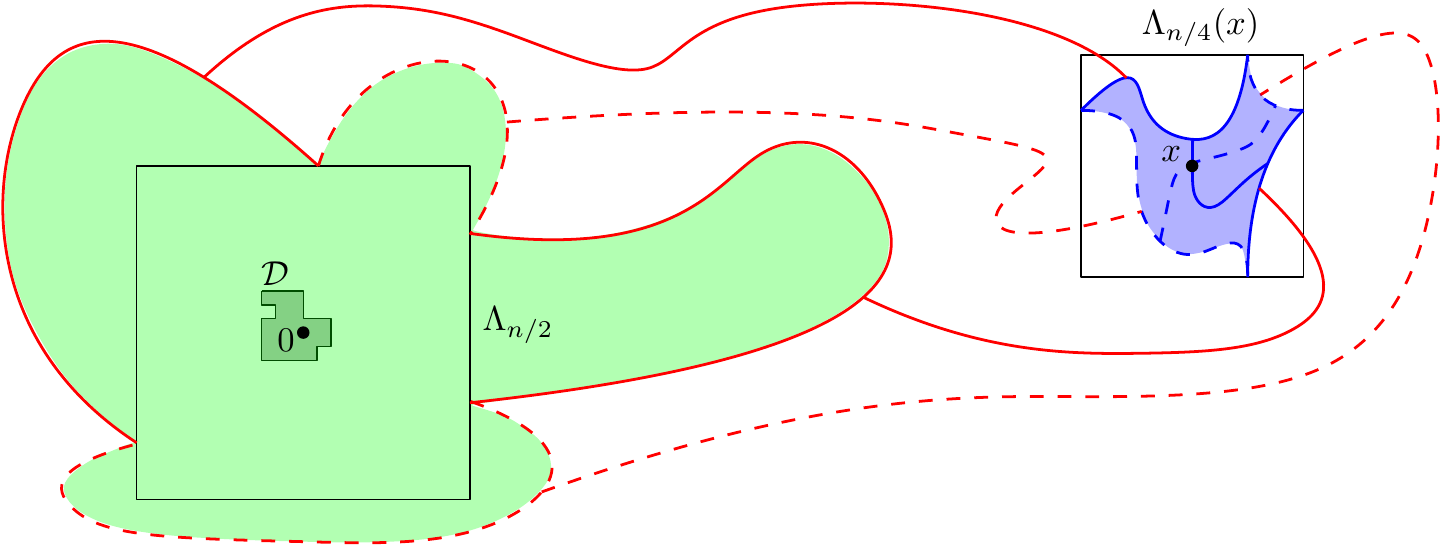}
	\caption{In blue, the edges discovered until $\tau_1$. In red, the edges discovered between $\tau_1$ and $\tau_2$. Finally in green, the edges that are discovered afterwards. Note that the domain $\calD$ is a priori much smaller than the box $\Lambda_{n/2}$. }
	\label{fig:coupling far}
	\end{center}
	\end{figure}	
For the upper bound observe that, since~$\calD \subset \La_{n/2}$ and~$\La_{n/2}(e)$ are disjoint, if~$\zeta$ and~$\zeta'$ are the boundary conditions induced on~$\calF_{\tau_1}$, Theorem~\ref{thm:Delta_coupling} gives
	\begin{align}\label{eq:cov_delta2ub}
		 \phi[\calC \,|\, \omega_e = 1] -\phi[\calC\,|\,\omega_e = 0]
		& \le (\phi_{\La_{n/2}}^1[\calC] - \phi_{\La_{n/2}}^0[\calC])\,
	\mathbb P[\zeta\ne \zeta']
		\les \Delta_p(R,n/2)\Delta_p(n).
	\end{align}
	Finally,~$\Delta_p(R,n/2)$ may be replaced by~$\Delta_p(R,n)$ due to~\eqref{eq:Delta_lengthen_out}.
For the lower bound, Theorem~\ref{thm:Delta_coupling}  gives
	\begin{align*}
		\phi[\calC\,|\, \omega_e = 1] - \phi[\calC\,|\, \omega_e = 0]
		&= 
		\bbP[\omega' \in \calC,\omega\notin \calC ]\\
		&\geq \bbE \big[\ind_{\{\tau_2 < \infty\}}\big( \phi_{\calF_{\tau_2}}^{\omega'_{[{\tau_2}]}}[\calC]- \phi_{\calF_{\tau_2}}^{\omega_{[{\tau_2}]}}[\calC]\big)\big]\\
		&\ges \bbP[\tau_1 < \infty]\, \bbP[\tau_2 < \infty\,|\, \tau_1 < \infty]\, \Delta_p(R,n/2).
	\end{align*}	
	The second term above is of constant order by Lemma~\ref{lem:boosting_out_to_boosting_in}.
	The first term is of order~$\Delta_p(1,n/4)$ by Theorem~\ref{thm:Delta_coupling}. Since~$\Delta_p(1,n/4)\ge \Delta_p(n)$ and~$\Delta_p(R,n/2)\ge \Delta_p(R,n)$, we obtain the result.
	\hfill~$\diamond$
\medskip 

\noindent{\em Proof of~\eqref{eq:Cov_Delta3}:}	
	We focus here on the case~$p < p_c$; when~$p > p_c$ the proof is obtained by duality. 
	Construct the following coupling~$\bbP$ between~$\phi_p[\cdot\,|\,\omega_e=0]$ and~$\phi_p[\cdot\,|\,\omega_e=1]$:
	\begin{enumerate}[noitemsep,nolistsep]
		\item Use the coupling given by Theorem~\ref{thm:coupling}(ii) between~$\phi_p[\cdot\,|\,\omega_e=0]$ and~$\phi_p[\cdot\,|\,\omega_e=1]$ inside~$\La_{L(p)}(e)$; 
		let~$\xi$ and~$\xi'$ be the boundary conditions induced by the revealed configurations~$\omega$ and~$\omega'$ 
		on the boundary of~$\bbZ^2\setminus\La_{L(p)}(e)$;
		\item Continue the coupling in~$\bbZ^2\setminus\La_{L(p)}$ using the decision tree that explores the connected components of~$\La_{L(p)}(e)$ 
		in~$\omega'$ (see Example 2 of Section~\ref{sec:coupling exploration}); 
		let~$\zeta,\zeta'$ be the boundary conditions induced by the revealed regions of~$\omega$ and~$\omega'$ on~$\partial \La_{L(p)}$;
		\item Use an arbitrary increasing coupling of~$\phi_{\La_{L(p)},p}^\zeta$ and~$\phi_{\La_{L(p)},p}^{\zeta'}$.
	\end{enumerate}
	If~$\xi = \xi'$, then~$\omega$ and~$\omega'$ are identical in the complement of~$\La_{L(p)}(e)$. 
	Moreover, as explained in Section~\ref{sec:coupling exploration}, for~$\zeta$ to differ from~$\zeta'$, 
	$\omega'$ must contain a connection between~$\La_{L(p)}(e)$ and~$\La_{L(p)}$. Thus, we find
	\begin{align}
		\phi_p[\calC \,|\, \omega_e = 1] -\phi_p[\calC\,|\, \omega_e = 0] 
		&\le \bbE\big[\ind_{\{\xi \neq\xi'\}} \ind_{\omega'\in\{\La_{L}(e) \xlra{} \La_{L(p)}\}} ( \phi^{\zeta'}_{\La_L,p}[ \calC] - \phi^{\zeta}_{\La_L,p}[\calC])\big ]\label{eq:cov_delta3ub}\\
		&\les \Delta_p({L(p)}) \phi_{\bbZ^2\setminus\La_{L(p)}, p}^1[\La_{L(p)}\longleftrightarrow\La_{L(p)}(e) ] \Delta_p(R,{L(p)}) .
		\nonumber
	\end{align}
	In the second inequality we used the monotonicity of boundary conditions~\eqref{eq:CBC} and Theorem~\ref{thm:Delta_coupling}.
	Finally, due to the mixing property of Proposition~\ref{prop:mixing} and Proposition~\ref{prop:estimate subcritical}, 
	the second term of the last product is bounded above by~$\exp[-cn/L(p)]$ for some positive constant~$c$. 
\medskip \hfill~$\diamond$

\noindent{\em Proof of~\eqref{eq:Cov_Delta1}:}
For the lower bound, translate~$\calD$ and~$e$ such that~$e\notin \La_{\eta R/2}$ and~$\La_{\eta R/2} \subset \calD$. 
Since~$\calD$ is assumed~$\eta$-regular, this is possible. 
We produce a coupling $\bbP$ between~$\phi_p[\cdot\,|\,\omega_e=0]$ and~$\phi_p[\cdot\,|\,\omega_e=1]$ as follows. 
\begin{enumerate}[noitemsep,nolistsep]
		\item Explore the double four-petal flower domain~$(\calF_{\rm in}, \calF_{\rm out})$ in~$\omega$ between ~$\La_{\eta R/4}$ and ~$\La_{\eta R/8}$; 
if no such  double four-petal flower domain exists, reveal the rest of the configurations in arbitrary order.
\item Continue~$\bbP$ by sampling~$\omega$ and~$\omega'$ inside~$\calF_{\rm in}$. 
\item Reveal the configurations in~$\calF_{\rm out}$. 
\end{enumerate}

 Now,  if~$(\calF_{\rm in}, \calF_{\rm out})$ exists, the argument of~\eqref{eq:Cov_Delta2} shows that 
\begin{align*}
\phi[P_1^{\rm out} \xlra{\calF_{\rm out}} P_3^{\rm out} | \omega_e =1, \calF_{\rm in},  \calF_{\rm out}] -
\phi[P_1^{\rm out} \xlra{\calF_{\rm out}} P_3^{\rm out} | \omega_e =0, \calF_{\rm in},  \calF_{\rm out}] \ges \Delta_p(R).
\end{align*}
Indeed,~$e$ is at a distance comparable to~$R$ from~$(\calF_{\rm in}, \calF_{\rm out})$; 
for details see the proof of point~\eqref{eq:boosting_pair_circ_i} of Theorem~\ref{thm:boosting_pair}.
Note that this is simply a statement on the conditional probability of the connections between the petals~$P_1$ and~$P_3$, 
we do not reveal the configurations~$\omega$ and~$\omega'$ on~$\calF_{\rm out}$. 

Then, as in the proof of~\eqref{eq:boosting_pair_circ_i}, the previous estimate gives that
\begin{align*}
    \phi[P_1^{\rm in} \xlra{\calF_{\rm in}} P_3^{\rm in} | \omega_e =1, \calF_{\rm in},  \calF_{\rm out}] -
    \phi[P_1^{\rm in} \xlra{\calF_{\rm in}} P_3^{\rm in} | \omega_e =0, \calF_{\rm in},  \calF_{\rm out}] \ges \Delta_p(R).
\end{align*}
 
Let~$H$ be the event that~$P_1^{\rm out}$ and~$P_3^{\rm out}$ are connected in~$\omega \cap \calF_{\rm out} \cap \calD$ to the arcs~$(ab)$ and~$(cd)$, respectively, while~$P_2^{\rm out}$ and~$P_4^{\rm out}$ are connected in~$\omega^* \cap \calF_{\rm out} \cap \calD$ to the arcs~$(bc)$ and~$(da)$ respectively.
By Theorem~\ref{thm:RSWnear}, 
\begin{align*}
    \phi[H\, | \,(\calF_{\rm in},  \calF_{\rm out}) \text{ and } (\omega,\omega)' \text{ on~$\calF^c_{\rm out}$}] \ges 1.
\end{align*}
Finally, when~$H$ occurs,~$\calD$ is crossed if and only if~$P_1^{\rm in}$ and~$P_3^{\rm in}$ are connected inside~$\calF_{\rm in}$. 
As a consequence, the two displays above and Lemma~\ref{lem:D4PFD} conclude that 
\begin{align*}
	&\phi_p[\calC \,|\, \omega_e = 1] -\phi_p[\calC\,|\, \omega_e = 0]\\
	&\qquad\ges \bbP[(\calF_{\rm in},  \calF_{\rm out}) \text{ exists}]
	\bbP[P_1^{\rm in} \xlra{\omega' \cap \calF_{\rm in}} P_3^{\rm in} \text{ but }P_1^{\rm in} \nxlra{\omega \cap \calF_{\rm in}} P_3^{\rm in}\,|\, (\calF_{\rm in},  \calF_{\rm out}) \text{ exists}]\\
	&\qquad\ges \Delta_p(R).
\end{align*}

We turn to the upper bound. 
For~$s > \lceil \log n \rceil$, let~$\calP_s = {\rm Piv}_{2^s,e}(\calD) \setminus {\rm Piv}_{2^{s-1},e}(\calD)$,
and set~$\calP_s = {\rm Piv}_{2^s,e}(\calD)$ when~$s = \lceil \log n \rceil$.
These events partition~$\calC$, and we find that
\begin{align*}
	\mathrm{Cov}_{p}(\omega_e,\calC)
	&= \sum_{s> \log n}(\phi [\omega_e | \calP_s]-\phi[\omega_e])\phi_p[\calP_s]\\
    &\le \sum_{s > \log n}(\phi_{\Lambda_{2^{s-1}}(e)}^1[\omega_e]-\phi_{\Lambda_{2^{s-1}}(e)}^0[\omega_e])\phi_p[{\rm Piv}_{2^s,e}(\calD)]\\
    &\les \sum_{r\ge n/2}\tfrac{1}r\Delta_p(r)\phi_p[{\rm Piv}_{r,e}(\calD)].
\end{align*}
where the first inequality is based on the spatial Markov property, the fact that~$\calP_s$ is measurable in the edges outside~$\La_{2^{s-1}}$
and the inclusion~$\calP_s \subset {\rm Piv}_{2^s,e}(\calD)$. 
The second is a simple reindexing of the sum that uses~\eqref{eq:Delta_lengthen_out} 
and the monotonicity in~$r$ of the events~${\rm Piv}_{r,e}(\calD)$.
To obtain the upper bound in~\eqref{eq:Cov_Delta1}, it suffices to notice that~${\rm Piv}_{r,e}(\calD)$ may not occur for~$r \geq 3R$. 
\end{proof}

\begin{remark}\label{rem:dtv}
	The upper bounds in \eqref{eq:Cov_Delta2} and \eqref{eq:Cov_Delta3} 
	may be shown to apply to any event~$H$ which depends only on the edges in~$\La_R$. 
	Indeed, for~$H$ increasing the proof above applies mutatis mutandis. 
	When~$H$ is not increasing, some additional care needs to be taken when bounding 
	$|\phi[H \,|\, \omega_e = 1] - \phi[H \,|\, \omega_e = 0]|$ in \eqref{eq:cov_delta2ub} and \eqref{eq:cov_delta3ub}. 
	Notice, however, that in both equations one may produce a coupling of~$\phi[\cdot \,|\, \omega_e = 0]$ and~$\phi[\cdot \,|\, \omega_e = 1]$ 
	that produces equal configurations inside~$\La_R$ with probability~$1 - O(\Delta_p(R))$ and~$1 - O(\Delta_p(R,n))$, respectively. 
	As a consequence, the terms on the right-hand side in \eqref{eq:Cov_Delta2} and \eqref{eq:Cov_Delta3} 
	bound from above the distance in total variation between the restrictions of 
	$\phi[\cdot\,|\, \omega_e = 1]$ and~$\phi[\cdot\,|\, \omega_e = 0]$ to~$\La_R$. 
\end{remark}

\begin{proof}[Proposition~\ref{prop:deriv_cross}]
	We focus here on the expression for the crossing probability of a quad;
	the proof for the derivatives of probabilities of arm events is identical. 

	Fix~$p$ and some~$\eta$-regular quad~$(\calD,a,b,c,d)$ at scale~$R\le L(p)$.
	By grouping the contributions of different edges depending on their distance to the origin and the boundary of~$\Lambda_R$, 
	and using Lemma~\ref{lem:Cov_Delta} we have
		\begin{align*}
		\tfrac{{\rm d}}{{\rm d}p}\phi_{p}[\calC(\calD)]
		&= \frac1{p(1-p)} \sum_{e} \mathrm{Cov}_p(\omega_e, \calC(\calD)) 
		\ges R^2 \Delta_p(R) + \sum_{n =R}^{L(p)}  n \Delta_p(n)\Delta_p(R,n),
	\end{align*}
	since there are order~$R^2$ edges in the case~\eqref{eq:Cov_Delta1} and order~$n$ edges at distance~$n$ from~$\partial \calD$. 
	
	For the upper bound we use the three upper-bounds of Lemma~\ref{lem:Cov_Delta}.
	We split the edges into three categories:
	those at a distance less than~$2R$ from~$\partial \calD$, 
	those at a distance between~$2R$ and~$L(p)$ from~$\partial \calD$,
	and those at a distance larger than~$L(p)$ from~$\partial \calD$.

	For the two last categories,  keeping in mind that there are~$O(n)$ edges at a distance exactly~$n \geq 2R$ from~$\partial \calD$,
	and applying \eqref{eq:Cov_Delta2} and \eqref{eq:Cov_Delta3}, we have
	\begin{align}\label{eq:cat3}
    	\sum_{e:\,{\rm dist}(e, \partial \calD) \geq 2R} \!\!\!\!\!\!\mathrm{Cov}_p(\omega_e, \calC(\calD)) 
    	&\les \sum_{n= 2R}^{2L(p)} n \Delta_p(n)\Delta_p(R,n) 
    	 + \sum_{n\geq 2L(p)} n \Delta_p(L(p))\Delta_p(R,L(p))e^{-cn/L(p)}\nonumber\\
    	& \les  R^2 \Delta_p(R) + \sum_{n= R}^{L(p)} n \Delta_p(n)\Delta_p(R,n).
	\end{align}
	Indeed, the second sum of the middle term may be ignored due to the exponential factor. 
	The first term in the second line is only important in the particular situation when~$R$ is very close to~$L(p)$.
	 
	We turn to the edges in the first category: those close to~$\partial \calD$. We refer to Figure~\ref{fig:derivative} for an illustration. 
	Fix~$n\le 2R$. We start by studying the contribution to the derivative of the edges at a distance~$n$ of the arc~$(ab)$ (the cases of other arcs $(bc)$, $(cd)$ and $(da)$),
	and we precisely focus on the term in the right-hand side of \eqref{eq:Cov_Delta1} for these edges and corresponding 
	to some fixed~$n/2 \leq r\leq 3R$.
	Consider a family of vertices~$a=x_0,\dots,x_k$ found counterclockwise along~$(ab)$ at a~$\ell^\infty$ distance~$r$ from each other, with the last vertex~$x_k$ at distance at most~$r$ of~$b$ (when~$r$ is large~$k$ is equal to 0). Consider the event~$E_i$ that~$(x_{i-4}x_{i-3})$ is dual connected to~$(da)$ and~$(x_{i+3}x_{i+4})$ is connected to~$(cd)$, with the convention that if~$i\le 4/\eta$ or~$i\ge k-4/\eta$ then~$E_i$ is the full event. We claim that, for every~$e\subset\Lambda_r(x_i)$, 
	\[
	\phi_p[{\rm Piv}_{r,e}(\calD)]\les \phi_p[E_i].
	\]
	Indeed, the inequality is trivial when~$i\le 4/\eta$ or~$i\ge k-4/\eta$ since $E$ is the full event. 
	For the remaining values of~$i$ (which exist only when~$r\ll \eta R$), 
	observe that for~${\rm Piv}_{r,e}(\calD)$ to occur, $\Lambda_{2r}(x_i)$ needs to be connected by a primal path to $(cd)$
	and by dual paths to $(bc)$ and $(da)$. 
	In particular, the interface starting from~$d$ and delimiting the primal cluster of~$(cd)$ in~$\calD$
	necessarily hits~$\Lambda_{2r}(x_i)$ before hitting the arc $(ab)$. 
	Let $\Gamma$ be the section of this interface from $d$ up to the first time it hits ~$\Lambda_{2r}(x_i)$. 
	Conditionally on $\Gamma$, one may use Theorem~\ref{thm:RSWnear} to construct a dual path connecting~$\Gamma$ to~$(x_{i-4}x_{i-3})$ 
	and a primal path connecting~$\Gamma$ to~$(x_{i+3}x_{i+4})$  with probability uniformly bounded away from zero.
	These paths, together with $\Gamma$, induce the connections required by $E_i$. 
	
	Notice now that any edge at distance $n$ from $(ab)$ is contained in at least one $\La_r(x_i)$. 
	Conversely there are~$O(r)$ vertices in each~$\Lambda_{r}(x_i)$ at a distance exactly~$n$ from~$(ab)$. 
	Thus, summing over the edges~$e$ at a distance~$n$ from~$(ab)$ gives 
	\[
	\sum_{e:{\rm dist}(e,(ab))=n}\phi_p[{\rm Piv}_{r,e}(\calD)]\les r \sum_{i=0}^k\phi_p[E_i]\les r,
	\]
	where the second inequality is due to the fact that at most~$O(1)$ events~$E_i$ can occur simultaneously. 	
	
	One may do the same with edges at a distance~$n$ of~$(bc)$,~$(cd)$ and~$(da)$.
	Finally, summing the previous displayed equation over $r$ and using~\eqref{eq:Cov_Delta1}, we find
	\begin{align*}
    	\sum_{e:{\rm dist}(e,\partial \calD)\le 2R}\Cov_p(\omega_e,\calC(\calD))
    	&\les \sum_{n=1}^{2R}\sum_{r=n/2}^{3R}\tfrac1r\Delta_p(r)\sum_{e:{\rm dist}(e,\partial \calD)=n}\phi_p[{\rm Piv}_{r,e}(\calD)]\\
    	&\les \sum_{n=1}^{2R}\sum_{r=n/2}^{3R}\Delta_p(r)
    	\les \sum_{r=1}^{R}r \Delta_p(r).
	\end{align*}
	The above combined with \eqref{eq:cat3} implies the upper bound in \eqref{eq:deriv_cross}.
\end{proof}

\begin{figure}
	\begin{center}
	\includegraphics[width=0.6\textwidth]{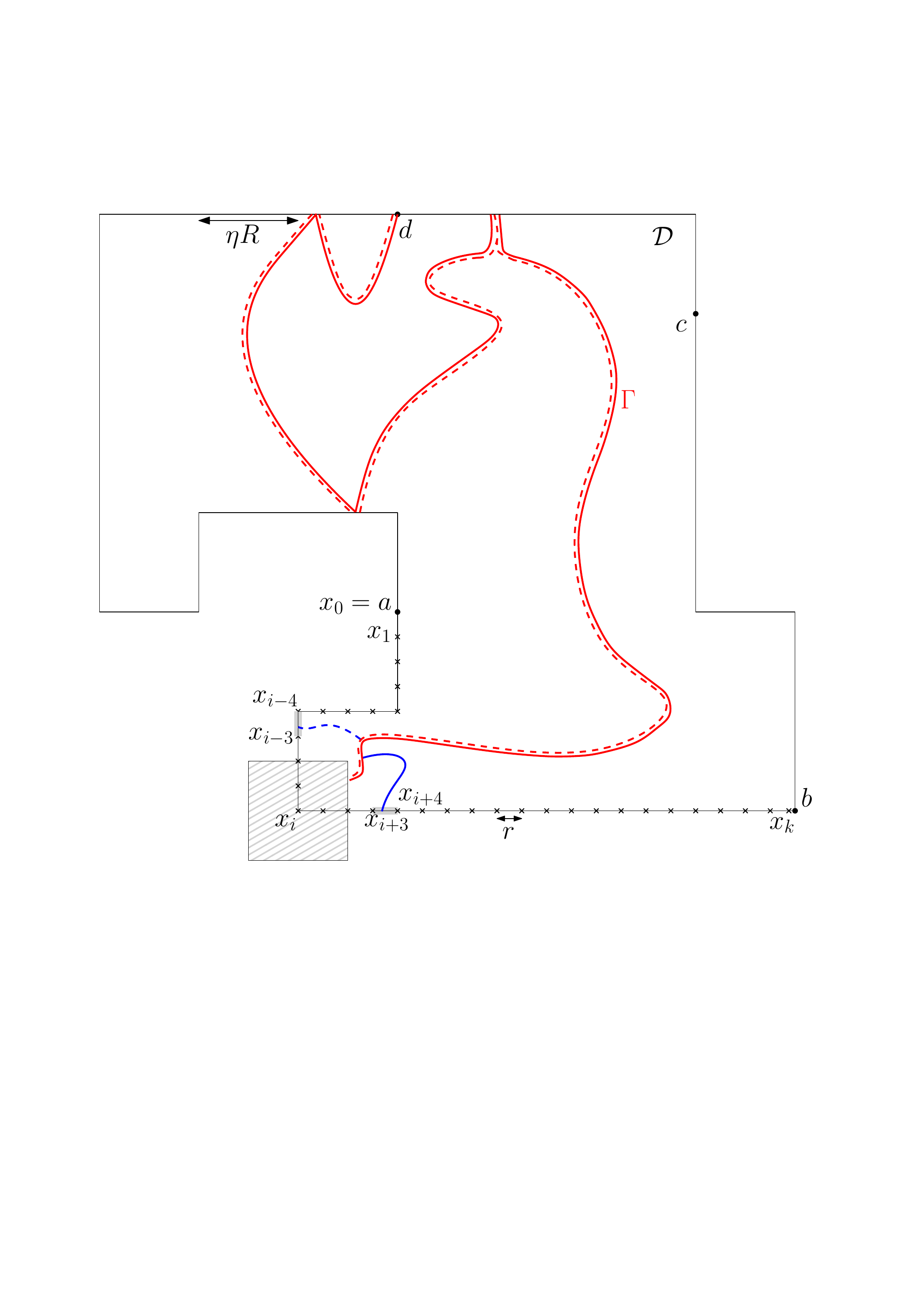}
	\caption{A depiction of the points~$x_0,\dots,x_k$ (note that~$x_k$ is not necessarily equal to~$b$). 
	The occurrence of ${\rm Piv}_{r,e}(\calD)$ for some $e \in \La_r(x_i)$ induces the existence of the exploration path~$\Gamma$ (in red).
	When orienting~$\Gamma$ in the direction of its exploration, that is from $d$ to $\La_{2r}(x_i)$, 
	$\Gamma$ has dual open edges on its right and primal ones on its left. 
	Thus, conditionally on $\Gamma$, the blue paths occur with uniformly positive probability, and induce the occurrence of $E_i$. } \label{fig:derivative}
	\end{center}
	\end{figure}

\subsection{Derivative for the mixing rate}
	
\begin{proposition}\label{prop:deriv_Delta_ub}
    For every~$p$ and two edges~$e$ and~$f$ at a distance~$R\le L(p)$ of each other, 
    \begin{equation}\label{eq:deriv_Delta_ub}
        \Big|\tfrac{{\rm d}}{{\rm d}p}\log \Cov_p(\omega_e,\omega_f)\big|
        \les \sum_{\ell=1}^R\ell\Delta_p(\ell) + \sum_{\ell=R}^{L(p)} \ell \Delta_p(\ell)\Delta_p(R,\ell).
    \end{equation}
\end{proposition}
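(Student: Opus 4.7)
The plan is to follow the strategy of Proposition~\ref{prop:deriv_cross}, replacing the crossing probability $\phi_p[\calC(\calD)]$ by $\Cov_p(\omega_e, \omega_f)$ and the covariance $\Cov_p(\omega_g, \calC(\calD))$ by the third cumulant
\[
\kappa_3(\omega_g; \omega_e, \omega_f) := \phi_p\bigl[(\omega_g - \phi_p[\omega_g])(\omega_e - \phi_p[\omega_e])(\omega_f - \phi_p[\omega_f])\bigr],
\]
which equivalently equals $\Cov_p(\omega_g, \omega_e\omega_f) - \phi_p[\omega_e]\Cov_p(\omega_g, \omega_f) - \phi_p[\omega_f]\Cov_p(\omega_g, \omega_e)$. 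Applying~\eqref{eq:Russ} to each of the three quantities $\phi_p[\omega_e\omega_f]$, $\phi_p[\omega_e]$, $\phi_p[\omega_f]$ produces the Russo-type identity
\[
\tfrac{d}{dp}\Cov_p(\omega_e, \omega_f) \;=\; \tfrac{1}{p(1-p)}\sum_{g\in \bbE}\kappa_3(\omega_g; \omega_e, \omega_f).
\]

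I would next establish the two-sided estimate $\Cov_p(\omega_e, \omega_f) \asymp \Delta_p(R)$. Writing $\Cov_p(\omega_e, \omega_f) = \phi_p[\omega_f](1-\phi_p[\omega_f])\bigl(\phi_p[\omega_e\,|\,\omega_f=1] - \phi_p[\omega_e\,|\,\omega_f=0]\bigr)$, the two conditional measures correspond to contracting versus deleting the edge $f$, which from the point of view of the complement of a small box around $f$ amounts to a boosting pair of boundary conditions on an outer flower domain of scale $O(1)$ around $f$. Theorem~\ref{thm:Delta_coupling}(ii), combined with the transfer between arm events and single-edge marginals provided by Lemma~\ref{lem:Delta_out_in}, then gives the claimed equivalence. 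The proof of the proposition thus reduces to establishing
\[
\sum_g |\kappa_3(\omega_g; \omega_e, \omega_f)| \;\les\; \Cov_p(\omega_e, \omega_f)\cdot \Bigl[\sum_{\ell=1}^R \ell\Delta_p(\ell) + \sum_{\ell=R}^{L(p)} \ell\Delta_p(\ell)\Delta_p(R,\ell)\Bigr],
\]
which one partitions according to the distance $n$ from $g$ to $\{e,f\}$, exactly as in the proof of Proposition~\ref{prop:deriv_cross}. For $n\le 2R$, a pivotality-like argument in the spirit of the first bound of Lemma~\ref{lem:Cov_Delta}, applied to each of the three covariances composing $\kappa_3$, produces the first sum, with the required prefactor $\Cov_p(\omega_e, \omega_f)$ emerging from the fact that a pivotal-like configuration must simultaneously connect both $e$ and $f$; for $n > 2L(p)$, the exponential-decay bound~\eqref{eq:Cov_Delta3} of Lemma~\ref{lem:Cov_Delta} produces a negligible contribution.

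The main obstacle is the medium regime $2R \le n \le 2L(p)$. The naive approach, which bounds each of the three summands of $\kappa_3$ using Remark~\ref{rem:dtv} and then invokes the triangle inequality, yields only the absolute bound $|\kappa_3|\les \Delta_p(n)\Delta_p(R,n)$; divided by $\Cov_p(\omega_e, \omega_f)\asymp \Delta_p(R)$ this exceeds the target by a factor $1/\Delta_p(R)$. The missing ingredient is a cancellation among the three terms of $\kappa_3$: the pairwise covariances $\Cov_p(\omega_g,\omega_e)$ and $\Cov_p(\omega_g,\omega_f)$ account for the effect of $\omega_g$ on each marginal separately, whereas $\Cov_p(\omega_g, \omega_e\omega_f)$ encodes the joint effect, and their combination isolates only the genuine three-point correlation. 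To extract this cancellation I would mimic the coupling proof of Lemma~\ref{lem:Cov_Delta}(ii): couple $\phi_p[\cdot\,|\,\omega_g=0]$ and $\phi_p[\cdot\,|\,\omega_g=1]$ via Theorem~\ref{thm:coupling}(ii) inside $\La_{n/4}(g)$, extend to a well-separated inner flower domain at scale $n/2$ via Lemma~\ref{lem:boosting_out_to_boosting_in}, and then estimate the effect of the resulting boosting pair on $\Cov_p(\omega_e, \omega_f)$ (viewed as a function of boundary conditions on $\La_{n/2}$) using a multiplicative analog of Lemma~\ref{lem:Delta_boosing_pair} that I would derive for the covariance. This should give the sharp estimate $|\kappa_3| \les \Cov_p(\omega_e, \omega_f)\,\Delta_p(n)\Delta_p(R,n)$, in direct parallel with the way the analogous bound $|\Cov_p(\omega_g, A_\sigma)| \les \phi_p[A_\sigma]\Delta_p(n)\Delta_p(R,n)$ is obtained for arm events in the proof of Lemma~\ref{lem:Cov_Delta}.
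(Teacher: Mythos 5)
Your architecture is the paper's: the Russo-type identity for $\tfrac{{\rm d}}{{\rm d}p}\Cov_p(\omega_e,\omega_f)$ as a sum of third cumulants, the partition of the edges $g$ by distance, and, crucially, the diagnosis that the naive triangle-inequality bound on $\kappa_3$ misses a multiplicative factor of $\Cov_p(\omega_e,\omega_f)$ that must come from a cancellation. There are, however, three concrete gaps. First, $\Cov_p(\omega_e,\omega_f)\asymp\Delta_p(R)^2$, not $\Delta_p(R)$ (this is \eqref{eq:Cov_Delta2} applied to the one-edge quad, and is recorded in Remark~\ref{rmk:h}): conditioning on $\omega_f$ survives as a boosting pair at scale $R/4$ around $f$ only with probability $\asymp\Delta_p(1,R/4)\asymp\Delta_p(R)$, and that boosting pair then boosts the event $\{\omega_e=1\}$ --- which lives at scale $1$, not a crossing event at scale $R$ --- by a further factor $\Delta_p(1,R)\asymp\Delta_p(R)$. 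Second, the cancellation is needed in \emph{all three} regimes, not only for $2R\le n\le 2L(p)$. For $g$ at distance $\ell\le R/2$ from $e$, the naive per-edge bound $|\kappa_3|\les\Delta_p(\ell)^2$ gives, after dividing by $\Delta_p(R)^2$ and summing, $\sum_\ell \ell\Delta_p(\ell)/(\Delta_p(\ell,R)\Delta_p(R))$, which exceeds the target $\sum_\ell\ell\Delta_p(\ell)$ by as much as $\Delta_p(R)^{-2}$ at small $\ell$; similarly the regime $n>2L(p)$ is \emph{not} negligible under a naive application of \eqref{eq:Cov_Delta3} --- it contributes $L(p)^2\Delta_p(L(p))\Delta_p(R,L(p))/\Delta_p(R)^2$ after normalization, again off by $\Delta_p(R)^{-2}$. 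The required per-edge bounds are $\les\Delta_p(\ell)\Delta_p(R)^2$ for close edges and $\les\Delta_p(R)\Delta_p(\ell\wedge L(p))^2e^{-c\ell/L(p)}$ for far ones, each carrying the extra $\Delta_p(R)^2$.

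Third, the key cancellation lemma is left as a to-do, and the coupling route you sketch has a pitfall: a difference of covariances under two measures is not the average over a coupling of differences of conditional covariances, because covariance is quadratic in the measure; the law of total covariance produces an additional term $\Cov(\phi^\zeta[\omega_e],\phi^\zeta[\omega_f])$ over the randomness of the induced boundary condition $\zeta$, which must be controlled separately. The paper sidesteps the need for a new ``multiplicative Lemma~\ref{lem:Delta_boosing_pair} for covariances'' by a purely algebraic version of your idea: it conditions on the boundary conditions $\xi$ induced on an intermediate domain $G$ containing the \emph{two edges whose covariance is taken} ($e$ and $g$ when $g$ is close to $e$; $e$ and $f$ when $g$ is far from both), rewrites the cumulant as $\sum_\xi(\phi_p[B_\xi\,|\,\omega_\cdot]-\phi_p[B_\xi])\,F(\xi)$ with $F(\xi)=\Cov_{G,p}^\xi(\cdot,\cdot)+(\phi_{G,p}^\xi[\cdot]-\phi_p[\cdot])(\phi_{G,p}^\xi[\cdot]-\phi_p[\cdot])$, bounds $\|F\|_\infty\les\Delta_p(\cdot)^2$ uniformly in $\xi$ via Lemma~\ref{lem:Cov_Delta} and Theorem~\ref{thm:Delta_coupling} (this is where the factor $\asymp\Cov_p(\omega_e,\omega_f)$ is extracted), and bounds the outer sum by the total-variation estimate of Remark~\ref{rem:dtv}. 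With these corrections your plan becomes the paper's proof; as written, the medium-regime lemma is unproved and the close and far regimes are asserted to follow from bounds that do not suffice.
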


Exactly as for crossing events, one may use the next section to replace~$\sum_{\ell\le R}\ell \Delta_p(\ell)$ by~$R^2\Delta_p(R)$; see Corollary~\ref{cor:deriv} of Section~\ref{sec:7}.

\begin{remark}\label{rmk:h}
    Since~$\Cov_p(\omega_e,\omega_f)$ was shown in~\eqref{eq:Cov_Delta2}
    to be comparable to~$\Delta_p(R)^2$, the above should be understood as a bound on the logarithmic derivative of~$\Delta_p(R)$. 
    Indeed,~\eqref{eq:deriv_Delta_ub} combined with~\eqref{eq:Cov_Delta2} yields
    \begin{align}\label{eq:ub_deriv_Delta_int}
    	\Big|\log \frac{\Delta_p(R)}{\Delta_{p_c}(R)}\Big|
    	\les \int_{p_c}^p \Big(\sum_{\ell=1}^R\ell\Delta_u(\ell)+ \sum_{\ell=R}^{L(u)} \ell \Delta_u(\ell)\Delta_u(R,\ell)\Big) \,du,
    \end{align}
    for all~$p$ and~$R \le L(p)$.
    This inequality will be useful when~$R$ is of the same order as~$L(p)$, 
    in which case the terms with~$R\le \ell \le L(p)$
    may be absorbed by the first sum on the right-hand side. 

\end{remark}	

\begin{proof}
	Fix~$p$,~$e$ and~$f$ as in the statement. 	
	Then 
	\begin{align}\label{eq:CC'0}
    	\tfrac{{\rm d}}{{\rm d}p} \Cov_p(\omega_e,\omega_f)
		&= \sum_{g \in \bbE}  \phi_p[\omega_e   \omega_f   \omega_g ] - \phi_p[\omega_e   \omega_f ]\phi_p[\omega_g] - \phi_p[\omega_f   \omega_g ]\phi_p[\omega_e]\\
		&\qquad - \phi_p[\omega_e   \omega_g]\phi_p[\omega_f ] +2 \phi_p[\omega_e]\phi_p[\omega_f]\phi_p[\omega_g ].\nonumber
	\end{align}
	We will write these terms differently, depending on the position of~$g$. 
	First, consider an edge $g \in \La_{R/2}(e)$; the corresponding term may be written as
	\begin{align}\label{eq:CC'}
	\phi_p[\omega_f]\Big( \phi_p[\omega_e   \omega_g |\omega_f ] - \phi_p[\omega_g|\omega_f]\phi_p[\omega_e]- \phi_p[\omega_e|\omega_f]\phi_p[\omega_g ] 
		- \phi_p[\omega_e   \omega_g  ] +2 \phi_p[\omega_e]\phi_p[\omega_g ]\Big).
	\end{align}
	Write~$\ell$ for the distance between~$e$ and~$g$ and set~$G := \La_{3\ell/2}(e)$.	
	For any event~$A$ depending only on the edges in~$G$, we have 
	\begin{align*}
		\phi_p[A] = \sum_\xi \phi_p[B_\xi] \phi_{G,p}^\xi [A] \quad \text{ and }\quad  \phi_p[A \,|\,\omega_f] = \sum_\xi \phi_p[B_\xi\,|\,\omega_f] \phi_{G,p}^\xi [A],
	\end{align*}
	where the sum is over all boundary conditions~$\xi$ imposed on~$G$ by the configuration outside of~$G$, and~$B_\xi$ is the event that the boundary conditions induced on~$G$ are~$\xi$. Thus,~\eqref{eq:CC'} may be written as
	\begin{align*}
		\phi_p[\omega_f]\sum_\xi  (\phi_p[B_\xi\,|\,\omega_f] - \phi_p[B_\xi])
		(\underbrace{\phi_{G,p}^\xi[\omega_e   \omega_g] -\phi_{G,p}^\xi[\omega_g]\phi_p[\omega_e]- \phi_{G,p}^\xi[\omega_e]	\phi_p[\omega_g] + \phi_p[\omega_e]\phi_p[\omega_g]}_{F(\xi)}).
	\end{align*}
	By adding and subtracting~$\phi_{G,p}^\xi[\omega_e] \phi_{G,p}^\xi[\omega_g ]$, we find 
	\begin{align*}
		F(\xi) = \Cov_{G,p}^\xi(\omega_e,  \omega_g) +(\phi_{G,p}^\xi[\omega_g ] - \phi_p[\omega_g ] )( \phi_{G,p}^\xi[\omega_g]  -\phi_p[\omega_g]).
	\end{align*}
	where~$\Cov_{G}^\xi$ stands for the covariance under the measure~$\phi_{G,p}^\xi$. 
	Next, we apply Lemma~\ref{lem:Cov_Delta} to~$\phi_{G,p}^\xi$ instead of~$\phi_p$ 
	-- notice that the choice of~$G$ ensures that both~$e$ and~$g$ are at a distance of order $\ell$ from~$\partial G$, 
	and the lemma does indeed apply as explained in Remark~\ref{rem:Cov_Delta} -- to obtain
	\begin{align*}
		\Cov_{G}^\xi(\omega_e,  \omega_g) &\les \Delta_p(\ell)^2.
	\end{align*}
	In addition, for any~$\xi$, Theorem~\ref{thm:Delta_coupling} yields
	\begin{align*}
	\phi_{G,p}^\xi[\omega_g ] - \phi_p[\omega_g ] \les \Delta_p(\ell) \quad \text{ and }\quad
	\phi_{G,p}^\xi[\omega_e]  -\phi_p[\omega_e] \les \Delta_p(\ell).
	\end{align*}
	Then,~\eqref{eq:CC'} may be bounded above by 
	\begin{align}\label{eq:CC10}
		 \sum_\xi   \|F\|_\infty\cdot
|\phi_p[B_\xi|\omega_f] - \phi_p[B_\xi]|	\les \Delta_p(R) \Delta_p(\ell, R)\Delta_p(\ell)^2 
		\asymp \Delta_p(\ell) \Delta_p(R)^2,
	\end{align}
	since the total variation distance between the restrictions of~$\phi_p[\cdot\,|\omega_f]$ and~$\phi_p$ to~$G$ 
	is bounded by a universal multiple of~$\Delta_p(R) \Delta_p(\ell, R)$, as shown in Lemma~\ref{lem:Cov_Delta} (see also Remark~\ref{rem:dtv}).
	The second equivalence is given by the quasi-multiplicativity of~$\Delta_p$~\eqref{eq:delta_quasi}. 
	
	Edges~$g \in \La_{R/2}(f)$ have the same contribution to~\eqref{eq:CC'} due to symmetry. 
	Finally, we consider edges~$g$ that are outside~$\La_{R/2}(e)\cup\La_{R/2}(f)$; write~$r$ for the distance between~$g$ and~$e$. 
	Let~$G$ be the domain formed of the edges at a distance at least~$R/4$ from the segment uniting~$e$ and~$f$;
	notice that~$g$ is at a  distance at least~$R/4$ from~$G$. 
	Applying the same argument as in the first case, with the roles of~$\omega_f$ and~$\omega_g$ inverted, we find
	that the term corresponding to~$g$ in~\eqref{eq:CC'0} is
	\begin{align}
		&\phi_p[\omega_g]\sum_\xi  \big(\phi_p[\xi\,|\,\omega_g] - \phi_p[\xi]\big)\big(\Cov_{G,p}^\xi(\omega_e,  \omega_f) 
		+( \phi_{G,p}^\xi[\omega_e]  -\phi_p[\omega_e])(\phi_{G,p}^\xi[\omega_f ] - \phi_p[\omega_f ] ) \big) \nonumber \\
    	&\les \Delta_p(\ell \wedge L(p) )\Delta_p(R,\ell \wedge L(p))e^{-c r/L(p)} \Delta_p(R)^2\nonumber\\
		&\asymp \Delta_p(R)\Delta_p(\ell \wedge L(p) )^2 e^{-c r/L(p)}\label{eq:CC11},
	\end{align}
	where the sum in the left-hand side is over all the boundary conditions~$\xi$ on~$G$ imposed by the configuration outside of~$G$. 
	Indeed, by Lemma~\ref{lem:Cov_Delta}, the total variation distance between the restrictions of $\phi[\cdot\,|\,\omega_g]$ and~$\phi$ to~$G$ 
	is bounded by a universal multiple of~$\Delta_p(\ell \wedge L(p) )\Delta_p(R,\ell \wedge L(p))e^{-c r/L(p)}$,
	while each term in the second parentheses of the left-hand side is bounded by multiples of~$\Delta_p(R)^2$.
	
	Summing now over~$g$ and using~\eqref{eq:CC10} and~\eqref{eq:CC11}, we find 
	\begin{align*}
    	\tfrac{{\rm d}}{{\rm d}p} \Cov_p(\omega_e,\omega_f) 
		&\les \sum_{\ell =1}^{R/2} \ell \Delta_p(\ell) \Delta_p(R)^2 + \sum_{\ell >R/2} \ell \Delta_p(R)\Delta_p(\ell \wedge L(p) )^2 e^{-c r/L(p)}\\
		&\les \Delta_p(R)^2 \Big(\sum_{\ell =1}^{R} \ell \Delta_p(\ell) + \sum_{\ell =R}^{L(p)} \ell \Delta_p(R,\ell)\Delta_p(\ell)\Big).
	\end{align*}
	In the second inequality, we use the exponential term to eliminate all terms with~$\ell > L(p)$ and~\eqref{eq:Delta_lengthen_in} and~\eqref{eq:Delta_lengthen_out}
	to adjust the range of~$\ell$ in the sum. 
	Finally, divide by~$\Cov_p(\omega_e,\omega_f) \asymp \Delta_p(R)^2$ to obtain the desired result. 
\end{proof}

\section{Lower bound on~$\Delta_p(r,R)$: proof of Proposition~\ref{prop:lower bound Delta}}\label{sec:b}

In this section, we prove Proposition~\ref{prop:lower bound Delta}. The section will be divided in two as the case~$1\le q<4$ is quite different from the case~$q=4$.

\subsection{Lower bound on~$\Delta_p(r,R)$ for~$1\le q<4$}

In this section, we assume that~$1\le q<4$. We will prove the following stronger statement.
\begin{proposition}\label{prop:lower bound pi4 q<4}
Fix~$1\le q<4$. There exists~$\delta=\delta(q)>0$ such that for~$p \in (0,1)$ and~$r\le R\le L(p)$,
\begin{equation}\label{eq:stronger}
\pi_4(p,r,R)\ges (r/R)^{2-\delta}.
\end{equation}
\end{proposition}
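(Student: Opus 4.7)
The strategy is to reduce the lower bound on $\pi_4(p,r,R)$ to an iterative dyadic argument, combined with a gain factor coming from the parafermionic observable inputs of~\cite{DumManTas20}. The trivial lower bound $\pi_4(p,r,R) \ges (r/R)^2$ already follows from Theorem~\ref{thm:RSWnear} via standard quasi-multiplicativity (applied dyadically, using the FKG inequality to build four disjoint macroscopic arms in each annulus of ratio 2); the whole game is to upgrade the exponent $2$ to $2-\delta$ for some $\delta=\delta(q)>0$.

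The key external input is the following consequence of the parafermionic observable machinery of~\cite{DumManTas20}, valid for $1\le q<4$: the crossing estimates from that paper hold uniformly in (possibly fractal) domains and in arbitrary boundary conditions, and they persist in the entire near-critical window $R\le L(p)$ (they rely only on Theorem~\ref{thm:RSWnear}-type inputs, which are stable below the characteristic length, combined with the discrete holomorphicity of the parafermionic observable, which is an algebraic identity independent of $p$). From this, one extracts the following ``dimension gain'' at each scale: there exists a universal $\eta=\eta(q)>0$ such that for every $r\le R\le L(p)$ with $R/r$ a fixed constant (say $R=2r$),
\ben
\pi_4(p,r,R) \ge 2^{-2+\eta},
\een
or equivalently that conditionally on a 4-arm configuration in ${\rm Ann}(r,R)$, with positive probability an additional primal or dual macroscopic arm can be grown inside ${\rm Ann}(r,R)$, forcing $\pi_4$ at the next scale to decay by a factor strictly smaller than $1/4$.

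With this in hand, the proof of Proposition~\ref{prop:lower bound pi4 q<4} is then obtained by iterating over $k=\lfloor \log_2(R/r)\rfloor$ dyadic scales and invoking quasi-multiplicativity of $\pi_4$: combining the per-scale bound $\pi_4(p,2^j r,2^{j+1} r)\ges 2^{-2+\eta}$ with the separation-of-arms argument (a direct consequence of Theorem~\ref{thm:RSWnear}, see e.g.~\cite{Nol08,DumManTas20}), one obtains
\ben
\pi_4(p,r,R)\ges \prod_{j=0}^{k-1} 2^{-2+\eta}\asymp (r/R)^{2-\eta},
\een
as required with $\delta=\eta$. Setting $\delta$ smaller if necessary to absorb the multiplicative constants completes the proof.

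The main obstacle is justifying the per-scale gain in the near-critical window rather than only at $p=p_c$, which is the central content of \cite{DumManTas20}. At $p=p_c$ this is explicitly stated there; for general $p$ with $R\le L(p)$, one needs to verify that the discrete holomorphicity relations of the parafermionic observable combine with the uniform crossing estimates of Theorem~\ref{thm:RSWnear} to produce exactly the same fractal-domain RSW. Once this is established, everything else is a routine induction. This is the only step where the restriction $q<4$ is essential, which is why the case $q=4$ must be treated separately (as is done in the next subsection of the paper).
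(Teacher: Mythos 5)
There is a genuine gap, and it sits exactly where the proof has to do work. Your plan is to extract from~\cite{DumManTas20} a single-scale gain $\pi_4(p,r,2r)\ge 2^{-2+\eta}$ and then iterate it through quasi-multiplicativity over $k=\log_2(R/r)$ dyadic annuli. This iteration cannot close. Quasi-multiplicativity only gives $\pi_4(p,r,R)\ge c^{\,k}\prod_j\pi_4(p,2^jr,2^{j+1}r)$ for some unquantified constant $c<1$ coming from RSW/separation of arms, so the output is $(c\cdot 2^{-2+\eta})^{k}=(r/R)^{2-\eta+\log_2(1/c)}$; since neither $\eta$ nor $c$ is quantitatively controlled, there is no way to guarantee $\eta>\log_2(1/c)$, and the per-scale loss swallows the per-scale gain. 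Relatedly, the single-scale statement you invoke is not what~\cite{DumManTas20} provides and is essentially contentless for this purpose: for a fixed ratio $R/r=2$, RSW already gives $\pi_4(p,r,2r)\ge c_0>0$, and whether $c_0$ exceeds $1/4$ is irrelevant precisely because of the constants above. The actual content of~\cite{DumManTas20} is the \emph{multi-scale} statement (a polynomial lower bound on the expectation of an increasing ``number of coarse pivotals'' variable $\mathbf M_r(\calD,R)$, uniformly over domains $\calD$), which is essentially equivalent to the conclusion you are trying to re-derive by iteration; so your argument is simultaneously circular and lossy.

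The second gap is the near-critical extension. You propose to redo the parafermionic-observable analysis at $p\ne p_c$, conceding that this is ``the main obstacle''. But the vanishing-contour-integral identity for the observable is special to the self-dual point, so this route is at best a substantial new argument and not a ``routine'' verification. The paper avoids it entirely with a one-line monotonicity trick: after reducing to $p\ge p_c$ by duality, the quantity $\mathbf M_r(\calD,R)$ is an \emph{increasing} random variable, so $\phi_{\calD,p}[\mathbf M_r(\calD,R)]\ge\phi_{\calD,p_c}[\mathbf M_r(\calD,R)]\ges (R/r)^{\delta_0}$ by \eqref{eq:pmon}, and the only near-critical input needed is Theorem~\ref{thm:RSWnear} to control the auxiliary circuit event at scale $R$. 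If you want to salvage your write-up, you should replace both the dyadic iteration and the near-critical re-derivation by this transfer-by-monotonicity of the critical multi-scale estimate.
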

This implies \eqref{eq:delta_thm4} since~$\Delta_p(r,R)\ge \pi_4(p,r,R)$ by the observation at the beginning of Section~\ref{sec:influence vs. pivotality} 
(one does not even require the polynomial improvement \eqref{eq:delta_pi4_improvement}).
Let us mention that \eqref{eq:stronger} is expected to fail for~$q=4$, which explains why the case~$q=4$ needs to be treated separately. 

\begin{proof}
By symmetry, we only need to treat the case~$p\ge p_c$. The case~$p=p_c$ was obtained in a recent paper~\cite[Prop.~6.8]{DumManTas20}. The argument extends readily to our setting. 

Indeed, if $E$ denotes the event that $\La_{3R}$ contains both an open circuit and
a dual open circuit surrounding  $\La_{2R}$, with the open one being connected to $\partial \La_{4R}$, 
then it is shown in~\cite{DumManTas20} that
$$\pi_4(p,r,R)\ge \phi_{p}[E] \inf_{\calD}\phi_{\calD,p}[ \mathbf M_r(\calD,R)],$$ 
where the infimum is taken over all simply connected domains containing $\La_{2R}$ and contained in $\La_{3n}$ 
and $\mathbf M_r(\calD,R)$ is an increasing random variable defined in~\cite[Sec.~1.5]{DumManTas20}. 
When $R \leq L(p)$, Theorem~\ref{thm:RSWnear} implies that $\phi_{p}[E] \ges 1$. 
Moreover, since $\mathbf M_r(\calD,R)$ is increasing, 
\begin{align*}
	\inf_{\calD}\phi_{\calD,p}[ \mathbf M_r(\calD,R)]
\geq  \inf_{\calD}\phi_{\calD,p_c}[ \mathbf M_r(\calD,R)] \ges (R/r)^{\delta_0},
\end{align*} 
for some universal $\delta_0 > 0$, 
where the last inequality is given by~\cite[Prop. 1.4]{DumManTas20}. Combining the above displays produces the desired bound.
\end{proof}

\begin{remark}
The proof looks simple but we wish to insist that the whole difficulty of the argument is in \cite{DumManTas20}.
\end{remark}

\subsection{Lower bound on~$\Delta_p(r,R)$ for~$q=4$}\label{sec:q=4}

The reasoning of the previous section does not apply for~$q=4$, as it is expected that
\[
\pi_4(p,r,R)\asymp (\tfrac rR)^2
\] (see Remark~\ref{rmk:ahah}). Nevertheless, this is not contradictory with the fact that~$\Delta_p(r,R)\ge \delta(\tfrac rR)^{2-\delta}$ as we know from Section~\ref{sec:influence vs. pivotality} that~$\Delta_p(r,R)$ is polynomially larger than~$\pi_4(p,r,R)$. 
Since we do not currently know how to prove that~$\pi_4(p,r,R)\asymp (\tfrac rR)^2$, 
we adopt a direct approach to prove Proposition~\ref{prop:lower bound Delta}, that does not involve the comparison to~$\pi_4(p,r,R)$.

\begin{proof}[ \eqref{eq:delta_thm4}] The lower bound on~$\Delta_p(r,R)$ follows directly from the combination of the next two propositions, which respectively correspond to the required bound at~$p_c$ and the stability of~$\Delta_p$ below the characteristic length. 
\end{proof}
\begin{proposition}[Lower bound at~$p_c$]\label{prop:lower_bound_Delta4c}
    For~$q=4$, there exists~$\delta_0>0$ such that for every~$R\ge 2r>0$,
    \begin{equation}\label{eq:lower_bound_Delta4c}
	    \Delta_{p_c}(r,R)\ges\pi_2(r,R)(r/R)\ges (r/R)^{2-\delta_0}.
    \end{equation}
\end{proposition}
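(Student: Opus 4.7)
The second inequality $\pi_2(r,R)(r/R)\ges(r/R)^{2-\delta_0}$ is immediate from \eqref{eq:TWO ARM}, which gives $\pi_2(p_c,r,R)\ges(r/R)^{1-c}$ for a universal $c>0$; take $\delta_0=c$. All the substance lies in the first inequality $\Delta_{p_c}(r,R)\ges \pi_2(r,R)(r/R)$, and the plan is to obtain it via the parafermionic observable, the tool specific to $q=4$. The strategy of Proposition~\ref{prop:lower bound pi4 q<4} cannot be transported because the uniform-in-boundary-condition crossing estimates of \cite{DumManTas20} degenerate at $q=4$.

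First, by \eqref{eq:Delta_out_in2} applied with $H=A_{r/2}$, it suffices to exhibit a lower bound of order $\pi_2(r,R)(r/R)$ on
\[
\phi^1_{\Lambda_R,p_c}[A_{r/2}]-\phi^0_{\Lambda_R,p_c}[A_{r/2}].
\]
The plan is to work on the annular domain $\Lambda_R\setminus\Lambda_{r/2}$ equipped with Dobrushin-type boundary conditions: a single wired arc $I$ of length of order $r$ on $\partial\Lambda_R$ (the complementary arc being free), together with free boundary on $\partial\Lambda_{r/2}$. On this configuration one introduces the $q=4$ parafermionic observable $F$ of spin $\sigma=1$, whose value at a medial edge $z$ is the expectation of a signed weight involving the winding of the exploration interface between the two Dobrushin points. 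A standard local computation (which, at $q=4$ and $p=p_c$, is exact rather than approximate) shows that $F$ is discretely holomorphic.

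Discrete holomorphicity will then be exploited via discrete contour integration: the sum of $F$ against the appropriate tangent weights along any simple cycle in the annulus enclosing $\Lambda_{r/2}$ is a topological invariant. Evaluated on a cycle along $\partial\Lambda_R$, this invariant encodes the imbalance between the wired arc $I$ and the free complement and, via \eqref{eq:CBC} and the boosting machinery of Theorem~\ref{thm:boosting_pair}, is bounded above by $\phi^1_{\Lambda_R}[A_{r/2}]-\phi^0_{\Lambda_R}[A_{r/2}]$. Evaluated on a cycle hugging $\partial\Lambda_{r/2}$, the same invariant will be bounded below, using Theorem~\ref{thm:RSWnear} and the standard separation-of-arms argument, by a multiple of $\pi_2(r,R)$ times the probability that the two arms land on the appropriate Dobrushin side, which by mixing and symmetry is a fixed fraction of the whole two-arm probability. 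The factor $r/R$ then appears because the Dobrushin setup uses a single wired arc among the $O(R/r)$ arcs of length $r$ that tile $\partial\Lambda_R$; averaging (or rather selecting an optimal arc via pigeonhole) converts $\pi_2(r,R)$ into $\pi_2(r,R)(r/R)$.

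The main obstacle is the marginality of the spin $\sigma=1$: at $q<4$ one has $\sigma<1$ and the observable genuinely contracts from scale to scale, which yields polynomial improvements for free; at $q=4$ no such contraction is available, so the holomorphicity identity is only a sharp equality and a priori permits cancellations between distant boundary contributions. Ruling out such cancellations requires combining the contour identity with the near-critical RSW estimates of Theorem~\ref{thm:RSWnear} and with boosting arguments à la Theorem~\ref{thm:boosting_pair} to control the sign and modulus of $F$ near both $\partial\Lambda_R$ and $\partial\Lambda_{r/2}$. Once this control is in place, the contour identity delivers the announced lower bound on $\phi^1_{\Lambda_R}[A_{r/2}]-\phi^0_{\Lambda_R}[A_{r/2}]$, which through \eqref{eq:Delta_out_in2} completes the proof.
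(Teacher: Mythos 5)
Your reduction of the second inequality to \eqref{eq:TWO ARM} is correct, and you have correctly identified the parafermionic observable as the tool and the general shape of the argument (a contour identity relating boundary contributions at scales $r$ and $R$). But the plan as written has a genuine gap precisely at the point where the $q=4$ case is hard, and the route you propose is one the paper explicitly shows does \emph{not} work. The paper's identity is \eqref{eq:fundamental_holomorphicity}, taken with the wired arc degenerated to a single boundary point $x$ (so the domain carries free boundary conditions, not your Dobrushin arc of length $r$); at $q=4$ the complex phase collapses to $\pm1$ and the winding contribution becomes the deterministic factor $(4-d_y)$, so the contour identity reads $\sum_{y}(4-d_y)\phi_\Omega^0[A(x,y)]=\tfrac{3\pi}{2}$ exactly. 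Subtracting this for $\Omega=\La_R$ and $\Omega=\Ann(r,R)$ gives \eqref{eq:obs_diff}. Your plan then asks to bound the outer-boundary evaluation above by $\phi^1_{\La_R}[A_{r/2}]-\phi^0_{\La_R}[A_{r/2}]$ and the inner-boundary evaluation below by a multiple of $\pi_2(r,R)$ directly; the paper states in the remark following \eqref{eq:obs_diff} that exactly this direct two-sided estimate fails, because of boundary terms near the corners of the inner box that cannot be controlled. Your closing paragraph acknowledges the cancellation problem but resolves it only by assertion (``once this control is in place''), which is where the proof is missing.

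What the paper does instead, and what your proposal lacks, are two specific devices. First, the left-hand side of \eqref{eq:obs_diff} is related to $\Delta_{p_c}(r,R)$ not by a contour estimate but by the coupling machinery of Section~\ref{sec:10}: conditioning $\phi^0_{\La_R}$ on $\{\La_r\equiv 0\}$ shifts each $\phi^0_{\La_R}[A(x,y)]$ by an amount comparable to $\Delta_{p_c}(r,R)$ times the shift produced by conditioning on $\{\La_{R/2}\equiv 0\}$, so the whole left-hand side equals $\Delta_{p_c}(r,R)\,\Sigma_R$ up to constants, with $\Sigma_R$ depending on $R$ only. Second, since $\Sigma_R$ (and the half-plane one-arm factor $\pi_1^+(R)$ appearing on the right-hand side) cannot be evaluated, the paper divides the identity for general $r$ by the same identity at $r=R/2$; this ratio trick cancels the uncontrollable quantities and yields \eqref{eq:obs_diff4}, where the factor $r/R$ comes from comparing $\sum_{k\le r/2}\phi_{\bbU}^0[(k,0)\lra\partial\La_r]$ with $\sum_{k\le R/4}\phi_{\bbU}^0[(k,0)\lra\partial\La_R]$ via \eqref{eq:CBC} — not from a pigeonhole over $O(R/r)$ boundary arcs as you suggest. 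Without these two steps your argument cannot be completed along the lines you describe.
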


\begin{proposition}[Stability of~$\Delta$]\label{cor:stability_Delta}
    For~$q=4$, every~$p$ and~$2\le r \le R\le L(p)$,
    \begin{equation}\label{eq:stability_Delta4}
	    \Delta_{p}(r, R) \asymp \Delta_{p_c}(r,R).
    \end{equation}
\end{proposition}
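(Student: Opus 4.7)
The plan is to integrate the logarithmic derivative bound for $\Delta_p(R)$ given by Proposition~\ref{prop:deriv_Delta_ub} combined with Remark~\ref{rmk:h}, namely
\begin{equation*}
    \Big|\log \tfrac{\Delta_p(R)}{\Delta_{p_c}(R)}\Big|
    \les \int_{p_c}^p \Big(\textstyle\sum_{\ell=1}^R \ell\,\Delta_u(\ell) + \sum_{\ell=R}^{L(u)} \ell\,\Delta_u(\ell)\,\Delta_u(R,\ell)\Big)\,du,
\end{equation*}
and to show the right-hand side is $O(1)$. The two-parameter statement $\Delta_p(r,R) \asymp \Delta_{p_c}(r,R)$ then follows from the one-parameter case via the quasi-multiplicativity of Corollary~\ref{cor:Delta_quasi}. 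Note that $L$ is maximized at $p_c$, so $R \le L(p) \le L(u)$ throughout the range of integration. The case $p < p_c$ reduces to $p > p_c$ by duality, so I focus on $p \ge p_c$.

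The far-field sum is controlled unconditionally: the lower bound in Proposition~\ref{prop:deriv_cross} yields $\sum_{\ell=R}^{L(u)} \ell\,\Delta_u(\ell)\,\Delta_u(R,\ell) \les \tfrac{d}{du}\phi_u[\calC(\La_R)]$, whose integral over $[p_c,p]$ is at most $1$ since $\phi_u[\calC(\La_R)] \in [0,1]$. The near-field sum is the crux: I expect $\sum_{\ell \le R} \ell \Delta_u(\ell) \les R^2 \Delta_u(R)$, which combined with the further lower bound $R^2\Delta_u(R) \les \tfrac{d}{du}\phi_u[\calC(\La_R)]$ from Proposition~\ref{prop:deriv_cross} would yield again an $O(1)$ contribution after integration. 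This desired upper bound on the near-field sum would follow from the polynomial lower bound
\begin{equation*}
    (\star)\qquad \Delta_u(\ell,R) \ges (\ell/R)^{2-\delta_0} \quad \text{for all } \ell \le R \le L(u),
\end{equation*}
via $\Delta_u(\ell) \asymp \Delta_u(R)/\Delta_u(\ell,R) \les \Delta_u(R)(R/\ell)^{2-\delta_0}$ and the summation $\sum_{\ell\le R}\ell (R/\ell)^{2-\delta_0} \les R^2$.

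Proposition~\ref{prop:lower_bound_Delta4c} gives $(\star)$ exactly at $u=p_c$ (with some constant $c_{LB}>0$), and I would promote it to all of $[p_c,p]$ by a continuity/bootstrap argument. Define $T$ as the supremum of $u \in [p_c,p]$ for which $(\star)$ holds on $[p_c,u]$ with a fixed small absolute constant $c_0 \in (0,c_{LB})$; by continuity and Proposition~\ref{prop:lower_bound_Delta4c}, $T > p_c$. Suppose for contradiction that $T < p$; by continuity $(\star)$ is saturated at $T$ by some pair $(r^*,R^*)$. On $[p_c,T]$ the preceding estimate applies and yields $|\log(\Delta_T(R)/\Delta_{p_c}(R))| \le K(c_0)$ for all $R \le L(T)$, where $K(c_0)$ grows only polynomially in $1/c_0$. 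Quasi-multiplicativity then gives $\Delta_T(r,R) \asymp \Delta_{p_c}(r,R) \ges c_{LB} e^{-2K(c_0)} (r/R)^{2-\delta_0}$; choosing $c_0$ so that $c_{LB}\,e^{-2K(c_0)} > 2c_0$ (possible by optimizing $c_0 \mapsto c_0 e^{2K(c_0)}$ on $(0,c_{LB})$) produces a strict improvement over $(\star)$ at $T$, contradicting its saturation. Hence $T=p$ and the whole argument goes through.

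The main obstacle is precisely the closure of the bootstrap with absolute constants: one must track carefully that the implicit constant in $\sum_{\ell \le R} \ell \Delta_u(\ell) \les c_0^{-1} R^2 \Delta_u(R)$ enters only polynomially in $1/c_0$, so that after integration the constant $K(c_0)$ remains polynomial in $1/c_0$ rather than exponential. Once this is secured, the optimization of $c_0 \in (0,c_{LB})$ described above provides the needed improvement, and both the upper and lower directions of $\Delta_p(R) \asymp \Delta_{p_c}(R)$ are captured by the same two-sided bound $|\log(\Delta_p(R)/\Delta_{p_c}(R))| \les 1$.
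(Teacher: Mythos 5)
Your overall frame --- integrate the logarithmic-derivative bound of Proposition~\ref{prop:deriv_Delta_ub} (Remark~\ref{rmk:h}), split the right-hand side into a near-field sum $(A)=\int_{p_c}^p\sum_{\ell\le R}\ell\Delta_u(\ell)\,du$ and a far-field sum $(B)$, and absorb $(B)$ into $\int_{p_c}^p\tfrac{d}{du}\phi_u[\calC(\La_R)]\,du\le 1$ via the lower bound of Proposition~\ref{prop:deriv_cross} --- is exactly the paper's, and that part is correct. The gap is in your treatment of $(A)$. You reduce it to the polynomial lower bound $(\star)$ for all $u\in[p_c,p]$; but for $q=4$ that bound away from $p_c$ is precisely Proposition~\ref{prop:lower bound Delta}, whose proof is the combination of Proposition~\ref{prop:lower_bound_Delta4c} (the bound at $p_c$) with the very stability statement you are proving. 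Your bootstrap is meant to break this circularity, and it does not close. Quantitatively: if $(\star)$ holds with constant $c_0$ on $[p_c,T]$, your own computation gives $K(c_0)\asymp 1/c_0$ (via $\sum_{\ell\le R}\ell\Delta_u(\ell)\les c_0^{-1}R^2\Delta_u(R)$ and $\int_{p_c}^{T}R^2\Delta_u(R)\,du\les 1$), so the constant recovered at $u=T$ is $c_{LB}e^{-C/c_0}$, and you need this to exceed $2c_0$. As $c_0\to0$ the left side vanishes exponentially faster than the right; for $c_0$ comparable to $c_{LB}$ you would need $e^{-C/c_{LB}}>2$. The minimum of $c_0\mapsto c_0e^{2K(c_0)}$ is of order $Ce$, not small, so no admissible $c_0$ exists unless the absolute constants happen to satisfy $c_{LB}\ges C$, which nothing guarantees ($C$ aggregates many RSW and quasi-multiplicativity constants and is large, $c_{LB}$ is small). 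Your diagnosis that polynomial growth of $K$ in $1/c_0$ suffices is therefore mistaken: the bootstrap would close only if $K(c_0)\le\alpha\log(1/c_0)+O(1)$ with $\alpha<1/2$, which is far stronger. (There is also an unaddressed uniformity issue in asserting $T>p_c$, since $(\star)$ must hold simultaneously for unboundedly many pairs $(\ell,R)$ as $u\downarrow p_c$, but this is secondary.)

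The paper avoids $(\star)$ at $u\ne p_c$ altogether and instead controls $(A)$ by a quantity that is \emph{provably} stable in $p$. It introduces $\bfN_\Omega(u)=\sum_{y\in\partial_{\rm in}\Omega}\phi^0_{\Omega,u}[y\leftrightarrow\partial\La_{R/2}]$ over topological $R$-annuli $\Omega$. Lemma~\ref{lem:four_arm_N}, a consequence of the parafermionic integral relation \eqref{eq:fundamental_holomorphicity} special to $q=4$, gives $\bfN_\Omega(p)\asymp\bfN_\Omega(p_c)$ for the minimizing $\Omega$; and a geometric argument (disjoint boxes $\La_{8^k/2}(z_k)$ placed along a path from each $y\in\partial_{\rm in}\Omega$, each contributing $8^{2k}\Delta_u(8^k)$ times $\phi^0_{\Omega,u}[y\leftrightarrow\partial\La_{R/2}]$ to the covariance sum) yields $\tfrac{d}{du}\log\bfN_\Omega(u)\ges\sum_{\ell\le R}\ell\Delta_u(\ell)$. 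Integrating gives $(A)\les\log\bfN_\Omega(p)-\log\bfN_\Omega(p_c)\les1$. Some external input of this kind --- a monotone quantity whose logarithmic derivative dominates the near-field sum and whose stability is known independently --- is what your argument is missing.
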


The section is divided as follows.
The proof makes use of the parafermionic observable introduced by Smirnov~\cite{Smi10}, we therefore start by recalling this notion. 
Then, we prove each proposition in a separate section, in the order in which they are stated.

\subsubsection{Parafermionic observable: a crash-course}

Since the parafermionic observable is used only in this section, we give minimal details and refer to the literature, for instance the review article~\cite{Dum17a} and the research paper~\cite{DumManTas20} (which is implementing a reasoning very similar to the present one, with similar notations).  We also recommend that the reader take a look at Figures~\ref{fig:domain}.

Parafermionic observables are usually defined in Dobrushin domains, 
but in the present study we limit ourselves to situations where the wired arc is reduced to a single point,
and therefore the domain has free boundary conditions. We do, however, authorise the domains to be non-simply-connected. 

Fix a finite connected subgraph~$\Omega$ of~$\mathbb Z^2$ and a vertex~$x\in\Omega$ with a neighbour~$x'$ in the infinite connected component of~$\mathbb Z^2\setminus \Omega$. 
Recall that~$\Omega^\diamond$ is spanned by the edges of~$(\bbZ^2)^\diamond$ bordering the faces of~$(\bbZ^2)^\diamond$ that contain vertices of~$\Omega$. As such the vertices of~$\Omega^\diamond$ have either degree~$2$ or~$4$ in~$\Omega^\diamond$ (here one should be careful when looking at a vertex of $\Omega^\diamond$ that is corresponding to an edge outside of $\Omega$ with both endpoints in $\Omega$: in this case we think of this vertex as being split into two ``prime ends'' of degree 2 in $\Omega^\diamond$, but this case does not occur for domains considered in this paper). 
Let~$e_x$ be the first edge of~$\Omega^\diamond$ bordering the face of~$(\bbZ^2)^\diamond$ containing $x$, 
when going around said face in clockwise order, starting from~$xx'$.

For a configuration~$\omega$ on~$\Omega$, let~$\overline\omega$ be the loop configuration on~$\Omega^\diamond$ associated to~$\omega$. 
In the loop configuration, the loop passing through~$e_x$ is called  \emph{exploration path};
it is denoted by~$\gamma=\gamma(\omega)$ and is oriented counterclockwise, so as to have primal open edges on its left and dual-open ones on its right.
For an edge~$e \in \gamma$, let~$\text{W}_{\gamma}(e,e_x)$ be the winding of~$\gamma$ between~$e$ and~$e_x$, 
that is the number of left turns minus the number of right turns taken by~$\gamma$ when going from~$e$ to~$e_x$ multiplied by~$\pi/2$. 

\begin{definition}\label{def:parafermionic observable}
    For~$(\Omega,x)$ as above, the {\em parafermionic observable}~$F=F_{\Omega,x}$  is defined for any (medial) edge~$e$ of~$\Omega^\diamond$ by
    \begin{equation*}
	      F(e)~:=~\phi^{0}_{\Omega,p_c,4}[\mathrm{W}_{\gamma}(e,e_x){\rm e}^{{\rm i}\mathrm{W}_{\gamma}(e,e_x)} \ind_{e\in \gamma}].
    \end{equation*}
 \end{definition}

The parafermionic observable satisfies a very special property first observed in~\cite{Smi10} (see also~\cite[Thm. 5.16]{Dum17a} for a statement with a similar notation). 
For every vertex of~$\Omega^\diamond$ with four incident edges in~$\Omega^\diamond$,
\begin{equation}\label{rel_vertex}
	\sum_{i=1}^4 \eta(e_i) F(e_i) = F(e_1) - {\rm i}F(e_2) - F(e_3) + {\rm i}F(e_4) = 0,
\end{equation}
where~$e_1$,~$e_2$,~$e_3$ and~$e_4$ are the four edges incident to~$v$, indexed in clockwise order,
and~$\eta(e_i)$ is the complex number of norm one with same direction as~$e_i$ and orientation from~$v$ towards the other endpoint of~$e_i$.
Summing this relation over all vertices of~$\Omega^\diamond$ of degree four, we obtain that 
\begin{equation}\label{eq:rel_vertex}
	\sum_{e\in \calC}\eta(e)F(e)=0,
\end{equation}
where~$\calC$ is the set of medial-edges of~$\Omega^\diamond$ having exactly one endpoint of degree two in~$\Omega^\diamond$, 
and~$\eta(e)$ is the complex number of modulus one, collinear with the edge~$e$ and oriented towards the outside of~$\Omega$. 

\begin{figure}
	\begin{center}
	\includegraphics[width=0.6\textwidth]{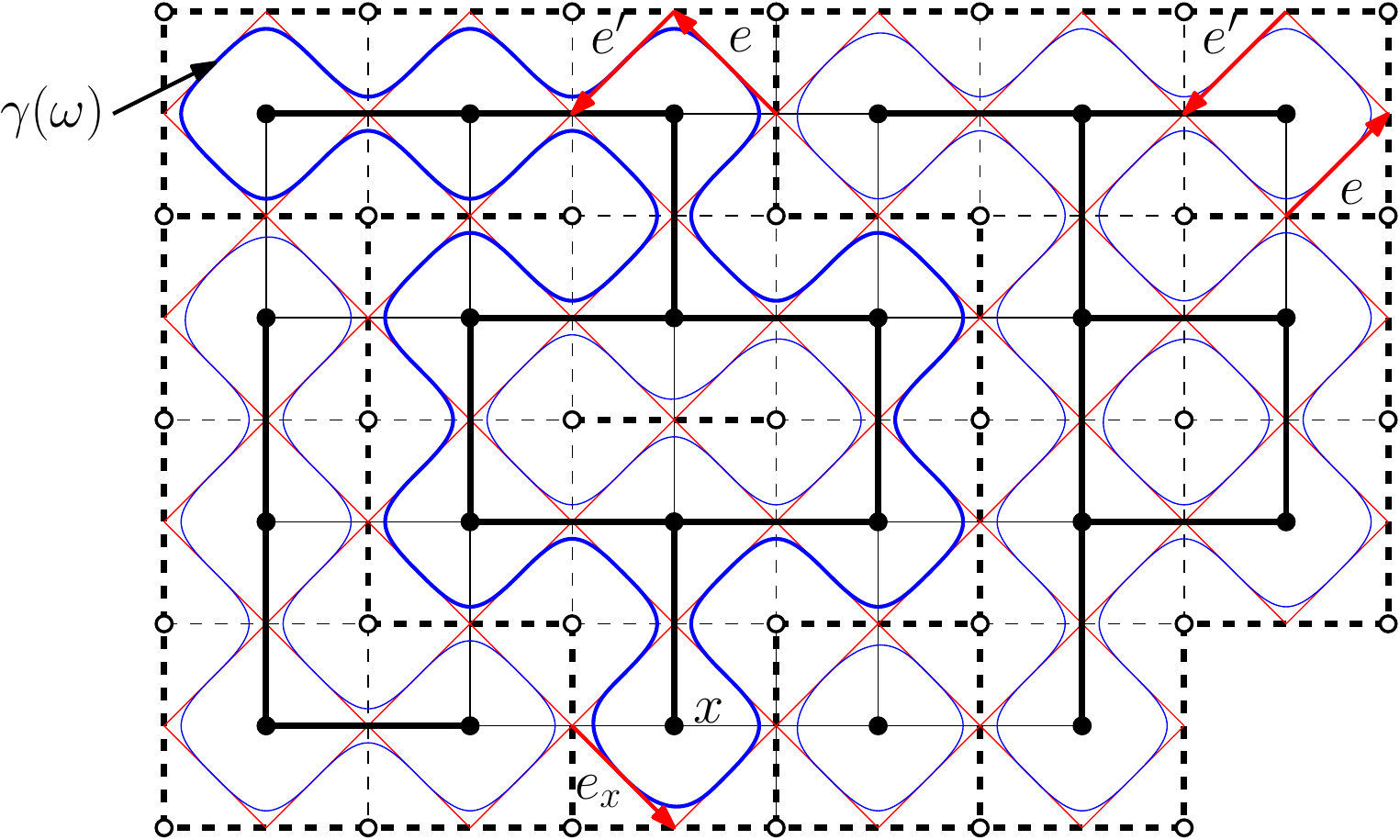}
	\caption{The primal and dual graphs in plain and dashed black lines. In bold (plain and dashed respectively), the configurations $\omega$ and $\omega^*$. The graph $\Omega^\diamond$ is in red and the configuration $\overline\omega$ in blue. The path $\gamma(\omega)$ is in bold blue. We also presented two examples of edges $e$ and $e'$, one for which the corresponding vertex of the primal graph has degree three, and one for which it has degree two.}
	\label{fig:domain}
	\end{center}
	\end{figure}
	
\begin{remark}
This relation should be understood as stating that
the contour integral of the parafermionic observable along the boundary of~$\Omega^\diamond$ is~$0$. 
While it is common to use the above in simply connected domains, we insist that~\eqref{eq:rel_vertex} is valid for any~$\Omega$ as above (it is important that $x$ is on the exterior face). 
\end{remark}

For random-cluster models with general values of~$q$ between 1 and 4, and the loop~$O(n)$ models, a similar property was used to obtain estimates for the two-point functions, see e.g.~\cite{DumSmi12,DumSidTas16,DumGla18,DumManTas20}. Here, we propose a new use of this property. 

For~$q = 4$, the complex phase of the observable (that is~${\rm e}^{{\rm i}\mathrm{W}_{\gamma}(e,e_x)}$) is invariant under addition of factors~$2\pi$ to the winding. Therefore, it is (almost) determined by the orientation of~$e$. 
Indeed, for any oriented edge~$e$ and any~$\omega$ such that~$\gamma$ passes through~$e$
\begin{align}\label{eq:etaW}
	\eta(e){\rm e}^{{\rm i}\mathrm{W}_{\gamma}(e,e_x)}
	=\begin{cases}
	+1 &\text{ if~$\gamma$ is oriented like~$e$},\\
	-1 &\text{ if~$\gamma$ is oriented opposite to~$e$}.	
	\end{cases}
\end{align}
Moreover, for~$y \in \partial \Omega$ 
(exclude from this explanation the situation where~$y$ has exactly two opposite neighbours in~$\Omega$), 
there exist two edges~$e,e' \in \calC$ bordering the face of~$(\bbZ^2)^\diamond$ containing~$y$
such that~$\gamma$ passes through~$e$ if and only if it passes through~$e'$, see Figure~\ref{fig:domain}. 
When this happens and~$y$ is not equal to~$x$,~$e$ and~$e'$ may be chosen such that~$\gamma$ always passes first through~$e$ towards the exterior of~$\Omega$, then through~$e'$, towards the interior.
Finally,~$\gamma$ performs~$4 - d_y$ positive~$\pi/2$ turns 
between the passage through~$e$ and~$e'$, where~$d_y$ is the degree of~$y$ inside~$\Omega$. 
Thus, 
\begin{align}\label{eq:wind_dif}
	\mathrm{W}_{\gamma}(e,e_x)  = \mathrm{W}_{\gamma}(e',e_x)  + (4-d_y)\pi/2.
\end{align}
When~$y=x$, the two contributions to \eqref{eq:rel_vertex} of~$e$ and~$e'$ are~$0$ and~$3\pi/2$, respectively. 
Inserting~\eqref{eq:etaW} and~\eqref{eq:wind_dif} into~\eqref{eq:rel_vertex} yields
\begin{equation}\label{eq:fundamental_holomorphicity}
	\sum_{\substack{y\in\partial\Omega\\y\ne x}}(4-d_y)\phi_{\Omega}^0[A(x,y)]=\tfrac{3\pi}2,
\end{equation}
where~$A(x,y)$ is the event that~$\gamma$ passes between~$y$ and its neighbour outside~$\Omega$.
When~$y$ has a neighbour in the infinite component of~$\Omega^c$, then~$A(x,y) = \{x \longleftrightarrow y\}$,
but since we do not ask~$\Omega$ to be simply connected, this is not always the case for other $y$. 

A more sophisticated analysis yields~\eqref{eq:fundamental_holomorphicity} 
also when some~$y \in \calC$ have exactly two opposite neighbours in~$\Omega$.

\subsubsection{Lower bound on~$\Delta_{p_c}(r,R)$ at~$p_c$: proof of Proposition~\ref{prop:lower_bound_Delta4c}}
 
    The second inequality follows from~\eqref{eq:LOWER_BOUND_ONE_ARM}, we therefore focus on the first one.
    By subtracting~\eqref{eq:fundamental_holomorphicity} for~$\Omega=\Lambda_R$ and~$\Omega={\rm Ann}(r,R)$ with~$x=(0,R)$,
    we find
    \begin{equation}\label{eq:obs_diff}
    	\sum_{y\in\partial\Lambda_R}(4-d_y)\big[\phi_{\Lambda_R}^0[A(x,y)]-\phi_{\Ann(r,R)}^0[A(x,y)]\big]
    	=\sum_{y\in \partial\Lambda_r}(4-d_y)\phi_{{\rm Ann}(r,R)}^0[A(x,y)].
    \end{equation}

    We will now estimate the terms on the left and right-hand sides of the above. 
  \begin{remark}  It is reasonable to expect that the left-hand side is of order~$R  \pi_1^+(R)^2\Delta_{p_c}(r,R)$, 
    while the right one of order~$r \pi_1^+(R)\pi_1^+(r)\pi_2(r,R)$, which would produce the desired result.  Nevertheless, we are not able to prove this due to boundary terms near corners of the inner box, which we cannot control. 
    Instead, we will use a more sophisticated strategy. 
Let us mention that the conjectural scaling limit of the model suggests that~$\pi_1^+(r,R)\asymp (r/R)$ and~$\pi_2(r,R)\asymp \sqrt{r/R}$, which would imply that~$\Delta_{p_c}(r,R)\asymp \sqrt{r/R}$.        \end{remark} 
    First, on the right-hand side, the event~$A(x,y)$ occurs if and only if~$x$ and~$y$ are connected in~$\omega$ and~$\Lambda_r$ and~$\Lambda_R$ are connected by a path in~$\omega^*$. Quasi-multiplicativity implies that
    \begin{align}\label{eq:obs_diff1}
        \sum_{y\in \partial\Lambda_r}(4-d_y)\phi_{{\rm Ann}(r,R)}^0[A(x,y)]
        \asymp \pi_1^+(R)\pi_2(r,R)\sum_{k\le r/2}\phi_\mathbb U^0[(k,0)\longleftrightarrow \partial\Lambda_r],
    \end{align}
    where~$\mathbb U:=\{x\in\mathbb Z^2:x_1\le0\text{ or }x_2\le0\}$ is the lower-left three-quarter plane. 
    
    On the left-hand side, for~$y\in\partial\Lambda_R$,~$A(x,y)$ corresponds to the event that~$x$ and~$y$ are connected.       
    Also, the measure~$\phi_{{\rm Ann}(r,R)}^0$ may be viewed as the measure in~$\Lambda_R$ conditioned on the event~$\{\La_r \equiv 0\}$ that every edge with one endpoint in~$\Lambda_{r-1}$ is closed.
    The previous study of~$\Delta_{p_c}(r,R)$ thus implies that for any~$r \le R/2$, 
        \begin{align*}
    	\phi_{\La_R}^0[A(x,y)]-\phi_{\La_R}^0[A(x,y)\,|\, \La_r\equiv 0] 
    	\asymp \Delta_{p_c}(r,R) \big[\phi_{\La_R}^0[A(x,y)]-\phi_{\La_R}^0[A(x,y)\,|\, \La_{R/2}\equiv 0]\big].
    \end{align*}
    We deduce that 
    \begin{equation}\label{eq:obs_diff2}
        \sum_{y\in\partial\Lambda_R}(4-d_y)\big[ \phi_{\Lambda_R}^0[A(x,y)]-\phi_{\Ann(r,R)}^0[A(x,y)]\big]
        \asymp   \Delta_{p_c}(r,R) \Sigma_R,
    \end{equation}
    where \begin{equation}\label{eq:Sigma}
    \Sigma_R := \sum_{y\in\partial\Lambda_R}(4-d_y)\big[ \phi_{\Lambda_R}^0[A(x,y)]-\phi_{\La_R}^0[A(x,y)\,|\, \La_{R/2}\equiv 0]\big]\end{equation} is a constant that depends on~$R$ only. 
    Inserting~\eqref{eq:obs_diff1} and~\eqref{eq:obs_diff2} into~\eqref{eq:obs_diff} gives
    \begin{align*}
    	\Delta_{p_c}(r,R) \Sigma_R \asymp \pi_1^+(R)\pi_2(r,R)\sum_{k\le r/2}\phi_\mathbb U^0[(k,0)\longleftrightarrow \partial\Lambda_r],
    \end{align*}
    for any~$r \le R/2$. Divide the relation above for an arbitrary~$r \le R/2$ with the same relation for~$r = R/2$ (we use that~$\Delta_{p_c}(r/2,R)\asymp \Delta_{p_c}(r,R)$) to find
    \begin{equation}\label{eq:obs_diff4}
    	\Delta_{p_c}(r,R)
    	\asymp \pi_2(r,R)\, \frac{\sum_{k\le r/2}\phi_\bbU^0[(k,0)\longleftrightarrow \partial\La_r]}{\sum_{k\le R/4}\phi_\bbU^0[(k,0)\longleftrightarrow \partial\La_R]}\ges \pi_2(r,R)(r/R).
    \end{equation}
    The inequality above is obtained by summing over~$j = 0,\dots, R/r$ and~$k = 1,\dots, r/2$ the inequality below :
        \begin{equation*}
    \phi_\mathbb U^0[(k+rj/2,0)\longleftrightarrow \partial\Lambda_R]\le\phi_\mathbb U^0[(k,0)\longleftrightarrow \partial\Lambda_r],
    \end{equation*}
   which is a direct consequence of the comparison between boundary conditions \eqref{eq:CBC}.

\begin{remark}\label{rmk:lower bound Delta q=4}
Applying~\eqref{eq:obs_diff4} with~$r=1$, the comparison between boundary conditions~\eqref{eq:CBC} and~\eqref{eq:LOWER_BOUND_ONE_ARM} imply that 
\[\pi_1(R)\Delta_{p_c}(R)\ges \frac{\pi_1(R)\pi_2(R)}{\sum_{k\le R/4}\phi_\bbU^0[(k,0)\longleftrightarrow \partial\La_R]}\ge \frac{\pi_1(R)\pi_2(R)}{\tfrac14R\pi_1(3R/4)}\ges \pi_2(R)/R\ges R^{c-2}.\] 
\end{remark}

\subsubsection{Stability of~$\Delta_p(r,R)$: proof of Proposition~\ref{cor:stability_Delta}}

The proof is based on the following quantity. Call a subgraph~$\Omega$ of~$\bbZ^2$ an {\em topological~$R$-annulus} if it is of the form~$\La_R \setminus H$ with~$H$ 
a simply connected domain such that~$\La_{R/8}\subset H \subset \La_{R/4}$; see Figure~\ref{fig:observable}. 
The boundary of~$\Omega$ is split into~$\partial \La_R$ and~$\partial_{\rm in}\Omega:=\partial\Omega\setminus\partial\Lambda_R$. 
Set
\begin{equation*}
	\bfN_{\Omega}(p):=\sum_{y\in \partial_{\rm in}\Omega}\phi_{\Omega,p}^0[y\longleftrightarrow \partial \La_{R/2}].
\end{equation*}
The first observation is the following lemma stating that for~$q=4$ the quantity~$\mathbf N_\Omega(p)$ does not depend on the choice of the~$R$-annulus~$\Omega$, or on the choice of~$p$ such that~$R\le L(p)$.
\begin{lemma}\label{lem:four_arm_N}
	For every~$p\ge p_c$ and~$R \le L(p)$, 
	\begin{align}\label{eq:four_arm_N}
    	\sup_\Omega\mathbf N_{\Omega}(p^*)
    	\asymp \sup_\Omega\mathbf N_\Omega(p_c)\asymp\inf_\Omega\mathbf N_{\Omega}(p_c)\asymp \inf_\Omega\mathbf N_{\Omega}(p),
	\end{align}
	where the infimum and supremum are taken over topological~$R$-annuli.
	\end{lemma}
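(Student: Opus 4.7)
The plan is to exploit two independent ingredients: the \emph{exact} parafermionic identity \eqref{eq:fundamental_holomorphicity} available at $p=p_c$ precisely because $q=4$, to compare $\mathbf N_\Omega(p_c)$ across all topological $R$-annuli; and the stability results of Section~\ref{sec:7} (Theorem~\ref{thm:stability}(ii) and \eqref{eq:mixing b.c.}) to transfer the conclusion from $p_c$ to $p$ and $p^*$. First, note that \eqref{eq:pmon} immediately gives three of the inequalities as trivialities: $\sup_\Omega \mathbf N_\Omega(p^*) \le \sup_\Omega \mathbf N_\Omega(p_c)$, $\inf_\Omega \mathbf N_\Omega(p_c) \le \inf_\Omega \mathbf N_\Omega(p)$, and $\inf_\Omega \mathbf N_\Omega(p_c) \le \sup_\Omega \mathbf N_\Omega(p_c)$. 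It thus suffices to prove the three converse bounds: (a) $\sup_\Omega \mathbf N_\Omega(p_c) \les \inf_\Omega \mathbf N_\Omega(p_c)$, (b) $\sup_\Omega \mathbf N_\Omega(p_c) \les \sup_\Omega \mathbf N_\Omega(p^*)$, (c) $\inf_\Omega \mathbf N_\Omega(p) \les \inf_\Omega \mathbf N_\Omega(p_c)$.

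For (a), fix a topological $R$-annulus $\Omega$ and apply \eqref{eq:fundamental_holomorphicity} at $p_c$ with marked point $x:=(0,R)\in \partial\La_R$, splitting the boundary sum into its outer and inner parts:
\[
\sum_{y\in \partial\La_R\setminus\{x\}}(4-d_y)\phi_\Omega^0[A(x,y)]
+\sum_{y\in \partial_{\rm in}\Omega}(4-d_y)\phi_\Omega^0[A(x,y)]=\tfrac{3\pi}{2}.
\]
For $y\in \partial_{\rm in}\Omega$, the exploration path crosses the medial edge between $y$ and a neighbour in $H$ exactly when $y$ belongs to the primal cluster emanating from $x$, so $A(x,y)=\{x\xlra{\omega\cap\Omega} y\}$. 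Quasi-multiplicativity of the one-arm event \eqref{eq:MULTIPLICATIVITY}, together with an RSW construction \eqref{eq:RSWnear} that connects any primal cluster reaching $\partial\La_{R/2}$ to $x$ through $\Ann(R/2,R)$ with uniformly positive probability, yields
\[
\phi_\Omega^0[A(x,y)]\asymp \alpha_R\cdot\phi_\Omega^0[y\lra\partial\La_{R/2}]
\]
for some constant $\alpha_R>0$ uniform in $\Omega$ and $y\in\partial_{\rm in}\Omega$. Summing over such $y$ shows the inner sum is $\asymp \alpha_R\,\mathbf N_\Omega(p_c)$.

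The crux of (a) is therefore to bound the inner sum both above and below by positive constants uniform in $\Omega$. The upper bound $\le 3\pi/2$ is immediate from the identity. The lower bound requires showing that the outer sum is bounded away from $3\pi/2$: by comparison between boundary conditions, $\phi_\Omega^0[A(x,y)]\le\phi_{\La_R}^0[x\lra y]$, while the identity applied to $\La_R$ gives $\sum_{y\in\partial\La_R\setminus\{x\}}(4-d_y)\phi_{\La_R}^0[x\lra y]=3\pi/2$; the hole $H\supset\La_{R/8}$ must produce a constant-order deficit. This deficit is obtained by applying \eqref{eq:mixing b.c.} and RSW on a macroscopic set of $y$'s on the opposite side of $\partial\La_R$, whose connection to $x$ in $\La_R$ is affected by a uniformly-positive conditional probability of a dual circuit in $\Ann(R/8,R/4)$ surrounding $H$ (available in $\Omega$ with free boundary but not forced in $\La_R$). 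This is the main obstacle of the proof, since the argument must be uniform over all simply connected holes $\La_{R/8}\subset H\subset\La_{R/4}$, in particular fractal ones; the key is that one only needs a constant bound, and the macroscopic obstruction provided by the inclusion $\La_{R/8}\subset H$ is already sufficient.

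For (b) and (c) we argue that, uniformly in $\Omega$ and $y\in\partial_{\rm in}\Omega$ and for any $p$ with $R\le L(p)$,
\[
\phi_{\Omega,p}^0[y\lra\partial\La_{R/2}]\asymp \phi_{\Omega,p_c}^0[y\lra\partial\La_{R/2}].
\]
Indeed, for $p\ge p_c$ monotonicity \eqref{eq:pmon} gives one inequality directly; for the converse one uses Theorem~\ref{thm:stability}(ii) applied within a box of size $R$ around $y$, together with the mixing \eqref{eq:mixing b.c.} to replace the BC on $\partial\Omega$ by those induced by $\phi_{\Omega,p_c}^0$, noting that $R\le L(p)$ throughout. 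The same argument applies at $p^*$ by duality. Summing over $y\in\partial_{\rm in}\Omega$ and then taking sup or inf over $\Omega$ yields (b) and (c), completing the chain of equivalences.
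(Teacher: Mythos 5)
There are two genuine gaps, one in your step (a) and one in your steps (b)--(c). In (a), your plan is to bound the inner-boundary sum $\sum_{y\in\partial_{\rm in}\Omega}(4-d_y)\phi_\Omega^0[A(x,y)]$ \emph{below by a universal constant} by exhibiting a ``constant-order deficit'' in the outer sum caused by the hole $H$. This target is wrong: that inner sum is comparable to $\Sigma_R\asymp\pi_1^+(R)\,\mathbf N_\Omega(p_c)\asymp R\,\pi_1^+(R)^2$ (see \eqref{eq:obs_diff1}--\eqref{eq:obs_diff2} and Remark~\ref{rmk:ahah}), which is conjecturally of order $R^{-1}$ and in any case is not known to be bounded away from $0$. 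Indeed, your own mechanism cannot produce more than this: each term $\phi_{\La_R}^0[A(x,y)]-\phi_\Omega^0[A(x,y)]$ is at most of order $\pi_1^+(R)^2$ and there are $O(R)$ boundary points. What the argument actually requires — and what the paper proves by subtracting \eqref{eq:fundamental_holomorphicity} for $\La_R$ and for $\Omega$ and invoking the $\Delta_{p_c}$ covariance estimates — is only that the inner sum equals, up to universal constants, a quantity $\Sigma_R$ \emph{depending on $R$ but not on $\Omega$}; comparability, not a constant lower bound, is what yields $\sup_\Omega\mathbf N_\Omega(p_c)\asymp\inf_\Omega\mathbf N_\Omega(p_c)$. (A secondary inaccuracy: for $y\in\partial_{\rm in}\Omega$ the event $A(x,y)$ is not $\{x\lra y\}$ but requires in addition a dual connection from $\partial_{\rm in}\Omega$ to $\partial\La_R$, as in \eqref{eq:OmegaA}.)

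The gap in (b)--(c) is more serious. The pointwise stability $\phi_{\Omega,p}^0[y\lra\partial\La_{R/2}]\asymp\phi_{\Omega,p_c}^0[y\lra\partial\La_{R/2}]$, uniform over boundary points $y$ of an arbitrary (possibly fractal) topological $R$-annulus with free boundary conditions, does not follow from Theorem~\ref{thm:stability}(ii): that theorem concerns the bulk one-arm probability, and the paper explicitly disclaims stability ``for probabilities of arm events in the half-plane with boundary conditions strictly on the half-plane'', which is exactly the type of quantity you need. Worse, invoking Theorem~\ref{thm:stability} here is circular for $q=4$: its proof goes through Corollary~\ref{cor:deriv}, which uses \eqref{eq:delta_thm4}, which for $q=4$ is proved via Proposition~\ref{cor:stability_Delta}, which rests on the very lemma you are proving. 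The paper's route avoids any pointwise stability: it sandwiches $R^2\pi_4(p,R)$ between $\inf_\Omega\mathbf N_\Omega(p)$ and $\sup_\Omega\mathbf N_\Omega(p)$ by exploring a random topological $R$-annulus carved by an open/dual circuit pair in $\Ann(R/8,R/4)$, and then closes the chain using only the monotonicity of $\mathbf N_\Omega$ in $p$ and the self-duality $\pi_4(p^*,R)=\pi_4(p,R)$, together with the $p_c$ statement from step (a). You would need to adopt that aggregate, duality-based comparison (or an equivalent non-circular substitute) for your steps (b) and (c) to stand.
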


\begin{figure}
	\begin{center}
	\includegraphics[width=0.6\textwidth]{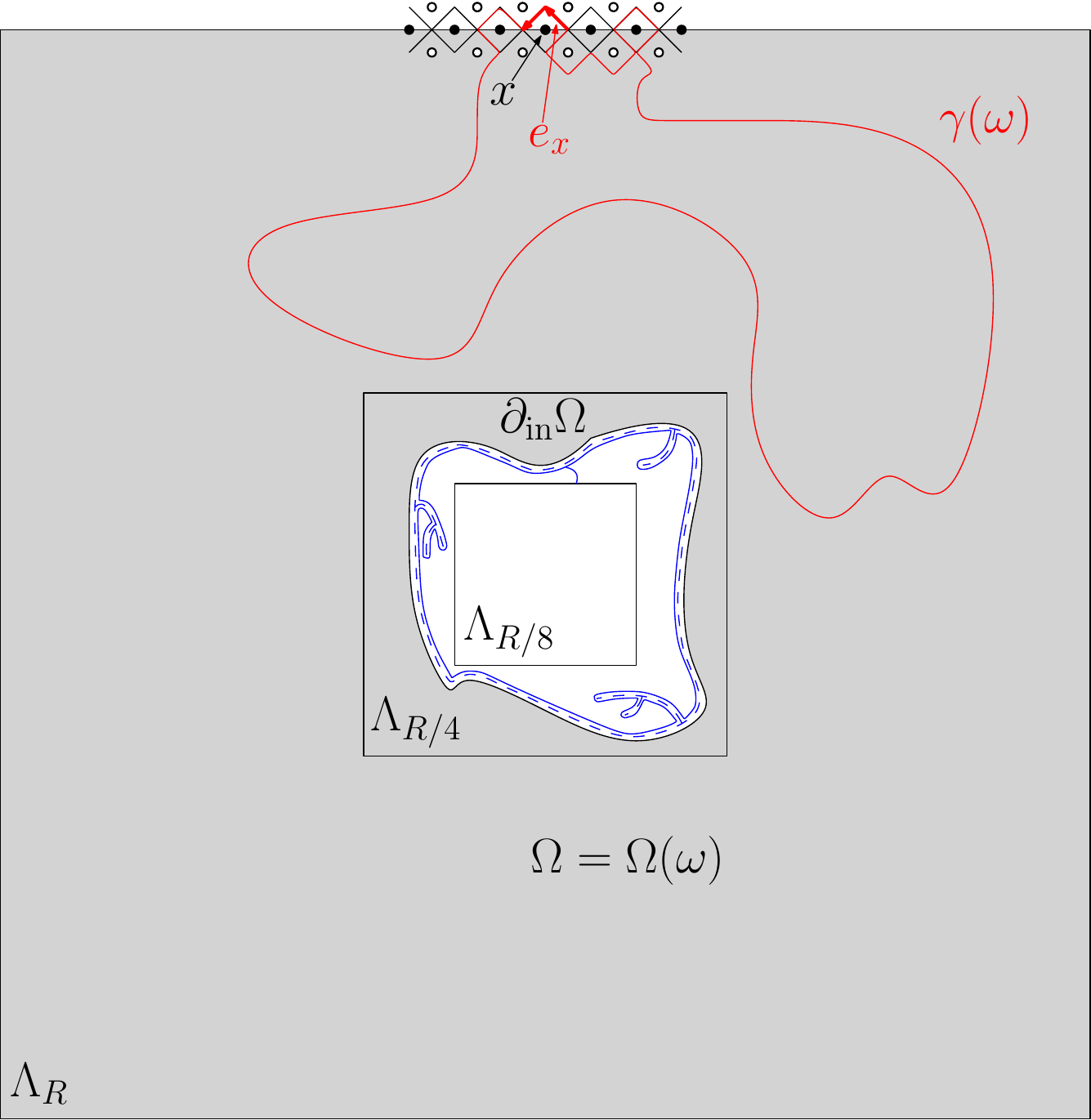}
	\caption{In grey the domain~$\mathcal H$ (which is an topological~$R$-annulus) carved by the event~$F$ (in blue). In red, the exploration path~$\gamma=\gamma(\omega)$ going around the boundary vertex~$x$.}
	\label{fig:observable}
	\end{center}
	\end{figure}
	
\begin{proof}
We start by proving that
	\begin{align}\label{eq:important1}
    	\sup_{\Omega} \mathbf N_{\Omega}(p_c) \asymp \Sigma_R/\pi_1^+(R)\asymp \inf_{\Omega} \mathbf N_{\Omega}(p_c),
   \end{align}
   where~$\Sigma_R$ was introduced in \eqref{eq:Sigma} in the previous section. 
   
    Fix~$\Omega$ an topological~$R$-annulus and set~$x := (0,R)$. 
	Then, due to~\eqref{thm:RSWnear}, for every~$y \in \partial_{\rm in}\Omega$,
	\begin{align*}
		\phi_{\Omega,p_c}^0[y\longleftrightarrow x \text{ and } \partial_{\rm in} \Omega \stackrel{*}{\longleftrightarrow}\partial \La_R] 
		\asymp \pi_1^+(R)\phi_{\Omega,p_c}^0[y\longleftrightarrow \partial \La_{R/2}].
	\end{align*}
	In particular, we have that 
	\begin{align}\label{eq:OmegaA}
		\mathbf N_{\Omega}(p_c)
		\asymp \frac{1}{ \pi_1^+(R)}\sum_{y \in \partial_{\rm in}\Omega}\phi_{\Omega,p_c}^0[A(x,y)].
	\end{align}
	Subtracting~\eqref{eq:fundamental_holomorphicity} for the domains~$\Lambda_R$ and~$\Omega$, 
	and following the proof of Proposition~\ref{prop:lower_bound_Delta4c}, we find that
    \begin{equation*}
    	\sum_{y \in \partial_{\rm in}\Omega}\phi_{\Omega,p_c}^0[A(x,y)]\asymp \Sigma_R,
    \end{equation*}
	which concludes the proof of~\eqref{eq:important1}.

 We now prove that for every~$p$ and~$R\le L(p)$,
 \begin{equation}\label{eq:important2}
 \inf_\Omega \mathbf N_\Omega(p)\les R^2\pi_4(p,R)\les \sup_\Omega \mathbf N_\Omega(p),
 \end{equation}
 where the infimum and supremum are taken over topological~$R$-annuli. Note that this inequality implies the result. Indeed, 
since~$\mathbf N_{\Omega}(p)$ is increasing in~$p$  and~$\pi_4(p^*,R)=\pi_4(p,R)$ (by duality), the previous inequality implies that for~$p\ge p_c$ and~$R\le L(p)=L(p^*)$,
	\begin{align*}
		\sup_\Omega\mathbf N_\Omega(p_c)\ge 
		\sup_\Omega\mathbf N_{\Omega}(p^*)\ges R^2\pi_4(p^*,R)=R^2\pi_4(p,R)
    	\ges  \inf_{\Omega} \mathbf N_{\Omega}(p)
	    \geq  \inf_{\Omega} \mathbf N_{\Omega}(p_c),
	\end{align*}
	which combines with~\eqref{eq:important1} to give the result.
	We now focus on the proof of~\eqref{eq:important2}.
	We proceed similarly to~\cite[Prop~6.8]{DumManTas20}, but from inside out.
   
    Let~$F$ be the event that in~$\Ann(R/8,R/4)$ there exists an open circuit surrounding~$\La_{R/8}$ which is connected to~$\La_{R/8}$, 
    as well as a dual-open circuit, which is necessarily outside of the open one; see Figure~\ref{fig:observable}. 
    If~$F$ occurs, let~$\calH = \calH(\omega)$ be the graph formed of the union of all open clusters that intersect~$\La_{R/8}$, 
    along with all finite components of~$\mathbb Z^2$ minus the said union. Then, due to the definition of~$F$,  
   ~$\La_{R/8}\subset \calH \subset \La_{R/4}$.
    Write~$\Omega(\omega) = \La_R\setminus \calH$ for the random topological~$R$-annulus formed by removing~$\calH$. 
    
When the measure~$\phi^0_{\La_R,p}$ is conditioned on~$F\cap\{\Omega(\omega)=\Omega\}$, 
    its restriction to~$\Omega$ is~$\phi^0_{\Omega,p}$. 
    Notice that if~$y \in \partial_{\rm in} \Omega$ is connected to~$\partial \La_{R/2}$ by an open path, then a four arm event to distance~$R/8$ occurs around~$y$. Thus, using the quasi-multiplicativity~\eqref{eq:mix} and Theorem~\ref{thm:RSWnear} (to bound the probability of~$F$ from below), we find that
    \begin{align*}
        R^2\pi_4(p,R)
        \ges  \phi_{\Lambda_R,p}[\mathbf N_{\Omega(\omega)}(p) \, \ind_{F}]\ges \inf_{\Omega}\bfN_{\Omega}(p).
    \end{align*}
     
    Conversely, the separation of arms for the four-arm event and Theorem~\ref{thm:RSWnear} show that for each~$y \in \Ann(5R/32, 7R/32)$,
    \begin{align*}
    	\phi_{\Lambda_R,p}[F \cap \{y \in \partial_{\rm in}\Omega(\omega)\} \cap \{y \longleftrightarrow \partial \La_{R/2}\}] \ges \pi_4(p,R).
    \end{align*}
    Summing over all~$y$ we find 
    \begin{align*}
        R^2\pi_4(p,R) \les \phi_{\Lambda_R,p}[\mathbf N_{\Omega(\omega)}(p)\ind_{F}]\le \sup_{\Omega }\bfN_{\Omega}(p).
    \end{align*}
\end{proof}

\begin{remark}
The previous proof shows as a byproduct that~$\pi_4(p,R)\asymp\pi_4(R)$ for every~$R\le L(p)$. 
It is somehow surprising that the stability of~$\pi_4$ below the correlation length can be directly extracted from the parafermionic observable.
Recall, however, that this is only valid for~$q = 4$. 
\end{remark}

\begin{remark}\label{rmk:ahah}
The previous proof also implies that~$R^2\pi_4(R)$ is of the order of~$\mathbf N_\Omega(p_c)$ for every topological~$R$-annulus~$\Omega$. In particular, taking~$\Omega=\Lambda_{R/8}$ gives  that
\begin{equation}
R^2 \pi_4(R)\asymp R\pi_1^+(R).
\end{equation}
The conjectural scaling limit of the model suggests that~$\pi_1^+(R)\asymp R^{-1}$, which would imply that~$\pi_4(R)\asymp R^{-2}$. We see that in this case the fact that~$\Delta_{p_c}(R)\gg \pi_4(R)$ is crucial for the bound~\eqref{eq:lower_bound_Delta4c}.
\end{remark}

\begin{figure}
	\begin{center}
	\includegraphics[width=1.0\textwidth]{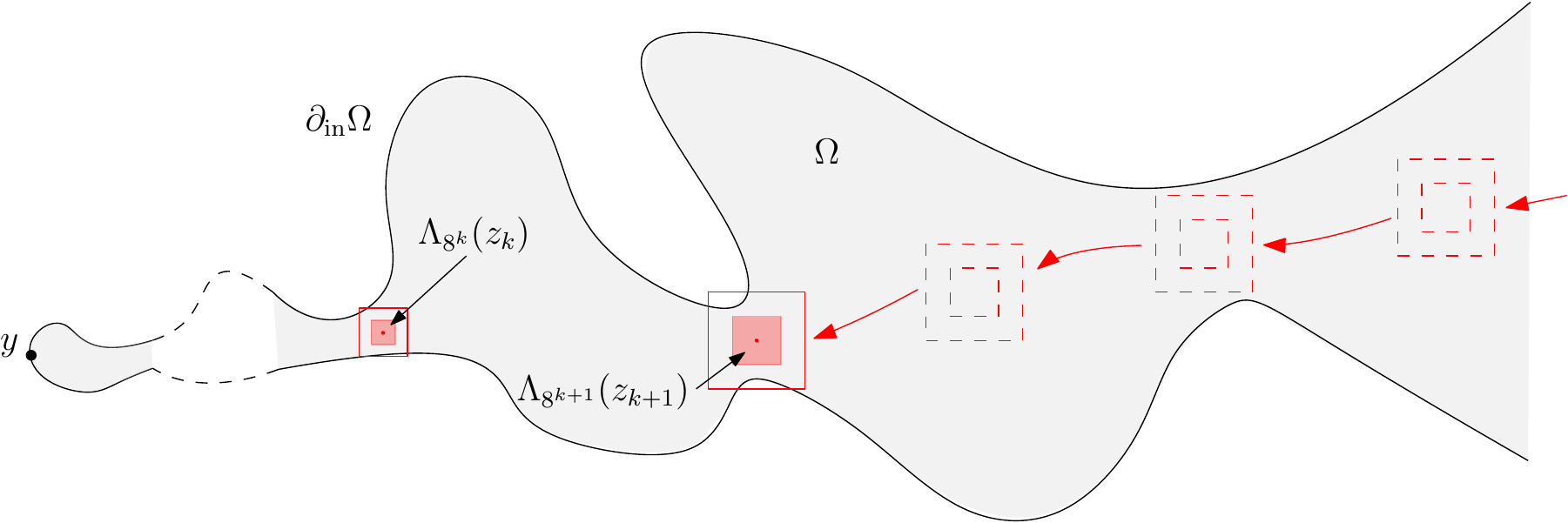}
	\caption{By sliding the boxes along a given path one by one from the exterior towards $y$, one finds the first time that the twice larger box $\Lambda_{2\cdot 8^k}(z_k)$ disconnects $y$ from the boundary. Note that by construction the first time it stops cannot be the quite the same for different $k$. In particular, the box $\Lambda_{8^k/2}(z_k)$ being at a distance $8^k/2$ of the boundary of $\Lambda_{8^k}(z_k)$, it is also at such a distance of $\partial_{\rm in}\Omega$. Since on contrary $\Lambda_{8^\ell/2}(z_\ell)$ is within a distance $8^\ell\times 3/2$ of it, we immediately deduce that the two boxes do not intersect as soon as $\ell<k$.}
	\label{fig:trick}
	\end{center}
	\end{figure}
\begin{proof}[Proposition~\ref{cor:stability_Delta}]
We treat the case~$p>p_c$; the case~$p<p_c$ can  be done similarly. Fix~$R\le L(p)$. 
The inequality~\eqref{eq:ub_deriv_Delta_int} implies that
	\begin{align*}
        \Big|\log \frac{\Delta_p(r,R)}{\Delta_{p_c}(r,R)}\Big|
        &\les \underbrace{\int_{p_c}^p \Big(\sum_{\ell=1}^R \ell\Delta_u(\ell)\Big)du}_{(A)}
        	+ \underbrace{\int_{p_c}^p \Big(\sum_{\ell=R}^{L(u)}\ell\Delta_u(\ell)\Delta_u(R,\ell)\Big) du}_{(B)}
\end{align*}
and we need to prove that~$(A)$ and~$(B)$ are bounded by constants that are independent of~$R$ and~$p>p_c$. 
The bound on~$(B)$ is easy to obtain since~\eqref{eq:deriv_cross} shows that~$(B) \les \phi_p[\calC(\Lambda_R)]  -  \phi_{p_c}[\calC(\Lambda_R)]\le 1$. We therefore focus on~$(A)$.

Choose the topological~$R$-annulus~$\Omega$ minimizing~$\mathbf N_\Omega(p)$ and observe that by Lemma~\ref{lem:four_arm_N}, $\mathbf N_\Omega(p)\asymp \mathbf N_\Omega(p_c)$. 
We claim that 
\begin{align}\label{eq:Nderiv}
	\tfrac{d}{d u} \log \bfN_\Omega (u)  \ges\sum_{\ell=1}^R \ell\Delta_u(\ell).
\end{align}
Observe that \eqref{eq:Nderiv} implies that
$(A)\les \log \mathbf N_\Omega(p)-\log \mathbf N_\Omega(p_c)\les1$ 
which concludes the proof of the proposition. 
Thus, we only need to prove \eqref{eq:Nderiv}, which we do next. 

Start by observing that
	\begin{align}\label{eq:Nderiv2}
		\tfrac{d}{d u} \log \bfN_\Omega(u) 
		= \sum_{y\in \partial_{\rm in}\Omega}\tfrac{d}{d u} \phi_{\Omega,u}^0[y\longleftrightarrow \partial \La_{R/2}]
		= \tfrac{1}{u(1-u)}\sum_{y\in \partial_{\rm in}\Omega}\,\sum_{e \in \Omega}\Cov[y\longleftrightarrow \partial \La_{R/2},\omega_e ].
	\end{align}
	Fix some~$y \in \partial_{\rm in}\Omega$ and set~$N:=\lfloor \log_8 R\rfloor$. 
	Then, there exist points~$z_1,\dots, z_{N-2}$ in~$\Omega$ 
	such that for each~$k$,~$\La_{8^k}(z_k) \subset \Omega$ but~$y$ is not connected to~$\partial \La_R$ in
	the subgraph~$\Omega \setminus \La_{2 \cdot 8^k}(z_k)$; the latter condition includes the situation where~$y \in  \La_{2 \cdot 8^k}(z_k)$.
	See Figure~\ref{fig:trick} 
	and its caption for an explanation of this elementary fact. 
	For~$k < \ell$, since~$\La_{2 \cdot 8^k}(z_k)$ intersects the boundary of~$\Omega$, but~$\La_{8^{\ell}}(z_{\ell}) \subset \Omega$, 
	we conclude that~$\La_{8^k/2}(z_k)$ and~$\La_{8^{\ell}/2}(z_{\ell})$ do not intersect.
	Thus, the boxes~$(\La_{8^k/2}(z_k))_{k \le N-2}$ are pairwise disjoint. 
	
	Now, for any~$e \in \La_{8^k/2}(z_k)$ for one such~$k$, a direct application of Theorem~\ref{thm:Delta_coupling} implies that
	\begin{align*}
		\Cov[y\longleftrightarrow \partial \La_{R/2},\omega_e ] \asymp \Delta_u(8^k)\phi_{\Omega,u}^0[y\longleftrightarrow \partial \La_{R/2}].
	\end{align*}
	Summing over~$e$ and keeping in mind that all covariances in~\eqref{eq:Nderiv2} are non-negative gives
	\begin{align*}
		\tfrac{d}{d u} \phi_{\Omega,u}^0[y\longleftrightarrow \partial \La_{R/2}]
		&\ges \sum_{k = 1}^{N -2}\sum_{e \in \La_{8^k/2}(z_k)} \Cov[y\longleftrightarrow \partial \La_{R/2},\omega_e ]\\
		&\ges \sum_{k = 1}^{N -2}8^{2k} \Delta_u(8^k)\phi_{\Omega,u}^0[y\longleftrightarrow \partial \La_{R/2}]\\
		&\ges \Big(\sum_{\ell = 1}^{R}\ell \Delta_u(\ell)\Big)\phi_{\Omega,u}^0[y\longleftrightarrow \partial \La_{R/2}].
	\end{align*}
	For the last inequality, we used the fact that~$\Delta_u(\ell) \asymp \Delta_u(8^k)$ for any~$8^k\le  \ell \le 8^{k+1}$.
	Finally, summing over~$y$, we find
	\begin{align*}
		\tfrac{d}{d u} \bfN_\Omega(u) 
		\ges 
		 \bfN_\Omega(u)\sum_{\ell = 1}^{R}\ell \Delta_u(\ell),
	\end{align*}
	which concludes the proof of~\eqref{eq:Nderiv} and of the whole proposition.
\end{proof}

\section{Proofs of the stability theorems}\label{sec:7}

In this section, we prove stability results for crossing probabilities, arm events and~$\Delta_p$. 
First, observe that due to the previous section, Propositions~\ref{prop:deriv_cross} and~\ref{prop:deriv_Delta_ub} immediately lead to the following corollary.
 
\begin{corollary}\label{cor:deriv}
    (i) Fix~$\eta>0$. For every~$p \in (0,1)$ and every~$\eta$-regular quad~$(\calD,a,b,c,d)$ at scale~$R\le L(p)$,
    \begin{equation}\label{eq:deriv_cross_}
    	\tfrac{{\rm d}}{{\rm d}p}\phi_{p}[\calC(\calD)]\asymp 
		 R^2\Delta_p(R)+\sum_{\ell=R}^{L(p)}\ell \Delta_p(\ell)\Delta_p(R,\ell),
    \end{equation}
    where the constants in~$\asymp$ depend on~$\eta$.
    
    (ii) For every~$p \in (0,1)$,~$\sigma \in \{0,1\}^k$ and~$r \le R \le L(p)$, 
    \begin{equation}\label{eq:deriv_arms_}
    	\tfrac{{\rm d}}{{\rm d}p}\log\phi_{p}[A_\sigma(r,R)]\asymp 
		  R^2\Delta_p(R)+\sum_{\ell=R}^{L(p)}\ell \Delta_p(\ell)\Delta_p(R,\ell),
    \end{equation}
    where the constants in~$\asymp$ depend on~$\sigma$.
    
    (iii) For every~$p \in (0,1)$ and two edges~$e$ and~$f$ at a distance~$R\le L(p)$ of each other, 
    \begin{equation}\label{eq:deriv_Delta_ub1}
        \big|\tfrac{{\rm d}}{{\rm d}p}\log \Cov_p(\omega_e,\omega_f)\big|
        \les R^2\Delta_p(R) + \sum_{\ell=R}^{L(p)} \ell \Delta_p(\ell)\Delta_p(R,\ell).
    \end{equation}
\end{corollary}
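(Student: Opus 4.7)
The plan is to observe that Corollary~\ref{cor:deriv} differs from Propositions~\ref{prop:deriv_cross} and~\ref{prop:deriv_Delta_ub} only in that the term $\sum_{\ell=1}^R \ell\Delta_p(\ell)$ appearing in the upper bounds has been replaced by $R^2 \Delta_p(R)$. Therefore, it suffices to establish the two-sided comparison
\begin{equation}\label{eq:aim_cor_deriv}
	\sum_{\ell=1}^R \ell\, \Delta_p(\ell) \asymp R^2 \Delta_p(R),
\end{equation}
for every $p \in (0,1)$ and every $1 \le R \le L(p)$, with constants independent of $p$ and $R$.

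The lower bound in~\eqref{eq:aim_cor_deriv} is essentially free: by the quasi-multiplicativity of $\Delta_p$ (Theorem~\ref{thm:delta}(ii), in the generalised form given by Corollary~\ref{cor:Delta_quasi}), one has $\Delta_p(\ell) \asymp \Delta_p(R)$ uniformly in $\ell \in [R/2, R]$, so that
\[
	\sum_{\ell=1}^R \ell\, \Delta_p(\ell) \;\ge\; \sum_{\ell = R/2}^R \ell\, \Delta_p(\ell) \;\ges\; R \cdot R \cdot \Delta_p(R).
\]

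For the upper bound, I would invoke Proposition~\ref{prop:lower bound Delta}: there exists $\delta > 0$ such that $\Delta_p(\ell, R) \ges (\ell/R)^{2-\delta}$ for every $1 \le \ell \le R \le L(p)$. Combining this with quasi-multiplicativity yields
\[
	\Delta_p(\ell) \;\asymp\; \frac{\Delta_p(R)}{\Delta_p(\ell,R)} \;\les\; \Delta_p(R)\,(R/\ell)^{2-\delta},
\]
and hence
\[
	\sum_{\ell = 1}^R \ell\, \Delta_p(\ell) \;\les\; R^{2-\delta}\,\Delta_p(R)\,\sum_{\ell = 1}^R \ell^{\delta - 1} \;\les\; R^2\, \Delta_p(R),
\]
since $\sum_{\ell=1}^R \ell^{\delta-1} \les R^\delta$ for any $\delta > 0$. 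This proves~\eqref{eq:aim_cor_deriv}.

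With~\eqref{eq:aim_cor_deriv} at hand, point (i) of the corollary follows by plugging the equivalence into the upper and lower bounds of Proposition~\ref{prop:deriv_cross}, which now agree up to multiplicative constants. Point (ii) is identical, applied to the estimate for $\frac{d}{dp}\log \phi_p[A_\sigma(r,R)]$ in the same proposition. Point (iii) is obtained by substituting~\eqref{eq:aim_cor_deriv} in the upper bound of Proposition~\ref{prop:deriv_Delta_ub}. I expect no genuine obstacle: the only non-trivial input is Proposition~\ref{prop:lower bound Delta}, which is precisely the reason why this corollary is stated only after that proposition has been established.
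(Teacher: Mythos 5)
Your proposal is correct and coincides with the paper's argument: the paper's proof is exactly the observation that \eqref{eq:delta_thm4} and the quasi-multiplicativity \eqref{eq:delta_thm3} give $\sum_{\ell=1}^R \ell\Delta_p(\ell) \les R^2\Delta_p(R)$, which is then inserted into Propositions~\ref{prop:deriv_cross} and~\ref{prop:deriv_Delta_ub}. The only superfluous part of your write-up is the lower bound of your display~(the direction $\sum_{\ell\le R}\ell\Delta_p(\ell)\ges R^2\Delta_p(R)$), which is not needed since the lower bounds in the propositions already feature $R^2\Delta_p(R)$.
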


\begin{proof}
	By \eqref{eq:delta_thm4} and \eqref{eq:delta_thm3}, for any~$p \in (0,1)$ and~$R \leq L(p)$, 
	$\sum_{\ell=1}^R \ell\Delta_p(\ell) \les R^2\Delta_p(R)$. Insert this into 
	Propositions~\ref{prop:deriv_cross} and~\ref{prop:deriv_Delta_ub} to obtain the desired results. 
\end{proof}

We next prove the stability of crossing probabilities and arm events probabilities stated in Theorem~\ref{thm:stability}. 

\begin{proof}[Theorem~\ref{thm:stability}]
	Fix~$p \neq p_c$. 
	We prove~\eqref{eq:stability crossings}; the stability for arm events can be deduced similarly.
	Fix some~$\eta$-regular discrete quad~$(\calD,a,b,c,d)$ at some scale~$R\ge1$.
	The inequality~\eqref{eq:stability crossings} is trivial when~$R > L(p)$, and we focus on the case where~$R \le L(p)$.
	Applying~\eqref{eq:deriv_cross}  to~$u$ between~$p_c$ and~$p$, we find, 
    \begin{align}
	   	\tfrac{{\rm d}}{{\rm d}u}\phi_{u}[\calC(\calD)]
		 & \asymp R^2\Delta_u(R)+\sum_{\ell=R}^{L(p)}\ell \Delta_u(\ell) \Delta_u(R, \ell) + \sum_{\ell=L(p)}^{L(u)}\ell \Delta_u(\ell)\Delta_u(R,\ell)\nonumber\\
		 &\les (\tfrac R{L(p)})^\delta\, \Big[L(p)^2\Delta_u(L(p)) + \sum_{\ell=L(p)}^{L(u)}\ell \Delta_u(\ell)\Delta_u(L(p),\ell)\Big],\label{eq:ffbbb}
    \end{align}
 where the inequality is due to a simple computation based on the quasi-multiplicativity of $\Delta_p$ and the two inequalities from \eqref{eq:delta_thm4} (the constant $\delta$ is actually given by this displayed equation). 
    The terms for~$\ell \geq L(p)$ are dominated by the corresponding sum in the second line. 
    
    Corollary~\ref{cor:deriv} applied to~$\calC(\La_{L(p)})$ together with  \eqref{eq:ffbbb} imply that
     \begin{align*}
	   	\tfrac{{\rm d}}{{\rm d}u}\phi_{u}[\calC(\calD)]
		\les (\tfrac R{L(p)})^\delta\, \tfrac{{\rm d}}{{\rm d}u}\phi_{u}[\calC(\La_{L(p)})].
    \end{align*}
    Integrate the above between~$p_c$ and~$p$ to find
    \begin{align*}
    	|\phi_{p} [\calC(\calD)] - \phi_{p_c} [\calC(\calD)]|
		\les(\tfrac R{L(p)})^\delta	|\phi_{p} [\calC(\La_{L(p)})] - \phi_{p_c} [\calC(\La_{L(p)})]|\le (\tfrac R{L(p)})^\delta. 
    \end{align*}
\end{proof}

\begin{remark}\label{rem:improved_stability}
	One may also obtain the following improvement of~\eqref{eq:stability arm exponent} (similar to~\eqref{eq:stability crossings}):
	\begin{align}\label{eq:improved_stability}
		\exp[-C(R/L(p))^\delta]\le \frac{\pi_1(p,R)}{\pi_1(p_c,R)} \le \exp[C(R/L(p))^\delta]\qquad \text{ for all~$R \le L(p)$}, 
	\end{align}
	where~$\delta,C > 0$ are universal constants depending only on~$q$. 
	
		Moreover, since Corollary~\ref{cor:deriv} applies to logarithmic derivatives of any arm event probabilities, 
	\eqref{eq:improved_stability} also applies to~$\pi_\sigma(p,R)$ and~$\pi_\sigma^+(p,R)$, with~$C$ depending on the color sequence~$\sigma$. 
	Notice, however, that we do not claim to prove stability for probabilities of arm events in the half-plane with boundary conditions strictly on the half-plane. 
\end{remark}

We now turn to the proof of the stability of~$\Delta_p$. 
Let us note that for~$q = 4$, the stability of~$\Delta_p$ was proved also in Section~\ref{sec:q=4}, as a step towards~\eqref{eq:delta_thm4}. 

\begin{proof}[Theorem~\ref{thm:delta}(iii).]
     Fix~$p \neq p_c$,~$R \le L(p)$ and two edges~$e$ and~$f$ at a distance~$R$ of each other.
    For~$u$ between~$p$ and~$p_c$, Corollary~\ref{cor:deriv} gives
     \begin{align*}
	    \Big|\tfrac{{\rm d}}{{\rm d}u}\log \Cov_u(\omega_e,\omega_f)\big|
	    &\les R^2 \Delta_u(R)+\sum_{\ell=R}^{L(u)}\ell \Delta_u(\ell) \Delta_u(R,\ell)
    	\les \tfrac{{\rm d}}{{\rm d}u}\phi_u[\calC(\Lambda_R)].
    \end{align*}
    By integrating the above between~$p_c$ and~$p$, we find
    \begin{equation} 
    	\frac{\Cov_p(\omega_e,\omega_f)}{\Cov_{p_c}(\omega_e,\omega_f)}	\asymp 1.
	\end{equation}
	Apply now~\eqref{eq:Cov_Delta2} to deduce that 
   \begin{equation*}
    	\Delta_p(R) \asymp \sqrt{\Cov_p(\omega_e,\omega_f)}
		\asymp \sqrt{\Cov_{p_c}(\omega_e,\omega_f)} \asymp \Delta_{p_c}(R).
    \end{equation*}
 \end{proof}

\begin{remark}\label{rem:improved_stability}
	It will come to no surprise to the reader that the same type of improved stability as in \eqref{eq:improved_stability} may be shown for the covariance. Getting the same result for $\Delta_p$ itself seems more difficult as we crucially rely on an up-to-constant relation between $\Delta_p$ and the covariance, and that the derivative of $\Delta_p$ itself is less obvious.
\end{remark}

\section{Derivation of the scaling relations}\label{sec:all_scaling_relations}

This section is dedicated to proving the scaling relations (Theorems~\ref{thm:scaling_rel_crit},~\ref{thm:scaling_rel_h} and~\ref{thm:scaling_rel_nc}). 
The proof of the near-critical scaling relations (Theorem~\ref{thm:scaling_rel_nc}) is based on the stability below the characteristic length (Theorem~\ref{thm:stability} and Theorem~\ref{thm:delta}(iii)). With the latter results at our disposal, the proofs of the critical and near-critical scaling relations \eqref{eq:relation2}--\eqref{eq:L_Delta}  are very close to those for Bernoulli percolation and contain no significant innovation. For this reason, we are voluntarily quick on these proofs, trying to merely recall the crucial ingredients.

The main novelties in this section are the independent proof of the scaling relation involving the magnetic field (Section~\ref{sec:scaling_rel_h}), and the derivation of the scaling relation involving~$\alpha$.

\subsection{Scaling relations at criticality: proof of Theorem~\ref{thm:scaling_rel_crit}}\label{sec:scaling_rel_crit}\label{sec:5}

In this section we work with~$p = p_c$ and we drop it from the notation. 
We will prove a stronger result, which implies~\eqref{eq:relation2} when taking~$x\in\partial\Lambda_R$. 

\begin{lemma}\label{lem:0}
    Fix~$1\le q\le 4$. For every~$R\ge1$ and every~$x\in \Lambda_R$,
    \begin{equation}\label{eq:jj}
    	\pi_1(R)^{2}\les \phi_{\bbZ^2}[0\xlra{\Lambda_{2R}} x, 0\longleftrightarrow \partial\Lambda_R]
		\le \phi_{\bbZ^2}[0 \longleftrightarrow x]\les \pi_1(|x|)^{2}.
    \end{equation}
\end{lemma}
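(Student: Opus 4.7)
The middle inequality in \eqref{eq:jj} is immediate by inclusion of events, so only the outer bounds require argument. Both statements should be thought of as the random-cluster analogue of Kesten's classical two-point function estimate for Bernoulli percolation, and my plan is to implement the same two-arm strategy here, substituting the quasi-multiplicativity \eqref{eq:MULTIPLICATIVITY}, the mixing bound of Proposition~\ref{prop:mixing}, and the RSW theorem~\ref{thm:RSWnear} for the independence used in the $q=1$ case.

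For the upper bound the regime of bounded $|x|$ is trivial, so assume $|x|$ large and set $n=\lfloor|x|/3\rfloor$. Then $\{0\longleftrightarrow x\}\subseteq A\cap B$ with $A=\{0\longleftrightarrow\partial\Lambda_n\}\in\calF(\Lambda_n)$ and $B=\{x\longleftrightarrow\partial\Lambda_n(x)\}\in\calF(\Lambda_n(x))$; the two boxes are disjoint. Conditioning on the edges outside $\Lambda_n$ and combining the spatial Markov property~\eqref{eq:SMP} with the comparison between boundary conditions~\eqref{eq:CBC} gives
\[
\phi_{\bbZ^2}[A\cap B]
=\phi_{\bbZ^2}\big[\ind_B\,\phi_{\Lambda_n}^{\xi(\omega)}[A]\big]
\le \phi_{\Lambda_n}^1[A]\,\phi_{\bbZ^2}[B].
\]
Here $\phi_{\bbZ^2}[B]=\pi_1(n)\asymp\pi_1(|x|)$ by translation invariance. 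For the other factor, first restrict via $A\subseteq\{0\longleftrightarrow\partial\Lambda_{n/4}\}$, an event in $\calF(\Lambda_{n/4})$; Proposition~\ref{prop:mixing} applied at scales $(r,R)=(n/4,n)$ then allows to swap the wired boundary conditions on $\partial\Lambda_n$ for free ones at the cost of a bounded multiplicative constant, after which the quasi-multiplicativity \eqref{eq:MULTIPLICATIVITY} delivers $\phi_{\Lambda_n}^1[A]\les \pi_1(n/4)\asymp\pi_1(|x|)$. Multiplying the two factors yields the upper bound.

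For the lower bound I may again assume $|x|$ large (if $|x|$ is bounded, the FKG inequality and the direct RSW estimate $\phi_{\bbZ^2}[0\longleftrightarrow x]\ges 1$ immediately give $\phi_{\bbZ^2}[0\xlra{\Lambda_{2R}} x,\,0\longleftrightarrow\partial\Lambda_R]\ges\pi_1(R)\ge\pi_1(R)^2$). Set $\rho=|x|$ and consider three increasing events: $E_0=\{0\longleftrightarrow\partial\Lambda_{\rho/100}\}$ and its translate $E_x=\{x\longleftrightarrow\partial\Lambda_{\rho/100}(x)\}$, each of probability $\ges\pi_1(\rho)\ge\pi_1(R)$; and a gluing event $E_{\rm glue}$ requiring the existence of open circuits around $0$ in $\Ann(\rho/100,\rho/50)$ and around $x$ in $\Ann(\rho/100,\rho/50)(x)$, together with open paths inside $\omega\cap\Lambda_{2R}$ joining these two circuits and joining one of them to $\partial\Lambda_R$. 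Repeated applications of Theorem~\ref{thm:RSWnear} together with FKG give $\phi_{\bbZ^2}[E_{\rm glue}]\ges 1$. On $E_0\cap E_x\cap E_{\rm glue}$, any arm witnessing $E_0$ must cross the inner circuit around $0$ (similarly for $E_x$), so the gluing paths realise simultaneously $0\xlra{\Lambda_{2R}} x$ and $0\longleftrightarrow\partial\Lambda_R$; a last application of FKG to the three increasing events completes the argument. The only subtle point I anticipate is purely geometric: when $|x|$ is comparable to $R$, one must place the annuli and joining paths so that they fit inside $\Lambda_{2R}$, which I would handle by a short case split on the ratio $|x|/R$.
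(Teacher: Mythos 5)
Your treatment of the middle and right-hand inequalities is essentially the paper's: the same decomposition into one-arm events in the disjoint boxes $\Lambda_n$ and $\Lambda_n(x)$ with $n=\lfloor|x|/3\rfloor$, factorised via mixing and closed with \eqref{eq:MULTIPLICATIVITY}. (Only cosmetic caveat: Proposition~\ref{prop:mixing} is stated for $R/r$ large enough, so use a larger ratio than $4$; this costs nothing by quasi-multiplicativity.)

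The lower bound, however, contains a genuine error. Your gluing event $E_{\rm glue}$ includes an open path from one of the circuits at scale $|x|/50$ all the way to $\partial\Lambda_R$. When $|x|\ll R$ this is a one-arm event from scale $|x|$ to scale $R$, whose probability is $\asymp\pi_1(|x|,R)$ and tends to $0$ as $R/|x|\to\infty$; the claim $\phi_{\bbZ^2}[E_{\rm glue}]\ges1$ is therefore false, and the only regime you flag for extra care ($|x|\asymp R$) is not the problematic one. The argument is salvageable: with the corrected estimate $\phi_{\bbZ^2}[E_{\rm glue}]\ges\pi_1(|x|,R)$, FKG yields $\pi_1(|x|)^2\pi_1(|x|,R)\ge\pi_1(|x|)^2\pi_1(|x|,R)^2\asymp\pi_1(R)^2$ by \eqref{eq:MULTIPLICATIVITY}, so the conclusion survives. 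But the paper's choice of events avoids the issue altogether and is worth noting: it takes $A_R$ (a circuit in $\Ann(R,2R)$, of uniformly positive probability by \eqref{eq:RSWA}) together with the two \emph{long} arms $\{0\longleftrightarrow\partial\Lambda_{2R}\}$ and $\{x\longleftrightarrow\partial\Lambda_{3R}(x)\}$. Since $x\in\Lambda_R$, both arms must cross the circuit, which glues them inside $\Lambda_{2R}$; each arm has probability $\ges\pi_1(R)$ and a single application of FKG finishes the proof, with no case split on $|x|/R$ and no long connecting path whose probability degenerates.
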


\begin{proof}
	The middle inequality is obvious. For the right one, observe that if~$0$ and~$x$ are connected, then 
 ~$0 \longleftrightarrow \partial \Lambda_r$ and~$x\longleftrightarrow \partial\Lambda_r(x)$, where~$r:=\lfloor |x|/3\rfloor$. 
The invariance under translations of~$\phi_{\bbZ^2}$, the mixing property~\eqref{eq:mix} and~\eqref{eq:MULTIPLICATIVITY} give that
  ~$$
    	\phi_{\bbZ^2}[0\longleftrightarrow x] 
		\les\pi_1(r)^2\les\pi_1(|x|)^2.
$$
    We now prove the left inequality.   The FKG inequality~\eqref{eq:FKG} implies that   \begin{align*}\phi_{\bbZ^2}[0\stackrel{\Lambda_{2R}}{\longleftrightarrow} x, 0\longleftrightarrow \partial\Lambda_R]&\ge \phi_{\bbZ^2}[A_R,0\longleftrightarrow \partial\Lambda_{2R}, x\longleftrightarrow \partial\Lambda_{3R}(x)]\\
   &		\geq  \phi_{\bbZ^2}[A_R]\pi_1(2R)\pi_1(3R)\\
	&	\ges \pi_1(R)^2,	\end{align*}
    where we used~\eqref{eq:RSWA} and~\eqref{eq:MULTIPLICATIVITY}.
\end{proof}

Recall that~$\sfC$ is the cluster of~$0$. Let~${\rm rad}(\sfC) := \max\{r:\sfC \text{ intersects }\partial \La_r \}$ be the radius of~$\sfC$. 

\begin{lemma}\label{prop:moments}
	For every~$R \geq1$, 
	\begin{align}\label{eq:moments}
    	R^2\pi_1(R)&\les\phi_{\bbZ^2}[|\sfC| \,| R\le{\rm rad}(\sfC)<4R] \le \sqrt{\phi_{\bbZ^2}[|\sfC|^2 \,| R\le{\rm rad}(\sfC)<4R]}\les R^2\pi_1(R).
	\end{align}
\end{lemma}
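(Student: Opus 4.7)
The middle inequality is the Cauchy--Schwarz inequality applied to the conditional measure, so the content of the statement reduces to a lower bound on the conditional first moment and an upper bound on the conditional second moment, both matching $R^2\pi_1(R)$. The starting point is the estimate $\phi_{\bbZ^2}[E_R]\asymp \pi_1(R)$, where $E_R:=\{R\le{\rm rad}(\sfC)<4R\}$. The upper bound is immediate since $E_R\subset\{0\longleftrightarrow\partial\La_R\}$. The lower bound follows from \eqref{eq:MULTIPLICATIVITY} and \eqref{eq:RSWA} applied to the dual model: by quasi-multiplicativity $\phi_{\bbZ^2}[{\rm rad}(\sfC)\ge 4R]=\pi_1(4R)\asymp\pi_1(R)\,\pi_1(R,4R)$, and RSW shows that $\pi_1(R,4R)$ is bounded strictly away from $1$, so $\phi_{\bbZ^2}[E_R]\ges \pi_1(R)$.

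\emph{Lower bound on the first moment.}\
The plan is to show that for every $x\in\La_{R/2}$,
$\phi_{\bbZ^2}[0\longleftrightarrow x,\,E_R]\ges \pi_1(R)^2$,
and then sum over $x$. Lemma~\ref{lem:0} already provides
$\phi_{\bbZ^2}[0\xlra{\La_{2R}}x,\,0\longleftrightarrow\partial\La_R]\ges\pi_1(R)^2$.
To secure in addition that ${\rm rad}(\sfC)<4R$, I intersect with the decreasing event $D$ that there exists a dual-open circuit in $\Ann(2R,4R)$ surrounding $\La_{2R}$. The two events depend on disjoint edge sets; the spatial Markov property \eqref{eq:SMP} combined with the comparison between boundary conditions \eqref{eq:CBC} (applied to the decreasing event $D$ under the unfavourable wired boundary conditions) gives
\[
\phi_{\bbZ^2}[0\xlra{\La_{2R}}x,\,0\longleftrightarrow\partial\La_R,\,D] \ges \pi_1(R)^2 \cdot \phi_{\Ann(2R,4R)}^1[D]\ges\pi_1(R)^2,
\]
the last bound being \eqref{eq:RSWA} for the dual model. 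Summing over the $\asymp R^2$ vertices $x\in\La_{R/2}$ yields $\phi_{\bbZ^2}[|\sfC|\,\ind_{E_R}]\ges R^2\pi_1(R)^2$, and dividing by $\phi_{\bbZ^2}[E_R]\asymp\pi_1(R)$ gives the claim.

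\emph{Upper bound on the second moment.}\
On $E_R$ one has $\sfC\subset\La_{4R}$, so
\[
\phi_{\bbZ^2}[|\sfC|^2\ind_{E_R}]\le \sum_{x,y\in\La_{4R}} \phi_{\bbZ^2}[0\longleftrightarrow x,\,0\longleftrightarrow y,\,0\longleftrightarrow\partial\La_R].
\]
The target bound $R^4\pi_1(R)^3$ (after dividing by $\phi[E_R]\asymp\pi_1(R)$, this is $R^4\pi_1(R)^2$, whose square root is $R^2\pi_1(R)$) would follow from the three-point inequality
\[
\phi_{\bbZ^2}[0\longleftrightarrow x,\,0\longleftrightarrow y,\,0\longleftrightarrow\partial\La_R]\les \pi_1(|x|)\,\pi_1(|y|)\,\pi_1(R)\qquad\text{for every } x,y\in\La_{4R},
\]
together with the summation estimate $\sum_{x\in\La_{4R}}\pi_1(|x|)\les R^2\pi_1(R)$, which itself follows from \eqref{eq:MULTIPLICATIVITY} and \eqref{eq:LOWER_BOUND_ONE_ARM} (these ensure that $r\mapsto r^2\pi_1(r)$ is polynomially increasing, so the sum is dominated by its last dyadic block).

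\emph{Main obstacle.}\
The only delicate point is the three-point estimate above. For Bernoulli percolation this follows at once from the BK inequality, which is unavailable for $q>1$. One instead distinguishes cases according to the relative sizes of $a=|x|,\,b=|y|,\,c=|x-y|$ (say $a\le b$, forcing $c\le 2b$), identifies a branching scale from which three macroscopic arms depart towards $0$, $x$ and $y$, with a fourth single arm extending from the union of the three to $\partial\La_R$. Combining the quasi-multiplicativity \eqref{eq:MULTIPLICATIVITY}, the mixing property \eqref{eq:mix} (which decouples events that live at well-separated scales), and the comparison between boundary conditions \eqref{eq:CBC}, one recovers the same polynomial bound as for percolation, with constants independent of $x,y$ and $R$. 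This is the topological/combinatorial argument of~\cite{Kes87,Nol08}, which goes through verbatim in the random-cluster setting once BK is replaced by the mixing and quasi-multiplicativity developed in Section~\ref{sec:2}.
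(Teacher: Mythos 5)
Your proposal follows essentially the same route as the paper: Cauchy--Schwarz for the middle inequality; for the first moment, the bound $\phi_{\bbZ^2}[0\xlra{\La_{2R}}x,\,0\lra\partial\La_R]\ges\pi_1(R)^2$ from Lemma~\ref{lem:0} intersected with a dual circuit in $\Ann(2R,4R)$ (the paper writes the event as $\{\La_{2R}\not\lra\partial\La_{4R}\}$, which is the same thing); and for the second moment, the sum over $x,y\in\La_{4R}$ of a three-point function, controlled by quasi-multiplicativity and mixing in place of BK, exactly as in the paper.

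One imprecision is worth flagging. The three-point inequality you state, namely $\phi_{\bbZ^2}[0\lra x,\,0\lra y,\,0\lra\partial\La_R]\les\pi_1(|x|)\,\pi_1(|y|)\,\pi_1(R)$ \emph{for every} $x,y\in\La_{4R}$, is false as a uniform bound: take $y$ adjacent to $x$ with $|x|=|y|=d$; then by finite energy and a Lemma~\ref{lem:0}-type computation the left-hand side is $\ges\pi_1(d)\pi_1(R)$, which exceeds the claimed $\pi_1(d)^2\pi_1(R)$ by a factor $1/\pi_1(d)$. The correct pointwise bound is $\pi_1(a)\pi_1(b)\pi_1(R)$ where $a\le b$ are the two \emph{smallest} of $|x|,|y|,|x-y|$ (in the paper this is proved for the ordering $|x|\le|y|\le|x-y|$, where it coincides with your formula, and the other orderings are treated symmetrically). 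With this corrected bound the double sum still closes: e.g.\ in the regime where $|x-y|$ is smallest, $\sum_{\ell\le 4R}\sum_{c\le\ell}\ell c\,\pi_1(c)\pi_1(\ell)\les\sum_{\ell}\ell^3\pi_1(\ell)^2\les R^4\pi_1(R)^2$, so the final estimate $R^4\pi_1(R)^3$ is unaffected. Since you explicitly invoke a case analysis on the relative sizes of $|x|,|y|,|x-y|$, this is a fixable misstatement of the target inequality rather than a flaw in the strategy, but as written the displayed inequality should be corrected.
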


\begin{proof}
	Fix~$n \geq 1$.  The inequality in the middle is the Cauchy-Schwarz inequality. 
	For the first one,
	observe that~\eqref{eq:jj} and~\eqref{eq:RSWA} imply that
	\begin{align*}
		\phi_{\bbZ^2} [ |\sfC| \ind_{R\le{\rm rad}(\sfC)<4R} ] 
		\ge \sum_{x\in\Lambda_R}
			\phi_{\bbZ^2}[0\xlra{\Lambda_{2R}} x,\Lambda_{2R}\not\longleftrightarrow\partial\Lambda_{4R}]
		\ges |\Lambda_R|\pi_1(R)^2.
	\end{align*}
	Divide the above by~$\phi_{\bbZ^2}[R\le{\rm rad}(\sfC)<4R]\le \pi_1(R)$ to obtain the first inequality in \eqref{eq:moments}.  

    We turn to the last inequality of~\eqref{eq:moments}. We have 
	\begin{align*}
		\phi_{\bbZ^2}[|\sfC|^2 \ind_{R\le{\rm rad}(\sfC)<4R}]
    	&\le \sum_{x,y\in \Lambda_{4R}} \phi_{\bbZ^2}[0\longleftrightarrow x,0\longleftrightarrow y,0\longleftrightarrow \partial\Lambda_R].
	\end{align*}
	Fix~$x,y \in \Lambda_{4R}$ 
	and assume first that~$|x|\le |y| \le |x-y|$. 
	Set~$\ell := |x|$ and~$k := |y|$. 
	Observe that the event on the right induces the following events which are listed along with the order -- up to constants -- of their probabilities (which are obtained thanks to~\eqref{eq:MULTIPLICATIVITY}): 
	\begin{itemize}[noitemsep]		
		\setlength\itemsep{0cm}
		\setlength\topsep{0cm}
		\item~$0 \llra \partial \Lambda_{\ell/4 }$ -- of probability of order~$\pi_1(\ell)$;
		\item~$x \llra \partial \Lambda_{\ell/4}(x)$ -- of probability of order~$\pi_1(\ell)$;
		\item~$y \llra \partial \Lambda_{k/4}(y)$ -- of probability of order~$\pi_1(k)$;
		\item~$\partial \Lambda_{\min\{2\ell,k/4\}} \llra  \partial \Lambda_k$ -- of probability of order~$\pi_1(k)/\pi_1(\ell)$;
		\item~$\partial \Lambda_{\min\{2k,R\}} \llra  \partial \Lambda_{R}$ -- of probability of order~$\pi_1(R) /\pi_1(k)$.
	\end{itemize}
	Several iterations of the mixing property~\eqref{eq:mix} imply that
	\begin{align}\label{eq:pp}
		\phi_{\bbZ^2}[0\longleftrightarrow x,0\longleftrightarrow y,0\longleftrightarrow \partial\Lambda_R]
		&\les \pi_1(\ell)\pi_1(k) \pi_1(R).
	\end{align}
	Observe now that for~$1 \le \ell \le k\le 4R$, there are~$8\ell$ vertices~$x \in \bbZ^2$ with~$|x| = \ell$ 
	and~$8k$  vertices~$y$ with~$|y| = k$.
	Thus, 
	\begin{align*}
    	\sum_{\substack{x,y\in \Lambda_{4R}\\ |x| \le |y| \le |x-y|}}\phi_{\bbZ^2}[0\longleftrightarrow x,0\longleftrightarrow y,0\longleftrightarrow \partial\Lambda_R]
    	&\les  \sum_{1 \le \ell \le k \le 4R} \ell\pi_1(\ell)k\pi_1(k)\pi_1(R)	\les R^4\pi_1(R)^3,
	\end{align*}
	where in the last line we used that 
	\begin{equation}
	\sum_{k=1}^Rk\pi_1(k)\les R^2\pi_1(R)
	\end{equation}
	which is a consequence of~\eqref{eq:LOWER_BOUND_ONE_ARM} and \eqref{eq:MULTIPLICATIVITY}. The same upper bound may be obtained for any of the other five possible orderings of~$|x|, |y|, |x-y|$. 
	Overall, we conclude that 
	\begin{align*}
    	\phi_{\bbZ^2} [|\sfC|^2 \ind_{R\le{\rm rad}(\sfC)<4R}]\les R^4\pi_1(R)^3,
	\end{align*}
	which gives the result by dividing by	
	\begin{align}
	\phi_{\bbZ^2}[R\le{\rm rad}(\sfC)<4R]
	&\geq \phi_{\bbZ^2}[0 \llra \partial \Lambda_R,\Lambda_R\not\longleftrightarrow\partial\Lambda_{2R}]
	\ges\pi_1(R),\label{eq:l}
	\end{align}
where in the last inequality we used the mixing property~\eqref{eq:mix} and~\eqref{eq:RSWA}.
\end{proof}

\begin{proof}[Theorem~\ref{thm:scaling_rel_crit}] 
    Lemma~\ref{lem:0} applied with~$R=2|x|$ and~\eqref{eq:MULTIPLICATIVITY} directly imply~\eqref{eq:relation2}. 
	We turn to the proof of~\eqref{eq:relation3}.
	Fix~$R \geq 1$ and~$r:=\varphi(R)$.
    Let us start with the lower bound on~$\phi_{\bbZ^2}[|\sfC| \geq R]$. 
   	Let~$c \in (0,1)$ be the constant appearing in the first bound~$\les$ of \eqref{eq:moments}.
	Then, using the definition of~$\varphi$,~\eqref{eq:moments}, 	the Paley–Zygmund inequality and~\eqref{eq:l}, we find that 
	\begin{align*}
		\phi_{\bbZ^2}[|\sfC|\geq \tfrac{c}2R]&\ge \phi_{\bbZ^2}[|\sfC|\geq  \tfrac{c}2 r^2\pi_1(r)]\\
		&\ge \phi_{\bbZ^2}[|\sfC|\geq \tfrac{1}2\phi_{\bbZ^2}(|\sfC| \,|\,r\le {\rm rad}(\sfC)< 4r)]\\
		&\ges \frac{\phi_{\bbZ^2}[|\sfC| \,|\,r\le {\rm rad}(\sfC)<4r]^2}{\phi_{\bbZ^2}[|\sfC|^2 \,|\, r\le {\rm rad}(\sfC)<4r]}\,\phi_{\bbZ^2}[r\le {\rm rad}(\sfC)<4r]\\
		&\ges\phi_{\bbZ^2}[r\le {\rm rad}(\sfC)<4r]\\
		& \ges\pi_1(r).
	\end{align*}
	This concludes the proof of the lower bound since \eqref{eq:MULTIPLICATIVITY} implies that~$\phi( \tfrac{c}2R) \asymp  \phi(R) = r$. 
	
	We turn to the complementary upper bound. Using the Markov inequality in the third line and the definition of~$\varphi$ and~\eqref{eq:jj} in the fourth, we obtain that 
    \begin{align*}
    	\phi_{\bbZ^2}[|\sfC|\ge R]
    	&= \phi_{\bbZ^2}[|\sfC|\ge R,{\rm rad}(\sfC)> r] + \phi_{\bbZ^2}[|\sfC| \geq R,{\rm rad}(\sfC)\le r]\\
    	&\le \pi_1(r)+\phi_{\bbZ^2}[|\sfC \cap \Lambda_r| \geq R] \\
    	&\le \pi_1(r)+\frac{1}{R}\sum_{u \in \Lambda_r}\phi_{\bbZ^2}[0 \longleftrightarrow u] \\ 
    	&\les \pi_1(r)+\frac{1}{r^2\pi_1(r)}\sum_{u \in \Lambda_r} \pi_1(|u|)^2\les \pi_1(r),
	\end{align*}
	where the last inequality follows from \eqref{eq:LOWER_BOUND_ONE_ARM} via the following computation
$$\frac1k\sum_{k=1}^r \frac{k\pi_1(k)^2}{r\pi_1(r)^2} \les \frac{1}{r}\sum_{k=1}^r(k/r)^{1-2c}\les 1.$$
\end{proof}

\subsection{Scaling relations in the near-critical regime: proof of Theorem~\ref{thm:scaling_rel_nc}}\label{sec:main}\label{sec:13}

By Theorem~\ref{thm:L_equiv_xi} and~\eqref{eq:MULTIPLICATIVITY}, we have that~$L(p)\asymp \xi(p)$ and~$\pi_1(L(p))\asymp\pi_1(\xi(p))$, so we only need to show~\eqref{eq:main_theta}--\eqref{eq:lp2} with~$L(p)$ instead of~$\xi(p)$.

\subsubsection{Proof of~\eqref{eq:main_theta} (scaling relation between~$\beta$,~$\nu$, and~$\xi_1$)}
On the one hand, the stability below the characteristic length (Theorem~\ref{thm:stability}) gives
\begin{align*}
\theta(p)\le \pi_1(p,L(p))\les 
\pi_1(L(p)).
\end{align*}
On the other hand, the FKG inequality~\eqref{eq:FKG},~\eqref{eq:RSWA}, and Corollary~\ref{rmk:1} imply that 
\begin{align*}
	\theta(p) 
	\geq\phi_p[0\longleftrightarrow\partial\Lambda_{2L(p)},A_{L(p)},\Lambda_{L(p)}\longleftrightarrow\infty]\ges  \pi_1(p,2L(p))\ge \pi_1(2L(p))\ges \pi_1(L(p)).
\end{align*}

\subsubsection{Proof of~\eqref{eq:main_chi} (scaling relation between~$\gamma$,~$\nu$ and~$\xi_1$)}

We start with the case~$p<p_c$. First, Proposition~\ref{prop:estimate subcritical} can be improved to 
\begin{equation}\label{eq:fund}
	c\pi_1(R)^2\exp[-C|x|/L(p)]\le \phi_{p}[0\longleftrightarrow x]\le C\pi_1(R)^2\exp[-c|x|/L(p)],
\end{equation}
where~$R:=\min\{|x|,L(p)\}$ and~$c,C > 0$ are uniform constants. 
Indeed, one simply needs to combine the proofs of Proposition~\ref{prop:estimate subcritical} and Lemma~\ref{lem:0} in a standard fashion. 

Now,~\eqref{eq:main_chi} follows by summing~\eqref{eq:fund} over every~$x\in\bbZ^2$ and using that, by~\eqref{eq:LOWER_BOUND_ONE_ARM},
\begin{equation}
	\pi_1(R)\le C(L(p)/R)^{1/2-c}\pi_1(L(p)).
\end{equation}
We now turn to the case~$p>p_c$. 

The only additional difficulty comes from the fact that we need to force 0 and~$x$ not to be connected to infinity.
For the lower bound, sum over every~$x\in\La_{L(p)/2}$ the following inequality 
$$
	\phi_p[0\longleftrightarrow x,0\not\longleftrightarrow \infty]
	\ge \phi_p[\La_{L(p)}\not\longleftrightarrow \infty]\phi_{\La_{L(p)},p}^0[0\longleftrightarrow x]
	\ge c\pi_1(R)^2,
$$ 
where the first inequality is due to the spatial Markov property~\eqref{eq:SMP}, 
and the second inequality follows \eqref{eq:RSWnear}, \eqref{eq:pmon} and an argument similar to Lemma~\ref{lem:0}. 

For the upper bound,  let~$A_x^*$ be the event that there exists a circuit in~$\omega^*$ surrounding 0 and~$x$.
 The FKG inequality in the first inequality, Corollary~\ref{rmk:1} and~\eqref{eq:CBC} in the second, and a reasoning similar to Lemma~\ref{lem:0} in the third (based on~\eqref{eq:mix} and Theorem~\ref{thm:stability}) imply that 
\begin{align*}\phi_p[0\longleftrightarrow x,0\not\longleftrightarrow\infty]&\le  \phi_p[0\longleftrightarrow x]\phi_p[A_x^*]\\
&\les\phi_{\Lambda_R,p}^1[0\longleftrightarrow\partial\Lambda_{R/2}]^2\exp(-c|x|/L)\\
&\les \pi_1(R)^2\exp(-c|x|/L).\end{align*}
Summing this inequality over~$x\in\bbZ^2$ gives the upper bound.

\subsubsection{Proof of~\eqref{eq:L_Delta} (scaling relation between~$\iota$ and~$\nu$)}

Assume~$p > p_c$, the case~$p < p_c$ is identical. Write~$L = L(p)$. Use Corollary~\ref{cor:deriv} and integrate the derivative of 
$g(p):=\phi_p[\calC(\La_{L})]$ between~$p_c$ and~$p$ to get
\begin{align}\label{eq:ak2}
	\int_{p_c}^p L^2\Delta_{u}(L)du \les g(p)-g(p_c)\les 1.
\end{align}
In the other direction, let~$p_0 \in [p_c,p]$ be such that~$L(p_0)=RL(p)$ for some~$R > 1$. 
Theorem~\ref{thm:stability} and the definition of~$L(p)$ implies that 
\begin{equation}\label{eq:aki1}
	g(p)-g(p_0)\ge g(p)-g(p_c)-CR^{-\ep}\ges 1
\end{equation}
provided that~$R$ is large enough. 
For~$u \in [p_0, p]$, Corollary~\ref{cor:deriv}, together with the quasi-multiplicativity property Theorem~\ref{thm:delta}(ii), implies
\begin{align*}
	g'(u)\les L(u)^2\Delta_{u}(L(u))\les L^2\Delta_u(L).
\end{align*}
(Note that the constant in $\les$ depends on $R$.) Integrating the previous displayed equation between~$p_0$ and~$p$ and then using \eqref{eq:aki1} gives
\begin{align*}
	1\les g(p)-g(p_0)
	\les \int_{p_0}^p L^2\Delta_{u}(L)du 
	\le \int_{p_c}^p L^2\Delta_{u}(L)du.
\end{align*}
Together with \eqref{eq:ak2}, the previous displayed equation and the stability of~$\Delta_u(L)$ given by Theorem~\ref{thm:delta}(iii) 
show that 
\begin{align*}
	1 \asymp \int_{p_c}^p L^2\Delta_{u}(L)du \asymp (p - p_c) L^2\Delta_{p_c}(L),
\end{align*}
which concludes the proof.

\subsubsection{Proof of~\eqref{eq:lp2} (scaling relation between~$\iota$ and~$\alpha$)}\label{sec:scaling_iota}

A straightforward computation involving \eqref{eq:Russ} shows that
\begin{align*}
	f''(p)  = 2 \frac{{\rm d}}{{\rm d}p}\phi_p [\omega_e]=2\sum_{f}\Cov_p(\omega_e,\omega_f),
\end{align*}
where~$e$ is a fixed edge of~$\bbZ^2$ and the sum is over all edges~$f$. 
By Lemma~\ref{lem:Cov_Delta} applied to~$\calD$ formed of the single edge~$e$, we find 
\[
\Cov_p(\omega_e;\omega_f)\asymp \Delta_p(\ell \wedge L(p))^2 e^{-c \ell /L(p)}\asymp \Delta_{p_c}(\ell \wedge L(p))^2 e^{-c \ell /L(p)},
\]
where~$\ell$ is the distance between~$e$ and~$f$.
For the second equivalence, we use Theorem~\ref{thm:delta}(iii).
Summing the displayed equation above over all edges~$f$, we conclude that 
\begin{align*}
	f''(p)  \asymp \sum_{\ell = 1}^{L(p)} \ell \Delta_{p_c}(\ell)^2,
\end{align*}
as required.

\begin{remark}\label{rem:alpha_positive}
	The above shows that if the phase transition is of second order (meaning that~$f''(p)$ diverges as~$p$ tends to~$p_c$), 
	then~$\sum_{\ell}\ell\Delta_p(\ell)^2$ diverges, which, using the interpretation of crossing probabilities in terms of~$\Delta_p(\ell)$,
	implies that the crossing probabilities of quads for the infinite-volume measure are not differentiable at~$p_c$.
	\end{remark}

\begin{remark}\label{rem:alpha_negative}
When~$\sum_{\ell  \geq 1}\ell\Delta(\ell)^2$ converges, the computation above simply proves that~$f''(p)$ remains bounded uniformly in~$p$. Nevertheless, it is easy to deduce by differentiating one more time that for~$p\ne p_c$ (we drop~$p$ from the notation), \begin{align*}
	f'''(p)
	&\le \sum_{f,g}\phi[\omega_e\omega_f\omega_g]-\phi[\omega_e\omega_f]\phi[\omega_g]-\phi[\omega_f\omega_g]\phi[\omega_e]-\phi[\omega_e\omega_g]\phi[\omega_f]+2\phi[\omega_e]\phi[\omega_f]\phi[\omega_g]\\
	&\les \sum_{\ell\le L(p)}\ell \Delta(\ell)\Delta(\ell',\ell)\sum_{\ell'\le\ell}\ell'\Delta(\ell')^2
	\les L(p)^4\Delta(L(p))^3
	\les \frac{1}{|p-p_c|^3L(p)^{2}}.
\end{align*}
If one defines~$\alpha$ in this framework by the formula~$f'''(p)=|p-p_c|^{-\alpha-1-o(1)}$, we deduce that~$\alpha\le2-2\nu$. 
The converse bound does not follow by the same computation since the summand on the first line is not of definite sign; 
at the time of writing the matching lower bound on~$f'''(p)$ remains unproved.
\end{remark}

\subsection{Scaling relation with magnetic field: proof of Theorem~\ref{thm:scaling_rel_h}}\label{sec:scaling_rel_h}\label{sec:14}

We insist one last time on the fact that this section is independent of the rest of the paper. Below, we work with the graph $\mathbb Z^2$ with the addition of the ghost.
We drop~$p_c$ and~$q$ but keep~$h$ in the notation except when it is equal to~$0$, in which case we omit it as well. 
We start by a lemma relating certain quantities at~$h=0$ with the corresponding quantities at~$h\ge0$.

Set~$\tilde\pi_1(h,R):=\phi_{\Lambda_R,h}^1[0\lra\partial\Lambda_R]$, 
where connections need to occur in~$\bbZ^2$. 
Additionally, let $A_R^*$ be the event that there exists a dual circuit in~$\Ann(R,2R)$ surrounding~$\La_R$ and
\begin{align*}
	\beta(h,R):=\phi_{\Ann(R,2R),h}^1[A_R^*].
\end{align*}
Finally, for~$C > 1$, define 
\begin{align*}
	h_c(R)=h_c(R,C):=\inf\big\{h>0:\,\exists\ r\le R\text{ such that }  \tilde\pi_1(h,r)> C \tilde\pi_1(r) 
	\text{ or } \beta(h,r)<  C^{-1}\beta(r)\big\}.
\end{align*}

\begin{lemma}\label{lem:ab}
    For any ~$C>1$, there exists~$\varepsilon>0$ such that for every~$R$,
    \begin{align}\label{eq:h_c}
	h_c(R)R^2\pi_1(R) \geq \varepsilon.\end{align} 
\end{lemma}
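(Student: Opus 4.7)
The plan is to argue by contradiction: assume $h_c(R)R^2\pi_1(R)<\varepsilon$ for some small $\varepsilon>0$ to be chosen, and derive a contradiction with the definition of $h_c(R)$. The starting point is an explicit derivative formula. Integrating out the ghost edges in the joint measure on $(\omega,\eta)$ under $\phi_{\Lambda_r,h}^1$ (each ghost edge carrying weight $e^h-1$), the primal marginal has density
\[
\frac{d\phi_{\Lambda_r,h}^1}{d\phi_{\Lambda_r}^1}(\omega)\;\propto\; e^{h m_0(\omega)}\prod_{i\ge 1}\bigl(q-1+e^{h m_i(\omega)}\bigr)
\]
with respect to $\phi_{\Lambda_r}^1$, where $m_0(\omega)$ is the size of the wired cluster of $\omega^1$ and $m_1(\omega),m_2(\omega),\ldots$ are the sizes of the other clusters. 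Differentiating yields, for any primal event $A$,
\[
\frac{d}{dh}\log\phi_{\Lambda_r,h}^1[A]\;=\;\phi_{\Lambda_r,h}^1[M_h\mid A]-\phi_{\Lambda_r,h}^1[M_h], \qquad M_h:=m_0+\sum_{i\ge 1}\frac{m_i\,e^{h m_i}}{q-1+e^{h m_i}}.
\]

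The second step is to show that this log-derivative is bounded by $C' r^2\pi_1(r)$ uniformly for $h\in[0,h_c(R)]$ and $r\le R$, for both $A=\{0\lra\partial\Lambda_r\}$ (giving $\tilde\pi_1$) and $A=A_r^*$ (giving $\beta$). Decompose $M_h=|V|/q+(1-1/q)m_0+\sum_{i\ge 1}m_i\,g(h m_i)$ with $g(x)=\tfrac{(q-1)(e^x-1)}{q(q-1+e^x)}\in[0,(q-1)/q]$; the constant $|V|/q$ cancels in the difference $\phi_h^1[M_h\mid A]-\phi_h^1[M_h]$. For the main $(1-1/q)m_0$ contribution, in the regime $h\le h_c(R)$ the defining bounds $\tilde\pi_1(h,\rho)\le C\tilde\pi_1(\rho)$ and $\beta(h,\rho)\ge C^{-1}\beta(\rho)$ for all $\rho\le R$ transfer the RSW estimates of Theorem~\ref{thm:RSWnear} from $\phi^1$ to $\phi_h^1$, which in turn transfer the moment bounds of Lemma~\ref{prop:moments} and give $\phi_h^1[m_0\mid A]+\phi_h^1[m_0]\les r^2\pi_1(r)$. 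For the remainder $\sum_{i\ge 1}m_i\,g(h m_i)$, the inequality $g(h m_i)\le\tfrac{q-1}{q}\min(h m_i,1)$ splits it into a contribution from clusters with $m_i\le 1/h$, bounded by $h\sum_i m_i^2$ (again controlled by the transferred second-moment estimate), and a contribution from clusters with $m_i>1/h$, which are exponentially rare in this regime.

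Integrating these derivative bounds from $0$ to $h_c(R)$ then gives, for every $r\le R$,
\[
\Bigl|\log\tfrac{\tilde\pi_1(h_c(R),r)}{\tilde\pi_1(r)}\Bigr|+\Bigl|\log\tfrac{\beta(h_c(R),r)}{\beta(r)}\Bigr|\;\le\;2C'h_c(R)r^2\pi_1(r)\;\le\;2C'h_c(R)R^2\pi_1(R)\;<\;2C'\varepsilon.
\]
Choosing $\varepsilon$ so that $2C'\varepsilon<\log C$ ensures that neither of the two defining bounds of $h_c(R)$ can be saturated at $h=h_c(R)$, contradicting its definition. The hard part will be the second step: uniformly transferring the RSW estimates and moment bounds from $\phi^1$ to $\phi_h^1$ in the regime $h\le h_c(R)$ given only the two comparison conditions defining $h_c$, and separately controlling the non-wired large-cluster contribution to $M_h$. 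Once this transfer is performed, the remainder of the argument is a direct bootstrap integration.
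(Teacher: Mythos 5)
Your proposal is correct and follows the same overall bootstrap architecture as the paper: bound the logarithmic $h$-derivatives of $\tilde\pi_1(h,r)$ and $\beta(h,r)$ by a constant times $r^2\pi_1(r)$, uniformly for $h\le h_c(R)$ and $r\le R$ (using that the definition of $h_c$ keeps the RSW estimates valid for $\phi_h$ in this regime), then integrate and choose $\varepsilon$ so that neither defining inequality of $h_c(R)$ can fail before $h$ reaches $\varepsilon/(R^2\pi_1(R))$. Where you genuinely differ is the derivative computation. The paper uses the ghost-edge covariance formula $\tfrac{d}{dh}\phi_h[A]=\tfrac{1}{1-e^{-h}}\sum_y\Cov(\ind_A,\omega_{y\ghost})$ and discards all terms with $y$ outside the cluster of $0$ by a monotonicity argument: conditioning on $\sfC=\calC$ and applying \eqref{eq:SMP} and \eqref{eq:CBC} shows those terms are nonpositive, reducing everything to the first-moment sum $\sum_y\phi_h[0\lra y,\,0\lra\partial\La_r]\les r^2\pi_1(r)^2$. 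You instead integrate out the ghost and differentiate the resulting density, obtaining the cluster functional $M_h$; the wired-cluster term $m_0$ reproduces essentially the same first-moment bound, but the non-wired clusters survive and must be controlled by hand. Your $\min(hm_i,1)$ splitting does work, with two caveats. First, it costs extra inputs that the paper's monotonicity trick avoids: a second-moment bound $\phi_h[\sum_i m_i^2\mid A]\les r^4\pi_1(r)^2$ and a cluster-volume tail bound, both under $\phi_h$ and under the conditioning on $A$ (these do follow from the transferred RSW estimates, via mixing and quasi-multiplicativity, but they are not free). Second, the clusters with $m_i>1/h$ are not exponentially rare at criticality, only polynomially rare; the estimate closes via $\phi_h[|\sfC_y|\ge 1/h]\les\pi_1(\varphi(1/h))$ together with $1/h\ge R^2\pi_1(R)/\varepsilon$, hence $\varphi(1/h)\ges R\ge r$ — note that this, like the bound $h\sum_i m_i^2\les \varepsilon\, r^2\pi_1(r)$, is only available because you run the whole argument under the hypothesis $h\le\varepsilon/(R^2\pi_1(R))$, so the contradiction set-up (or, equivalently, integrating only up to $\min\{h_c(R),\varepsilon/(R^2\pi_1(R))\}$ as the paper does) is essential and should be made explicit. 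With those points repaired your route is sound; the paper's version is simply leaner because the negativity of the off-cluster terms removes the need for any second-moment or tail estimates.
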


Due to the definition of~$h_c$, 
crossing estimates as in Theorem~\ref{thm:RSWnear} apply in the regime of~$(r,h)$ with~$r\le R$ and~$h\le h_c(R)$. 
Indeed, the crossing probabilities in the primal model increase with~$h$, which ensures the lower bounds. 
For the upper bounds, observe that~$\beta(h,r)$ involves the boundary conditions that render dual crossings least likely. 
Theorem~\ref{thm:RSWnear} applied at~$p_c$ combined with the definition of~$h_c(R)$ implies the uniform positivity of~$\beta(r,h)$
for~$r \leq R$ and~$h \leq h_c(R)$.  
The FKG inequality and the monotonicity of boundary conditions imply lower bounds for crossing probabilities in the dual model, as claimed. 

As a consequence, a similar proof to that of Lemma~\ref{lem:0} applies for all~$r\le R$ and~$0\le h\le h_c(R)$, and yields 
\begin{align}
	\phi_{\Ann(r,2r),h}^1[y\longleftrightarrow \partial\Lambda_{2r}]&\les \pi_1({\rm dist}(y,\partial \Ann(r,2r))) 
	&& \text{ for ~$y \in \Ann(r,2r)$ and }\nonumber\\
	\phi_{\Lambda_r,h}^1[0\longleftrightarrow y,0\longleftrightarrow \partial\Lambda_r]&\les \pi_1(r)\pi_1(\|y\| \wedge {\rm dist}(y,\partial \La_n)) 
	&&\text{ for ~$y \in \La_r$},\label{eq:yy}
\end{align}
where the constants in~$\les$ depend on~$C$. 

\begin{proof}[Lemma~\ref{lem:ab}]
Fix~$C> 1$ and~$r\le R$. All constants in the signs~$\les$ below are allowed to depend on~$C$, but not on~$r$ or~$R$. 
The differential formula~\cite[Thm.~3.12]{Gri06} reads 
\begin{align}\label{eq:ko}
	\tfrac {d}{dh}\tilde\pi_1(h,r)
	= \tfrac{1}{1-e^{-h}}\sum_{y\in\Lambda_r}\phi_{\Lambda_r,h}^1[0\lra \partial\Lambda_r,\omega_{y\ghost}=1]
	-\phi_{\Lambda_r,h}^1[0\lra \partial\Lambda_r]\phi_{\Lambda_r,h}^1[\omega_{y\ghost}=1].
\end{align}
Let us analyse the right-hand side of the above. Recall that~$\sfC$ is the cluster of the origin for the connectivity in~$\bbZ^2$.
First we show that the vertices~$y \notin \sfC$ have a negative contribution to~\eqref{eq:ko}.
Fix some~$y \in \La_r$.  
For a set~$\calC\subset\Lambda_r$ containing~$0$ and~$y \notin \calC$, 
\begin{align*}
    \phi_{\Lambda_r,h}^1[0\lra\partial\Lambda_r,\omega_{y\ghost}=1,\sfC=\calC]
    &=\phi_{\Lambda_r,h}^1[\phi_{\Lambda_r,h}^1(\omega_{y\ghost}=1\,|\,\sfC=\calC) \,\ind_{0\lra\partial\Lambda_r}\ind_{\sfC=\calC}]\\
    &\le \phi_{\Lambda_r,h}^1[\omega_{y\ghost}=1]\phi_{\Lambda_r,h}^1[0\lra\partial\Lambda_r,\sfC=\calC].
\end{align*}
The first inequality is due~\eqref{eq:SMP} and~\eqref{eq:CBC} since~$\phi_{\Lambda_r,h}^1[ \cdot \,|\,\sfC = \calC]$ is a random-cluster measure with free boundary conditions on~$(\Lambda_r \setminus \calC ) \cup \{\ghost\}$, 
and is stochastically dominated by the restriction of~$\phi_{\Lambda_r,h}^1$ to~$(\Lambda_r \setminus \calC ) \cup \{\ghost\}$.

Summing the above over every~$\calC\subset \Lambda_r$, we find that 
\begin{align*}
	&\phi_{\Lambda_r,h}^1[0\lra \partial\Lambda_r,\omega_{y\ghost}=1]-\phi_{\Lambda_r,h}^1[\omega_{y\ghost}=1]\phi_{\Lambda_r,h}^1[0\lra\partial\Lambda_r]
	 \le \phi_{\Lambda_r,h}^1[0\lra y,0\lra \partial\Lambda_r,\omega_{y\ghost}=1].
\end{align*}
Plugging this inequality in~\eqref{eq:ko} gives
\begin{align*}
	\tfrac {d}{dh}\tilde\pi_{1}(h,r)
	\le \tfrac{1}{1-e^{-h}} \sum_{y\in\Lambda_r}\phi_{\Lambda_r,h}^1[0\lra y,0\lra\partial\Lambda_r,\omega_{y\ghost}=1]
	\le  \sum_{y\in\Lambda_r}\phi_{\Lambda_r,h}^1[0\lra y,0\lra\partial\Lambda_r],
\end{align*}
where the second inequality comes from~\eqref{eq:SMP}, which implies that 
\[
\phi_{\Lambda_r,h}^1[\omega_{v\ghost}=1\,|0\lra y,0\lra\partial\Lambda_r]\le 1-e^{-h}.
\]
Now, assuming that~$h \le h_c(R)$,~\eqref{eq:yy} applies, and we conclude that 
\begin{align*}
    \tfrac {d}{dh}\tilde\pi_1(h,r)
    \les \sum_{y \in \La_r} \pi_1(r)\pi_1(\|y\|\wedge {\rm dist}(y,\partial\La_r))
    \les r\pi_1(r)\sum_{k = 1}^{r/2}   \pi_1(k)
    \les r^2\pi_1(r)^2.
\end{align*}
The second inequality uses the fact that there are at most order~$r$ vertices at distance~$k$ from~$0$ or~$\partial \La_r$; 
the third one is a standard consequence of~\eqref{eq:LOWER_BOUND_ONE_ARM} and \eqref{eq:MULTIPLICATIVITY}.
Keeping in mind that~$\pi_1(r) \le \tilde\pi_1(h,r)$, the above implies
\begin{align}\label{eq:da_up}
    \tfrac {d}{dh}\log\tilde\pi_1(h,r) \les r^2\pi_1(r).
\end{align}
A similar computation, where~$\sfC$ is replaced by the cluster of~$\partial \La_{2r}$ implies that 
\begin{align}\label{eq:betaderiv}
	-\tfrac {d}{dh} \log \beta(h,r)
	\les  \sum_{y\in\Ann(r,2r)}\phi_{\Ann(r,2r),h}^1[y\lra\partial\Lambda_{2r}]
	\les r^2\pi_1(r).
\end{align}

We are now in a position to conclude. 
Let~$c_0$ be a constant larger than the constants involved in the inequalities~$\les$ in~\eqref{eq:da_up} and~\eqref{eq:betaderiv}. 
Then, for~$\ep >0$, integrate these two inequalities for~$h$ between~$0$ and~$h' = \min\{h_c(R),\ep/(R^2\pi_1(R))\}$. 
We find
\begin{align*}
\left.\begin{array}{r}
\log\tilde\pi_1(h,r) - \log\tilde\pi_1(r)\\
\log \beta(r)  - \log \beta(h,r) 
\end{array}\right\} \le  c_0h' r^2\pi_1(r) \le \eps c_0.
\end{align*}
Now, for~$\eps < (\log C) /c_0$, the above shows that~$h' < h_c(R)$, 
which implies~\eqref{eq:h_c}. 
\end{proof}

\begin{proof}[Theorem~\ref{thm:scaling_rel_h}]
    Fix~$h >0$. Again, $\mathsf C$ is the cluster of the origin when considering connections in $\mathbb Z^2$ only.
    We start with the lower bound. We have
    \begin{align*}
    	\phi_h[0\longleftrightarrow\ghost]
		&\ge \phi_{h}[0\longleftrightarrow\ghost,|\sfC|\ge 1/h]\ges\phi_{h}[|\sfC|\ge 1/h ]
		\ge\phi_0[|\sfC|\ge 1/h ]
		\ge\pi_1(\varphi(1/h)),
    \end{align*}
    where the fourth inequality is due to~\eqref{eq:relation3}, the third to monotonicity in~$h$~\eqref{eq:hmon}, and the second to the fact that conditioned on~$|\sfC| \geq 1/h$, there is a positive probability for~$0$ to be connected to~$\ghost$. This last property can be easily deduced from the finite energy property, which states that every edge connecting~$\bbZ$ to~$\ghost$ has a probability larger than~$(1-e^{-h})/(1+(q-1)e^{-h})$ and smaller than~$h$ of being open, regardless of the states of other edges.
    
    Let us now derive the upper bound.
    Let~$\varepsilon$ be the quantity given by Lemma~\ref{lem:ab} for some fixed~$C > 1$. 
    Choose~$R$ to be the largest integer such that~$h R^2\pi_1(R)\leq\varepsilon$. 
    Notice that this implies that~$h\le h_c(R)$. We deduce that
    \begin{align*}
        \phi_{h}[0 \longleftrightarrow\ghost]
        &\le \pi_1(h,R)+\phi_h[0\longleftrightarrow\ghost, \, 0\not\longleftrightarrow \partial\Lambda_R]\\
        &\le \pi_1(h,R)+\sum_{y\in \Lambda_R}\phi_h[0\longleftrightarrow y,\,\omega_{y\ghost}=1]\\
        &\le \pi_1(h,R)+ h \sum_{y\in \Lambda_R}\phi_h[0\longleftrightarrow y],
    \end{align*}
    where the last inequality uses the finite energy property. 
        Now, use that~$h\le h_c(R)$ to deduce that~$ \pi_1(h,R)\les \pi_1(R)$ and that, similarly to Lemma~\ref{lem:0},~$\phi_h[0\longleftrightarrow y]\les \pi_1(|y|)^2$. We deduce from the above that 
        \begin{align*}
     \phi_{h}[0 \longleftrightarrow\ghost]   &\les \pi_1(R)+ h \sum_{y\in \Lambda_R} \pi_1(|y|)^2        \les\pi_1(R)+ hR^2\pi_1(R)^2 \les\pi_1(R),
    \end{align*}
    where in the second inequality we used a computation similar to~\eqref{eq:main_chi}.
    Observe that, with the notation of Theorem~\ref{thm:scaling_rel_crit},~$R=\varphi(\ep/h) \ges \varphi(1/h)$, so
 ~$$
    	\phi_{h}[0 \longleftrightarrow\ghost] \les\pi_1(\varphi(1/h)),
~$$
which concludes the proof. 
\end{proof}

\paragraph{Acknowledgments} 

The first author is supported by the ERC CriBLaM, the NCCR SwissMAP, the Swiss NSF and an IDEX Chair from Paris-Saclay. 
The second author is supported by  the NCCR SwissMAP and the Swiss NSF. 
We thank Matan Harel for useful discussions at an early stage of the project, as well as Jeremy Dubout and Robin Pierre Kaufmann who worked on parts of Sections~\ref{sec:11} and~\ref{sec:boost} during their respective Master thesis with the two authors. Finally, we are extremely thankful to Vincent Tassion, whose discussions on this project have been very inspiring and absolutely crucial.

\newcommand{\etalchar}[1]{$^{#1}$}
\providecommand{\bysame}{\leavevmode\hbox to3em{\hrulefill}\thinspace}
\providecommand{\MR}{\relax\ifhmode\unskip\space\fi MR }
\providecommand{\MRhref}[2]{%
  \href{http://www.ams.org/mathscinet-getitem?mr=#1}{#2}
}
\providecommand{\href}[2]{#2}

\end{document}